\tikzstyle{box}=[fill={rgb,255: red,228; green,228; blue,228}, draw=black, shape=rectangle]
\tikzstyle{reducible}=[->, dashed]
\tikzstyle{strictreducible}=[->]
\tikzstyle{nonreducible}=[->, draw=red]
\tikzstyle{uncomp}=[draw=red, <->]
\tikzstyle{thmref}=[opacity=0,inner sep=2mm]
\newcommand{\st}{{}\,:\,{}}
\newcommand{\effectivewadge}{\leq_{\mathsf{EW}}}
\newcommand{\constructionone}[1]{\mathcal{A}^{\sqsubseteq}(#1)}
\newcommand{\constructiontwo}[1]{\mathcal{A}^{\!\incomparable}(#1)}
\newcommand{\defas}{:=}
\newcommand{\function}[3]{{#1}:{#2}\rightarrow {#3}}
\newcommand{\partialfunction}[3]{{{#1}} :\subseteq {#2}\rightarrow {#3}}
\newcommand{\multifunction}[3]{{#1}: {#2}\rightrightarrows{#3}}
\newcommand{\partialmultifunction}[3]{{{#1}} :\subseteq {#2} \rightrightarrows {#3}}
\newcommand{\repmap}[1]{\delta_{#1}}
\newcommand{\str}[1]{\langle #1 \rangle}
\newcommand{\pair}[1]{\langle #1 \rangle}
\newcommand{\join}{\oplus}
\newcommand{\incomparable}{~|~}
\newcommand{\length}[1]{\left|{#1}\right|}
\newcommand{\body}[1]{\left[{#1}\right]}
\newcommand{\concat}{^\smallfrown}
\newcommand{\X}{\mathcal{X}}
\newcommand{\dom}{\operatorname{dom}}
\newcommand{\nats}{\mathbb{N}}
\newcommand{\Baire}{\nats^\nats}
\newcommand{\baire}{\nats^{<\nats}}
\newcommand{\Cantor}{2^\nats}
\newcommand{\cantor}{2^{<\nats}}
\newcommand{\weireducible}{\le_{\mathrm{W}}}
\newcommand{\strictlyweireducible}{<_\mathrm{W}}
\newcommand{\weiequiv}{\equiv_{\mathrm{W}}}
\newcommand{\weiincomparable}{~|_{\mathrm{W}~}}
\newcommand{\sweireducible}{\le_{\mathrm{sW}}}
\newcommand{\strongweireducible}{\le_{\mathrm{sW}}}
\newcommand{\sweiequiv}{\equiv_{\mathrm{sW}}}
\newcommand{\parallelization}[1]{\widehat{#1}}
\newcommand{\firstOrderPart}[1]{{}^1{#1}}
\newcommand{\ustar}[1]{#1^{u*}}
\newcommand{\boldfacePi}{\mathbf{\Pi}}
\newcommand{\boldfaceDelta}{\mathbf{\Delta}}
\newcommand{\boldfaceSigma}{\mathbf{\Sigma}}
\newcommand{\boldfaceGamma}{\mathbf{\Gamma}}
\newcommand{\wadgereducible}{\leq_{\mathbf{W}}}
\newcommand{\strictwadgereducible}[1]{<_{\mathbf{W}}}
\newcommand{\wadgeequiv}[1]{\equiv_{\mathbf{W}}}
\newcommand{\disjointunion}[2]{\bigsqcup_{{#1}}{#2}}
\newcommand{\range}{\mathsf{range}}
\newcommand{\mflim}{\mathsf{lim}}
\newcommand{\CBaire}{\mathsf{C}_{\Baire}}
\newcommand{\CCantor}{\mathsf{C}_{\Cantor}}
\newcommand{\UCBaire}{\mathsf{UC}_{\Baire}}
\newcommand{\wf}{\mathsf{WF}}
\newcommand{\PiCA}{\boldfacePi^1_1\mathsf{-CA}_0}
\newcommand{\choice}[2]{{#1\text{-}}\mathsf{C}_{#2}}
\newcommand{\cnats}{\mathsf{C}_\nats}
\newcommand{\uchoice}[2]{#1\text{-}\mathsf{UC}_{#2}}
\newcommand{\illfounded}{\mathcal{IF}}
\newcommand{\wellfounded}{\mathcal{WF}}
\newcommand{\D}[2]{\mathrm{deg}^{#1}(#2)}
\newcommand{\pathconnected}[4]{{#1}\!\leftrightsquigarrow_{#3}^{#4}\!{#2}}
\newcommand{\isg}{\mathsf{IS}}
\newcommand{\sg}{\mathsf{S}}
\newcommand{\connected}{\mathbf{Con}\text{-}}
\newcommand{\subgraph}{\subseteq_{\mathbf{s}}}
\newcommand{\inducedsubgraph}{\subseteq_{\mathbf{is}}}
\newcommand{\infinitelycycles}{\bigotimes_{i\geq 3}\cycle{i}}
\newcommand{\infinitelycopies}[1]{\overset{\infty}{\bigotimes}#1}
\newcommand{\quot}[1]{\lq\lq{#1}\rq\rq}
\newcommand{\id}{\mathsf{id}}
\newcommand{\allgraphrelations}{\subseteq_{(\mathbf{i})\mathbf{s}}}
\newcommand{\egraph}{EGr}
\newcommand{\graph}{Gr}
\newcommand{\R}{\mathsf{R}}
\newcommand{\elle}{\mathsf{L}}
\newcommand{\cycle}[1]{C_{#1}}
\newcommand{\ray}[1]{L_{#1}}
\newcommand{\complete}[1]{K_{#1}}
\newcommand{\embeddingray}[1]{\R\text{-}\mathsf{Emb}_{#1}}
\newcommand{\eembeddingray}[1]{e\R\text{-}\mathsf{Emb}_{#1}}
\newcommand{\jump}[2]{{#1}^{(#2)}}
\newcommand{\connectedunion}[1]{\bigodot_{#1}}
\newcommand{\disconnectedunion}[1]{\bigotimes_{#1}}
\newcommand{\findis}[2]{\mathsf{IS\text{-}Copy}^{#1}_{#2}}
\newcommand{\finds}[2]{\mathsf{S\text{-}Copy}^{#1}_{#2}}
\newcommand{\lpo}{ \mathsf{LPO} }
\newcommand{\tree}{\mathbf{Tr}}
\newcommand{\findisEC}[1]{\mathsf{IS\text{-}Copy}_{#1}^{e  \chi}}
\newcommand{\findisEE}[1]{\mathsf{IS\text{-}Copy}_{#1}^{e  e}}
\newcommand{\findisCC}[1]{\mathsf{IS\text{-}Copy}_{#1}^{\chi  \chi}}
\newcommand{\findisCE}[1]{\mathsf{IS\text{-}Copy}_{#1}^{\chi  e}}
\newcommand{\findsEC}[1]{\mathsf{S\text{-}Copy}_{#1}^{e  \chi}}
\newcommand{\findsEE}[1]{\mathsf{S\text{-}Copy}_{#1}^{e  e}}
\newcommand{\findsCC}[1]{\mathsf{S\text{-}Copy}_{#1}^{\chi  \chi}}
\newcommand{\findsCE}[1]{\mathsf{S\text{-}Copy}_{#1}^{\chi  e}}
\newcommand{\repspacegraphs}{\mathbf{Gr}}
\newcommand{\repspaceegraphs}{\mathbf{EGr}}
\newcommand{\repspacex}{\mathbf{X}}
\newcommand{\setofgraphs}[3]{\{H \in \mathbf{#2} :#1 #3 H\}}
\newcommand{\repspaceallgraphs}{(\mathbf{E})\mathbf{Gr}}
\newtheorem*{theorem*}{Theorem}
\newtheorem{theorem}{Theorem}[section]
\newtheorem{proposition}[theorem]{Proposition}
\newtheorem*{proposition*}{Proposition}
\newtheorem{lemma}[theorem]{Lemma}
\newtheorem{corollary}[theorem]{Corollary}
\theoremstyle{definition}
\newtheorem{definition}[theorem]{Definition}
\newtheorem*{definition*}{Definition}
\theoremstyle{remark}
\newtheorem{open}[theorem]{Question}
\newtheorem{remark}[theorem]{Remark}
\newtheorem*{remark*}{Remark}
\begin{document}

	\title{Deciding embeddability of graphs}

	\title{Embeddability of graphs and Weihrauch degrees\thanks{Cipriani’s research was partially supported by the Italian PRIN 2017 Grant \lq\lq Mathematical Logic: models, sets, computability\rq\rq. He thanks Alberto Marcone and Manlio Valenti for useful discussions about the topics of this paper.}}

\author{
		Vittorio Cipriani
		\institute{Dipartimento di Scienze Matematiche, Informatiche e Fisiche\\Universit\`a Degli Studi di Udine, Udine, Italy\\}
		\email{vittorio.cipriani17@gmail.com }
		\and
		Arno Pauly
		\institute{Department of Computer Science\\Swansea University, Swansea, UK\\}
		\email{Arno.M.Pauly@gmail.com}
	}
	
	\def\titlerunning{Embeddability of graphs and Weihrauch degrees}
\def\authorrunning{V.~Cipriani \& A.~Pauly}
	
	\maketitle

\begin{abstract}
    We study the complexity of the following computational tasks concerning a fixed countable graph $G$: 1.\ Does a countable graph H provided as input have a(n induced) subgraph isomorphic to $G$? 2.\ Given a countable graph H that has a(n induced) subgraph isomorphic to $G$, find such a subgraph. The frameworks for our investigations are given by effective Wadge reducibility and by Weihrauch reducibility. Our work follows on "Reverse mathematics and Weihrauch analysis motivated by finite complexity theory" (Computability, 2021) by BeMent, Hirst and Wallace, and we answer several of their open questions. 
\end{abstract}

\tableofcontents
\section{Introduction}
In this paper, we study the uniform computational content of the \emph{subgraph problem} and the \emph{induced subgraph problem} via effective Wadge reducibility and via Weihrauch reducibility. The (induced) subgraph problem is parameterized by a countable graph $G$. The input is a countable graph $H$, and the task to decide is whether $G$ is isomorphic to a(n induced) subgraph of $H$ (the decision problem), or to find a(n induced) subgraph of $H$ which is isomorphic to $G$. We classify how the structure of $G$ impacts the difficulty of these computational tasks.

The line of research was initiated by BeMent, Hirst and Wallace \cite{bement2021reverse}, and they showed that the induced subgraph decision problem is $\Sigma^0_1$ for finite graphs, and exhibited some infinite graphs with a $\Sigma^1_1$-complete induced subgraph decision problem. We follow up on this by providing a much more complete picture.

The main contribution of this work is to explore the complexity of countable structures (of which graphs are the most prominent example) from a new angle. We consider the graphs to be genuine infinite objects, rather than restricting our attention to computable graphs coded suitably. A curious phenomenon here is that for an infinite graph, the induced subgraph problem is always $\Sigma^1_1$-complete - however, our proofs proceed in two overlapping cases.

In addition, we also offer some technical contributions to the field of Weihrauch complexity. We introduce the \emph{finite part of problem} as a new tool to investigate particular Weihrauch degrees and relate them to known ones. The Weihrauch degree of finding a subgraph isomorphic to an infinite ray in a given graph turns out to be incomparable to many hithertho investigated Weihrauch degrees, and we prove this via its finite part.

The paper is organized as follows. In \S \ref{sec:background} we give the necessary preliminaries on graph-related concepts, effective Wadge reducibility, computable analysis, and Weihrauch reducibility. Here we also introduce a new concept namely the \emph{finite part of a problem}, which is the strongest problem with finite codomain it can compute. This notion is related to the first-order and deterministic part of a problem introduced respectively by Dzhafarov, Solomon, and Yokoyama \cite{dzafarovsolomonyokoyama} (see also \cite{valentisolda}), and by Goh, the second author and Valenti in \cite{pauly-valenti}. In \S \ref{wadgecomplexityofgraphs}  we give results on the $\Gamma$-completeness, with $\Gamma$ being a lightface class, of sets of (names of) graphs of the form
\begin{equation}
\setofgraphs{G}{\repspaceallgraphs}{\allgraphrelations} \defas \{p \in \dom(\repmap{(E)Gr}) : G \allgraphrelations \repmap{(E)Gr}(p)\}.
\end{equation}
where $G$ is a fixed graph. The results about this section are summarized in Table \ref{SummaryGraph1}, the precise definitions and notations involved are given in due time  (same for the next tables and figures). 

\begin{table}[H]
	\centering
	\begin{tabular}{ |l|l|l| }\hline
	 $G$ finite & $\setofgraphs{G}{\repspaceallgraphs}{\subgraph}$  &  $\Sigma_1^0$-complete \\ \hline
	 $G$ finite & $\setofgraphs{G}{\repspacegraphs}{\inducedsubgraph}$ &  $\Sigma_1^0$-complete \\ \hline

	 $\complete{n}$ & $\setofgraphs{G}{\repspaceegraphs}{\inducedsubgraph}$ &  $\Sigma_1^0$-complete \\  \hline
	 $G$ finite and $G\not\cong \complete{n}$ & $\setofgraphs{G}{\repspaceegraphs}{\inducedsubgraph}$ &  $\Sigma_2^0$-complete \\  \hline

	$G$ c.e.\ and $\complete{\omega}\not\inducedsubgraph G$ & $\setofgraphs{G}{\repspaceallgraphs}{\inducedsubgraph}$ &  $\Sigma_1^1$-complete \\  \hline

	$G$ c.e.\ and $\R\subgraph G$ & $\setofgraphs{G}{\repspaceallgraphs}{\allgraphrelations}$ &  $\Sigma_1^1$-complete \\ \hline

	$\mathsf{T}_{2k+1}$ $(\mathsf{F}_{2k+2})$ & $\setofgraphs{\mathsf{T}_{2k+1} (\mathsf{F}_{2k+2})}{\repspaceallgraphs}{\subgraph}$&  $\Sigma_{2k+1}^0$-complete ($\Pi_{2k+2}^0$-complete)\\ \hline

	\end{tabular}
	\caption{}
	\label{SummaryGraph1}
	\end{table}
In \S \ref{DecisionProblemsGraphs} we turn our attention to Weihrauch reducibility and decision problems: in particular we solve a question left open in \cite[\S 2]{bement2021reverse}. We also seize the opportunity to discuss the interplay between Weihrauch reducibility and reverse mathematics, exhibiting graph-theoretic representatives of $\PiCA$ in the Weihrauch lattice. Figure \ref{SummaryAtThebeginningGraphs} summarizes the results of \S \ref{DecisionProblemsGraphs}.

\begin{figure}[H]
	\centering
	\tikzstyle{every picture}=[tikzfig]
	\begin{tikzpicture}[scale=0.8]

	  \begin{pgfonlayer}{nodelayer}
	  \node [style=box] (lpo) at (0,-1) {$\lpo \sweiequiv \isg_F \sweiequiv  e\isg_{\complete{n}} \sweiequiv  (e)\sg_F$};
	  \node [style=box] (lpojump) at (0,2) {$\lpo' \sweiequiv  e\isg_F  \sweiequiv (e)\sg_{\mathsf{F}_2}$ where  $F\not\cong\complete{n}$ };
	  \node [] (empty) at (0,4) {$\dots$};
  
	  \node [style=box] (lpojumpn) at (0,6) {$\jump{\lpo}{n} \sweiequiv (e)\sg_{\mathsf{G}} $ };
  \node [] (empty2) at (0,8) {$\dots$};
  \node [style=box] (wf) at (0,10) {$\wf\sweiequiv (e)\isg_G \sweiequiv (e)\sg_R$};
  
	  \end{pgfonlayer}
	  \begin{pgfonlayer}{edgelayer}
		  \draw [style=strictreducible] (lpo) to (lpojump);
		  \draw [style=strictreducible] (lpojump) to (empty);
		  \draw [style=strictreducible] (empty) to (lpojumpn);
		  \draw [style=strictreducible] (lpojumpn) to (empty2);
		  \draw [style=strictreducible] (empty2) to (wf);

	  \end{pgfonlayer}
  \end{tikzpicture}

				\caption{Some of the multi-valued functions studied in this paper. Black arrows represent Weihrauch reducibility in the direction of the arrow. Here $F$ represent a finite graph, $G$ an infinite c.e.\ graph, $R$ a c.e.\ graph such that $\R\subgraph R$ and  $\mathsf{G} \in \{ \mathsf{T}_{2k+1}, \mathsf{F}_{2k+2}\}$.}
	\label{SummaryAtThebeginningGraphs}
	\end{figure}
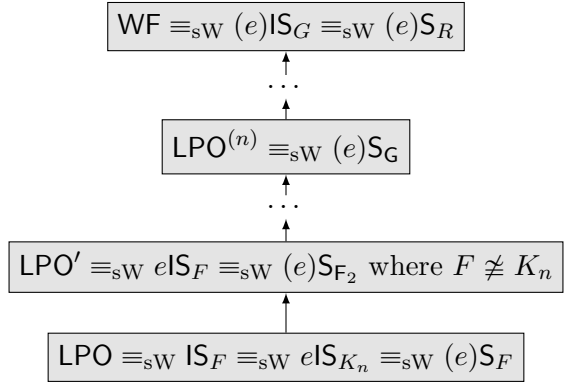
	\vspace{-0.5cm}

	In \S \ref{findsubgraphsandinduced} we introduce \quot{search problems}\ that, fixed a graph $G$ and given in input a graph $H$ such that $G \allgraphrelations H$ output an (induced) subgraph of $H$ that is isomorphic to $G$. In particular, we show that the situation for the induced subgraph relation is more \quot{tidy} (i.e.\ the problems we consider are all Weihrauch equivalent to $\CBaire$, sometimes relatively to some oracle, see \thref{maintheoreminducedsubgraph}), while the situation for the subgraph relation is more intricate. Indeed, we have different infinite graphs such that the corresponding problems for the subgraph relation are Weihrauch equivalent to $\CBaire$ (see \thref{maintheoermsubgraphcase,infinitelycyclescbaire}), others that are computable (see \thref{computablefinitecomponents}) and others that are Weihrauch equivalent to (jumps of) $\mflim$ (see \thref{maintheorem_findubgraphgn}). 

	Of particular interest is the case when $G$ is $\R$: we show that, restricting the domain of the corresponding problem to connected graph, we obtain examples of natural problems having computable finitary part but noncomputable first-order part. Furthermore, even without the domain restriction such problems have the peculiar property of being hard to compute, but weak when they have to compute a problem on their own (see Figure \ref{figurefindray}).

		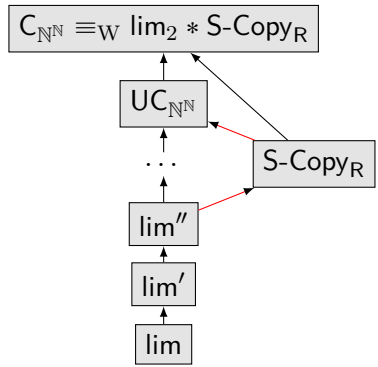
\begin{figure}[H]
			\centering
			\tikzstyle{every picture}=[tikzfig]
			\begin{tikzpicture}[scale=0.8]
		
			  \begin{pgfonlayer}{nodelayer}
			  \node [style=box] (mflim) at (2,0) {$\mflim$};
			  \node [style=box] (mflimjump) at (2,2) {$\mflim'$ };
			  \node [style=box] (mflimjump2) at (2,4) {$\mflim''$ };
		
			  \node [] (empty) at (2,6) {$\dots$};
		  
			  \node [style=box] (ucbaire) at (2,8) {$\UCBaire$ };
		  \node [style=box] (cbaire) at (2,10.5) {$\CBaire\weiequiv \mflim_2*\finds{}{\R}$};
		  \node [style=box] (finds) at (7,6) {$\finds{}{\R}$};

			  \end{pgfonlayer}
			  \begin{pgfonlayer}{edgelayer}
				  \draw [style=strictreducible] (mflim) to (mflimjump);
				  \draw [style=strictreducible] (mflimjump) to (mflimjump2);
				  \draw [style=strictreducible] (mflimjump2) to (empty);
				  \draw [style=strictreducible] (empty) to (ucbaire);
				  \draw [style=strictreducible] (ucbaire) to (cbaire);
		
				  \draw [style=strictreducible] (finds) to (cbaire);
				  \draw [style=nonreducible] (mflimjump2) to (finds);
				  \draw [style=nonreducible] (finds) to (ucbaire);
				  
			  \end{pgfonlayer}
		  \end{tikzpicture}
		  \caption{A summary of the Weihrauch reductions between problems considered in \S \ref{particularcase}. As in Figure \ref{SummaryAtThebeginningGraphs} black arrows represent Weihrauch reducibility in the direction of the arrow while red arrows represent the absence of a Weihrauch reduction in the direction of the arrow.}
		  \label{figurefindray}
		\end{figure}

\section{Background}
\label{sec:background}

 \subsection{Strings, graphs and trees}
\label{sectiongraphs}

Let $\nats^n$ denote the set of finite sequences of natural numbers of length $n$, where the length is denoted by $\length{\cdot}$ (we use the same symbol to denote the cardinality of a set). If $n=0$, $\nats^0=\{\str{}\}$, where $\str{}$ is the empty sequence: in general, given $i_0,\dots,i_{n-1} \in \nats$, we denote by $\str{i_0,\dots,i_{n-1}}$ the string in $\nats^n$ having digits $i_0,\dots,i_{n-1}$. The symbol $\str{\cdot}$ is also used to denote a fixed pairing function from $\baire$ to $\nats$, but there should be no confusion when we are referring to strings or their codes. The set of all finite sequences of natural numbers is denoted by $\baire$. For $\sigma \in \baire$ and $m\leq\length{\sigma}$ let $\sigma[m]=\str{\sigma(0),\dots,\sigma(m-1)}$. Given $\sigma, \tau \in \baire$ we use $\sigma \sqsubseteq \tau$ to say that $\sigma$ is an \emph{initial segment} of $\tau$ (equivalently, $\tau$ an \emph{extension} of $\sigma$), i.e. $\sigma=\tau[m]$ for some $m \leq \length{\sigma}$. We use the symbol $\sqsubset$ in case $\sigma \sqsubseteq \tau$ and $\length{\sigma}<\length{\tau}$, and in case $\sigma \not\sqsubseteq \tau$ and $\tau \not\sqsubseteq \sigma$ we say that $\sigma$ and $\tau$ are \emph{incomparable} ($\sigma \incomparable \tau$). The concatenation of two strings $\sigma,\tau$ is denoted by $\sigma\concat\tau$: whenever it is clear from the context we just write $\sigma\tau$. For $n,i \in \nats$, we denote by $n^i$ the sequence made of $i$ many $n$’s: if in case $i = 1$ we just write $n$ and we use $n^\nats$ to denote the infinite sequence with constant value $n$. 

A \emph{tree} $T$ is a subset of $\baire$ that is closed under initial segments. In case the tree $T$ is a subset of $\cantor$, we  call $T$ a binary tree. 
 Given a tree $T$, we say that $f \in \Baire$ is a path through $T$ if for all $n \in \nats$, $f[n]\in T$ where, as for finite strings, $f[n]=\str{f(0),\dots,f(n-1)}$. We denote by $\body{T}$ the \emph{body} of $T$, that is the set of paths in $T$. We say that a tree $T$ is \textit{ill-founded} iff there exists at least one path in $\body{T}$, \textit{well-founded} otherwise.

 A graph $G$ is a pair $(V, E)$ where $V$ is the set of \emph{vertices} and $E$ is a binary relation on $V \times V$; a pair $(v,w) \in E$ is called an \emph{edge}. In this paper, when we say graph, we always assume that is countable, undirected, and without self-loops: that is, $V\subseteq \nats$ and $E$ satisfies anti-reflexivity and symmetry. 
  Given a graph $G$ we denote the set of vertices and the set of edges respectively as $V(G)$ and $E(G)$.  Given graphs $G$ and $H$, we denote that they are isomorphic with $G\cong H$, and we often say that \lq\lq $G$ is a copy of $H$\rq\rq\ or vice versa. 
  We say that a graph $G$ is \emph{finite}  if $V(G)$ is finite, infinite otherwise. Given $v \in V(G)$, let $\{w \in V(G):(v,w)\in E(G)\}$ the set of \emph{neighbors} of $v$ in $G$, and define the \emph{degree} of $v$ in $G$ as $\D{G}{v}\defas \length{\{w:(v,w)\in E(G)\}}$. 
 
 For a graph $G$ and $n > 0$, a \emph{line segment of length $n$} is a sequence of distinct vertices $v_0,\dots,v_{n}$ of $G$ such that for every $i<n$, $(v_i,v_{i+1}) \in E(G)$ and we call $v_0$ and $v_n$ \emph{endpoints} of the line segment. Given $u,v,w \in V(G)$, we say that \emph{$v$ and $u$ are connected  in $G$ through $w$},  denoted by $\pathconnected{v}{u}{w}{G}$, if there exists a line segment of finite length in $G$, with endpoints $v$ and $u$ containing $w$: instead, the predicate $\pathconnected{v}{u}{\lnot w}{G}$ states that $v$ and $u$ are connected but no line segment of finite length with endpoints $u$ and $v$ contains $w$. If we simply want to say that $v$ and $u$ are connected in $G$, we drop the subscript $w$ (or $\lnot w$). Notice that $\pathconnected{v}{u}{}{G}$ by a line segment of length $1$ is equivalent to writing $(v,w) \in E(G)$. We say that a graph $G$ is \emph{connected}  if $(\forall v,u \in V(G))(\pathconnected{v}{u}{}{G})$.
 
We now define three particular types of graphs, namely \emph{line segments}, \emph{cycles}, and \emph{complete} graphs. We define $\ray{n}$, $\cycle{n}$ and $\complete{n}$ as graphs having the same vertex set $\{i:i < n\}$, where $n >0$ in $\ray{n}$ and $\complete{n}$ and $n\geq 3$ in $\cycle{n}$. The edge sets are respectively: $E(\ray{n})=\{(i,i+1):i< n\}$, $E(\complete{n})=\{(i,j): i \neq j \land i, j < n\}$ and $E(\cycle{n})=E(\ray{n-1}) \cup \{(n-1,0)\}$. It is immediate that $\ray{1}=\complete{1}$ and $\cycle{3}= \complete{3}$.  Notice that $\ray{n}$ and $\complete{n}$  generalize to the infinite case: that is, $\R$ and $\complete{\omega}$ are the graphs having as vertex set $\nats$ and as edge set respectively $\{(i,i+1):i \in \nats\}$ and $\{(i,j): i\neq j \land i,j \in \nats\}$. Another infinite generalization of $\ray{n}$ we use is the \quot{two-way infinite ray}\ $\elle$,  where $V(\elle)=\nats$ and $E(\elle)=\{(0,1)\}\cup \{(2i,2i+2):i \in \nats\} \cup \{(2i+1,2i+3):i \in \nats\}$.

  Given countably many graphs $\{G_i : i \in \nats\}$ we define the \emph{disconnected union}  $\disconnectedunion{i \in \nats}{G_i}$ as follows:
  \[V(\disconnectedunion{i \in \nats}{G_i})\defas \bigcup_{i \in \nats} \{\pair{i,v} : v \in V(G_i)\}\text{ and }E(\disconnectedunion{i \in \nats}{G_i}) \defas \{(\pair{i,v},\pair{i,w}):(v,w) \in  E(G_i)\}.\]

 In case the disconnected union involves at most three graphs $G_0$, $G_1$, $G_2$, we write $G_0 \disconnectedunion{} G_1$ and $G_0 \disconnectedunion{} G_1 \disconnectedunion{} G_2$ respectively. We denote by $\infinitelycopies{G}$ the disconnected union of countably many copies of $G$.
 
 Another operation on graphs we use is the  \emph{connected union} (denoted by $\connectedunion{i \in \nats} G_i$)  in which, intuitively, for every $i$, $G_i$ and $G_{i+1}$ share a unique common vertex, different from the one shared between $G_{i+1}$ and $G_{i+2}$. Formally, given countably many graphs $\{G_i: i \in \nats \}$ (for simplicity assume $\length{V(G_i)} \geq 3$ for every $i$), let $\mathsf{v_i} \defas \min \{v: v \in V(G_i)\}$ and, if $G_i$ is finite, let  $\mathsf{w_i}\defas \max\{v: v \in V(G)\}$, otherwise let $\mathsf{w_i}\defas \min\{v: v \in V(G_i)\setminus \{\mathsf{v}_i\}\}$. Then let,
 
 $$V(\connectedunion{i \in \nats} G_i) \defas V(\disconnectedunion{i \in  \nats} G_i) \
 \setminus \ \big\{ \{ \pair{i, \mathsf{v}_i}: i>0\} \cup \{ \pair{i, \mathsf{w}_i}: i \in \nats  \}\big\}  \bigcup \big \{\pair{\mathsf{w}_i,\mathsf{v}_{i+1}} : i \in \nats \big\}, $$
 $$E(\connectedunion{i \in \nats} G_i) \defas \{(\pair{i,v},\pair{i,w}):v,w \notin \{\mathsf{v}_i, \mathsf{w}_i\} \land (v,w) \in  E(G_i)\} \bigcup$$ $$\big\{ \big( \pair{\mathsf{w}_i, \mathsf{v}_{i+1}}, \pair{i,u} \big): (\mathsf{w}_i, u) \in E(G_i) \lor (\mathsf{v}_{i+1}, u) \in  E(G_{i+1}) \big\}.$$

 \begin{figure}[H]
	 \begin{subfigure}{.5\textwidth}
	   \centering
	   \begin{tikzpicture}[scale=2.0, main/.style = {draw, circle, fill=black,scale=0.3}] 
		 \node[main] (triangle1) {}; 
		 \node[main] (triangle2) [below of=triangle1, yshift=-1.5cm] {}; 
		 \node[main] (triangle3) [right of=triangle1,yshift=-1.2cm, xshift= 1.5cm] {}; 
		 
		 \node[main] (square1) [right of=triangle3,xshift=1.5cm] {}; 
		 \node[main] (square2) [right of= triangle1, xshift=5.5cm] {}; 
		  \node[main] (square3) [below of= square2, yshift=-1.5cm] {}; 
		  \node[main] (square4) [right of= square1, xshift=2cm] {}; 
	 
		  \node[main] (pent1) [right of=square4,xshift=1.5cm,yshift=0.1cm] {}; 
		  \node[main] (pent2) [right of=square2,  xshift=4.3cm] {}; 
		  \node[main] (pent3) [below of=pent2,xshift=-1cm, yshift=-1.5cm] {}; 
		  \node[main] (pent4) [right of=pent1,xshift=1.5cm] {}; 
		  \node[main] (pent5) [right of= pent3,xshift=0.9cm] {}; 
	 
		  \node[] (dots) [right of= pent1,xshift=0.5cm] {$\dots$}; 
 
		 \draw (triangle1) -- (triangle2);
		 \draw (triangle2) -- (triangle3);
		 \draw (triangle1) -- (triangle3);
	 
		 \draw (square1) -- (square2);
		 \draw (square1) -- (square3);
		 \draw (square2) -- (square4);
		 \draw (square3) -- (square4);
	 
		 \draw (pent1) -- (pent2);
		 \draw (pent1) -- (pent3);
		 \draw (pent2) -- (pent4);
		 \draw (pent3) -- (pent5);
		 \draw (pent4) -- (pent5);
		 \end{tikzpicture} 
	 \end{subfigure}
	 \begin{subfigure}{.5\textwidth}
	   \centering
	   \begin{tikzpicture}[scale=2.0, main/.style = {draw, circle, fill=black,scale=0.3}] 
		 \node[main] (triangle1) {}; 
		 \node[main] (triangle2) [below of=triangle1, yshift=-1.5cm] {}; 
		 \node[draw,red, circle, fill=red,scale=0.3] (triangle3) [right of=triangle1,yshift=-1.2cm, xshift= 1.5cm] {}; 
		 
		 \node[draw,red, circle, fill=red,scale=0.3] (square1) [right of=triangle3,xshift=-0.9cm] {}; 
		 \node[main] (square2) [right of= triangle1, xshift=3.2cm] {}; 
		  \node[main] (square3) [below of= square2, yshift=-1.5cm] {}; 
		  \node[draw, red, circle, fill=red,scale=0.3] (square4) [right of= square1, xshift=2.3cm] {}; 
	 
		  \node[draw,red, circle, fill=red,scale=0.3] (pent1) [right of=square4,xshift=-0.9cm] {}; 
		  \node[main] (pent2) [right of=square2,  xshift=2cm] {}; 
		  \node[main] (pent3) [below of=pent2,xshift=-0.9cm, yshift=-1.5cm] {}; 
		  \node[main] (pent4) [right of=pent1,xshift=1.5cm] {}; 
		  \node[main] (pent5) [right of= pent3,xshift=0.9cm] {}; 
		  \node[] (dots) [right of= pent1,xshift=0.5cm] {$\dots$}; 
 
		 \draw (triangle1) -- (triangle2);
		 \draw (triangle2) -- (triangle3);
		 \draw (triangle1) -- (triangle3);
	 
		 \draw (square1) -- (square2);
		 \draw (square1) -- (square3);
		 \draw (square2) -- (square4);
		 \draw (square3) -- (square4);
	 
		 \draw (pent1) -- (pent2);
		 \draw (pent1) -- (pent3);
		 \draw (pent2) -- (pent4);
		 \draw (pent3) -- (pent5);
		 \draw (pent4) -- (pent5);
		 \end{tikzpicture} 
	 \end{subfigure}
	 \caption{On the left side, the disconnected union $\disconnectedunion{i>2}\cycle{i}$ of all cyclic graphs, shown up to $C_5$. On the right side, the connected union $\connectedunion{i>2}\cycle{i}$ of all cyclic graphs, shown up to $C_5$: starting from the left one, the two red vertices denote respectively the vertices $\str{\mathsf{w}_3,\mathsf{v}_4}$ and $\str{\mathsf{w}_4,\mathsf{v}_5}$.}
	 \label{fig:fig}
	 \end{figure}
 
 As for the disconnected union, we write $G_0 \connectedunion{} G_1$ and $G_0 \connectedunion{} G_1 \connectedunion{} G_2$ in case the connected union is defined only on two or three graphs. Notice that $\elle \cong \R \connectedunion{} \R$.
 
We have already defined what a tree is: it is easy to notice that a tree is a particular type of graph, and hence when considering a tree, adjusting some detail, we can choose at our convenience if we want to refer to it as a set of finite sequences or as a \emph{graph theoretic tree}. A graph-theoretic tree is a connected graph in which any two vertices are connected by exactly one path: in other words, they are connected graphs that do not contain any cycle. The  graph theoretic tree is \emph{$v$-rooted} if there is a distinguished vertex $v \in V(G)$, namely the \emph{root} of $T$. So, as anticipated, given a  tree $T \in \tree$ we can translate it as a $v$-rooted graph theoretic tree $G$ where  $v=\str{}$, $V(G)=T$ and $E(G)=\{(\sigma,\tau):\sigma\tau \in T\}$. Conversely, we can translate a $v_0$-rooted graph theoretic tree $G$ into a tree $T$, identifying any $v\in V(G)$ with the sequence $\str{v_0,\dots,v_n}$, where $v_0,\dots,v_n$ are the vertices of the unique path from $v_0$ to $v_n$ and $v_n=v$. Notice that both translations are computable (in case we have to translate $T$ into a $v$-rooted tree just let $v=\str{}$). In case we drop the assumption that the graph-theoretic tree is connected, we obtain a \emph{forest}: in other words, a forest, is the disconnected union of countably many graph-theoretic trees.
 
 We give the definition of \emph{subgraph} and the \emph{induced subgraph} relations.
 \begin{definition}
	 \thlabel{definition_induced_subgraph}
	 Given two graphs $G$ and $H$ we say that:
	 \begin{itemize}
		 \item $H$ is a \emph{subgraph} of $G$ if $V(H) \subseteq V(G)$ and $E(H) \subseteq E(G)$;
		 \item  $H$ is an \emph{induced subgraph} of $G$ if $H$ is a subgraph of $G$ and $E(H)= E(G) \cap (V(H) \times V(H))$.
	 \end{itemize}
	 \end{definition}
	 Given graphs $G$ and $H$, we use the following abbreviations
 \begin{itemize}
		 \item $G\subgraph H:\iff(\exists G'\subseteq H)(G'\cong G \land G'\text{ is a subgraph of } H)$;
	 \item $G\inducedsubgraph H:\iff(\exists G'\subseteq H)( G' \cong G \land G'\text{ is an induced subgraph of } H)$.
 \end{itemize}
	 Finally, given a graph $G$ and $V\subseteq V(G)$, we define the \emph{graph induced by $V$ on $G$}, denoted by $G_{\restriction V }$ as the graph having $V(G_{\restriction V})\defas V$ and $E(G_{\restriction V})\defas E(G) \cap (V \times V)$.

\subsection{(Effective) Wadge reducibility}
We assume the reader to be familiar with the Borel and projective hierarchy and their effective counterparts, namely the Kleene and analytical hierarchy: these notions can be found for example in \cite{kechris2012classical,moschovakis}. It is customary to refer to Borel and projective classes as \emph{boldface classes} and their effective counterparts as \emph{lightface classes}.

We give some basic definitions on \emph{Wadge reducibility}. The latter, provides a notion of complexity for sets in topological spaces. Given two topological spaces $X,Y$ and given $A\subseteq X$, $B\subseteq Y$, we say that $A$ is \emph{Wadge reducible} to $B$ (in symbols, $ A \wadgereducible B$) if there is a continuous function $\function{f}{X}{Y}$ such that $x \in A \iff f(x) \in B$. Informally, if a set $A$ is Wadge reducible to a set $B$, it means that $A$ is simpler than $B$. We mention that the $\wadgereducible$ is a quasi-order and the corresponding equivalence classes are called the \emph{Wadge degrees}. Let $\boldfaceGamma$ be a boldface class and $X$ and $Y$ be Polish space with $X$ being zero-dimensional. We call $B \subseteq Y$ $\boldfaceGamma$-\emph{hard} if, for any $A\in \boldfaceGamma(X)$, $A \wadgereducible B$: if in addition $B  \in \boldfaceGamma(Y)$, we say that $B$ is $\boldfaceGamma$-\emph{complete}. Notice that, if $B$ is $\boldfaceGamma$-hard, then the complement of $B$ is $\check{\boldfaceGamma}$-hard. Furthermore, if $B$ is $\boldfaceGamma$-hard and $B \wadgereducible A$, then $A$ is $\boldfaceGamma$-hard as well. All these considerations still hold if we replace hardness with completeness. This gives us a useful technique to show that a set is $\boldfaceGamma$-hard (respectively, $\boldfaceGamma$-complete): \lq\lq take an already known $\boldfaceGamma$-hard ($\boldfaceGamma$-complete) set $A$ and show that $A\wadgereducible B$\rq\rq. 
The effective counterpart of Wadge reducibility (which we call \emph{effective Wadge reducibility}) is defined between subsets of \emph{effective second-countable spaces}, but for the aim of this paper, it suffices to define it on subsets of $\Baire$. So, given $A,B \subseteq \Baire$ we say that $A$ is effectively Wadge reducible to $B$ (in symbols, $A \effectivewadge B$) if there is a computable $\function{f}{\Baire}{\Baire}$ such that $x \in A \iff f(x) \in B$. For a lightface class $\Gamma$, we say that $B\subseteq \Baire$ is $\Gamma$-hard if, for every $A \in \Gamma(\Baire)$, $A \effectivewadge B$. In case $B$ is $\Gamma$-hard and $B \in \Gamma$, then we say that $B$ is $\Gamma$-complete.

We now give some examples of complete sets that are useful in the rest of the paper. All the results in the remaining part of this subsection are stated for lightface classes and with respect to effective Wadge reducibility: on the other hand, all the results still hold replacing \quot{lightface}\ with \quot{boldface}\  and \quot{effective Wadge}\ with \quot{Wadge}. These results can be found, sometimes with different terminology, for example in \cite{moschovakis,kechris2012classical}.

We start defining complete sets of (products of) $\Cantor$ for all levels of the Kleene arithmetical hierarchy. The proof of the next theorem is omitted and exploits ideas by Solecki (see \cite[\S Notes and Hints 23.5(i)]{kechris2012classical}).
\begin{theorem}
	\thlabel{firstcompletesets}
	Let $k>0$. Then,
	\[P_{2k+1} \defas \{ p \in 2^{\nats^{k+1}} :(\exists n_0)(\exists^\infty n_1)\dots(\exists^\infty n_k)(p(n_0,\dots,n_k)=1)\} \text{ is } \Sigma_{2k+1}^0\text{-complete} \]
	\[P_{2k+2} \defas \{ p \in 2^{\nats^{k+1}} : (\exists^\infty n_0)(\exists^\infty n_1)\dots(\exists^\infty n_k)(p(n_0,\dots,n_k) =1)\} \text{ is } \Pi_{2k+2}^0\text{-complete}. \]
\end{theorem}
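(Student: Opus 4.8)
The plan is to separate the easy upper bounds from the hardness, and to prove hardness by induction on $k$ using two ``slice'' operations. For $p\in 2^{\nats^{k+1}}$ and $n\in\nats$ write $p_n\in 2^{\nats^k}$ for the slice $p_n(\vec m)=p(n,\vec m)$, and for $B\subseteq 2^{\nats^k}$ set $B^\sharp:=\{p:\exists n\,(p_n\in B)\}$ and $B^\flat:=\{p:\exists^\infty n\,(p_n\in B)\}$. Then $P_{2k+1}=(P_{2k})^\sharp$ and $P_{2k+2}=(P_{2k})^\flat$, with base cases $P_1=\{p\in 2^\nats:\exists n\,p(n)=1\}$ and $P_2=\{p\in 2^\nats:\exists^\infty n\,p(n)=1\}$. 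The membership claims ($P_{2k+1}\in\Sigma^0_{2k+1}$, $P_{2k+2}\in\Pi^0_{2k+2}$) are a routine count of quantifiers: each $\exists^\infty$ is $\forall\exists$, and one checks that $B\in\Pi^0_{2k}$ gives $B^\sharp\in\Sigma^0_{2k+1}$ and $B^\flat\in\Pi^0_{2k+2}$. The base cases $P_1$ (trivial) and $P_2$ (the classical ``infinitely many $1$'s'' set) are $\Sigma^0_1$- resp. $\Pi^0_2$-complete.

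The odd case is clean. I would prove the $\sharp$-lemma: if $B$ is $\Pi^0_{2k}$-complete then $B^\sharp$ is $\Sigma^0_{2k+1}$-complete. Indeed, given $A\in\Sigma^0_{2k+1}$ write $A=\bigcup_m A_m$ with $A_m\in\Pi^0_{2k}$ uniformly, use $\Pi^0_{2k}$-completeness to get computable $f_m$ with $x\in A_m\iff f_m(x)\in B$, and define $g(x)$ by $g(x)_m:=f_m(x)$; then $g(x)\in B^\sharp\iff\exists m\,(x\in A_m)\iff x\in A$, so $A\effectivewadge B^\sharp$. Applying this with $B=P_{2k}$ (complete by induction) gives $P_{2k+1}=(P_{2k})^\sharp$ complete. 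Note that the \emph{analogous} clean argument for $\flat$ works only with a $\Sigma$-complete input: if $B$ is $\Sigma^0_n$-complete then writing $C\in\Pi^0_{n+1}$ as a \emph{decreasing} intersection $C=\bigcap_m C_m$ with $C_m\in\Sigma^0_n$ and setting $g(x)_m:=f_m(x)$ (where $x\in C_m\iff f_m(x)\in B$) yields $g(x)\in B^\flat\iff\exists^\infty m\,(x\in C_m)\iff x\in C$, so $B^\flat$ is $\Pi^0_{n+1}$-complete. This already shows that the variant $(P_{2k+1})^\flat$ is $\Pi^0_{2k+2}$-complete.

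The genuine difficulty is the even case as \emph{stated}, namely $P_{2k+2}=(P_{2k})^\flat$, where the slices land in the $\Pi^0_{2k}$-complete set $P_{2k}$ rather than in a $\Sigma^0_{2k+1}$-complete one. Here the decreasing-intersection reduction breaks: writing $x\in C\iff\forall j\,\exists i\,R(x,i,j)$ with $R\in\Pi^0_{2k}$ and $R(x,i,j)\iff F(x,i,j)\in P_{2k}$ for computable $F$, a single satisfied column $j$ may contain infinitely many witnesses $i$, so the naive pairing $g(x)_{\langle j,i\rangle}:=F(x,i,j)$ can produce infinitely many slices in $P_{2k}$ even when $x\notin C$. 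The slice condition ``$\,\cdot\in P_{2k}$'' is only $\Pi^0_{2k}$, whereas ``column $j$ is satisfied'' is $\Sigma^0_{2k+1}$, so this \emph{multiplicity of witnesses} cannot be removed by any blind recoding. I would resolve it with a movable-marker (finite-injury) construction. Put $\Phi_N(x):\equiv\forall j\le N\,\exists i\,R(x,i,j)$; these are decreasing in $N$, and $x\in C\iff\exists^\infty N\,\Phi_N(x)$. Partition the slice indices into infinite pools $S_N$, and arrange that pool $S_N$ contains \emph{exactly one} slice in $P_{2k}$ when $\Phi_N(x)$ holds and \emph{none} otherwise; then $\{n:g(x)_n\in P_{2k}\}$ is infinite iff $x\in C$. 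Within $S_N$ one maintains tentative witnesses $i_0,\dots,i_N$ and a current designated slice, feeds that slice toward a fixed point of $P_{2k}$ while every $i_j$ still ``looks live'', and upon a refutation freezes the designated slice outside $P_{2k}$ and restarts on a fresh slice with an advanced witness.

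The main obstacle is making the ``looks live / refutation'' approximation work for membership in $P_{2k}$. For $k=1$ this is transparent: $P_2$-membership is ``produces infinitely many $1$'s'', a true witness keeps producing $1$'s forever while a false one eventually stalls, so each $i_j$ is revised only finitely often exactly when $\Phi_N$ holds, the last designated slice is the unique good slice of $S_N$, and the whole map is plainly computable. For general $k$, membership in $P_{2k}$ unfolds to an iterated $\exists^\infty$ of a computable predicate, so ``looks live'' must be approximated level by level and the marker movements become a nested injury; the heart of the proof is to verify, by induction on $k$, that a true instance $F(x,i,j)\in P_{2k}$ is never permanently refuted while a false one is, so that the finitely-/infinitely-often revision dichotomy---and hence the exactly-one-good-slice-per-satisfied-pool invariant---survives through all nesting levels. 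Everything outside this approximation bookkeeping is routine.
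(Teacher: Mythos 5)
The paper itself does not prove this theorem: it explicitly omits the proof, deferring to Solecki's ideas in \cite[\S Notes and Hints 23.5(i)]{kechris2012classical}, so your proposal can only be judged on its own merits. Most of your reduction of the theorem to its combinatorial core is correct: the quantifier-counting upper bounds, the base cases, the $\sharp$-lemma for the odd levels, and the diagnosis that the only real content is the hardness of $(P_{2k})^\flat$ when the slices land in a set that is merely $\Pi^0_{2k}$-complete. Your observation that no blind recoding (e.g.\ passing to least witnesses, which is properly $\Delta^0_{2k+1}$) removes the multiplicity-of-witnesses problem is also correct, as is the pool architecture: decreasing $\Sigma^0_{2k+1}$ predicates $\Phi_N$, with pool $S_N$ contributing at least one good slice when $\Phi_N(x)$ holds and only finitely many good slices in any case.

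The gap is in the marker mechanism inside a pool, which is where the entire mathematical content of the theorem lives, and as described it would fail. You advance the tentative witness $i_j$ \emph{upon a refutation} of $F(x,i_j,j)\in P_{2k}$. But non-membership in $P_{2k}$ is a $\Sigma^0_{2k}$ fact --- already for $k=1$ it is ``only finitely many $1$'s ever appear'', which no finite portion of the input can witness --- so ``a refutation'' is not a computably observable event and cannot trigger any action. Concretely, if $\Phi_N(x)$ holds but the current $i_j$ is a non-witness while the true witness for column $j$ is some larger $i$, your procedure waits forever for a confirmation that never comes, never advances the marker, and pool $S_N$ produces no good slice: the forward direction of the reduction breaks. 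The standard repair is quite different from ``wait for refutation, then advance'': one re-evaluates the entire marker tuple at every stage against a growing demand (say, the least $i\le s$ whose confirmation count meets the current deadline), abandons the designated slice at every change of the tuple, and proves a convergence lemma (markers stabilize iff witnesses exist; when they stabilize they point at true witnesses; the post-stabilization slice is fed infinitely often). That lemma is not ``transparent'' even for $k=1$, and for $k\ge 2$ the level-by-level approximation of $P_{2k}$-membership that you defer to ``the heart of the proof'' is essentially the whole theorem. So the proposal correctly isolates the difficulty but does not overcome it.
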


The following lemma, whose proof is omitted, comes in handy in the next sections of the paper.

		\begin{lemma}
		\thlabel{Aomega}
		Let $n>1$ and let $A$ be a $\Pi_{n}^0$-complete set of $\Baire$. Then, 
		\begin{itemize}
			\item $A_\infty^0 \defas \{(x_i)_{i \in \nats} \in {(\Baire)}^\nats : (\exists i)(x_i \in A)\}$ is $\Sigma_{n+1}^0$-complete
			\item $A_\infty^1 \defas \{(x_i)_{i \in \nats} \in {(\Baire)}^\nats : (\exists^\infty i)(x_i \in A)\}$ is $\Pi_{n+2}^0$-complete.
		\end{itemize}
		\end{lemma}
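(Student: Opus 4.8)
The plan is to treat the two upper bounds by a direct quantifier count and the two hardness claims by uniform reductions from complete sets presented in a suitable normal form. Throughout I identify $(\Baire)^\nats$ computably with $\Baire$, and $\Baire\times\nats$ with $\Baire$, so that \thref{firstcompletesets} applies and so that $\effectivewadge$ and lightface classes on these spaces make sense. The hypothesis $n>1$ merely keeps all the classes involved closed under the finite Boolean operations used below and avoids the degenerate behaviour at level $1$.

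\emph{Upper bounds.} Since $A\in\Pi_n^0$, the predicate ``$x_i\in A$'' is $\Pi_n^0$ uniformly in $i$. For $A_\infty^0$, membership is $\exists i\,(x_i\in A)$, a countable union of $\Pi_n^0$ sets, hence $\Sigma_{n+1}^0$. For $A_\infty^1$, I rewrite $\exists^\infty i\,(x_i\in A)$ as $\forall t\,\exists i\geq t\,(x_i\in A)$; the inner part is $\Sigma_{n+1}^0$ and the outer $\forall t$ pushes the whole to $\Pi_{n+2}^0$.

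\emph{Hardness of $A_\infty^0$.} Fix a $\Sigma_{n+1}^0$ set $S\subseteq\Baire$ and write it in normal form $S=\{x:\exists i\,(x\in B_i)\}$ with $(B_i)_i$ a uniformly $\Pi_n^0$ family. Viewing $\{(x,i):x\in B_i\}$ as a $\Pi_n^0$ subset of $\Baire\times\nats\cong\Baire$ and using that $A$ is $\Pi_n^0$-complete, I obtain a computable $g$ with $x\in B_i\iff g(x,i)\in A$, uniformly in $i$. The map $f(x):=(g(x,i))_i$ is computable and satisfies $x\in S\iff\exists i\,(g(x,i)\in A)\iff f(x)\in A_\infty^0$, so $S\effectivewadge A_\infty^0$. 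This argument works for every $n$, and combined with the upper bound gives $\Sigma_{n+1}^0$-completeness.

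\emph{Hardness of $A_\infty^1$.} The componentwise idea works verbatim once a $\Pi_{n+2}^0$-complete set is presented in \emph{infinitely-often form} $T=\{x:\exists^\infty i\,(x\in C_i)\}$ with $(C_i)_i$ uniformly $\Pi_n^0$: reducing each $C_i$ to $A$ by a uniform $g$ and setting $f(x):=(g(x,i))_i$ yields $x\in T\iff\exists^\infty i\,(g(x,i)\in A)\iff f(x)\in A_\infty^1$. For even $n$ such a $T$ is handed to us directly by \thref{firstcompletesets}: writing $n=2k$, the set $P_{n+2}=P_{2k+2}$ has the shape $\exists^\infty n_0\,[\,\cdots\,]$ whose bracketed part is a $\Pi_{2k}^0=\Pi_n^0$ predicate of $(p,n_0)$, i.e.\ exactly $C_{n_0}$, and the reduction is just $p\mapsto(g(p,n_0))_{n_0}$. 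The crux — and the step I expect to be the main obstacle — is to obtain such a \emph{leading}-$\exists^\infty$ presentation at the odd levels, where \thref{firstcompletesets} only supplies complete sets with a leading single $\exists$ (namely $P_{2k+1}$) or, by complementation, with a leading $\forall$. A naive attempt to convert a leading $\forall m\,\exists k$ directly into $\exists^\infty i$ fails because of an accumulation phenomenon: if one lets a bounded witness search range freely, a single already-witnessed requirement alone produces infinitely many ``hits'', so $\exists^\infty$ becomes true even when the $\forall m$ clause fails. The honest fix is to produce the infinitely-often complete set at every level by the same Solecki-style construction that underlies \thref{firstcompletesets} (see \cite[Notes and Hints 23.5(i)]{kechris2012classical}), after which the reduction to $A_\infty^1$ is again purely componentwise. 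I would therefore isolate this normal-form statement as the one genuine lemma to establish, and derive the $\Pi_{n+2}^0$-completeness of $A_\infty^1$ from it together with the membership computation above.
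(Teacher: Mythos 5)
The paper explicitly omits the proof of \thref{Aomega}, so there is no authorial argument to compare yours against; judged on its own terms, your proposal is correct and complete for both upper bounds, for the $\Sigma_{n+1}^0$-hardness of $A_\infty^0$ (the uniformization trick of reducing the single $\Pi_n^0$ set $\{(x,i): x\in B_i\}\subseteq\Baire\times\nats\cong\Baire$ to $A$ in one stroke is exactly the right way to get the required uniformity in $i$), and for the $\Pi_{n+2}^0$-hardness of $A_\infty^1$ when $n$ is even, where $P_{n+2}$ from \thref{firstcompletesets} already has the needed leading-$\exists^\infty$ presentation over a uniformly $\Pi_n^0$ matrix. It is worth noting that the even case is the only one the paper ever invokes (\thref{treesforests} applies the lemma to the $\Pi_{2k+2}^0$-complete sets ${Forests}_{2k+2}$), so your argument does cover every use of the lemma in the paper.

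As a proof of the statement as written, however, there is a genuine gap, and you have located it yourself: for odd $n$ you never actually produce a $\Pi_{n+2}^0$-complete set of the form $\{x:\exists^\infty j\,(x\in C_j)\}$ with $(C_j)_j$ uniformly $\Pi_n^0$, you only assert that the "Solecki-style construction" would yield one. This is not a routine normal-form manipulation that can be waved through. Your diagnosis of why the naive conversion of $\forall t\,\exists k$ into $\exists^\infty$ fails (a level $t<t_0$ with infinitely many witnesses already produces infinitely many hits even when level $t_0$ has none) is accurate, and the obvious repairs do not work: restricting to least witnesses replaces the $\Pi_n^0$ matrix by a difference of $\Sigma_n^0$ sets, which need not reduce to $A$; taking complements in \thref{firstcompletesets} only supplies odd-level $\Pi^0$-complete sets with a leading $\forall$; and the missing normal form at an odd level $n+2$ is essentially an instance of the second bullet itself applied to the $\Pi_n^0$-complete set $\neg P_n$, so deferring to it is circular. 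Until that construction (or an equivalent inductive argument supplying a leading-$\exists^\infty$ complete set at every level $\geq 4$) is carried out, the $\Pi_{n+2}^0$-hardness of $A_\infty^1$ for odd $n$ remains unproved, and that is the hardest part of the lemma.
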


We summarize some results about the complexity of subsets of trees in the next theorem.  The fact that $\illfounded$ is $\Sigma_1^1$-complete follows from \cite[Theorem 27.1]{kechris2012classical}: the theorem is stated for the boldface case, but its proof works also in the lightface one. If $T\subseteq \cantor$, notice that, by K\"{o}nig's lemma, $T \in \illfounded_2$ if and only if $(\forall n)(\exists \tau \in 2^{n})(\tau \in T)$.

\begin{theorem}
	\thlabel{Complexityresults}
	The set $\illfounded\defas \{T \in \tree : T \text{ is ill-founded}\}$ is $\Sigma_1^1$-complete, while $\wellfounded\defas \{T \in \tree : T \text{ is well-founded}\}$ is $\Pi_1^1$-complete. In contrast, $\illfounded_2\defas \illfounded \cap \tree_2$ is $\Pi_1^0$-complete and $\wellfounded_2\defas \wellfounded \cap \tree_2$ is $\Sigma_1^0$-complete.
\end{theorem}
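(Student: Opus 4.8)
The plan is to split the statement into an ill-founded group and a well-founded group, and within each to argue membership and hardness separately; the well-founded claims then fall out of the ill-founded ones by complementation, using the observation recorded in the subsection on Wadge reducibility that $\boldfaceGamma$-hardness of a set yields $\check{\boldfaceGamma}$-hardness of its complement. For membership I would read off the definitions. We have $T \in \illfounded$ iff $\body{T}\neq\emptyset$, i.e.\ $(\exists f\in\Baire)(\forall n)(f[n]\in T)$; the matrix $f[n]\in T$ is decidable uniformly in (names of) $T$ and $f$, so this is a $\Sigma_1^1$ formula and $\illfounded\in\Sigma_1^1$, whence its relative complement $\wellfounded$ lies in $\Pi_1^1$. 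For the binary case I would invoke König's lemma exactly as flagged after the statement: a binary tree has a path iff it is infinite, so $T\in\illfounded_2 \iff (\forall n)(\exists\tau\in 2^n)(\tau\in T)$, whose inner existential is bounded and hence decidable from $T$; this is $\Pi_1^0$, and dually $T\in\wellfounded_2 \iff (\exists n)(\forall\tau\in 2^n)(\tau\notin T)$ is $\Sigma_1^0$.

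\textbf{Hardness for full trees.} Here I would use the lightface $\Sigma^1_1$ normal form: for every $A\in\Sigma_1^1(\Baire)$ there is a computable map $x\mapsto T_x$, with each $T_x$ a tree on $\nats\times\nats$, such that $x\in A$ iff $T_x$ has an infinite branch. Pairing the two coordinates by a fixed computable bijection converts each $T_x$ into a genuine tree on $\nats$, computably and preserving the presence of an infinite branch, so $x\in A \iff T_x\in\illfounded$. This reduction witnesses $A\effectivewadge\illfounded$, so $\illfounded$ is $\Sigma_1^1$-hard and, with membership, $\Sigma_1^1$-complete; complementing gives that $\wellfounded$ is $\Pi_1^1$-complete. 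This is the step I expect to carry the real weight: one must cite or reprove the normal form in its lightface form (as in \cite[Thm.~27.1]{kechris2012classical}) and check that the coding of $(\nats\times\nats)$-trees as $\nats$-trees is both computable and branch-preserving.

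\textbf{Hardness for binary trees.} For $\Pi_1^0$-hardness of $\illfounded_2$ I would reduce a generic $A\in\Pi_1^0(\Baire)$, writing $A=\{x:(\forall n)\,R(x[n])\}$ with $R$ decidable, via the map $x\mapsto T_x\defas\{0^n:(\forall m\leq n)\,R(x[m])\}$. Each $T_x$ is contained in the single branch $0^\nats$, hence is automatically a binary tree closed under initial segments; the map is total and computable (deciding $0^n\in T_x$ reads only $x[0],\dots,x[n]$); and $T_x$ is infinite, equivalently ill-founded, iff $R(x[m])$ holds for all $m$, i.e.\ iff $x\in A$. Thus $A\effectivewadge\illfounded_2$ and $\illfounded_2$ is $\Pi_1^0$-complete. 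Complementing the reduction (or reducing a $\Sigma_1^0$-complete set in the same way) yields that $\wellfounded_2$ is $\Sigma_1^0$-complete. The only routine checks are that these simple trees are genuinely prefix-closed and that the reductions are total computable functions on all of $\Baire$; the single nontrivial ingredient across the whole theorem remains the lightface $\Sigma^1_1$ normal form used above.
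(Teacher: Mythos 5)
Your proposal is correct and follows essentially the route the paper itself takes: the paper states this as a background fact, citing \cite[Theorem 27.1]{kechris2012classical} (noting its proof is effective) for the $\Sigma^1_1$-completeness of $\illfounded$ and the K\"onig's lemma characterization $T\in\illfounded_2\iff(\forall n)(\exists\tau\in 2^n)(\tau\in T)$ for the binary case, which is precisely the normal-form argument and the bounded-quantifier observation you spell out. Your explicit reductions (the lightface $\Sigma^1_1$ tree normal form, and the single-branch tree $\{0^n:(\forall m\leq n)\,R(x[m])\}$ for the $\Pi^0_1$ case) are the standard details behind that citation and are all sound.
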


\subsection{Weihrauch reducibility}
\label{subsec:Weihrauch}
 This subsection introduces some concepts from computable analysis and the theory of represented spaces. The reader is referred to \cite{Weihrauch,brattka2021weihrauch} for more on these topics. Here computability of functions from $\Baire$ to $\Baire$ is intended as in TTE. A \emph{represented space} $\repspacex$ is a pair $(X,\repmap{X})$ where $X$ is a set and $\partialfunction{\repmap{X}}{\Baire}{X}$ is a (possibly partial) surjection. We say that $p$ is a $\repmap{X}$-\emph{name} for $x$ if $\repmap{X}(p)=x$. A computational problem $f$ between represented spaces $\textbf{X}$ and $\textbf{Y}$ is formalized as a \emph{partial multi-valued function} $\partialmultifunction{f}{\textbf{X}}{\textbf{Y}}$. A  \emph{realizer} for $\partialmultifunction{f}{\textbf{X}}{\textbf{Y}}$ is a (possibly partial) function $\partialfunction{F}{\Baire}{\Baire}$ such that for every $p \in \dom(f\circ \delta_X)$, $\delta_Y(F(p)) \in f(\delta_X(p))$. Realizers allow us to transfer properties of functions on the Baire space (such as computability or continuity) to multi-valued functions  on represented spaces in general. Hence, from now on, whenever we say that a multi-valued function between represented spaces is computable we mean that it has a computable realizer.

Recall that a  \emph{computable metric space} $\X=(X,d,\alpha)$, is a separable metric space $(X,d)$ and a dense sequence $\function{\alpha}{\nats}{X}$ such that $\function{d\circ (\alpha \times \alpha)}{\nats^2}{\mathbb{R}}$ is a computable double sequence of real numbers. For such spaces and $k>0$ we can use the inductive definition of Borel sets, and define the represented spaces $\boldfaceSigma_k^0(\X)$, $\boldfacePi_k^0(\X)$ and $\boldfaceDelta_k^0(\X)$ and this allows us to consider Borel classes as represented spaces (see \cite{effectiveborelmeasurability}).

We denote by $\tree$ and $\tree_2$ the represented spaces of trees on $\nats$ numbers and binary trees respectively: for both the representation map is the characteristic function and hence we identify $\tree$ and $\tree_2$ with closed subsets of $\Cantor$. That is, the negative representation of a closed subset $C$ of $\Baire$ (resp.\ $\Cantor$) is equivalent (in the sense of \cite[Definition 2.3.2]{Weihrauch}) to the one given by the characteristic function of a (resp.\ binary) tree $T$ such that $\body{T}=C$.  We refer to the latter representation as the \emph{tree representation}.

To compare the uniform computational content of different problems, we use the framework of \emph{Weihrauch reducibility}.  We say that $f$ is \emph{Weihrauch reducible} (respectively, \emph{strongly Weihrauch reducible}) to  $g$, and we write $f\weireducible g$ ($f\strongweireducible g$) if there are computable maps $\partialfunction{\Phi,\Psi}{\Baire}{\Baire}$ such that
    \begin{itemize}
        \item for every name $p_x$ for some $x \in \dom(f)$, $\Phi(p_x)=p_z$, where $p_z$ is a name for some $z \in \dom(g)$  and,
        \item for every name $p_w$ for some $w \in g(z)$, $\Psi(p_x\join p_w)=p_y$ (or just $\Psi(p_w)=p_y$ in case it is a strong Weihrauch reduction) where $p_y$ is a name for $y \in f(x)$.
    \end{itemize}
Informally, if $f\weireducible g$ we are claiming the existence of a procedure for solving $f$ which is computable modulo a single invocation of $g$ as an oracle (in other words, this procedure transforms realizers for $g$ into realizers for $f$). In case $\Phi$ is as above and $\Psi$ is not allowed to use $p$ in its computation, we say that $f$ is \emph{strongly Weihrauch reducible} to a problem $g$, written $f \sweireducible g$.

Weihrauch reducibility and strong Weihrauch reducibility are reflexive and transitive hence they induce the equivalence relations
$\weiequiv$ and $\sweiequiv$: that is $f\weiequiv g$ iff $f \weireducible g$ and $g\weireducible f$ (similarly for $\sweireducible$).
The $\weiequiv$-equivalence classes are called \emph{Weihrauch degrees} (similarly the $\sweiequiv$-equivalence classes are called  \emph{strong Weihrauch degrees}). Both the Weihrauch degrees and the strong Weihrauch degrees form lattices (see \cite[Theorem\ 3.9\ and\ Theorem\ 3.10]{brattka2021weihrauch}).

There are several natural operations on problems which also lift to the $\weiequiv$-degrees and the $\sweiequiv$-degrees: we mention below the ones we need.
 
\begin{itemize}
    \item The \emph{parallel product} $f \times g$ is defined by $(f \times g)(x,y) \defas f(x) \times g(y)$. 
    \item The \emph{finite parallelization} is defined as $f^*((x_i)_{i<n})\defas\{(y_i)_{i<n}:(\forall i<n)(y_i \in f(x_i))\}$.
    \item The \emph{infinite parallelization} is defined as $\parallelization{f}((x_i)_{i \in \nats})\defas\{(y_i)_{i \in \nats}:(\forall i)(y_i \in f(x_i))\}$.
    
	\item   Given a family of problems $\{f_i:i \in \nats\}$ with $\partialmultifunction{f_i}{\bf{X}_i}{\bf{Y}_i}$, the \emph{countable co-product} $\partialmultifunction{\bigsqcup_{i \in \nats }f_i}{\bigcup_{i \in \nats} X_i}{\bigcup_{i \in \nats} Y_i}$ where 
	\[ \dom(\bigsqcup_{i \in \nats} f_i) \defas \bigcup_{i \in \nats} \{i\} \times X_i \text{ and } \big(\bigsqcup_{i \in \nats} f_i\big)(i,x)\defas \{i\} \times f_i(x).\] 
    \end{itemize}
    Informally, the first three operators defined above, capture respectively the idea of using $f$ and $g$ in parallel, using $f$ a finite (but given in the input) number of times in parallel, and using $f$ countably many times in parallel. The last operator captures the idea of computing exactly one $f_i$.

	The following definition, with a slightly different notation, was recently given by Sold\`a and Valenti.
    \begin{definition}[\cite{valentisolda}]
    \thlabel{ustar}
    For every $\partialmultifunction{f}{\mathbf{X}}{\mathbf{Y}}$, define the finite unbounded parallelization $\partialmultifunction{\ustar{f}}{\nats \times \Baire \times \mathbf{X}}{(\Baire)^{<\nats}}$ as follows:
    \begin{itemize}
        \item instances are triples $(e,w,(x_n)_{n \in \nats})$ such that $(x_n)_{n \in \nats} \in \dom(\parallelization{f})$ and for each sequence $(q_n)_{n \in \nats}$ with $\repmap{Y}(q_n) \in f(x_n)$, there is a $k\in \nats$ such that $\Phi_e(w,q_0*\dots * q_{k-1})(0)\downarrow$ in $k$ steps;
        \item a solution for $(e,w,(x_n)_{n \in \nats})$ is a finite sequence $(q_n)_{n<k}$ such that for every $n <k$, $\repmap{Y}(q_n)\in f(x_n)$ and  $\Phi_e(w,q_0*\dots * q_{k-1})(0)\downarrow$ in $k$ steps.
    \end{itemize}
    \end{definition}
    Informally, $\ustar{f}$ takes an input a Turing functional with a parameter and an input for $\parallelization{f}$ and outputs \lq\lq sufficiently many\rq\rq\ names for solutions where \lq\lq sufficiently many\rq\rq\ is determined by the convergence of $\Phi_e$.
    
    We call $f$ a \emph{cylinder} if $f \sweiequiv f \times \id$. If $f$ is a cylinder, then $g \weireducible f$ iff $g \sweireducible f$ (\cite[Cor.\ 3.6]{BG09}). This is useful for establishing nonreductions because, if $f$ is a cylinder, then it suffices to diagonalize against all strong Weihrauch reductions from $g$ to $f$ in order to show that $g \not\weireducible f$. Cylinders are also useful when working with compositional products (discussed below). Observe that for every problem $f$, $f \times \id$ is a cylinder which is Weihrauch equivalent to $f$.

The \emph{compositional product} $f * g$ captures the idea of what can be achieved by first applying $g$, possibly followed by some computation, and then applying $f$. Formally, $f*g$ is any function satisfying
\[ f * g \weiequiv \max_{\weireducible} \{f_1 \circ g_1 \st f_1 \weireducible f \land g_1 \weireducible g\}. \]
This operator was first introduced in \cite{BolWei11}, and proven to be well-defined in \cite{BP16}. For each problem $f$, we denote by $f^{[n]}$ the $n$-fold iteration of the compositional product of $f$ with itself, i.e., $f^{[1]} = f$, $f^{[2]} = f * f$, and so on.

We say that a computational problem $f$ is \emph{first-order} if its codomain is $\nats$. As we need only the following characterization of $\firstOrderPart{f}$ we omit the technical definition (see e.g.\ \cite[Definition 2.2]{pauly-valenti}). 

\begin{theorem}[\cite{dzafarovsolomonyokoyama}]
	\thlabel{firstorderpartcharacterization}
	For every problem $f$, $\firstOrderPart{f} \weiequiv \max_{\weireducible}\{ g\st g \text{ is first-order and } g \weireducible f\}$.
\end{theorem}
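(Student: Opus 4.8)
The statement asserts that $\firstOrderPart{f}$ is the $\weireducible$-greatest element of the collection of first-order problems reducible to $f$, so the plan is to verify the two defining properties of a maximum: that $\firstOrderPart{f}$ itself belongs to this collection, and that it bounds every member of it. I will work from the standard definition under which an instance of $\firstOrderPart{f}$ is a triple $(e,w,x)$ consisting of an $f$-instance $x$, a parameter $w \in \Baire$, and an index $e$ such that $\Phi_e(w,q)(0)\downarrow$ for every name $q$ of every solution of $f(x)$, while a solution of $(e,w,x)$ is any $n \in \nats$ of the form $\Phi_e(w,q)(0)$ for such a $q$. (This mirrors the shape of the instances appearing in \thref{ustar}.)

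First I would check membership. That $\firstOrderPart{f}$ is first-order is immediate, since its codomain is $\nats$ by definition. For $\firstOrderPart{f} \weireducible f$, the forward functional maps a name of $(e,w,x)$ to a name of the $f$-instance $x$; given a name $q$ of a solution of $f(x)$, the backward functional computes $\Phi_e(w,q)(0)$, which converges by the promise encoded in $\dom(\firstOrderPart{f})$ and is by definition a valid solution of $\firstOrderPart{f}(e,w,x)$. Here the backward functional legitimately reads $w$ from the original input, so this is a genuine $\weireducible$-reduction.

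The substance of the argument is the upper-bound property. Let $g$ be first-order with $g \weireducible f$, witnessed by computable $\Phi,\Psi$: for every name $p$ of $x \in \dom(g)$, $\Phi(p)$ names an $f$-instance, and for every name $q$ of a solution of $f$ on the instance named by $\Phi(p)$, $\Psi(p\join q)$ names a solution $n \in g(x) \subseteq \nats$. Since $g$ is first-order, under the standard representation of $\nats$ the value $n$ is recovered as $\Psi(p\join q)(0)$, read from a finite prefix of $p \join q$. Fixing an index $e$ with $\Phi_e(p,q)(0) = \Psi(p \join q)(0)$ and taking the parameter $w := p$, I would send $p$ to a name of the $\firstOrderPart{f}$-instance $(e, p, \Phi(p))$; the identity then serves as the backward functional, since every solution of $\firstOrderPart{f}(e,p,\Phi(p))$ is a value $\Psi(p \join q)(0) \in g(x)$. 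This yields $g \weireducible \firstOrderPart{f}$.

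The one point requiring care --- and the main obstacle --- is verifying that the constructed triple $(e, p, \Phi(p))$ actually lies in $\dom(\firstOrderPart{f})$, i.e.\ that $\Phi_e(p,q)(0)$ converges for every solution-name $q$ of every solution of the $f$-instance named by $\Phi(p)$. This is exactly the totality guaranteed by $\Psi$ being a valid backward functional for the reduction $g \weireducible f$: because $\Psi(p \join q)$ must be a total name of a natural number whenever $q$ names a solution, it in particular halts at position $0$ after finitely many steps. Combined with the observation that any value so produced is a legitimate $g$-solution, this closes the argument and, together with membership, establishes the asserted $\weiequiv$-equivalence with the maximum.
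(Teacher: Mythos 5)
Your proposal is correct and follows the standard argument: the paper itself omits the proof, citing \cite{dzafarovsolomonyokoyama} (and the analogous \cite[Theorem 3.2]{pauly-valenti}), and your reconstruction --- showing $\firstOrderPart{f}$ is first-order and reducible to $f$ via the triple $(e,w,x)$ definition, then absorbing the backward functional $\Psi$ of any reduction $g \weireducible f$ into the index $e$ with parameter $w := p$ --- is exactly how those references establish the maximality. The one point you flag as delicate, that $(e,p,\Phi(p))$ lies in the domain because $\Psi(p \join q)(0)$ must converge for every solution name $q$, is handled correctly.
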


We continue defining the \emph{jump} of a problem: notice that such an operator does not lift to Weihrauch degrees: that is, the jump applied to two Weihrauch equivalent problems may produce problems of different strength.

First, we define the jump of a represented space $\mathbf{X}=(X,\repmap{X})$. This is $\mathbf{X}'=(X,\repmap{X}')$ where $\repmap{X}'$ takes in input a sequence of elements of $\Baire$ converging to some $p \in \dom(\delta_X)$ and returns $\repmap{X}(p)$. Then for a problem $\partialmultifunction{f}{\textbf{X}}{\textbf{Y}}$ its jump $ \partialmultifunction{f'}{\textbf{X}'}{\textbf{Y}}$ is defined as $f'(x) \defas f(x)$. In other words, $f'$ is the following task: given a sequence that converges to a name for an instance of $f$, produces a solution for that instance. The jump lifts to  strong Weihrauch degrees but not to Weihrauch degrees (see \cite[\S 5]{BolWei11}). We use $f^{(n)}$ to denote the $n$-th iterate of the jump applied to $f$.

We proceed defining some well-known problems in the Weihrauch lattice.
\begin{itemize}
\item Let $ \partialfunction{\mflim}{(\Baire)^{\mathbb{N}}}{\Baire}, \ (p_n)_{n \in \mathbb{N}}\mapsto \lim p_n$ be the single-valued function  whose domain consists of all converging sequences in $\Baire$.
\end{itemize}
Notice that have $f^{(n)}\weireducible f* \mflim^{[n]}$, but the converse reduction does not hold in general. 
\begin{itemize}
    \item the function $\function{\lpo}{\Cantor}{\{0,1\}}$ is defined as $\lpo(p)=1$ iff $(\forall n)(p(n)=0)$. It is convenient to think of $\lpo$ as the function answering yes or no to questions which are $\Pi_1^{0}$ or $\Sigma_1^{0}$ in the input.
    \item  Similarly, $\lpo^{(n)}$ can be seen as the function answering yes or no to questions which are $\Pi_{n+1}^0$ or $\Sigma_{n+1}^0$ in the input.
\end{itemize}
It is well-known that $\mflim \sweiequiv \parallelization{\lpo}$. Moreover, using \cite[\S 5]{BolWei11}, we obtain that for every $n$, $\mflim^{(n)} \sweiequiv \parallelization{\lpo^{(n)}}$.

\begin{itemize}
    \item the function $\function{\wf}{\tree}{\{0,1\}}$ is defined as $\wf(T)=1$ iff $T \in \wellfounded$. Analogously to $\lpo$, we can think of $\wf$ as the problem answering yes or not to questions which are $\Pi_1^{1}$ or $\Sigma_1^{1}$ in the input. 
\end{itemize}

We now move our attention to \emph{choice problems}. For a computable metric space $\X$ and a boldface class $\boldfaceGamma$, we define:
\begin{itemize}
     \item $\partialmultifunction{\choice{\boldfaceGamma}{\X}}{\boldfaceGamma(\X)}{\X}$  as the problem that given in input a nonempty set $A \in \boldfaceGamma(\X)$ outputs a member of $A$ and 
     \item $\uchoice{\boldfaceGamma}{\X}$ as above but with domain restricted to singletons.
\end{itemize}
When $\boldfaceGamma=\boldfacePi_1^0$ we just write $\mathsf{C}_{\X}$ and  $\mathsf{UC}_{\X}$.
  It is well-known that for every $n>0$, $(\choice{\boldfacePi_n^0}{\nats})'\weiequiv \choice{\boldfacePi_{n+1}^0}{\nats}$.
 Using the tree representation of closed sets, $\CBaire$ can be formulated as the problem of computing a path through some $T \in \illfounded$: notice that $\CBaire$ closed under compositional product by \cite[Theorem 7.3]{paulybrattka}. Notice that $\CBaire \weiequiv \choice{\boldfaceSigma_1^1}{\Baire}$.

\subsubsection*{The ($\mathbf{k}$-)finitary part of a problem}
\label{finitarypart}
For $k>0$, we denote with $\mathbf{k}$ the space consisting of $\{0,\dots,k-1\}$ endowed with the discrete topology: the $\mathbf{k}$-\emph{finitary part} and the \emph{finitary part} of a problem $f$ captures respectively the most complex problem  with codomain  $\mathbf{k}$  and the most complex problem with finite codomain. We start from the $\mathbf{k}$-{finitary part} of a problem, whose definition follows the same pattern of \cite[Definition 2.2]{valentisolda}.

\begin{definition}
\thlabel{finitaryk}
For every problem $\partialmultifunction{f}{\mathbf{X}}{\mathbf{Y}}$, the $\mathbf{k}$-finitary part of $f$ is the multi-valued function $\partialmultifunction{\mathrm{Fin}_{\mathbf{k}}}{\nats\times\Baire\times\bf{X}}{\mathbf{k}}$ defined as follows:
\begin{itemize}
\item instances are triples $(e,w,x)$ such that $x \in \dom(f)$ and for every $y \in f(x)$ and every name $p_y$ for $y$,
$\Phi_e(w \oplus p_y)(0)\downarrow < k$;
\item a solution for $(e,w,x)$ is any $n<k$ such that there is a name $p_y$ for a solution $y \in f(x)$ with $\Phi_e(w \oplus p_y)(0)\downarrow=n$.
\end{itemize}
\end{definition}

\begin{proposition}
\thlabel{kfintiarypartcharacterization}
For every problem $f$, $\mathrm{Fin}_{\mathbf{k}}\weiequiv \max_{\weireducible} \{g : \subseteq \Baire \rightrightarrows \mathbf{k} \mid g \weireducible f\}$.
\end{proposition}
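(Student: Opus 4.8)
The plan is to prove that $\mathrm{Fin}_{\mathbf{k}}$ is the maximum (with respect to $\weireducible$) among all problems with codomain $\mathbf{k}$ that are Weihrauch reducible to $f$. This requires two things: first, that $\mathrm{Fin}_{\mathbf{k}} \weireducible f$, and second, that every problem $\partialmultifunction{g}{\Baire}{\mathbf{k}}$ with $g \weireducible f$ also satisfies $g \weireducible \mathrm{Fin}_{\mathbf{k}}$. Since $\mathrm{Fin}_{\mathbf{k}}$ itself has codomain $\mathbf{k}$, establishing these two facts exhibits it as the required maximum. I would mirror the structure of the analogous characterization of $\firstOrderPart{f}$ in \thref{firstorderpartcharacterization} and of the Sold\`a--Valenti definition this one is modeled on.

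For the first direction, $\mathrm{Fin}_{\mathbf{k}} \weireducible f$, I would describe the forward and backward maps $\Phi, \Psi$ of the reduction. Given an instance $(e,w,x)$ of $\mathrm{Fin}_{\mathbf{k}}$, the forward map $\Phi$ simply extracts $x$ (more precisely, forwards a name $p_x$), which by definition is an instance of $f$. When $f$ returns a name $p_y$ for some $y \in f(x)$, the backward map $\Psi$ has access to $w$ (recovered from the original input via the full Weihrauch reduction) and computes $\Phi_e(w \oplus p_y)(0)$. By the hypothesis encoded in the instance condition, this halts with some value $n < k$, and by definition of a solution this $n$ is a valid answer. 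This direction is essentially unwinding the definition.

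For the second direction, suppose $\partialmultifunction{g}{\Baire}{\mathbf{k}}$ satisfies $g \weireducible f$ via computable maps $\Phi_0, \Psi_0$; I would show $g \weireducible \mathrm{Fin}_{\mathbf{k}}$. The idea is that the composition $\Psi_0$, which turns a name $p_x$ together with a solution name $p_y$ of $f$ into an answer in $\mathbf{k}$, is precisely the kind of computation that the index $e$ in $\mathrm{Fin}_{\mathbf{k}}$ is meant to witness. Given an instance $z \in \dom(g)$, the forward reduction applies $\Phi_0$ to produce an $f$-instance $x$, and I would package $x$ together with an index $e$ (obtained by the s-m-n theorem from $\Psi_0$) and an appropriate parameter $w$ encoding $p_z$ into the triple $(e,w,x)$. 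The key point to verify is that this triple is a legitimate instance of $\mathrm{Fin}_{\mathbf{k}}$, i.e.\ that $\Phi_e(w \oplus p_y)(0)\downarrow < k$ for every solution name $p_y$: this holds because $\Psi_0$ always produces a valid answer in $\mathbf{k}$ for $g$, so the convergence-below-$k$ condition is automatic. A solution $n < k$ of the $\mathrm{Fin}_{\mathbf{k}}$-instance is then directly a solution of $g(z)$.

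The main obstacle I anticipate is purely bookkeeping rather than conceptual: carefully tracking how the parameter $w$ and the index $e$ are synthesized from the given reduction $(\Phi_0,\Psi_0)$ via the s-m-n theorem, and ensuring the backward map in a \emph{Weihrauch} (not strong Weihrauch) reduction is permitted to use the original input $p_z$ — which it is, allowing $w$ to carry $p_z$. One must also confirm that the maximum is genuinely attained and not merely a supremum, but this follows because $\mathrm{Fin}_{\mathbf{k}}$ is itself a member of the collection on the right-hand side, being a problem with codomain $\mathbf{k}$ that reduces to $f$ by the first direction. No essentially new idea beyond unwinding the definitions and applying s-m-n is needed.
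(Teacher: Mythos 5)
Your proposal is correct and is essentially the same argument the paper intends: the paper's proof simply defers to the analogous characterization of the deterministic part in the cited reference, and the two-step argument you give (unwinding the definition for $\mathrm{Fin}_{\mathbf{k}}(f)\weireducible f$, then packaging the backward functional $\Psi_0$ of a reduction $g\weireducible f$ as the index $e$ with parameter $w=p_z$ via s-m-n) is exactly that standard proof. The key verification you identify — that the instance condition $\Phi_e(w\oplus p_y)(0)\downarrow<k$ holds for \emph{every} solution name — is indeed guaranteed by the definition of Weihrauch reduction, so no gap remains.
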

\begin{proof}
The proof is analogous to \cite[Theorem 3.2]{pauly-valenti} (see also the comment in \cite{pauly-valenti} after Proposition 3.2).
\end{proof}
We now define the {finitary part} of a problem $f$.
\begin{definition}
\thlabel{def:finitepart}
For every problem $f$, we define the \emph{finitary part} of $f$ as  \[\mathrm{Fin}(f) \defas \bigsqcup_{k \geq 1} \mathrm{Fin}_{\mathbf{k}}(f).\]
\end{definition}
We highlight an important fact. Despite its name and its intuitive meaning being reminiscent of the first-order/deterministic/$\mathbf{k}$-finitary part of a problem, the definition of the finitary part of a problem is very different. Indeed, we do not have a similar characterization to the ones given in \thref{firstorderpartcharacterization,kfintiarypartcharacterization}: in particular, we do not even have the codomain of the finitary part is finite.

The following proposition is immediate.
\begin{proposition}
\thlabel{finpartbelowfirstorder}
For every problem $f$, $\mathrm{Fin}(f) \weireducible \firstOrderPart{f}$.
\end{proposition}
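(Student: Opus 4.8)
The plan is to exhibit $\mathrm{Fin}(f)$ as a first-order problem that is Weihrauch reducible to $f$, and then invoke the characterization of $\firstOrderPart{f}$ from \thref{firstorderpartcharacterization} as the maximum first-order problem below $f$.

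First I would check that $\mathrm{Fin}(f)$ is, up to an obvious computable renaming, first-order. By \thref{def:finitepart} its codomain is the countable coproduct $\bigsqcup_{k \geq 1}\mathbf{k}$, whose elements are tagged pairs $(k,n)$ with $n < k$. This set is computably identified with a subset of $\nats$, so $\mathrm{Fin}(f)$ counts as a problem with codomain $\nats$ in the sense relevant to \thref{firstorderpartcharacterization}.

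Second, I would give a single Weihrauch reduction $\mathrm{Fin}(f) \weireducible f$ working uniformly for all $k$. An instance of $\mathrm{Fin}(f)$ is a tagged triple $(k,(e,w,x))$ in which $(e,w,x)$ is a valid instance of $\mathrm{Fin}_{\mathbf{k}}(f)$; in particular $x \in \dom(f)$. The forward map $\Phi$ reads a name for $(k,(e,w,x))$, extracts a name for $x$, and outputs it as an instance of $f$. Given a name $p_y$ for a solution $y \in f(x)$, the backward map $\Psi$ recovers $e$, $w$ and $k$ from the original input, computes $m \defas \Phi_e(w \oplus p_y)(0)$ — which converges with $m < k$ precisely because $(e,w,x)$ is a valid instance of $\mathrm{Fin}_{\mathbf{k}}(f)$ by \thref{finitaryk} — and returns the tagged value $(k,m)$. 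Again by \thref{finitaryk} this $(k,m)$ is a legitimate solution of $\mathrm{Fin}(f)$ on the given instance, so the pair $(\Phi,\Psi)$ witnesses $\mathrm{Fin}(f) \weireducible f$.

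Finally, since $\mathrm{Fin}(f)$ is first-order and reduces to $f$, the maximality in \thref{firstorderpartcharacterization} yields $\mathrm{Fin}(f) \weireducible \firstOrderPart{f}$, as desired. The only point requiring care is uniformity across the coproduct: the reduction must handle all $k$ at once. But the forward map ignores $k$ entirely, and the backward map merely evaluates $\Phi_e$ and re-attaches the tag $k$, so one fixed pair $(\Phi,\Psi)$ suffices; the convergence of $\Phi_e$ and the bound $m<k$ are exactly what the definition of $\mathrm{Fin}_{\mathbf{k}}(f)$ guarantees, so no extra work is needed there.
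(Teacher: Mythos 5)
Your proposal is correct and is exactly the argument the paper has in mind when it calls the proposition ``immediate'': unfold the coproduct to see that $\mathrm{Fin}(f)$ has codomain computably identified with a subset of $\nats$, observe that the defining clauses of $\mathrm{Fin}_{\mathbf{k}}(f)$ hand you a uniform reduction to $f$ (forward map forgets $(k,e,w)$, backward map evaluates $\Phi_e(w\oplus p_y)(0)$ and re-attaches the tag), and conclude by the maximality characterization of $\firstOrderPart{f}$ in \thref{firstorderpartcharacterization}. No gaps; the uniformity point you flag is handled correctly.
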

We want to show that, for some $f$'s, the reduction in \thref{finpartbelowfirstorder} can be strict (\thref{finitepartcnats}). Before doing so we define when a problem is \emph{join-irreducible} and the \emph{cardinality of a problem}.
\begin{definition}[{\cite[Definition 5.4]{paulybrattka}}]
A problem $f$  is called join-irreducible, if 
\[f\weiequiv \bigsqcup_{i \in \nats} f_i \implies(\exists n_0)(f\weiequiv f_{n_0}).\]
\end{definition}

\begin{definition}[{\cite[Definition 3.5]{brattka2015probabilistic}}]
For every problem $\partialmultifunction{f}{\bf{X}}{\bf{Y}}$ we denote by $\#f$ the maximal cardinality (if it exists) of a set $M \subseteq \dom(f)$ such that $\{f(x) : x \in M\}$ contains pairwise disjoint sets.
\end{definition}

It is easy to see that for every problem $f$ and $g$, $f \strongweireducible g \implies \#f \leq \#g$ \cite[Proposition 3.6]{brattka2015probabilistic}.
\begin{proposition}
	\thlabel{finitepartcnats}
	$\mathrm{Fin}(\cnats) \strictlyweireducible \cnats \weiequiv \firstOrderPart{\cnats}$.
	\end{proposition}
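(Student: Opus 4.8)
The plan is to treat the three relations separately. The equivalence $\cnats \weiequiv \firstOrderPart{\cnats}$ is immediate: $\cnats$ has codomain $\nats$, so it is itself first-order, and since $\cnats \weireducible \cnats$, the characterization in \thref{firstorderpartcharacterization} yields $\firstOrderPart{\cnats} \weiequiv \cnats$. The reduction $\mathrm{Fin}(\cnats) \weireducible \cnats$ then costs nothing: by \thref{finpartbelowfirstorder} we have $\mathrm{Fin}(\cnats) \weireducible \firstOrderPart{\cnats}$, and composing with the equivalence just obtained gives $\mathrm{Fin}(\cnats) \weireducible \cnats$. So the whole content is the strictness, i.e.\ $\cnats \not\weireducible \mathrm{Fin}(\cnats)$.

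Recall $\mathrm{Fin}(\cnats) = \bigsqcup_{k \geq 1} \mathrm{Fin}_{\mathbf{k}}(\cnats)$ by \thref{def:finitepart}, and that each component $\mathrm{Fin}_{\mathbf{k}}(\cnats)$ has codomain $\mathbf{k}$. First I would collapse the coproduct. Suppose towards a contradiction that $\cnats \weireducible \mathrm{Fin}(\cnats)$ via computable $\Phi,\Psi$. Since a name of a point of the coproduct begins by writing down the component index, the map sending an input $p$ of $\cnats$ to the index chosen by $\Phi(p)$ is continuous and $\nats$-valued, hence locally constant; running it on the name of the instance $\nats \in \dom(\cnats)$ fixes an index $n_0$ decided already on a finite prefix $\sigma$ recording no exclusions. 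As every $\cnats$-instance can be presented by a name extending $\sigma$, the reduction restricted to such names sends all of $\cnats$ into the single summand, giving $\cnats \weireducible \mathrm{Fin}_{\mathbf{n_0}}(\cnats)$. (This is exactly the statement that $\cnats$ is \emph{join-irreducible}: a reduction into a coproduct is forced onto one summand on a cone of instances that already exhausts $\cnats$ up to a computable shift.)

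It remains to refute $\cnats \weireducible \mathrm{Fin}_{\mathbf{n_0}}(\cnats)$, and here the only feature I use is that the target has the finite codomain $\mathbf{n_0}$; in fact I would prove the sharper statement that $\cnats \not\weireducible g$ for \emph{every} problem $g$ with finite codomain. It is tempting to finish by cardinality, since $\#\mathrm{Fin}_{\mathbf{n_0}}(\cnats) \leq n_0 < \infty = \#\cnats$; but the monotonicity $\#f \leq \#g$ only holds for $\strongweireducible$, and what we have is a mere weak reduction (indeed $\#$ is genuinely not monotone under $\weireducible$, as $\mathsf{id}_\nats \weireducible \mathsf{C}_1$ witnesses), so a direct diagonalization is needed instead. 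Given a putative reduction $\Phi,\Psi$ of $\cnats$ to $g$, a solution of $g(\Phi(p))$ is one of at most $n_0$ values, so the candidate $\cnats$-answers $\{\Psi(p,n) : n < n_0\}$ number at most $n_0$ and each is locked in after reading a finite prefix of $p$. I would build a nonempty co-c.e.\ instance $A$ that excludes every such candidate: keeping an infinite reservoir of never-excluded elements guarantees $A \neq \emptyset$, while the at most $n_0$ candidate outputs are all placed into the complement. Since $\Phi(p)$ is then a valid $g$-instance it has some genuine solution $n^\ast$, and correctness forces $\Psi(p,n^\ast) \in A$, contradicting that this value was excluded.

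The main obstacle is the circularity of this construction: the candidate outputs $\Psi(p,n)$ depend on the very name $p$ that we are producing by deciding exclusions. I would resolve it with the recursion theorem, taking $p$ to be a fixed point of the computable operator that maps a tentative name $r$ to the name of $\nats$ minus the (at most $n_0$) converging values $\Psi(r,n)(0)$; at the fixed point the excluded set is finite, so $A$ is cofinite and nonempty, and the candidate outputs computed against $p$ agree with those actually excluded, closing the diagonalization. Combining the three parts gives $\mathrm{Fin}(\cnats) \strictlyweireducible \cnats \weiequiv \firstOrderPart{\cnats}$.
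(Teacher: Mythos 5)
Your proof is correct, and while its skeleton matches the paper's (the equivalence and the reduction are dismissed as immediate, and join-irreducibility of $\cnats$ is used to collapse the coproduct onto a single summand $\mathrm{Fin}_{\mathbf{k}}(\cnats)$ --- the paper simply cites \cite[Corollary 5.6]{paulybrattka} where you re-derive it), the decisive step is handled quite differently. The paper finishes in one line: $\#\mathrm{Fin}_{\mathbf{k}}(\cnats) < \#\cnats$, so by \cite[Proposition 3.6]{brattka2015probabilistic} the reduction $\cnats \weireducible \mathrm{Fin}_{\mathbf{k}}(\cnats)$ is impossible. You instead refuse the cardinality shortcut on the grounds that the monotonicity of $\#$ is only recorded (in this paper's background section) for $\strongweireducible$, and you replace it with a recursion-theorem diagonalization showing the stronger, reusable fact that $\cnats \not\weireducible g$ for \emph{every} $g$ with finite codomain. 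Your objection is mathematically accurate: $\#$ is genuinely not monotone under $\weireducible$ (your witness $\id_\nats \weireducible \mathsf{C}_1$ is airtight, since the backward functional may read the original name), and since $\mathrm{Fin}_{\mathbf{k}}(\cnats)$ has finite codomain it cannot be a cylinder, so the weak reduction cannot be upgraded to a strong one for free. Thus your longer argument actually supplies a justification that the paper's one-line appeal to the cardinality invariant leaves implicit. The trade-off is the usual one: the paper's proof is shorter modulo the cited facts, while yours is self-contained, and the diagonalization details you sketch (fixing canonical names for the elements of $\mathbf{k}$ so that there are only finitely many candidate outputs, and taking a fixed point of the operator that excludes exactly the converging candidates, which keeps the instance cofinite and hence nonempty) are routine to complete.
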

	\begin{proof}
		The reduction and the equivalence are immediate.

	For strictness, suppose $\cnats\weireducible \bigsqcup_{k \geq 1} \mathrm{Fin}_{\mathbf{k}}(\cnats)$. Since $\cnats$ is join-irreducible (\cite[Corollary 5.6]{paulybrattka}), we obtain that if $\cnats \weireducible \bigsqcup_{k \in \nats} \mathrm{Fin}_{\mathbf{k}}(\cnats)$, then there must exist some $k \in \nats$ with $\cnats \weireducible \mathrm{Fin}_{\mathbf{k}}(\cnats)$. Since $\#\mathrm{Fin}_{\mathbf{k}}(\cnats) < \#\cnats$, by \cite[Proposition 3.6]{brattka2015probabilistic} this cannot be the case.
		\end{proof}

\subsection{Computing with graphs}

We conclude this section with some considerations on the the two different graph representation we use: namely via their characteristic function and via an enumeration of the vertices and the edges.
\begin{itemize}
	\item We denote by $\repspacegraphs$ the represented space of graphs represented via characteristic functions, where the representation map $\repmap{\graph}$ has domain 
	\[\{p \in \Cantor: (\forall i,j\in \nats)(p(\str{i,j})=1 \implies p(\str{j,i})=p(\str{i,i})=p(\str{j,j})=1)\}.\]
	Any graph $G$ has a unique $\repmap{\graph}$-name $p \in \Cantor$ such that $ i \in V(G) \iff p(\langle i,i\rangle)=1$ and for $i \neq j$, $ (i,j) \in E(G) \iff p(\langle i,j\rangle)=1$.
	\item We denote by $\repspaceegraphs$ the represented space of graphs represented via enumerations, where the representation map $\repmap{\egraph}$ has domain 
	\[\{p \in \Baire :(\forall i\neq j \in \nats)(\exists k \in \nats)(p(k)=\str{i,j} \implies (\exists \ell_0,\ell_1< k)(p(\ell_0)=\str{i,i} \land p(\ell_1)=\str{j,j})) \}.\]
	A graph $G$ instead has multiple $\repmap{\egraph}$-names namely $\{p \in \dom(\repmap{\egraph}):(i \in V(G) \iff (\exists k)(p(k)=i)) \land ((i,j) \in E(G) \iff (\exists \ell)(p(\ell)=\str{i,j})) \}$.
\end{itemize}

Regarding the space $\repspaceegraphs$, the extra requirement of enumerating edges only after the involved vertices are enumerated is just to simplify some proofs in this paper.

It is trivial that given a $\repmap{\graph}$-name for a graph $G$ we can compute a $\repmap{\egraph}$-name for $G$: the converse is false, but the next lemma tells us that from a $\repmap{\egraph}$-name for $G$ we can compute a $\repmap{\graph}$-name for a graph $H$ \quot{close}\ to $G$, i.e.\ a graph $H$ containing a copy of $G$ plus vertices of finite degree.

\begin{lemma}
	\thlabel{fromcetocomputable}
	There exists a computable $\partialmultifunction{\mathbf{F}}{\repspaceegraphs}{\repspacegraphs}$ such that 
	\[(\forall G\in \repspaceegraphs )(\forall H \in \mathbf{F}(G))(\exists V \subseteq V(H))(\forall v \in V(H)\setminus V)(H\restriction_V\cong G \land \D{H}{v}<\aleph_0).\]
\end{lemma}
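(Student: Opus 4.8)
The plan is to turn the $\delta_{EGr}$-name of $G$ into a characteristic-function name $q$ of a graph $H$ whose vertex set is all of $\nats$ (so $q(\str{a,a})=1$ for every $a$ and only the edge bits require work) by a \emph{movable marker} construction. The obstruction to copying $G$ directly is that an enumeration name gives only positive information: we learn $\str{i,j}\in E(G)$ when it is listed, but a non-edge is never certified, so we can never settle the bit $q(\str{a,b})$ merely by waiting. I would resolve this by committing edge bits optimistically and \emph{relocating} a vertex's copy to a fresh position whenever a late-arriving edge shows that a bit was committed wrongly; the abandoned positions will be the promised finite-degree garbage.

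In detail, reading the name stage by stage, I maintain a finite injective partial map $\phi_s$ from the vertices of $G$ enumerated so far to their current positions in $\nats$, always choosing fresh positions strictly larger than the current stage and than every position used before (so each position is assigned at most once, and if a position $a$ is ever used it is used at a stage $<a$). When a vertex $i$ is first enumerated I place it at a fresh position and commit the bit between it and each currently-placed $j$ to $1$ iff $\str{i,j}$ has already appeared, else to $0$ (symmetric bits identically; any bit never touched defaults to $0$). When an edge $\str{i,j}$ appears while the bit between the current positions of $i$ and $j$ stands at $0$ (both endpoints are already placed, since the representation lists an edge only after its endpoints), I declare a conflict and relocate the \emph{younger} of $i,j$ — the one enumerated later — to a fresh position, re-committing from there a $1$ to each currently-placed neighbour and $0$ otherwise.

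Two bookkeeping facts drive the verification. First, a vertex is relocated only when it is the younger party in a conflict, hence only on account of an edge to an \emph{older} vertex; each older vertex forces at most one relocation of it (once that edge's bit has been set to $1$, it is maintained at $1$ through all subsequent relocations of either endpoint), and there are only finitely many older vertices, so every vertex is relocated finitely often and settles at a final position $\phi^*(i)$. Crucially this survives \emph{infinite-degree} vertices, because edges to younger neighbours relocate those neighbours rather than the vertex itself — this is the main obstacle, and the older/younger relocation policy is exactly what defeats it. Setting $V\defas\ran(\phi^*)$, the map $i\mapsto\phi^*(i)$ is then an isomorphism of $G$ onto $H\restriction_V$: a non-edge of $G$ is never enumerated, so its bit stays $0$, while an edge must have been enumerated before the later of its two vertices settled (otherwise a further conflict, hence a further relocation, would be forced, contradicting finality), so its bit is $1$.

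Finally I would verify computability of $q$ and the garbage condition. Since each position is used at most once and a bit $q(\str{a,b})$ can only be committed at the placement event of $a$ or of $b$ — events occurring at stages $<a$ and $<b$ — its value is fixed by running the construction for $\max(a,b)$ stages, so $q$ is computable; together with $q(\str{a,a})=1$ this gives a legitimate $\repmap{\graph}$-name. A position outside $V$ is either never assigned (degree $0$) or is active only on the finite stage-interval between its assignment and the relocation abandoning it; during that interval only finitely many placement events occur, each contributing at most one incident edge, and no edge to it is ever committed afterwards, so $\D{H}{v}<\aleph_0$. This produces the required computable $\partialmultifunction{\mathbf{F}}{\repspaceegraphs}{\repspacegraphs}$.
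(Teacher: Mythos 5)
Your proposal is correct and follows essentially the same movable-marker/injury construction as the paper's proof: on a conflict relocate the later-enumerated endpoint to a fresh position, bound the relocations of each vertex by its finitely many earlier-enumerated neighbours, and observe that abandoned positions keep finite degree while undetermined bits are settled to $0$ early enough to make the name computable. The only cosmetic difference is that you declare every position of $\nats$ a vertex of $H$ (so unused positions become isolated vertices), whereas the paper sets unused vertex bits to $0$; both satisfy the stated conclusion.
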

\begin{proof}
	Let $q$ be a name for $G \in \repspaceegraphs$. The function $\mathbf{F}$ computes from $q$ a name $p$ for a graph $H \in \repspacegraphs$ in stages. We write $p(\str{i,j})\uparrow_s$ to denote that, at stage $s$, we do not have decided yet if $(i,j) \in E(H)$ or, in case $i=j$, if $i \in V(H)$. To prove the claim, for any $s \in \nats$, we define the auxiliary maps $\function{\iota_s}{\nats}{\nats}$ 
 with the property that, 
 \begin{enumerate}[$(i)$]
		\item for every $s$, $\dom(\iota_s)\defas \{v:(\exists i< s)(q(i)=\str{v,v})\}$ and $\dom(\iota_s)\subseteq \dom(\iota_{s+1})$;
	\end{enumerate}
 and the function $\function{\iota}{\nats}{\nats}$ such that:
	\begin{enumerate}
		\item[$(ii)$] for every $v \in V(G)$, $\iota(v)\defas \lim_{s\rightarrow \infty} \iota_s(v)$ exists.
	\end{enumerate}
\textit{Construction.}\\
We now describe how to compute $p$, but before doing so we introduce the following notation: we say that we \quot{erase $p$ up to stage $s$}\ meaning that for all $i,j\leq s$, if $p(\str{i,j})\uparrow_s$, we set $p(\str{i,j})=0$. We perform this procedure whenever we need to decide if $p(i)=1$ or $p(i)=0$ for some $i \in \nats$, but we do not have received the information we wanted. This procedure does not affect our construction as, when the information arrives, if it tells us that the vertex/edge coded by $i$ is not in $H$ we have already settled $p(i)$ correctly, otherwise we can choose another vertex/edge coded by some $t>s$ that will play the role of vertex/edge coded by $i$. This informal description will be clearer during the construction.

At stage $0$, do nothing. At stage $s+1$, suppose $q(s)=\str{u,v}$. 
First notice that, since an edge $(u,v)$ is enumerated only after $u$ and $v$ have already been enumerated, in this construction it is never the case that $ u \neq v \land (u \notin \dom(\iota_s) \lor v \notin \dom(\iota_s))$. We have the following cases
\begin{itemize}
	\item \emph{Case} 1: $u,v \notin \dom(\iota_s)$ and $u=v$.  Let $m \defas \min \{ k \notin \range(\iota_s): k>s\}$. Choosing $m$ as such, we ensure that $p(\str{m,m})\uparrow_s$ and hence we can set $p(\str{m,m})=1$ and $\iota_{s+1}(u)=m$. Then, erase $p$ up to stage $s$.
	\item \emph{Case} 2:  $u,v \in \dom(\iota_s)$ and $u = v$. By \emph{Case} 1 $p(\str{\iota_s(u),\iota_s(u)})=1$. Then, erase $p$ up to stage $s$.
	\item \emph{Case} 3:  $u,v \in \dom(\iota_s)$ and $u\neq v$,
	\ \begin{itemize}
		\item[(a)] if $p(\str{\iota_s(u),\iota_s(v)}) = p(\str{\iota_s(v),\iota_s(u)})= 1$, then erase $p$ up to stage $s$;
		\item[(b)] if $p(\str{\iota_s(u),\iota_s(v)})\uparrow_s$ and $p(\str{\iota_s(v),\iota_s(u)})\uparrow_s$, let 
		\[p(\str{\iota_s(u),\iota_s(v)})= p(\str{\iota_s(v),\iota_s(u)})=1.\]
		Then, erase $p$ up to stage $s$;
		\item[(c)] if $p(\str{\iota_s(u),\iota_s(v)}) = 1$ and $p(\str{\iota_s(v),\iota_s(u)})=\uparrow_s$, let 
		$ p(\str{\iota_s(v),\iota_s(u)})=1$. Then, erase $p$ up to stage $s$.;
		\item[(d)] if $p(\str{\iota_s(u),\iota_s(v)}) \lor p(\str{\iota_s(v),\iota_s(u)})=0$, let $k_u\defas \min \{k: q(k)=\str{u,u}\}$ and $k_v\defas \min \{k: q(k)=\str{v,v}\}$: if $k_u<k_v$ declare $v$ injured, $u$ otherwise. Suppose $v$ is injured (the case for $u$ is the same). Then let $m \defas \min \{k \notin \range(\iota_s): k>s\}$, let $p(\str{m,m})=1$, $\iota_{s+1}(v)=m$ and for every $i \leq s$, if $q(i)=\str{v,w} \lor q(i)=\str{w,v}$ let $p(\str{m,\iota_s(w)})=p(\str{\iota_s(w),m})=1$. Then, erase $p$ up to stage $s$.;
	\end{itemize}
\end{itemize}
\textit{Verification.}\\
We first verify that $\iota$ and, for all $s$, $\iota_s$ have the desired properties.
To verify the desired properties of $\iota$, notice that $(i)$ is straightforward:  to prove $(ii)$, notice that  
\[(\forall v \in V(G))(|\{s : v \text{ is injured at stage } s\}| \leq |\{w: (v,w) \in E(G) \land w < v\}|).\]
 Let $s_v\defas \max\{t : v \text{ is injured at stage }t\}$: then, since the only case in which $\iota_s(v)\neq \iota_{s+1}(v)$  is in \emph{Case} 3(d), we have that for all $(\forall s\geq s_v)(\iota_{s_v}(v)=\iota_s(v))$ and hence $\iota(v)$ exists.

We now prove that $(\forall G\in \repspaceegraphs)(\forall H \in \mathbf{F}(G))(\exists V\subseteq V(H))(H\restriction_V\cong G)$. Given a name  $p$ for $H \in \repspacegraphs$, let $V\defas \{\iota(v): v \in V(G)\}$. We have to prove that for any $v,u \in V(G)$,  $(v,u) \in E(G) \iff p(\str{\iota(v),\iota(u)})=1$. For the left-to-right direction, suppose $(v,u) \in E(G)$ and notice that, by $(ii)$, $\iota(v)$ and $\iota(u)$ exists. Let $s_0 \defas \max\{t : u \text{ or } v \text{ has been injured at stage } t \}$. By $(ii)$ we get that $\iota_{s_0}(v)=\iota(v)$ and $\iota_{s_0}(u)=\iota(u)$ and by construction, $p(\str{\iota(u),\iota(v)})=1$. For the opposite direction, notice that $p(\str{\iota(v),\iota(u)})=1$ holds only if $(u,v)$ is enumerated in $E(G)$. 

To conclude the proof, we need to show that $(\forall v \in V(H)\setminus V)(\D{H}{v}<\aleph_0)$.
Notice that $v \in V(H)\setminus V$ if and only if $(\exists s,w)(v=\iota_s(w)\neq \iota(w))$ and, by construction, $(\forall t \geq s) (p(\str{v,t})=0)$, i.e.\ $\D{H}{v}<\aleph_0$.
\end{proof}

\section{Effective Wadge complexity of sets of graphs}
\label{wadgecomplexityofgraphs}
In this section, we provide examples of $\Gamma$-complete sets of (names of) graphs where $\Gamma$ is a lightface class. For a fixed graph $G$, we consider sets of the form
\begin{equation}
	\label{setsweconsider}
\setofgraphs{G}{\repspaceallgraphs}{\allgraphrelations} \defas \{p \in \dom(\repmap{(E)Gr}) : G \allgraphrelations \repmap{(E)Gr}(p)\}.
\end{equation}
 The next proposition, whose easy proof is omitted, is a direct consequence of the definitions of $\repspacegraphs$ and $\repspaceegraphs$.

 \begin{proposition}
 	\thlabel{enumerationabovecharacteristic}
	For any graph $G$, $\setofgraphs{G}{\repspacegraphs}{\allgraphrelations}\effectivewadge \setofgraphs{G}{\repspaceegraphs}{\allgraphrelations}$.
 \end{proposition}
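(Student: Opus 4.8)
The plan is to exhibit a single total computable $f : \Baire \to \Baire$ witnessing the reduction, relying on the observation recorded just before \thref{fromcetocomputable} that a characteristic-function name can be computably converted into an enumeration name of the \emph{same} graph. Write $A \defas \setofgraphs{G}{\repspacegraphs}{\allgraphrelations}$ and $B \defas \setofgraphs{G}{\repspaceegraphs}{\allgraphrelations}$ for the two sets of names defined by \eqref{setsweconsider}. The crucial point is that for a fixed graph $H$ the assertion $G \allgraphrelations H$ depends only on the isomorphism type of $H$, never on the particular name used for it. Hence it suffices to arrange that, whenever $p \in \dom(\repmap{\graph})$, the output $f(p)$ is a $\repmap{\egraph}$-name of $\repmap{\graph}(p)$; for then $p \in A \iff G \allgraphrelations \repmap{\graph}(p) = \repmap{\egraph}(f(p)) \iff f(p) \in B$.

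First I would describe the translation on valid inputs. Given $p$, process in stages $s = 0, 1, 2, \dots$: at stage $s$, for each $i \leq s$ with $p(\str{i,i}) = 1$ enumerate the vertex $i$ if it has not yet been listed, and \emph{afterwards}, for each pair $i \neq j$ with $i,j \leq s$ and $p(\str{i,j}) = 1$, enumerate the edge $(i,j)$ if not yet listed. Because the domain condition on $\repmap{\graph}$ forces $p(\str{i,j}) = 1$ to imply $p(\str{i,i}) = p(\str{j,j}) = 1$, every edge we output already has both endpoints listed as vertices, so the produced sequence meets the ordering requirement in $\dom(\repmap{\egraph})$ and enumerates exactly the vertices and edges of $\repmap{\graph}(p)$. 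Thus $f(p) \in \dom(\repmap{\egraph})$ with $\repmap{\egraph}(f(p)) = \repmap{\graph}(p)$. (To keep $f$ total, I pad any stage that produces no genuinely new symbol with a repetition of an already-listed vertex.)

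The only point requiring care --- and the main, though still routine, obstacle --- is that effective Wadge reducibility demands that $f$ be total on all of $\Baire$ and that $p \in A \iff f(p) \in B$ hold for \emph{every} $p$, including $p \notin \dom(\repmap{\graph})$ (all of which lie outside $A$). The condition defining $\dom(\repmap{\graph})$, together with $p \in \Cantor$, is $\Pi^0_1$, so its failure is semidecidable. I would therefore run the translation above in parallel with a search for a violation; if one is ever detected, I switch to \emph{spoiling} the output by emitting an edge $\str{a,a+1}$ for a fresh $a$ larger than every vertex and position seen so far (so that $a$ has never been and, after the switch, never will be enumerated as a vertex) and then padding forever. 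This single out-of-order edge is a permanent violation of $\dom(\repmap{\egraph})$ at its position, forcing $f(p) \notin \dom(\repmap{\egraph})$ and hence $f(p) \notin B$, in agreement with $p \notin A$. For $p \in \dom(\repmap{\graph})$ no violation is ever found and the faithful name of $\repmap{\graph}(p)$ is produced. Since the whole procedure is manifestly computable and the equivalence holds for all $p$, we obtain $A \effectivewadge B$. Finally, note that the construction never inspects the relation, so the same $f$ works uniformly whether $\allgraphrelations$ denotes the subgraph or the induced subgraph relation.
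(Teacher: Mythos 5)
Your proof is correct and is exactly the argument the paper has in mind: the paper omits the proof as a ``direct consequence of the definitions'' and records separately that a $\repmap{\graph}$-name can trivially be converted into a $\repmap{\egraph}$-name of the same graph, which is precisely your stage-by-stage translation. Your additional care with inputs outside $\dom(\repmap{\graph})$ (spoiling via an edge with a never-enumerated endpoint) is exactly what totality of an effective Wadge reduction demands; the only loose end is what to pad with before any vertex has appeared (e.g.\ on a name of the empty graph), which is resolved by emitting any fixed value that codes neither a vertex nor an edge.
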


 The following two operations on graphs and trees are fundamental in many constructions of this paper.
 Given a tree $T$ and a graph $G$ where $V(G)=\{v_i: i \in \nats\}$, let $\constructionone{T,G}$ and $\constructiontwo{T,G}$ be such that $V(\constructionone{T,G})=V(\constructiontwo{T,G})\defas T$ (i.e.\ any node in $T$ is a vertex of the resulting graph) and
 \begin{itemize}
 \item $E(\constructionone{T,G}) \defas \{(\sigma,\tau):(v_{\length{\sigma}}, v_{\length{\tau}}) \in E(G) \land (\sigma \sqsubset \tau \lor \tau \sqsubset \sigma)\}$. 
 \item $E(\constructiontwo{T,G}) \defas \{(\sigma,\tau):(v_{\length{\sigma}}, v_{\length{\tau}}) \in E(G) \lor (\sigma\incomparable \tau)\}$. 
 \end{itemize}
 Notice that both $\constructionone{T,G}$ and $\constructiontwo{T,G}$ are computable relative to $T$ and $G$. 

\begin{proposition}
\thlabel{Constructionongraphs}
Let $T \in \illfounded$ and let $G$ be an infinite graph. Then $G \inducedsubgraph \constructionone{T,G}$ and $G \inducedsubgraph \constructiontwo{T,G}$.
\end{proposition}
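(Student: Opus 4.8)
The plan is to exhibit, for an ill-founded tree $T$ and an infinite graph $G$ with $V(G)=\{v_i:i\in\nats\}$, an explicit set of vertices $V\subseteq T$ (recall $V(\constructionone{T,G})=V(\constructiontwo{T,G})=T$) such that the induced subgraph on $V$ is isomorphic to $G$. The natural candidate is to fix a path through $T$ and take $V$ to be the set of its initial segments. Since $T\in\illfounded$, there is some $f\in\body{T}$, so the strings $\{f[n]:n\in\nats\}$ all lie in $T$, are pairwise distinct, and are linearly ordered by $\sqsubseteq$. I would set $V\defas\{f[n]:n\in\nats\}$ and define a candidate isomorphism $\phi\st V(G)\to V$ by $\phi(v_n)\defas f[n]$, which is a bijection because $\length{f[n]}=n$.

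For the case of $\constructionone{T,G}$, I would verify that $\phi$ is a graph isomorphism onto the induced subgraph on $V$. Take two vertices $f[m],f[n]\in V$ with, say, $m<n$; then $f[m]\sqsubset f[n]$, so the comparability clause $(f[m]\sqsubset f[n]\lor f[n]\sqsubset f[m])$ in the definition of $E(\constructionone{T,G})$ is satisfied. Hence $(f[m],f[n])\in E(\constructionone{T,G})$ if and only if $(v_m,v_n)=(v_{\length{f[m]}},v_{\length{f[n]}})\in E(G)$, which is exactly the condition $(v_m,v_n)\in E(G)$. This shows precisely that the edges of the induced subgraph on $V$ correspond under $\phi$ to the edges of $G$, so $\constructionone{T,G}\restriction_V\cong G$, i.e.\ $G\inducedsubgraph\constructionone{T,G}$. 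The key point making this work is that along a single path any two distinct chosen nodes are always $\sqsubseteq$-comparable, so the comparability condition never interferes, and the edge relation reduces to that of $G$ on the indices.

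For $\constructiontwo{T,G}$ the argument is the same with the same $V$ and the same $\phi$: for distinct $f[m],f[n]$ with $m<n$ we have $f[m]\sqsubset f[n]$, hence $f[m]$ and $f[n]$ are \emph{not} incomparable, so the disjunct $(\sigma\incomparable\tau)$ in $E(\constructiontwo{T,G})$ is false. Thus $(f[m],f[n])\in E(\constructiontwo{T,G})$ reduces to $(v_m,v_n)\in E(G)$, and again $\phi$ witnesses $\constructiontwo{T,G}\restriction_V\cong G$. I expect the only thing to be careful about is the bookkeeping that $\phi$ respects both the presence and the absence of edges (i.e.\ it is an isomorphism onto the \emph{induced} subgraph, not merely onto a subgraph), but since along a path the two edge-defining formulas collapse exactly to the membership test $(v_m,v_n)\in E(G)$, this is immediate. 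There is no substantial obstacle here; the one hypothesis genuinely used is ill-foundedness of $T$ (to obtain the path) together with $V(G)$ being infinite (so that the path, which is infinite, can be matched bijectively with $V(G)$).
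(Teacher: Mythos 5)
Your proof is correct and follows exactly the paper's argument: pick a path $f\in\body{T}$, take $V=\{f[n]:n\in\nats\}$, and observe that along a path all pairs are $\sqsubseteq$-comparable, so both edge definitions collapse to the edge relation of $G$ on the indices. The paper leaves the verification as "easy to check"; you have simply written it out.
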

\begin{proof}
	In both cases, suppose that $V(G)=\{v_i : i \in \nats\}$, let $p \in \body{T}$ and consider $V \defas \{p[n]:n \in \nats\}$. By definitions of $E(\constructionone{T,G})$ and $E(\constructiontwo{T,G})$, it is easy to check that  $\constructionone{T,G}_{\restriction_V} \cong G$ and $\constructiontwo{T,G}_{\restriction_V} \cong G$ as well, and this proves the claim.
\end{proof}

\begin{remark}
	\thlabel{remarkccehyp}
	In this paper, most of the results are stated for graphs having a computable, c.e. or hyperarithmetical copy. In the literature, it is customary to say that a graph is computable if its vertex set is computably isomorphic to $\nats$ and it has a computable edge relation: similar definitions hold for c.e.\ and hyperarithmetical graphs. From now on, for notational simplicity, we modify these definitions saying e.g., that a graph is computable if it has a copy with vertex set computably isomorphic to $\nats$ and computable edge relation: in all proofs, when we consider a computable graph, we always consider a computable copy of it (similarly for c.e.\ and hyperarithmetical graphs).  For the computable case, this is equivalent to say that a computable graph has a computable $\repmap{\graph}$-name and, for the c.e.\ case, that a c.e.\ graph has a computable $\repmap{\egraph}$-name.
\end{remark}

In this section, we show that, if we restrict to a computable or c.e.\ graph $G$, except when $G$ is finite but not isomorphic to any $\complete{n}$ and we consider the induced subgraph relation, we obtain that both $\setofgraphs{G}{\repspaceallgraphs}{\allgraphrelations}$ are $\Gamma$-complete for some lightface class $\Gamma$, and hence, in these cases, the conclusion of \thref{enumerationabovecharacteristic} becomes an equivalence. To prove results of this kind, we usually perform the following steps:
\begin{itemize}
 	\item  show that $\setofgraphs{G}{\repspaceegraphs}{\allgraphrelations}$ is in $\Gamma$;
 	\item  prove that for any $\Gamma$-complete set $A$,  $A \effectivewadge\setofgraphs{G}{\repspacegraphs}{\allgraphrelations}$;
 	\item  apply \thref{enumerationabovecharacteristic} to obtain that both $\setofgraphs{G}{\repspacegraphs}{\allgraphrelations}$ and $\setofgraphs{G}{\repspaceegraphs}{\allgraphrelations}$ are $\Gamma$-complete (and hence effectively Wadge equivalent).
 \end{itemize}

 \begin{proposition}
 	\thlabel{upperboundandfinite}
 	For any hyperarithmetical graph $G$, the following sets are $\Sigma_1^{1}$: 
 	\[\text{(i)}\ \setofgraphs{G}{EGr}{\subseteq_\mathsf{s}}, \ \text{(ii)}\ \setofgraphs{G}{Gr}{\subseteq_\mathsf{s}}, \]
 \[\text{(iii)}\ \setofgraphs{G}{Gr}{\subseteq_\mathsf{is}} \ \text{(iv)}\ \setofgraphs{G}{EGr}{\subseteq_\mathsf{is}}.\]
 	If $G$ is finite, the first three sets are $\Sigma_1^{0}$-complete. For any $n \in \nats$, $\setofgraphs{\complete{n}}{EGr}{\subseteq_\mathsf{is}}$  is $\Sigma_1^{0}$-complete, otherwise, if $G \not\cong \complete{n}$, the set $\setofgraphs{G}{EGr}{\subseteq_\mathsf{is}}$ is $\Sigma_2^{0}$-complete.
 \end{proposition}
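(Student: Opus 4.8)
The plan is to establish the claims in the order suggested by the three-step strategy described just before the statement: first prove the $\Sigma^1_1$ upper bounds for all four sets, then handle the completeness results for finite $G$. For the upper bounds, I would write out the defining matrix of each set explicitly. For instance, $G \subgraph \repmap{\egraph}(p)$ unravels to ``there exists an injection $\function{h}{V(G)}{V(\repmap{\egraph}(p))}$ preserving edges'', and since $G$ is hyperarithmetical, the predicate ``$(v,w) \in E(G)$'' is hyperarithmetical in the parameter coding $G$. Quantifying existentially over $h \in \Baire$ (an element of Baire space coding the injection) in front of an arithmetical-in-$p$-and-$G$ matrix yields a $\Sigma^1_1$ formula. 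The induced-subgraph cases $\inducedsubgraph$ are entirely analogous, merely adding the biconditional requirement $E(H) = E(G) \cap (V(H)\times V(H))$ inside the matrix, which keeps it arithmetical; so all four sets are $\Sigma^1_1$. The only subtlety is checking that the $\repmap{\graph}$/$\repmap{\egraph}$-name domains are themselves arithmetical (they are, by inspection of the defining conditions in \S\,Computing with graphs), so membership in the represented space adds nothing above $\Sigma^1_1$.

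Next I would treat the finite case. When $G$ is finite, the embedding $h$ ranges over a \emph{finite} set of vertices, so the leading existential quantifier over $\Baire$ collapses: $G \subgraph H$ becomes ``there exist finitely many vertices $u_0,\dots,u_{k-1}$ of $H$ and the required edges are present'', which is a $\Sigma^0_1$ statement in a $\repmap{\egraph}$-name (each vertex and each required edge is eventually enumerated). This gives the $\Sigma^0_1$ upper bound for (i); for (ii) and (iii) with $G$ finite the same holds, and I would invoke \thref{enumerationabovecharacteristic} to move between the two representations. For $\Sigma^0_1$-\emph{hardness} I would reduce a known $\Sigma^0_1$-complete set (a halting-type set, or $\wellfounded_2$ from \thref{Complexityresults}) to $\setofgraphs{G}{\repspacegraphs}{\subgraph}$: from an input $n$ (or a finite binary tree) I compute a name of a graph that contains a copy of $G$ iff the $\Sigma^0_1$ event occurs — e.g.\ output the empty graph and add a fresh copy of $G$ exactly when the enumeration halts. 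The same reduction target works for the induced-subgraph relation $\inducedsubgraph$ when $G\cong\complete n$, since adding an isolated clique of the right size produces an induced copy precisely on the halting event.

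The genuinely interesting split is the induced-subgraph relation under the enumeration representation $\repspaceegraphs$, and this is where I expect the main obstacle. For $G \cong \complete{n}$, detecting an induced copy is equivalent to detecting a subgraph copy (a clique is induced automatically, as no ``missing'' edges can spoil it), so the $\Sigma^0_1$ argument above carries over verbatim and gives $\Sigma^0_1$-completeness of $\setofgraphs{\complete{n}}{\repspaceegraphs}{\inducedsubgraph}$. But when $G$ is finite and $G\not\cong\complete n$, $G$ has at least one \emph{non-edge}, and verifying that a candidate set of vertices induces a copy of $G$ requires certifying that certain pairs are \emph{not} edges. Under $\repmap{\egraph}$ the absence of an edge is only a $\Pi^0_1$ fact (edges are enumerated positively, so ``$(i,j)\notin E$'' is co-c.e.), which is why the complexity jumps: $G\inducedsubgraph H$ becomes ``$(\exists$ finite vertex tuple$)(\forall$ required non-edges$)(\text{that non-edge never appears})$'', a $\Sigma^0_2$ predicate. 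For the matching $\Sigma^0_2$-\emph{hardness} I would reduce the $\Sigma^0_2$-complete set (say, ``$\{n : W_n$ is finite$\}$'' or a $\Sigma^0_2$-complete set of the form furnished by \thref{firstcompletesets} for $k=0$) by building, from the input, an enumeration that always exhibits the vertices and all required edges of $G$ but enumerates the single ``forbidden'' edge — the one that would turn the subgraph copy into a non-induced one — if and only if the $\Sigma^0_2$ condition fails; getting this reduction to interact correctly with the ``vertices before edges'' enumeration discipline, and ensuring $G$'s lack of a particular edge is exactly what we exploit, is the delicate bookkeeping, but the existence of a non-edge in $G\not\cong\complete n$ is precisely what makes it possible.
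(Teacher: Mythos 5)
Your $\Sigma^1_1$ upper bounds, the collapse to $\Sigma^0_1$ for finite $G$ in cases (i)--(iii), the hardness reduction that adds a fresh copy of $G$ exactly when the $\Sigma^0_1$ event fires, and the observation that $\complete{n}$ needs no non-edge verification (so $\setofgraphs{\complete{n}}{EGr}{\subseteq_\mathsf{is}}$ stays $\Sigma^0_1$) all match the paper's proof essentially verbatim, including the use of \thref{enumerationabovecharacteristic} to transfer between the two representations. The $\Sigma^0_2$ upper bound for $\setofgraphs{G}{EGr}{\subseteq_\mathsf{is}}$ with $G\not\cong\complete{n}$ is also correctly identified: non-edges are only $\Pi^0_1$ under $\repmap{\egraph}$.

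The gap is in your $\Sigma^0_2$-hardness reduction. You propose to build a single copy of $G$ and enumerate one ``forbidden'' edge \emph{if and only if the $\Sigma^0_2$ condition fails}. But the failure of a $\Sigma^0_2$ condition such as $(\forall^\infty i)(p(i)=0)$ is a $\Pi^0_2$ event, and an enumeration must commit to each edge at a finite stage; there is no finite stage at which you can correctly decide to emit (or withhold) that edge based on whether infinitely many $1$'s will eventually appear. A one-shot, one-copy construction therefore cannot be realized as a computable map into $\repspaceegraphs$. The paper's construction avoids this by using infinitely many potential copies: each time $p(s)=0$ it adds a fresh disjoint copy of $G$, and each time $p(s)=1$ it completes the \emph{entire} graph built so far into a finite clique (a purely positive, hence enumerable, action that spoils every copy produced up to that point). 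In the limit one gets $\complete{m}\disconnectedunion{}\infinitelycopies{G}$ if only finitely many $1$'s occur, and $\complete{\omega}$ otherwise; since the induced subgraphs of $\complete{\omega}$ are exactly the complete graphs and $G\not\cong\complete{n}$, this yields the equivalence needed for $\Sigma^0_2$-hardness. Your instinct that the existence of a non-edge in $G$ is the exploitable feature is right, but the mechanism must be ``spoil by adding edges, then restart with fresh copies,'' not ``conditionally add one edge.''
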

 \begin{proof}
	We start showing that all sets in $(i)-(iv)$ are $\Sigma_1^1$.
 	\begin{enumerate}[$(i)$]
 		\item Let $p$ be a name for $H \in \repspaceegraphs$. Then $G \subgraph H$ if and only if  
 		\[(\exists f)(\forall i, j \in V(G))((i=j \lor (i,j) \in E(G)) \implies (\exists k)(p(k)= \str{f(i),f(j)})).\]
 		\item Let $p$ be a name for $H \in \repspacegraphs$. Then  $G\subgraph H$ if and only if
 		\[(\exists f)(\forall i, j \in V(G))(((i=j \lor (i,j) \in E(G)) \implies p(\str{f(i),f(j)})=1)).\]
 		\item  Let $p$ be a name for $H \in \repspacegraphs$. Then  $G\inducedsubgraph H$ if and only if 
 		\[(\exists f)(\forall i, j \in V(G))(((i=j \lor (i,j) \in E(G)) \iff p(\str{f(i),f(j)})=1)).\]
 		\item  Let $p$ be a name for $H \in \repspaceegraphs$. Then $G \inducedsubgraph H$ if and only if  
 		\[(\exists f)(\forall i, j \in V(G)) ( (i=j \lor (i,j) \in E(G)) \iff (\exists k)(p(k)= \str{f(i),f(j)})).\]
 	\end{enumerate}
 	All the formulas are $\Sigma_1^1$ and this concludes the first part of the proof.

 In case $G$ is finite, the first existential quantification ranges over $\nats$ while the quantification over $V(G)$ ranges over a finite set. This shows that the sets  $(i)$, $(ii)$, and $(iii)$ are $\Sigma_1^0$ sets, and $(iv)$  is $\Sigma_2^0$. On the other hand,  for any $n \in \nats$,  given a name $p$ for $H \in \repspaceegraphs$, $\complete{n} \inducedsubgraph H$ if and only if 
 \[(\exists \sigma \in \nats^{\length{E(\complete{n})}+\length{V(\complete{n})}})(\forall i, j \in V(\complete{n}))(\exists k)(p(k)=\str{\sigma(i),\sigma(j)}),\]
 and hence the formula defining the set in $(iv)$ is $\Sigma_1^0$. 
 
 We now show that, for finite graphs, the sets above are complete in the corresponding lightface classes. We prove that the set in $(iii)$ is $\Sigma_1^0$-complete and that if $G\not\cong\complete{n}$ for some $n>0$ the set in $(iv)$ is $\Sigma_2^0$-complete: we skip the proofs for the sets in $(i)$ and $(ii)$ as they follow the same pattern. To do so, we prove that  $\{p \in \Cantor : (\exists i)(p(i)=1)\} \effectivewadge \setofgraphs{G}{\repspacegraphs}{\subgraph}$ (recall that, by \thref{firstcompletesets}, the left-hand-side set is $\Sigma_1^0$-complete). Given $p\in \Cantor$ we define a computable $\function{f}{\Cantor}{\repspacegraphs}$ such that
 	\begin{itemize}
 		\item if $p(s)=0$, then let $f(p[s])$ be the empty graph;
 		\item if $p(s)=1$, then let $f(p) \cong G$, i.e.\ add a fresh copy of $G$ to the graph we are computing and stop the construction.
 	\end{itemize}
To show that $f$ is an effective Wadge reduction it suffices to notice that, if $(\exists i)(p(i)=1)$ then clearly  $G \inducedsubgraph f(p)\cong G$ while, if $(\forall i)(p(i)=1)$ then $f(p)$ is the empty graph and no nonempty graph is an induced subgraph of the empty graph.

  To conclude the proof, it remains to prove that $\setofgraphs{G}{EGr}{\subseteq_\mathsf{is}}$ for $G\not\cong \complete{n}$ for some $n \in \nats$ is $\Sigma_2^0$-complete and to do so we show that $\{p : (\forall^\infty i)(p(i)=0)\} \effectivewadge \setofgraphs{G}{EGr}{\subseteq_\mathsf{is}}$ (recall that, by \thref{firstcompletesets}, the left-hand-side set is $\Sigma_2^0$-complete). Given $p\in \Cantor$, we define a computable $\function{f}{\Cantor}{\repspaceegraphs}$ such that
 	\begin{itemize}
 		\item if $p(s)=0$, then let $f(p[s]) \defas f(p[s-1]) \disconnectedunion{}{} G'$, where $G'$ is a fresh copy of $G$;
 		\item if $p(s)=1$, then let $f(p[s])  \cong \complete{n_s}$ where  $n_s \defas \length{\{i: i \in V(f(p[s-1])) \}}$, i.e. enumerate enough edges in the graph we are computing to make it isomorphic to a complete finite graph.
 	\end{itemize}
 	Finally, if $(\forall^\infty i)(p(i)=0)$ then $f(p)\cong \complete{n} \disconnectedunion{}{} \infinitelycopies{G}$ for some $n \in \nats$, and clearly $G \inducedsubgraph f(p)$. Otherwise, if $(\exists^\infty i)(p(i)=1)$ then $f(p) \cong \complete{\omega}$. Combining the fact that $f(p)$ is an induced subgraph of $\complete{\omega}$ if and only if $f(p) \cong \complete{n}$  for some $n \in \nats$ or $f(p) \cong \complete{\omega}$ and the fact that $G\not\cong\complete{\omega}$ and $G\not\cong \complete{n}$ for any $n \in \nats$, we conclude that $G \not \inducedsubgraph f(p)$.
 \end{proof}

 Before moving to the case when $G$ is infinite, we give the following lemma, which is a particular case of the chain antichain principle (CAC). This principle asserts that each partial order on $\nats$ contains an infinite chain or an infinite antichain (notice that a tree, in particular, is a partial order, where the order is given by the prefix order of the tree). This principle is well-studied in reverse mathematics and its proof is an easy application of Ramsey theorem for pairs.

 \begin{lemma}
 	\thlabel{inducedsubgraphcacprinciple}
 	Let $T$ be a well-founded tree and $S$ an infinite subset of $T$. Then $S$ contains an infinite anti-chain, i.e.\ a set of nodes that are pairwise incomparable with respect to the prefix order of $T$.
 \end{lemma}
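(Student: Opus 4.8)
The plan is to prove the statement by contradiction, reducing it to the well-foundedness of $T$ via König-style reasoning adapted to the prefix order.

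Suppose toward a contradiction that $S \subseteq T$ is infinite but contains no infinite antichain. I would first observe that the prefix order $\sqsubseteq$ restricted to $S$ is a partial order in which, by assumption, all antichains are finite. The key structural fact I would exploit is that in a tree, the set of predecessors of any node is linearly ordered (indeed finite and well-ordered by $\sqsubseteq$). So the only way to avoid building an infinite chain — which would be a path witnessing ill-foundedness of $T$, contradicting the hypothesis that $T$ is well-founded — is to have wide branching, which in turn produces a large antichain.

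Concretely, the main step is the following dichotomy applied repeatedly. Since $T$ is well-founded, $T$ has no infinite chain, so in particular $S$ has no infinite chain. Combined with the standing assumption that $S$ has no infinite antichain, this directly contradicts the chain-antichain principle (CAC) referenced just before the statement: CAC guarantees that every infinite partial order on $\nats$ contains either an infinite chain or an infinite antichain. Here $S$, viewed with the induced prefix order, is an infinite partial order on a subset of $\nats$ (identifying nodes of $T$ with their codes). Well-foundedness of $T$ rules out the infinite chain alternative, so CAC forces the existence of an infinite antichain in $S$, which is exactly the conclusion. I would phrase this cleanly: apply CAC to $(S, \sqsubseteq)$; an infinite chain in $S$ would be a sequence $\sigma_0 \sqsubset \sigma_1 \sqsubset \cdots$ whose union is a path through $T$, making $T$ ill-founded and contradicting well-foundedness; hence the infinite antichain alternative must hold.

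The main obstacle — more a matter of care than of genuine difficulty — is justifying that an infinite $\sqsubseteq$-chain in $T$ really yields a path in $\body{T}$ and thus contradicts $T \in \wellfounded$; this requires noting that an infinite increasing sequence of strings has strictly increasing (hence unbounded) lengths, so their limit is a well-defined element of $\Baire$ all of whose initial segments lie in $T$. Since the paper explicitly flags that the lemma is a special case of CAC and that CAC follows from Ramsey's theorem for pairs, I would not reprove CAC but simply invoke it, keeping the argument short and pointing out only that the tree structure is what lets us identify infinite chains with paths.
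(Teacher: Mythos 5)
Your proposal is correct and matches the paper's intent exactly: the paper gives no written proof, simply noting that the lemma is a particular case of the chain--antichain principle, which is precisely the reduction you carry out (CAC applied to $(S,\sqsubseteq)$, with well-foundedness of $T$ ruling out the infinite-chain alternative). Your extra care in checking that an infinite $\sqsubseteq$-chain yields a path in $\body{T}$ is a welcome, if routine, addition.
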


 In the next three proofs, the function $\mathbf{F}$ is the one defined in \thref{fromcetocomputable}.
 \begin{theorem}
 	\thlabel{upperboundsinducedkomega}
 	Let $G$ be a c.e.\ graph such that $\complete{\omega}\not\inducedsubgraph G$. Then $\setofgraphs{G}{\repspaceallgraphs}{\subseteq_\mathsf{is}}$ are $\Sigma_1^1$-complete.
 	\end{theorem}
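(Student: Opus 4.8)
The plan is to establish $\Sigma^1_1$-hardness (the upper bound is already supplied by \thref{upperboundandfinite}) by an effective Wadge reduction from the set $\illfounded$ of ill-founded trees, which is $\Sigma^1_1$-complete by \thref{Complexityresults}. Following the three-step recipe, I would first prove hardness for the characteristic-function representation, i.e.\ exhibit a computable $f$ with $T \in \illfounded \iff f(T) \in \setofgraphs{G}{Gr}{\inducedsubgraph}$, and then transfer it to the enumeration representation via \thref{enumerationabovecharacteristic}. Throughout I assume $G$ is infinite, since for finite $G$ the complexity drops (\thref{upperboundandfinite}). The natural candidate is $T \mapsto \constructiontwo{T,G}$: the forward implication is exactly \thref{Constructionongraphs}, and in $\constructiontwo{\cdot,\cdot}$ incomparability becomes adjacency, so anti-chains turn into cliques — which is how the hypothesis $\complete{\omega}\not\inducedsubgraph G$ will be used.

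The one catch is that $\constructiontwo{T,G}$ is only computable as an \emph{enumeration} of $G$ (its edges mirror the c.e.\ relation $E(G)$), whereas I need a $\repmap{\graph}$-name. To fix this I would first replace $G$ by $G^\ast \defas \mathbf{F}(G)$, where $\mathbf{F}$ is the map of \thref{fromcetocomputable}; thus $G^\ast$ has a computable $\repmap{\graph}$-name, is infinite, and satisfies $G \inducedsubgraph G^\ast$ (it is a copy of $G$ together with extra vertices of finite degree). The reduction is then $f(T) \defas \constructiontwo{T, G^\ast}$, which is computable as a characteristic-function name because membership in $V(G^\ast)$ and $E(G^\ast)$ is decidable and incomparability in $T$ is computable.

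For the forward direction, if $T \in \illfounded$ then \thref{Constructionongraphs}, applied to the infinite graph $G^\ast$, yields $G^\ast \inducedsubgraph \constructiontwo{T, G^\ast}$; composing with $G \inducedsubgraph G^\ast$ gives $G \inducedsubgraph \constructiontwo{T, G^\ast}$. For the backward direction, suppose $T$ is well-founded and $\constructiontwo{T,G^\ast}_{\restriction S} \cong G$ for some $S \subseteq T$; since $G$ is infinite, $S$ is infinite. A well-founded tree has no infinite chain, so by \thref{inducedsubgraphcacprinciple} the infinite set $S$ contains an infinite anti-chain $A$. By definition of $E(\constructiontwo{T,G^\ast})$ pairwise incomparable nodes are pairwise adjacent, so $\constructiontwo{T,G^\ast}_{\restriction A} \cong \complete{\omega}$; as $A \subseteq S$ this exhibits $\complete{\omega} \inducedsubgraph G$, contradicting the hypothesis. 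Hence $f(T) \notin \setofgraphs{G}{Gr}{\inducedsubgraph}$, and $f$ is the desired reduction.

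The step I expect to be most delicate is the interaction with the representation: the finite-degree ``junk'' vertices that $\mathbf{F}$ injects into $G^\ast$ must not create spurious induced copies of $G$. The reason this causes no harm is twofold: I apply $\mathbf{F}$ only to the fixed parameter $G$, so that $\constructiontwo{T,G^\ast}$ is built \emph{exactly} (there are no uncontrolled extra vertices at the level of the tree graph itself), and the backward argument never inspects the edges inherited from $G^\ast$ — it uses only that incomparable nodes are adjacent, so the conclusion $\complete{\omega}\inducedsubgraph G$ is insensitive to which infinite supergraph of $G$ was plugged in. Finally, combining hardness for $\repspacegraphs$ with the upper bounds of \thref{upperboundandfinite} and the reduction $\setofgraphs{G}{Gr}{\inducedsubgraph}\effectivewadge\setofgraphs{G}{EGr}{\inducedsubgraph}$ of \thref{enumerationabovecharacteristic} yields $\Sigma^1_1$-completeness for both representations.
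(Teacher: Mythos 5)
Your proposal is correct and follows essentially the same route as the paper: reduce from $\illfounded$ via $T \mapsto \constructiontwo{T,\mathbf{F}(G)}$, use \thref{Constructionongraphs} for the forward direction, and use \thref{inducedsubgraphcacprinciple} to extract an infinite anti-chain (hence an induced $\complete{\omega}$) in the well-founded case, contradicting $\complete{\omega}\not\inducedsubgraph G$. Your explicit discussion of why $\mathbf{F}$ is needed to get a $\repmap{\graph}$-name, and why the junk vertices it adds are harmless, is a point the paper leaves implicit but does not change the argument.
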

 	\begin{proof}
 		By \thref{upperboundandfinite} both sets are $\Sigma_1^1$ and by \thref{enumerationabovecharacteristic}, to prove the claim it suffices to prove that $\setofgraphs{G}{\repspacegraphs}{\inducedsubgraph}$ is $\Sigma_1^1$-complete. To do so, we show that $\illfounded\effectivewadge\setofgraphs{G}{\repspacegraphs}{\inducedsubgraph}$ (recall that, by \thref{Complexityresults}, $\illfounded$ is $\Sigma_1^1$-complete).  Let $T \in \tree$, consider $G' \in \mathbf{F}(G)$ and, given $V(G')=\{v_i : i \in \nats\}$, compute $\constructiontwo{T,G}$. Notice that
 \begin{itemize}
 \item if $T \in \illfounded$, by \thref{Constructionongraphs}, $G' \inducedsubgraph \constructiontwo{T,G'}$ and hence, since $G\inducedsubgraph G'$, we obtain that $G \inducedsubgraph \constructiontwo{T,G'}$;
     \item if  $T \in \wellfounded$, we claim that $G \not\inducedsubgraph\constructiontwo{T,G'}$. If $T$ is finite (i.e.\ $T$ has finitely many nodes), since $G$ is infinite, the claim follows trivially. If $T$ is infinite, by \thref{inducedsubgraphcacprinciple}, for any infinite $H$ that is an induced subgraph of  $\constructiontwo{T,G'}$ there exists $H'$ that is an induced subgraph of $H$ such that $V(H')$ is an anti-chain in $T$. By definition of $E(\constructiontwo{T,G'})$, all incomparable nodes in $T$ are connected by an edge, and hence  $H'\cong \complete{\omega}$ is a subgraph of $H$.  Since $\complete{\omega}\not\subgraph G$ this concludes the proof of the claim.
 \end{itemize}
 Hence, $T$ is ill-founded if and only if $G \inducedsubgraph\constructiontwo{T,G'}$, and this concludes the proof.
 \end{proof}

 	While the theorem above holds only for the induced subgraph relation, the following holds also for the subgraph one. 
 \begin{theorem}
 	\thlabel{upperboundsinducedrayomega}
 	Let $G$ be a c.e.\ graph such that $\R\subgraph G$. Then the sets $\setofgraphs{G}{\repspaceallgraphs}{\allgraphrelations}$ are $\Sigma_1^1$-complete. 
 	\end{theorem}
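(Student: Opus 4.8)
The plan is to follow the three-step strategy outlined just before \thref{upperboundandfinite}. By \thref{upperboundandfinite} all four sets are already $\Sigma_1^1$ (a c.e.\ graph is hyperarithmetical), so it remains to establish $\Sigma_1^1$-hardness; and by \thref{enumerationabovecharacteristic} it suffices to prove hardness for the two characteristic-function sets $\setofgraphs{G}{\repspacegraphs}{\subgraph}$ and $\setofgraphs{G}{\repspacegraphs}{\inducedsubgraph}$. I would exhibit a \emph{single} computable reduction witnessing $\illfounded \effectivewadge \setofgraphs{G}{\repspacegraphs}{\subgraph}$ and $\illfounded \effectivewadge \setofgraphs{G}{\repspacegraphs}{\inducedsubgraph}$ at once; since $\illfounded$ is $\Sigma_1^1$-complete (\thref{Complexityresults}), this finishes the argument. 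Fix a computable $G' \in \mathbf{F}(G)$ (\thref{fromcetocomputable}); such a $G'$ exists because $G$ is c.e.\ (\thref{remarkccehyp}), it is infinite since $G \inducedsubgraph G'$ and $\R \subgraph G$, and we write $V(G') = \{v_i : i \in \nats\}$. The reduction is $T \mapsto \constructionone{T,G'}$, which is computable relative to the fixed $G'$ and outputs a $\repmap{\graph}$-name.

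For the ill-founded direction, if $T \in \illfounded$ then \thref{Constructionongraphs} gives $G' \inducedsubgraph \constructionone{T,G'}$; combined with $G \inducedsubgraph G'$ this yields $G \inducedsubgraph \constructionone{T,G'}$, and a fortiori $G \subgraph \constructionone{T,G'}$. So both target containments hold whenever $T$ is ill-founded.

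The substance lies in the well-founded direction, for which I would prove: if $T \in \wellfounded$ then $\R \not\subgraph \constructionone{T,G'}$. Granting this, since $\R \subgraph G$, any copy of $G$ occurring as a subgraph would contain a copy of $\R$; hence $G \not\subgraph \constructionone{T,G'}$, and therefore also $G \not\inducedsubgraph \constructionone{T,G'}$ (induced containment implies plain containment). This dovetails with the ill-founded case and makes $T \mapsto \constructionone{T,G'}$ a reduction for both relations simultaneously. To see $\R \not\subgraph \constructionone{T,G'}$, observe that every edge of $\constructionone{T,G'}$ joins $\sqsubseteq$-comparable nodes, so $\constructionone{T,G'}$ is a subgraph of the comparability graph of $T$; it thus suffices to show that the comparability graph of a well-founded tree contains no infinite ray (copy of $\R$).

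The main obstacle is exactly this last claim, which I would establish by transfinite induction on the rank $\rho$ of the root of the well-founded tree. Given a putative infinite ray $w_0, w_1, \dots$ of distinct nodes with consecutive nodes comparable, consider the longest common prefixes $u_i$ of $w_0, \dots, w_i$: their lengths are nonincreasing, so the $u_i$ stabilize to a common prefix $u$ of all the $w_j$. Since $u$ is the \emph{longest} such prefix, the $w_j$ do not all lie in one child-subtree of $u$; this forces $u$ itself to occur in the ray, for otherwise every $w_j$ strictly extends $u$ and, consecutive comparable nodes extending $u$ lying in a common child-subtree, the entire (connected) ray would sit in a single child-subtree, making the common prefix strictly longer than $u$. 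Deleting the unique occurrence of $u$ splits the ray into at most two one-sided subrays, each of which (again because consecutive comparable nodes strictly extending $u$ share the same child of $u$) is contained in a single child-subtree $T_{u\concat\langle c\rangle}$; at least one of them is infinite. As $\rho(u\concat\langle c\rangle) < \rho(u) \le \rho(\text{root})$, the induction hypothesis applied to $T_{u\concat\langle c\rangle}$ yields a contradiction; the base case of a single-node tree is trivial. (Equivalently, iterating the meet construction produces a strictly $\sqsubset$-increasing sequence of nodes, i.e.\ an infinite path, contradicting well-foundedness.) With this lemma in hand the well-founded direction follows, and transferring hardness to $\repspaceegraphs$ via \thref{enumerationabovecharacteristic} gives $\Sigma_1^1$-completeness for all four sets.
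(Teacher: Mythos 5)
Your proposal is correct and follows essentially the same route as the paper: reduce $\illfounded$ via $T \mapsto \constructionone{T,G'}$ with $G' \in \mathbf{F}(G)$, treat both relations with the single reduction (using that induced containment implies containment), and exploit the fact that every edge of $\constructionone{T,G'}$ joins $\sqsubseteq$-comparable nodes. The only divergence is one of detail: the paper dismisses the well-founded direction with the one-line assertion that the ray yields ``infinitely many nodes comparable to each other,'' whereas you supply the full rank-induction (meet-stabilization) argument showing that an infinite ray in the comparability graph of a tree forces an infinite chain --- a worthwhile elaboration, since consecutive comparability does not immediately yield pairwise comparability.
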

 	\begin{proof}
 		By \thref{upperboundandfinite} all four sets are $\Sigma_1^1$ and by \thref{enumerationabovecharacteristic}, it suffices to show that the sets $\setofgraphs{G}{\repspacegraphs}{\allgraphrelations}$ are $\Sigma_1^1$-complete. We only show it for the subgraph relation, as the same proof works also for the induced one. As in \thref{upperboundsinducedkomega}, we show that $\illfounded\effectivewadge \setofgraphs{G}{\repspacegraphs}{\subgraph}$. Let $T \in \tree$, consider $G' \in \mathbf{F}(G)$ and, given $V(G')=\{v_i : i \in \nats\}$, compute $\constructionone{T,G}$. Notice that:
 			\begin{itemize}
 			\item if $T \in \illfounded$, by \thref{Constructionongraphs}, $G' \inducedsubgraph \constructionone{T,G'}$ and hence, since $G\inducedsubgraph G'$, $G \inducedsubgraph \constructionone{T,G'}$;
 			 \item if  $T \in \wellfounded$, we claim that $G\not\subgraph \constructionone{T,G'}$. Since, $\R\subgraph G$ by hypothesis, it suffices to show that $\R\not\subgraph \constructionone{T,G'}$. We show the contrapositive: suppose that $\R\subgraph \constructionone{T,G'}$ and let $\{ \sigma_i : i \in \nats\}$ such that for every $i$, $(\sigma_i,\sigma_{i+1}) \in E(\constructionone{T,G'})$ be the vertices of the copy of $\R$ in $\constructionone{T,G'}$. By definition of $E(\constructionone{T,G'})$, $(\sigma,\tau) \in E(\constructionone{T,G'})\implies \sigma \sqsubset \tau \lor \tau \sqsubset \sigma$. Hence, $T$ contains infinitely many nodes comparable to each other, i.e.\ $T \in \illfounded$ and this proves the claim.
 			\end{itemize}
 			To conclude the proof, notice that $T \in \illfounded$ if and only if $G \inducedsubgraph \constructionone{T, G'}$.
 		\end{proof}

		The next proposition gives us a lower bound to the complexity of sets of the form $\{H \in \repspaceallgraphs : G \subgraph H\}$ where $G$ is c.e.\ and infinite.
 \begin{proposition}
	For any infinite c.e.\ graph $G$, the sets $\{H \in \repspaceallgraphs : G \subgraph H\}$ are $\Pi_2^0$-hard.
 \end{proposition}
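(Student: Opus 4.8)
The plan is to show $\Pi_2^0$-hardness by reducing the canonical $\Pi_2^0$-complete set $P_2 = \{p \in 2^\nats : (\exists^\infty i)(p(i)=1)\}$ (from \thref{firstcompletesets}) to $\{H \in \repspaceegraphs : G \subgraph H\}$, and then invoke \thref{enumerationabovecharacteristic} to transfer hardness to the characteristic-function representation. Since $G$ is infinite and c.e., by \thref{remarkccehyp} we may fix a computable $\repmap{\egraph}$-name for $G$, i.e.\ a computable enumeration of its vertices and edges. The key idea is to build, from $p \in \Cantor$, an enumeration of a graph $f(p)$ that enumerates a full copy of $G$ exactly when $p$ has infinitely many $1$'s, and otherwise enumerates only a finite fragment of $G$ (which, since $G$ is infinite, cannot contain a copy of $G$ as a subgraph).

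First I would set up the reduction $\function{f}{\Cantor}{\repspaceegraphs}$ as a stagewise enumeration: maintain a pointer into the fixed enumeration of $G$, and at stage $s$, if $p(s)=1$, advance the pointer by one step, enumerating the next vertex (and its incident edges among already-enumerated vertices) of the chosen copy of $G$; if $p(s)=0$, enumerate nothing new (or pad harmlessly). The crucial observation is that if $(\exists^\infty i)(p(i)=1)$, the pointer advances infinitely often, so $f(p)$ enumerates all of $G$, giving $G \subgraph f(p)$ (indeed $f(p) \cong G$). Conversely, if only finitely many $1$'s occur, the pointer stalls after finitely many steps, so $f(p)$ is a finite graph; being finite, it cannot have the infinite graph $G$ as a subgraph, whence $G \not\subgraph f(p)$. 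Thus $p \in P_2 \iff G \subgraph f(p)$, and $f$ is clearly computable since it only consults the computable name of $G$ and the bits of $p$ read so far.

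The main obstacle I anticipate is ensuring that $f(p)$ is a legitimate element of $\dom(\repmap{\egraph})$ in all cases — in particular, that every output respects the requirement that an edge $\str{i,j}$ is enumerated only after both endpoints $\str{i,i}$ and $\str{j,j}$ have appeared. This is handled by always enumerating a vertex before any of its incident edges when advancing the pointer, which the fixed enumeration of $G$ already guarantees, so the padding in the $p(s)=0$ case must be chosen not to violate this constraint (e.g.\ simply enumerating no symbol, or repeating a previously enumerated vertex code). A secondary point to verify carefully is the case of finitely many $1$'s: one must confirm that no \emph{proper} finite subgraph of $G$ happens to already contain an isomorphic copy of $G$ — but this is automatic because $G$ is infinite while the fragment $f(p)$ has finitely many vertices, and a subgraph copy of $G$ would require infinitely many vertices. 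Finally, I would remark that the same construction works verbatim for the induced subgraph relation, since the enumerated fragment being finite rules out an induced copy of the infinite $G$ for exactly the same cardinality reason, so the hardness result holds for $\allgraphrelations$ uniformly, matching the pattern of the preceding theorems.
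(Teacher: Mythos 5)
There is a genuine gap in how you handle the two representations. The proposition asserts hardness for \emph{both} sets $\setofgraphs{G}{\repspacegraphs}{\subgraph}$ and $\setofgraphs{G}{\repspaceegraphs}{\subgraph}$, and \thref{enumerationabovecharacteristic} only gives the reduction $\setofgraphs{G}{\repspacegraphs}{\subgraph}\effectivewadge \setofgraphs{G}{\repspaceegraphs}{\subgraph}$. Hardness propagates \emph{upward} along $\effectivewadge$, so proving $P_2\effectivewadge\setofgraphs{G}{\repspaceegraphs}{\subgraph}$, as you do, says nothing about the characteristic-function version; the transfer you invoke goes in the wrong direction. To cover both cases one must reduce $P_2$ to $\setofgraphs{G}{\repspacegraphs}{\subgraph}$ and then let \thref{enumerationabovecharacteristic} hand you the enumeration case for free, which is exactly what the paper does.

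Your pointer-stalling construction does not adapt directly to $\repspacegraphs$: a $\repmap{\graph}$-name forces you to commit an irrevocable $0/1$ value for each pair $\str{i,j}$ as you go, so you cannot ``enumerate nothing new'' while waiting for the next $1$ in $p$ and later decide that a stalled position is, after all, a vertex or edge of the copy of $G$. This is compounded by $G$ being merely c.e.: edges between already-placed vertices may show up arbitrarily late in $G$'s enumeration, after the corresponding bits of the output name have been fixed. Repairing this requires either the relocation machinery of \thref{fromcetocomputable} or, as the paper does, abandoning the attempt to build a copy of $G$ altogether: set $V(H)=\{n:p(n)=1\}$ and make $H$ complete on that vertex set, so that $p_H(\str{n,m})$ depends only on $p(n)$ and $p(m)$ and is computable with no deferred commitments. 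Then infinitely many $1$'s yield $H\cong\complete{\omega}$, which contains \emph{every} countable graph as a subgraph, while finitely many $1$'s yield a finite $H$. (Note that this trick is specific to $\subgraph$; your closing claim that the argument transfers verbatim to $\inducedsubgraph$ is not needed for this proposition and would fail for the complete-graph construction.) Your $\repspaceegraphs$-reduction itself is fine, but it only establishes the weaker half of the statement.
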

 \begin{proof}
 	To prove the claim, recall that, by \thref{firstcompletesets} the set $\{p \in \Cantor: (\exists^\infty n)(p(n)=1)\}$ is $\Pi_2^0$-complete and by \thref{enumerationabovecharacteristic}, it suffices to show that $ \{p \in \Cantor: (\exists^\infty n)(p(n)=1)\}\effectivewadge \{H \in \repspacegraphs : G \subgraph H\}$. Let $p \in \Cantor$ and consider $G' \in \mathbf{F}(G)$. We compute $H$ letting $V(H)\defas\{n:p(n)=1\}$ and $E(H)\defas\{(n,m) : p(n)=p(m)=1\}$. Notice that:
 	\begin{itemize}		
 		\item if $(\exists^{\infty} i)(p(i)=1)$ then $\complete{\omega} \subgraph H$, and since any graph is a subgraph of $\complete{\omega}$ we obtain that $G \inducedsubgraph G' \subgraph H$;
 		\item if $(\forall^{\infty} i)(p(i)=0)$ then $H$ is finite and since $G$ is infinite, we obtain that $G\not\subgraph H$.
 	\end{itemize}
 	Hence, $(\exists^{\infty} i)(p(i)=1)$ if and only if $G\subgraph H$ and this concludes the proof.
 \end{proof}

 So far, the sets of graphs of the form $\setofgraphs{G}{\repspaceallgraphs}{\allgraphrelations}$ for a fixed graph $G$ we considered are $\Gamma$-complete for $\Gamma \in \{\Sigma_1^0,\Sigma_2^0,\Sigma_1^1\}$: by the end of this section, we define sets of the same form that are $\Gamma$-complete with $\Gamma$ varying along the lightface arithmetical hierarchy. To do so, for $k \in \nats$, consider the graphs $\mathsf{T}_{2k+1}$ and $\mathsf{F}_{2k+2}$ where:
 $$V(\mathsf{T}_{2k+1})\defas \{\sigma \in \baire:\length{\sigma}\leq k\} \text{, } V(\mathsf{F}_{2k+2}) \defas \{\sigma \in \baire : 0 < \length{\sigma} \leq k+1\} \text{ and, }$$
 for $G \in \{\mathsf{T}_{2k+1}, \mathsf{F}_{2k+2}\}$, 
	  \[E(G)\defas \{(\sigma,\tau) \in E(G): \sigma,\tau \in V(G) \land (\sigma=\tau[\length{\tau}-1]\lor \tau=\sigma[\length{\sigma}-1])\}.\]
	  To have an intuitive idea on the graphs' definition above, notice that $\mathsf{T}$ and $\mathsf{F}$ stand respectively for \quot{tree}\ and \quot{forest} where a forest is the disconnected union of countably many trees. 

	  Given trees $T$ and $S$, we define the \textit{disjoint union} of $T$ and $S$ as $T\sqcup S=\{\str{}\} \cup \{\str{0} \tau: \tau \in  T \} \cup \{\str{1} \tau: \tau \in  S \}$. Of course, this is still a tree, and the construction can be easily generalized to countably many trees letting $\disjointunion{i\in \nats}{T^i}\defas\{\str{}\} \cup \{\str{i} \tau:\tau \in T^i \land i \in \nats\}$.  Starting from $\mathsf{T}_1 \cong  (\{\str{}\}, \emptyset)$ (the graph/tree having a unique vertex), the following relations hold:
	  \[\mathsf{F}_{2k+2}\cong \infinitelycopies{ \mathsf{T}_{2k+1}} \text{ and } \mathsf{T}_{2k+3}\cong \disjointunion{i \in \nats}{T_i} \text{ where } T_i \text{ is a connected component of }\mathsf{F}_{2k+2}.\]
 
	  Notice that all $\mathsf{T}_i$'s for $i \in \nats$ are well-founded, and for a well-founded tree $T$ we can define its \emph{height} as $\max\{\length{\sigma} : \sigma \in T\}$: notice that the $k$ in $\mathsf{T}_{2k+1}$ (respectively $\mathsf{F}_{2k+2}$) corresponds to the height of $\mathsf{T}_{2k+1}$  (the connected components of  $\mathsf{F}_{2k+2}$). For example, $\mathsf{T}_1= \{\str{}\}$ is a tree of height $0$, $\mathsf{F}_{2}$ is the forest having infinitely many trees of height $0$, $\mathsf{T}_3$ is a tree of height $1$ with infinitely many children, $\mathsf{F}_4$ is the forest made by infinitely many copies of $\mathsf{T}_3$ and so on.

	  \begin{theorem}
		\thlabel{subgraphcomplexity}
		\
	  \begin{itemize}
		  \item  The sets $\setofgraphs{\mathsf{T}_{2k+1}}{\repspaceallgraphs}{\subgraph}$ are $\Sigma_{2k+1}^0$-complete;
		  \item The sets $\setofgraphs{\mathsf{F}_{2k+2}}{\repspaceallgraphs}{\subgraph}$ are $\Pi_{2k+2}^0$-complete.
	  \end{itemize}
	  \end{theorem}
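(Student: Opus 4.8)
The plan is to follow the three-step recipe stated before \thref{upperboundandfinite}: bound the enumeration version from above in the correct lightface class, exhibit an effective Wadge reduction from a known complete set into the characteristic version, and conclude via \thref{enumerationabovecharacteristic}. The combinatorial heart of both items is a single auxiliary predicate. For a graph $H$ named by $p$, set $Q_0(v):\iff v\in V(H)$ and, for $j\geq 1$, $Q_j(v):\iff(\exists^\infty w)((v,w)\in E(H)\wedge Q_{j-1}(w))$; intuitively $Q_j(v)$ asserts that $v$ can serve as the root of a subgraph copy of $\mathsf{T}_{2j+1}$. I would then establish the two equivalences
\[ \mathsf{T}_{2k+1}\subgraph H \iff (\exists v)\,Q_k(v),\qquad \mathsf{F}_{2k+2}\subgraph H \iff (\exists^\infty v)\,Q_k(v), \]
and read the complexity off from them.

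For the upper bound, a routine quantifier count shows that in both representations $Q_j$ is $\Pi^0_{2j}$ for $j\geq 1$ (each $\exists^\infty=\forall n\,\exists w>n$ contributes one $\forall\exists$ block, and the $\Sigma^0_1$ edge-atom of $\repspaceegraphs$ is absorbed), so $(\exists v)Q_k(v)$ is $\Sigma_{2k+1}^0$ and $(\exists^\infty v)Q_k(v)$ is $\Pi_{2k+2}^0$. The easy directions of the equivalences (a genuine copy witnesses the predicate) follow by induction on $k$. The substantial content is a \emph{realization lemma}: if $Q_k(v)$ holds then $v$ roots an actual vertex-disjoint copy of $\mathsf{T}_{2k+1}$, and if $Q_k$ holds at infinitely many vertices then $H$ contains infinitely many pairwise disjoint copies. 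I would prove this by a single breadth-first, fresh-vertex construction that builds all required copies simultaneously in stages, maintaining the invariant that every already-placed node whose remaining subtree is $\mathsf{T}_{2j+1}$ is mapped to a vertex satisfying $Q_j$; since $Q_j$ supplies infinitely many admissible children while only finitely many vertices are committed at any finite stage, one can always pick a previously unused child. Combined with \thref{enumerationabovecharacteristic}, this places the $\repspaceegraphs$- and $\repspacegraphs$-sets in $\Sigma_{2k+1}^0$, resp.\ $\Pi_{2k+2}^0$.

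For hardness I would use one construction for both cases. By \thref{firstcompletesets}, $P_{2k+1}$ and $P_{2k+2}$ are complete. Given $p\in 2^{\nats^{k+1}}$, let $f(p)=\disconnectedunion{n_0\in\nats} H_{n_0}$, where $H_{n_0}$ is the graph-theoretic tree of height $k$ rooted at $\rho_{n_0}$ whose nodes are all tuples $(n_0,\dots,n_i)$ with $i\leq k-1$, together with a leaf $(n_0,\dots,n_k)$ for each tuple with $p(n_0,\dots,n_k)=1$, with the obvious parent–child edges; this is computable. The key structural lemma is that in a height-$k$ tree only the root can be the image of the root of $\mathsf{T}_{2k+1}$: that root must emit infinitely many internally disjoint paths of length $k$, but from a vertex at depth $d\geq 1$ all but one such path must descend, and descending paths have length at most $k-d<k$, while the single ascending direction offers only one path. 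Hence $\rho_{n_0}$ roots a copy iff $(\exists^\infty n_1)\cdots(\exists^\infty n_k)\,p(n_0,\dots,n_k)=1$ (the nontrivial $\Leftarrow$ being the realization lemma applied inside $H_{n_0}$), so $\mathsf{T}_{2k+1}\subgraph f(p)\iff p\in P_{2k+1}$. Moreover two disjoint copies would need distinct centers, both some $\rho_{n_0}$, so each component carries at most one copy; the maximal number of pairwise disjoint copies in $f(p)$ thus equals the number of "good" components, giving $\mathsf{F}_{2k+2}=\infinitelycopies{\mathsf{T}_{2k+1}}\subgraph f(p)\iff p\in P_{2k+2}$. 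These are the two effective Wadge reductions.

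The step I expect to be the main obstacle is the realization lemma, precisely because each copy of $\mathsf{T}_{2k+1}$ is infinite: a naive greedy choice of one full subtree first commits infinitely many vertices and can starve the remaining roots (for instance when all high-degree vertices are mutually adjacent). The dovetailed construction that adds only one child per node per stage — legitimate because the $Q_j$-invariant keeps infinitely many children available after any finite commitment — is what circumvents this, and phrasing the invariant correctly along the induction is the delicate point. The base cases $k=0$ ($\mathsf{T}_1$ a single vertex, $\mathsf{F}_2$ infinitely many isolated vertices) and the check that the counts agree in $\repspacegraphs$ and $\repspaceegraphs$ are routine and handled separately.
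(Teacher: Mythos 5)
Your proposal is correct, and the upper-bound half coincides with the paper's: your predicate $Q_j$ is exactly the normal form of \thref{subgraphcondition}, and your ``realization lemma'' (dovetailed, fresh-vertex construction exploiting that only finitely many vertices are committed at each stage while $\exists^\infty$ keeps supplying admissible children) is precisely the paper's map $\iota$ in the proof of that lemma. Where you genuinely diverge is the hardness half. The paper (\thref{treesforests}) proceeds by induction on $k$: it reduces the base case to ${Forests}_2$, then uses \thref{Aomega} to lift a $\Pi^0_{2k+2}$-complete set of sequences of forests to ${Forests}_{2k+4}$ by re-rooting the trees of each forest under a fresh root and taking disconnected unions. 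You instead give a single direct effective Wadge reduction from the complete sets $P_{2k+1}$, $P_{2k+2}$ of \thref{firstcompletesets}, building one forest of height-$k$ trees indexed by tuples and isolating the structural fact that in a tree of height $k$ only the root can emit infinitely many pairwise internally disjoint paths of length $k$ (with the degenerate case $k=1$ handled by a degree count). Your argument is self-contained and makes explicit the combinatorial reason that the paper hides inside its ``it is easy to check'' steps of the induction, whereas the paper's route is more modular and, importantly, is stated so as to yield hardness already for the subspace $\bigotimes\tree_{\leq k}$, a refinement that is reused later in \thref{maintheorem_findubgraphgn}. Your construction in fact also produces graphs in $\bigotimes\tree_{\leq k}$, so it recovers that refinement too, though you do not remark on it; if you intend your proof to substitute for \thref{treesforests} you should record this explicitly.
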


The proof of this theorem is obtained combining  \thref{subgraphcondition,treesforests}: the first proves that the sets involved are respectively $\Sigma_{2k+1}^0$ and $\Pi_{2k+2}^0$. The second one that (subspaces of) $\repspaceallgraphs$ are hard (and hence by \thref{subgraphcondition} complete) for the corresponding lightface classes.

\begin{lemma}
	\thlabel{subgraphcondition}
	Given a graph $H$, 
  \begin{itemize}
	  \item $\mathsf{T}_{2k+1} \subgraph H$ if and only if $(\exists v_0 \in V(H))(\exists^{\infty}v_1\in V(H))\dots(\exists^{\infty} v_{k}\in V(H))(\forall j<k)((v_{j},v_{j+1})\in E(H))$.
	  \item $\mathsf{F}_{2k+2} \subgraph H$ if and only if $(\exists^\infty v_0 \in V(H))(\exists^{\infty}v_1\in V(H))\dots(\exists^{\infty} v_{k}\in V(H))(\forall j<k)((v_{j},v_{j+1})\in E(H))$.
  \end{itemize}
  Hence,  $\setofgraphs{\mathsf{T}_{2k+1}}{\repspaceallgraphs}{\subgraph}$ and $\setofgraphs{\mathsf{F}_{2k+2}}{\repspaceallgraphs}{\subgraph}$ are respectively  $\Sigma_{2k+1}^0$ and $\Pi_{2k+2}^0$.
  \end{lemma}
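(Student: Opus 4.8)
The plan is to prove the two biconditionals characterizing when $\mathsf{T}_{2k+1}$ and $\mathsf{F}_{2k+2}$ embed as subgraphs, and then read off the complexity bounds directly from the quantifier structure of these characterizations. The key observation is that $\mathsf{T}_{2k+1}$ is, up to isomorphism, a rooted tree of height $k$ in which every internal node has infinitely many children, while $\mathsf{F}_{2k+2}$ is the disconnected union of infinitely many copies of $\mathsf{T}_{2k+1}$. So the right-hand sides simply spell out, level by level, what it means to find such a branching structure inside $H$.

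\medskip

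\emph{First I would handle the tree case} by induction on $k$, phrasing the inductive content as: for any vertex $v \in V(H)$, the subtree $\mathsf{T}_{2k+1}$ embeds into $H$ with its root mapped to $v$ if and only if $(\exists^\infty v_1)\dots(\exists^\infty v_k)$ such that $v, v_1,\dots,v_k$ form a path, where the $v_j$ range over distinct witnessing neighbors. For the base case $k=0$, $\mathsf{T}_1$ is a single vertex and embeds iff $V(H)\neq\emptyset$, matching $(\exists v_0)$. For the inductive step, note that embedding $\mathsf{T}_{2(k+1)+1}$ rooted at $v$ requires attaching infinitely many \emph{pairwise vertex-disjoint} copies of $\mathsf{T}_{2k+1}$ to $v$ via distinct neighbors. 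The forward direction is immediate from the embedding. The subtle converse direction is where the main obstacle lies: the characterization asserts only the \emph{existence of infinitely many neighbors each launching a path}, but a genuine subgraph embedding of $\mathsf{T}_{2k+1}$ demands that these infinitely many subtrees be realized on \emph{disjoint} vertex sets. I would argue that since at each level we have infinitely many choices ($\exists^\infty$), one can greedily select witnesses avoiding the finitely many vertices already committed at earlier stages, thereby extracting genuinely disjoint copies from the merely abstract witnessing structure. This disjointness-via-greedy-selection argument is the technical heart of the proof.

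\medskip

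\emph{The forest case follows quickly} once the tree case is established: $\mathsf{F}_{2k+2} \cong \infinitelycopies{\mathsf{T}_{2k+1}}$, so $\mathsf{F}_{2k+2} \subgraph H$ holds iff $H$ contains infinitely many vertex-disjoint copies of $\mathsf{T}_{2k+1}$. This amounts to requiring infinitely many valid choices for the root $v_0$, which is exactly why the leading quantifier becomes $\exists^\infty v_0$ rather than $\exists v_0$. The same greedy disjointness argument applies to guarantee that infinitely many roots each supporting an embedded $\mathsf{T}_{2k+1}$ can be pruned down to infinitely many disjoint copies.

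\medskip

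\emph{Finally, the complexity bounds} are read off mechanically. The predicate $(\forall j<k)((v_j,v_{j+1})\in E(H))$ is $\Sigma_1^0$ in the $\repspaceegraphs$ representation (an edge is witnessed by finding it in the enumeration) and $\Delta_1^0$ in $\repspacegraphs$; either way it contributes no quantifier alternation of significance. Prefixing it with the block $(\exists v_0)(\exists^\infty v_1)\cdots(\exists^\infty v_k)$ gives a $\Sigma^0_{2k+1}$ formula, since a single $\exists$ followed by $k$ many $\exists^\infty$ quantifiers produces $2k+1$ alternations (each $\exists^\infty$ abbreviates $\forall\exists$); replacing the initial $\exists$ with $\exists^\infty$ yields one additional $\forall\exists$ alternation and hence a $\Pi^0_{2k+2}$ formula for the forest. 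Applying \thref{enumerationabovecharacteristic} transfers these upper bounds between the two representations, establishing that the sets are $\Sigma^0_{2k+1}$ and $\Pi^0_{2k+2}$ respectively.
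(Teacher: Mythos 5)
Your proposal is correct and matches the paper's argument in essence: the forward direction is immediate, the converse is handled by greedily selecting, level by level, witnesses that are adjacent to the chosen parent and distinct from the finitely many vertices already committed (the paper formalizes this as an injection $\iota$ from $V(\mathsf{F}_{2k+2})\subseteq\baire$ into $V(H)$), and the complexity bounds follow by unfolding each $\exists^\infty$ as $\forall\exists$. The only difference is organizational — you induct on $k$ for the tree case and derive the forest case from it, whereas the paper constructs the forest embedding directly and notes the tree case is analogous — which does not change the substance.
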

  \begin{proof}
	In both cases, the left-to-right direction is trivial. For the opposite direction, we just prove the second item, as the proof of the first one is similar. Given a graph $H$, fix $k \in \nats$ and, for every $j < k$, let $(v_j^m)_{m \in \nats}$ be an enumeration of the $v_j$'s in $V(H)$: notice that, for $i \neq l \leq j$  vertices that appear in $(v_i^m)_{m \in \nats}$  may appear also in $(v_l^m)_{m \in \nats}$. We define a function $\iota$ from $V(\mathsf{F}_{2k+2})\subseteq \baire$ to $V(H)$ letting $\iota(\sigma)\defas v_{\length{\sigma}}^k$ if and only if
 $$(\forall \tau < \sigma)(\iota(\tau) \neq v_{\length{\sigma}}^k) \text{ and }(\exists l) (\iota(\sigma[\length{\sigma}-2])= v_{\length{\sigma}-1}^l \land (v_{\length{\sigma}}^k,v_{\length{\sigma}-1}^l) \in E(H))$$
			 Notice that, by definition of $H$, for any $\sigma$,  $v_{\length{\sigma}}^k$ as above always exists. Indeed, for $\sigma=\str{}$ this is immediate. For $\sigma \neq \str{}$, it suffices to notice that at any stage  we have defined only finitely many vertices in the range of $\iota$ while, by hypothesis, $\D{H}{\iota(\sigma[\length{\sigma}-2])}=\aleph_0$ (and hence we can always find $v_j^k$ satisfying the conditions above).

 Let $\iota(V(\mathsf{F}_{2k+2}))\defas\{\iota(\sigma) : \sigma \in V(\mathsf{F}_{2k+2})\} $ and consider the graph $H_{\restriction V}$: clearly, $H_{\restriction V} \cong \mathsf{F}_{2k+2}$ and this concludes the proof of the lemma.
  \end{proof}

\begin{remark}
	\thref{treesforests} shows that the complexity of such sets remain the same even if we restrict the graphs $H$ to trees/forests of length at most $k$: this comes in handy in \S \ref{othersubgraphproblems}.
\end{remark}

Given a subspace $\mathcal{G} \subseteq \repspaceallgraphs$, we define $\bigotimes \mathcal{G} \defas \{\disconnectedunion{n \in \nats}{G_n} : (\forall n)(G_n \in \mathcal{G})\}$. We denote by $\tree_{\leq k}$ the represented spaces of trees of height at most $k$: that is, 
  $$\tree_{\leq k} \defas \{T \in \tree: (\forall \sigma \in T)(\length{\sigma}\leq k) \}.$$

  Clearly $\tree_{\leq k}$ is a subspace of $\repspacegraphs$, and in the next lemma we consider the space $\bigotimes\tree_{\leq_k}$.

\begin{lemma}
	\thlabel{treesforests}
	For $k \in \nats$:
	\begin{itemize}
		\item ${Trees}_{2k+1}\defas\{G \in \bigotimes\tree_{\leq_k}:  \mathsf{T}_{2k+1} \subgraph G \}$ is $\Sigma_{2k+1}^0$-hard;
		\item 	${Forests}_{2k+2}\defas\{G \in \bigotimes\tree_{\leq_k}: \mathsf{F}_{2k+2} \subgraph G\}$ is $\Pi_{2k+2}^0$-hard.
	\end{itemize}
\end{lemma}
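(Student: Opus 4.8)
The plan is to prove both hardness statements with a single uniform construction, reducing the complete sets of \thref{firstcompletesets}: I will exhibit a computable map $p\mapsto G(p)$ witnessing $P_{2k+1}\effectivewadge {Trees}_{2k+1}$ and, \emph{using the very same map}, $P_{2k+2}\effectivewadge {Forests}_{2k+2}$. The point that makes one construction serve both cases is that membership in $P_{2k+1}$ (resp.\ $P_{2k+2}$) is the quantifier tower $(\exists n_0)(\exists^\infty n_1)\cdots(\exists^\infty n_k)$ (resp.\ $(\exists^\infty n_0)(\exists^\infty n_1)\cdots(\exists^\infty n_k)$) applied to $p(n_0,\dots,n_k)=1$, and these match \emph{exactly} the subgraph criteria for $\mathsf{T}_{2k+1}$ and $\mathsf{F}_{2k+2}$ supplied by \thref{subgraphcondition}; only the leading quantifier differs, and that difference corresponds precisely to $\mathsf{F}_{2k+2}\cong\infinitelycopies{\mathsf{T}_{2k+1}}$.

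Given $p\in 2^{\nats^{k+1}}$, I build $G(p)=\disconnectedunion{n_0\in\nats} C_{n_0}$, one component per value of $n_0$. The vertices of $C_{n_0}$ are the strings $\str{n_0,n_1,\dots,n_j}\in\baire$ with $0\le j\le k$; every such string of length at most $k$ is always present, while a \emph{leaf} $\str{n_0,\dots,n_k}$ (length $k+1$) is present iff $p(n_0,\dots,n_k)=1$, and edges join each present string to its present one-step extensions. Then each $C_{n_0}$ is a tree of height $\le k$ rooted at $\str{n_0}$, so $G(p)\in\bigotimes\tree_{\leq k}$; and since every vertex/edge bit is decided by reading a single value of $p$, the map is a computable $\repmap{\graph}$-name. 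The key design choice is to keep \emph{all} internal nodes present: this avoids any noncomputable pruning (we never need to know whether a witness will eventually appear below a node), while a node at depth $k-1$ has infinite degree iff $(\exists^\infty n_k)\,p(n_0,\dots,n_k)=1$, and this propagates correctly up the tree.

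The heart of the matter is the following rooted-embedding claim: \emph{if $C$ is any tree of height $\le k$ and $\mathsf{T}_{2k+1}$ embeds in $C$ as a subgraph with the root of $\mathsf{T}_{2k+1}$ sent to $v_0$, then $v_0$ is the root of $C$.} Indeed, the root of $\mathsf{T}_{2k+1}$ has infinite degree, so the embedding must use infinitely many children of $v_0$ (only one neighbour of $v_0$ can lie above it); from any such child the copy descends a further $k-1$ levels along a path that avoids $v_0$, hence stays among the descendants of that child, reaching depth $\mathrm{depth}(v_0)+k$, which cannot exceed $k$ — forcing $\mathrm{depth}(v_0)=0$. A downward induction on the levels then unwinds the nested degree conditions and shows that the root $\str{n_0}$ of $C_{n_0}$ carries a copy of $\mathsf{T}_{2k+1}$ if and only if $(\exists^\infty n_1)\cdots(\exists^\infty n_k)\,p(n_0,\dots,n_k)=1$ (the base case being that a depth-$(k-1)$ node roots a star $\mathsf{T}_3$ iff it has infinitely many leaf-children).

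Both reductions now follow. Since $\mathsf{T}_{2k+1}$ is connected, $\mathsf{T}_{2k+1}\subgraph G(p)$ iff $\mathsf{T}_{2k+1}\subgraph C_{n_0}$ for some $n_0$, i.e.\ iff $(\exists n_0)(\exists^\infty n_1)\cdots(\exists^\infty n_k)\,p(\bar n)=1$, which is exactly $p\in P_{2k+1}$; this gives $\Sigma_{2k+1}^0$-hardness. For the forest case, by the claim every copy of $\mathsf{T}_{2k+1}$ inside $G(p)$ is rooted at some component root $\str{n_0}$, and each component contains exactly one such vertex, so any family of pairwise disjoint copies uses distinct components; hence $\mathsf{F}_{2k+2}\cong\infinitelycopies{\mathsf{T}_{2k+1}}$ embeds iff infinitely many component roots carry a copy, i.e.\ iff $(\exists^\infty n_0)(\exists^\infty n_1)\cdots(\exists^\infty n_k)\,p(\bar n)=1=P_{2k+2}$, giving $\Pi_{2k+2}^0$-hardness. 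I expect the rooted-embedding claim to be the main obstacle: the whole argument hinges on ruling out ``sideways'' or ``upward'' embeddings, and it is precisely the height bound $k$ together with the forced infinite \emph{downward} branching that secures it; the computability of the map and the tower bookkeeping are then routine.
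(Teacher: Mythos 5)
Your proof is correct, and it takes a genuinely different route from the paper's. The paper argues by induction on $k$: the base case reduces $\{p \in \Cantor : (\forall^\infty i)(p(i)=0)\}$ to ${Forests}_{2}$ via a star, and the inductive step combines \thref{Aomega} (which lifts a $\Pi_n^0$-complete $A$ to the $\Sigma_{n+1}^0$-complete $A_\infty^0$ and the $\Pi_{n+2}^0$-complete $A_\infty^1$) with the root-joining operation $\disjointunion{i \in \nats}{T^{n,i}}$ to climb two levels at a time; the combinatorial heart of each step --- that a copy of $\mathsf{T}_{2k+3}$ inside a tree of height $k+1$ must be rooted at the root --- is dismissed as ``easy to check''. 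You instead give a single explicit map $p \mapsto G(p)$ that reduces the canonical complete sets $P_{2k+1}$ and $P_{2k+2}$ of \thref{firstcompletesets} simultaneously, matching the quantifier towers against the criteria of \thref{subgraphcondition}, and you isolate and actually prove the rooted-embedding claim that the paper leaves implicit (your depth argument --- a length-$(k-1)$ simple path from a child of $v_0$ avoiding $v_0$ must descend, forcing $\mathrm{depth}(v_0)+k\leq k$ --- is sound, as is the consequence that pairwise disjoint copies of $\mathsf{T}_{2k+1}$ occupy distinct components, which is exactly what the forest case needs). Unrolling the paper's induction yields essentially your graph (one fully internally branching height-$k$ tree per $n_0$, with leaves switched on by $p$), so the two constructions coincide in substance; what your version buys is a one-shot verification and an explicit statement of the key structural fact, at the cost of doing the level bookkeeping by hand rather than delegating it to \thref{Aomega}. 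Two minor points to attend to in a polished write-up: the degenerate case $k=0$ (where $\mathsf{T}_1$ is a single vertex and the rooted claim is vacuous) should be noted separately, as the paper also does via $P_1$ and $P_2$; and the components $C_{n_0}$ should be re-indexed with root $\str{}$ so that they literally lie in $\tree_{\leq k}$ as defined.
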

\begin{proof}
	By \thref{subgraphcondition}, ${Trees}_{2k+1}$ and ${Forests}_{2k+2}$ are respectively $\Sigma_{2k+1}^0$ and $\Pi_{2k+2}^0$ sets.

	To show that the sets above are complete for the corresponding lightface classes, we first prove by induction on $k$ that ${Forests}_{2k+2}$ is $\Pi_{2k+2}^0$-complete. For the base case, since $\{p \in \Cantor : (\forall^\infty i)(p(i)=0)\}$ is $\Pi_2^0$-complete (\thref{firstcompletesets}), it suffices to show that $\{p \in \Cantor : (\forall^\infty i)(p(i)=0)\}\effectivewadge{Forests}_{2}$. Given $p \in \Cantor$, compute $T\defas \{\str{}\} \cup \{\str{n}: p(n)=0\}$ and notice that
	$$p \in \{p \in \Cantor : (\exists^\infty i)(p(i)=0)\} \iff \D{T}{\str{}}=\aleph_0 \iff \mathsf{F}_2 \subgraph T.$$
	Assuming that ${Forests}_{2k+2}$ is $\Pi_{2k+2}^0$-complete we aim to show that ${Forests}_{2(k+1)+2}$ is $\Pi_{2k+4}^0$-complete.
	By \thref{Aomega},
	$${Forests}_{2k+2}^{0}\defas \{ (G_n)_{n \in \nats} \in (\bigotimes\tree_{\leq_k})^\nats : (\exists^\infty n)(G_n \in {Forests}_{2k+2})\} \text{ is }\Pi_{2k+4}^0\text{-complete}.$$
	Notice that, given $(G_n)_{n \in \nats} \in {Forests}_{2k+2}^{0}$ for every $n$, $G_n= \disconnectedunion{i \in \nats}{(T^{n,i})_{i \in \nats}}$ where $T^{n,i} \in \tree_{\leq k}$ : given $(G_n)_{n \in \nats} \in (\bigotimes\tree_{\leq_k})^\nats$, compute $F \defas \disconnectedunion{n \in \nats}{\big(\disjointunion{i \in \nats}{T^{n,i}}\big)}$. Informally, $F^n$ is obtained first connecting, for every $n \in \nats$, to a new root the roots of the $\{T^{n,i} : i \in \nats\}$ and then considering the disconnected union of the resulting trees: notice that, by hypothesis all $T^{n,i}$ are in $\tree_{\leq k}$ and, by construction all connected components of $F$ are in $\tree_{\leq k+1}$, and hence $F \in \bigotimes\tree_{\leq k+1}$. It is easy to check that  $ (G_n)_{n \in \nats} \in {Forests}_{2k+2}^{0} \iff F \in {Forests}_{2k+4}$,	and hence  ${Forests}_{2k+4}$ is $\Pi_{2k+4}^0$-complete.

	To conclude the proof, it suffices to prove for any $k \in \nats$, ${Trees}_{2k+1}$ is $\Sigma_{2k+1}^0$-complete. For $k=0$ we can prove the claim showing that $P_1 \effectivewadge Trees_1$: the proof is an easy adaptation of the one given to prove  $P_2\effectivewadge{Forests}_{2}$. For $k>0$, the proof is almost the same of the one showing that ${Forests}_{2k+2}$ is $\Pi_{2k+2}^0$-complete. Indeed, by \thref{Aomega}
	$${Trees}_{2k+2}^{0}\defas \{ (G_n)_{n \in \nats} \in (\bigotimes\tree_{\leq_k})^\nats : (\exists n)(G_n \in {Forests}_{2k+2})\} \text{ is }\Sigma_{2k+3}^0\text{-complete},$$
and we compute $F$ as above. It is easy to check that  $ F \in {Forests}_{2k+2}^{1} \iff F \in {Tree}_{2k+3}$,	and hence  ${Tree}_{2k+3}$ is $\Sigma_{2k+3}^0$-complete and this concludes the proof.
\end{proof}

Due to the fact that we can always think an element of (products) of $\tree_{\leq k}$ as an element of $\repspacegraphs$, the previous lemma, together with \thref{subgraphcondition}, proves \thref{subgraphcomplexity}.

We leave open whether there exists a graph $G$ such that $\setofgraphs{G}{\repspaceallgraphs}{\subgraph} \in \Gamma$ for $\Gamma \notin \{\Sigma_{2k+1}^0,\Pi_{2k+2}^0, \Sigma_1^1\}$ for $k \in \nats$.

\begin{remark}
Notice that from the results presented in this section  we also obtain the complexity of the same sets of graphs in the boldface hierarchy: in this context, graphs are given via their characteristic functions, i.e.\ as element of $\repspacegraphs$, and completeness is defined with respect to Wadge reducibility. The Wadge reductions are the effective ones we used to prove the statements for the lightface case. We do not restate all the results for the boldface case, except for the next one that gives a sort of dichotomy for $\setofgraphs{G}{\repspacegraphs}{\inducedsubgraph}$: this kind of results can be found for different structures for example in \cite{camerlo2005continua}.
\end{remark}

\begin{theorem}
	For any finite graph $G$, $\setofgraphs{G}{\repspacegraphs}{\inducedsubgraph}$ is $\boldfaceSigma_1^0$-complete, otherwise it is $\boldfaceSigma_1^1$-complete.
\end{theorem}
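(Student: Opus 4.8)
The plan is to read off both halves of the dichotomy from the lightface material of this section, upgrading to the boldface setting by the single observation that a continuous Wadge reduction may use (a name for) the fixed graph $G$ as a parameter. First I would dispatch the finite case. If $G$ is finite, item $(iii)$ of \thref{upperboundandfinite} already shows $\setofgraphs{G}{\repspacegraphs}{\inducedsubgraph}$ is lightface $\Sigma_1^0$-complete; in particular it is open, hence $\boldfaceSigma_1^0$. The hardness witness produced there is a computable---thus continuous---reduction of $\{p \in \Cantor : (\exists i)(p(i)=1)\}$ to it. Since this source is a universal open subset of $\Cantor$, i.e.\ $\boldfaceSigma_1^0$-complete, and since ``$B$ is $\boldfaceGamma$-hard and $B \wadgereducible A$'' transfers hardness to $A$, the very same map witnesses boldface $\boldfaceSigma_1^0$-hardness. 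Thus for finite $G$ the set is $\boldfaceSigma_1^0$-complete.

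For the infinite case, fix an infinite graph $G$ together with a name for it. The $\Sigma_1^1$ formula for $G\inducedsubgraph H$ displayed in the proof of \thref{upperboundandfinite}(iii) is $\Sigma_1^1$ relative to the parameters $V(G)$ and $E(G)$; since boldface complexity is exactly lightface complexity allowing real parameters, $\setofgraphs{G}{\repspacegraphs}{\inducedsubgraph}$ is $\boldfaceSigma_1^1$. For the matching lower bound I would reduce the $\boldfaceSigma_1^1$-complete set $\illfounded$ (\thref{Complexityresults}) along the two overlapping cases of \thref{upperboundsinducedrayomega,upperboundsinducedkomega}, now taking $G$ as the oracle of a continuous reduction. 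If $\R\subgraph G$, the map $T\mapsto\constructionone{T,G}$ (continuous, being computable relative to $G$) does the job exactly as in \thref{upperboundsinducedrayomega}: ill-founded $T$ gives $G\inducedsubgraph\constructionone{T,G}$ by \thref{Constructionongraphs}, while for well-founded $T$ a copy of $\R$ inside $\constructionone{T,G}$ would force, through an iterated longest-common-prefix argument, an infinite branch of $T$. If instead $\R\not\subgraph G$, then $\complete{\omega}\not\inducedsubgraph G$ (an induced $\complete{\omega}$ contains $\R$ as a subgraph, so would give $\R\subgraph G$), and I would use $T\mapsto\constructiontwo{T,G}$ as in \thref{upperboundsinducedkomega}: the ill-founded direction is again \thref{Constructionongraphs}, and for well-founded $T$ the chain--antichain principle (\thref{inducedsubgraphcacprinciple}) turns any infinite induced copy of $G$ into an induced copy of $\complete{\omega}$, contradicting $\complete{\omega}\not\inducedsubgraph G$. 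Since $\illfounded$ is $\boldfaceSigma_1^1$-hard and reduces to the set, the set is $\boldfaceSigma_1^1$-hard, hence $\boldfaceSigma_1^1$-complete.

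The one genuine point to check---and the place where the present argument departs from \thref{upperboundsinducedrayomega,upperboundsinducedkomega}---is that those theorems are stated for c.e.\ graphs and feed the computable approximation $G'\in\mathbf{F}(G)$ of \thref{fromcetocomputable} into the constructions, whereas here $G$ is an arbitrary infinite graph. In the boldface world, however, the full characteristic function of $G$ is available as advice, so we may run $\constructionone{\cdot,G}$ and $\constructiontwo{\cdot,G}$ on $G$ itself; this only streamlines the verification, since there are no longer spurious finite-degree vertices to account for. The main (and essentially only) obstacle is thus the bookkeeping that the two cases really exhaust all infinite $G$, which reduces to the implication $\R\not\subgraph G\Rightarrow\complete{\omega}\not\inducedsubgraph G$ used above; no new combinatorics beyond the cited theorems is required.
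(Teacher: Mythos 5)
Your proof is correct and follows essentially the same route as the paper: the upper bounds are relativized from \thref{upperboundandfinite}, and the lower bound combines the reductions $T\mapsto\constructionone{T,G}$ and $T\mapsto\constructiontwo{T,G}$ of \thref{upperboundsinducedrayomega,upperboundsinducedkomega}, now run with $G$ itself as a continuous parameter. The only cosmetic difference is that you split the infinite case on $\R\subgraph G$ versus $\R\not\subgraph G$ whereas the paper splits on $\complete{\omega}\inducedsubgraph G$ versus $\complete{\omega}\not\inducedsubgraph G$; both dichotomies rest on the same implication between these conditions and differ only in which construction is invoked on the overlap.
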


\begin{proof}
	For finite $G$'s the proof is the same of the one in \thref{upperboundandfinite}$(iii)$. 
	
	For infinite $G$'s, the proof that $\setofgraphs{G}{\repspacegraphs}{\inducedsubgraph}$ is $\boldfaceSigma_1^1$ is the same of \thref{upperboundandfinite}. For completeness, we show that $\illfounded \wadgereducible \setofgraphs{G}{\repspacegraphs}{\inducedsubgraph}$. To do so, we combine the effective Wadge reductions used in the proofs of \thref{upperboundsinducedkomega,upperboundsinducedrayomega},  noticing that if $\complete{\omega} \inducedsubgraph G$ then $\R\subgraph G$. Then it is easy to check that the following function is the desired Wadge reduction $$f_G(T) \defas \begin{cases}
		\constructionone{T,G}&\text{ if } \complete{\omega}\inducedsubgraph G,\\
		\constructiontwo{T,G}&\text{ if } \complete{\omega}\not\inducedsubgraph G,
	\end{cases}$$
	i.e., $T \in \illfounded \iff G \inducedsubgraph f_G(T)$.
\end{proof}

\subsection{Deciding  (induced) subgraphs problems in the Weihrauch lattice}
\label{DecisionProblemsGraphs}
In this section we consider problems of the following form:
	\begin{definition}
	\thlabel{def:isg}
	For a computable graph $G$, we define the functions $\function{\isg_G}{\repspacegraphs}{2}$ and $\function{\sg_G}{\repspacegraphs}{2}$ by
	$$\isg_G(H)=1 \iff G\inducedsubgraph H \text{ and } \sg_G(H)=1 \iff G\subgraph H.$$
	The same functions having domain $\repspaceegraphs$ are denoted respectively with $e\isg_G$ and $e\sg_G$.
	\end{definition}
	Notice that in \cite{bement2021reverse}, the authors considered only $\sg_G$ and $\isg_G$, and they denote these problems respectively with $\mathsf{SE}_G$ and $\mathsf{S}_G$.
   
   All the results in this section, are immediate consequences of  \S \ref{wadgecomplexityofgraphs}: indeed, in the following proofs, the forward and backward functionals witnessing the Weihrauch reduction are respectively the effective Wadge reduction coming from the corresponding result in \S \ref{wadgecomplexityofgraphs} and the identity. 
	\begin{proposition}
	 \thlabel{easycorollary}
	For any hyperarithmetical graph $G$,
	\[\isg_G \sweireducible e\isg_G \sweireducible \wf \text{ and } \sg_G \sweireducible e\sg_G \sweireducible \wf.\]
	 \end{proposition}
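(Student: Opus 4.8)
The plan is to exploit that each of the four problems is the characteristic function of a set of names, so that a strong Weihrauch reduction between two such decision problems is simply witnessed by an effective Wadge reduction between their yes-sets, with a trivial backward map. Concretely, write $A_f\subseteq\Baire$ for the set of names $p$ on which a decision problem $f$ answers $1$. Given decision problems $f,g$ and a computable $\function{\Phi}{\Baire}{\Baire}$ with $\Phi$ mapping $\dom(f)$-names into $\dom(g)$-names: if $p\in A_f\iff\Phi(p)\in A_g$, then $\Phi$ together with the identity on $2$ witnesses $f\sweireducible g$; if instead $p\in A_f\iff\Phi(p)\notin A_g$, then $\Phi$ together with the bit-flip $b\mapsto 1-b$ does so. In either case the backward map ignores the input name, so every reduction obtained is strong.

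First I would treat $\isg_G\sweireducible e\isg_G$ and $\sg_G\sweireducible e\sg_G$. Their yes-sets are $\setofgraphs{G}{Gr}{\inducedsubgraph}$ versus $\setofgraphs{G}{EGr}{\inducedsubgraph}$ (and likewise with $\subgraph$), and the forward functional is the trivial computable conversion of a $\repmap{\graph}$-name into a $\repmap{\egraph}$-name for the \emph{same} graph $H$ noted just before \thref{fromcetocomputable}, i.e.\ exactly the effective Wadge reduction of \thref{enumerationabovecharacteristic}. Since the represented graph is unchanged, $G\inducedsubgraph H$ (respectively $G\subgraph H$) holds of the input iff it holds of the output, so yes-set membership is preserved and the backward functional is the identity.

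Next I would treat $e\isg_G\sweireducible\wf$ and $e\sg_G\sweireducible\wf$. By \thref{upperboundandfinite}, for hyperarithmetical $G$ the yes-sets $\setofgraphs{G}{EGr}{\inducedsubgraph}$ and $\setofgraphs{G}{EGr}{\subgraph}$ are (lightface) $\Sigma_1^1$, hence their complements are $\Pi_1^1$. Since $\wellfounded$ is $\Pi_1^1$-complete by \thref{Complexityresults}, each complement admits a computable (effective Wadge) reduction $\Phi$ to $\wellfounded$, producing trees and satisfying $G\not\allgraphrelations\repmap{\egraph}(p)\iff\Phi(p)\in\wellfounded\iff\wf(\Phi(p))=1$. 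Thus $e\isg_G(H)=1-\wf(\Phi(p))$, and similarly for $e\sg_G$, so $\Phi$ with the bit-flip backward functional gives the reduction. The two chains in the statement then close up by transitivity of $\sweireducible$.

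I expect no serious obstacle, since the statement is a corollary of the Wadge-theoretic work in \S\ref{wadgecomplexityofgraphs}; the only point deserving care is the polarity of the $\wf$ reductions. As $\wf$ outputs $1$ exactly on well-founded trees, a $\Pi_1^1$ event, whereas ``$G$ is an (induced) subgraph of $H$'' is $\Sigma_1^1$, one cannot reduce the yes-set directly to $\wellfounded$; instead one reduces its complement and absorbs the mismatch into the bit-flip backward functional. The fact that, even for merely hyperarithmetical $G$, the forward map is genuinely (lightface) computable is precisely what \thref{upperboundandfinite} secures by placing the yes-sets in lightface $\Sigma_1^1$. Since the backward functional never consults the input name, all four reductions remain strong, as claimed.
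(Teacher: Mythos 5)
Your proposal is correct and follows essentially the same route as the paper: the first reduction in each chain is the name conversion underlying \thref{enumerationabovecharacteristic}, and the second uses the $\Sigma_1^1$ bounds from \thref{upperboundandfinite} together with the completeness of $\wellfounded$/$\illfounded$ to feed a tree to $\wf$ and flip the bit. The paper's proof is just a one-line citation of these facts; you have merely spelled out the details, including the polarity issue, correctly.
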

	\begin{proof}
		It follows by \thref{enumerationabovecharacteristic,upperboundandfinite} and the fact that $\wf$ answers any $\Sigma_1^1$-complete question relative to the input (see \S \ref{subsec:Weihrauch}).
	\end{proof}
The following theorem discusses the problems $\isg_G$ and $e\isg_G$ for finite graphs. 
\begin{theorem}
\thlabel{finiteinduced}
For any finite graph $G$, $\lpo \sweiequiv (e)\sg_G \sweiequiv \isg_G$.
If $G \cong \complete{n}$ for some $n>0$, then $\lpo \sweiequiv e\isg_G$, otherwise $\lpo' \sweiequiv e\isg_G$.
	 \end{theorem}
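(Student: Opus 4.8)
The plan is to read every equivalence off the effective Wadge classification already established in \thref{upperboundandfinite}, using the standard dictionary between $\Gamma$-complete decision problems and the tower $\lpo, \lpo', \lpo'', \dots$. First I would isolate the following principle: a decision problem $f \colon \repspacex \to 2$ whose positive-instance set $f^{-1}(1)$ (viewed as a set of names in $\dom(\repmap{X})$) is $\Sigma_{n+1}^0$-complete satisfies $f \sweiequiv \lpo^{(n)}$. This is the only ingredient needed, and it reduces the theorem to bookkeeping: since each of the problems below turns out $\sweiequiv \lpo$, the chain $\lpo \sweiequiv (e)\sg_G \sweiequiv \isg_G$ follows by transitivity of $\sweiequiv$.

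For the hardness half $\lpo^{(n)} \sweireducible f$ I would use that $f^{-1}(1)$ is $\Sigma_{n+1}^0$-hard: the effective Wadge reduction from a fixed complete set at this level (the one furnished by \thref{upperboundandfinite}) serves as the forward functional, sending an instance of $\lpo^{(n)}$ to a graph whose membership in $f^{-1}(1)$ encodes the answer; the backward functional merely transposes the two output bits and so ignores the original input, making the reduction strong. For the upper-bound half $f \sweireducible \lpo^{(n)}$ I would use that $f^{-1}(1) \in \Sigma_{n+1}^0$ and invoke the fact recalled in \S \ref{subsec:Weihrauch} that $\lpo^{(n)}$ answers $\Sigma_{n+1}^0$ (equivalently $\Pi_{n+1}^0$) questions about its input: the forward functional computes from the given name the corresponding $\lpo^{(n)}$-instance, and again the backward functional only transposes output bits. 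Thus both directions are strong with trivial backward functionals, which is what yields $\sweiequiv$ rather than mere $\weiequiv$.

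Applying the principle is then immediate. For finite $G$, \thref{upperboundandfinite} shows that $\setofgraphs{G}{EGr}{\subgraph}$, $\setofgraphs{G}{Gr}{\subgraph}$ and $\setofgraphs{G}{Gr}{\inducedsubgraph}$ --- the positive-instance sets of $e\sg_G$, $\sg_G$ and $\isg_G$ respectively --- are all $\Sigma_1^0$-complete, so each of these problems is $\sweiequiv \lpo$. For $e\isg_G$ the classification splits: when $G \cong \complete{n}$ the set $\setofgraphs{\complete{n}}{EGr}{\inducedsubgraph}$ is $\Sigma_1^0$-complete (detecting an induced copy of a complete graph needs only positive edge information), giving $e\isg_G \sweiequiv \lpo$; when $G \not\cong \complete{n}$ the set $\setofgraphs{G}{EGr}{\inducedsubgraph}$ is $\Sigma_2^0$-complete (a genuine non-edge of $G$ must be certified, a $\Pi_1^0$ condition in the enumeration representation, which pushes the whole predicate up one level), giving $e\isg_G \sweiequiv \lpo'$.

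I expect no genuine obstacle; the only point demanding care is the upper-bound half in the $\lpo'$ case, where I must confirm that a $\Sigma_2^0$ question about an enumeration-named graph is answerable by $\lpo'$ through a functional computable from the input alone. This is exactly the jump semantics of $\lpo$ recalled in the background --- a sequence converging to a $\lpo$-instance codes a $\Pi_2^0$, dually $\Sigma_2^0$, predicate --- so the only real work is tracking the direction of the output bit so as to keep every backward functional trivial.
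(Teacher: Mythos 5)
Your proposal is correct and follows essentially the same route as the paper: the authors likewise derive the theorem directly from \thref{upperboundandfinite}, taking the effective Wadge reductions as forward functionals and a trivial (input-independent) backward functional, together with the fact that $\lpo$ and $\lpo'$ answer $\Sigma_1^0$-complete and $\Sigma_2^0$-complete questions respectively. The only difference is cosmetic: the paper also cites \cite[Theorem 27]{bement2021reverse} for the $\isg_G$ case, whereas you subsume everything under the single general principle, which is exactly the uniformity the paper announces at the start of \S\ref{DecisionProblemsGraphs}.
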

\begin{proof}
	The fact that $\lpo \sweiequiv \isg_G$ is from \cite[Theorem 27]{bement2021reverse}. Essentially the same proof also gives us that $\lpo \sweiequiv \sg_G$. Anyway, all results are direct consequences of \thref{upperboundandfinite} and the fact that $\lpo$ and $\lpo'$ answer respectively any $\Sigma_1^0$.complete and $\Sigma_2^0$-complete questions relative to the input (see \S \ref{subsec:Weihrauch}).
\end{proof}
In \cite[Theorem 24]{bement2021reverse} the authors showed that $\wf  \sweiequiv (\mathsf{i})\sg_{\R}  \sweiequiv (\mathsf{i})\sg_{ \R\connectedunion{}{\cycle{n}}}$ for some $n>2$ and left open the following question:
\begin{equation}
\label{questionngraph}
\text{is there a computable graph } G \text{ such that } \lpo \strictlyweireducible \isg_G \strictlyweireducible \wf\text{?}
\end{equation}
By \thref{finiteinduced} we know that if such a $G$ exists, it must be infinite.

\begin{lemma}
   \thlabel{lemma1question}
   Let $G$ be a c.e.\ graph such that $\R\subgraph G$. Then, $\wf \sweiequiv (e)\sg_G \sweiequiv (e)\isg_G$.
\end{lemma}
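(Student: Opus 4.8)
The plan is to combine the upper bounds already recorded in \thref{easycorollary} with two new lower bounds, $\wf \sweireducible \sg_G$ and $\wf \sweireducible \isg_G$, both obtained by recycling the effective Wadge reduction from \thref{upperboundsinducedrayomega}. Since $G$ is c.e.\ it is in particular hyperarithmetical, so \thref{easycorollary} already gives $\isg_G \sweireducible e\isg_G \sweireducible \wf$ and $\sg_G \sweireducible e\sg_G \sweireducible \wf$. It therefore suffices to close both chains by showing that $\wf$ strongly reduces to the two weakest problems $\sg_G$ and $\isg_G$.

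For the lower bounds, I would fix once and for all a computable $G' \in \mathbf{F}(G)$; this exists because $G$ has a computable $\repmap{\egraph}$-name by \thref{remarkccehyp} and $\mathbf{F}$ is computable, and it satisfies $G \inducedsubgraph G'$ with $G'$ infinite (as $\R\subgraph G$). Take as forward functional $\Phi(T) \defas \constructionone{T,G'}$, which is computable in the tree representation and outputs a $\repmap{\graph}$-name (the nodes of $T$ being coded as naturals via the pairing function). Arguing exactly as in \thref{upperboundsinducedrayomega}: if $T \in \illfounded$ then $G' \inducedsubgraph \constructionone{T,G'}$ by \thref{Constructionongraphs}, hence $G \inducedsubgraph \constructionone{T,G'}$ by transitivity of the induced-subgraph relation; and if $T \in \wellfounded$ then any copy of $\R$ inside $\constructionone{T,G'}$ would force infinitely many pairwise comparable nodes in $T$, i.e.\ a path, so $\R \not\subgraph \constructionone{T,G'}$ and therefore (using $\R \subgraph G$) $G \not\subgraph \constructionone{T,G'}$, and a fortiori $G \not\inducedsubgraph \constructionone{T,G'}$. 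Consequently $T \in \illfounded \iff G \subgraph \constructionone{T,G'} \iff G \inducedsubgraph \constructionone{T,G'}$, i.e.\ $\sg_G(\Phi(T)) = \isg_G(\Phi(T)) = 1 - \wf(T)$.

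The backward functional is then just bit negation, $\Psi(b) \defas 1-b$; since it ignores the original name of $T$, this yields \emph{strong} Weihrauch reductions $\wf \sweireducible \sg_G$ and $\wf \sweireducible \isg_G$ through the single pair $(\Phi,\Psi)$. Chaining with the upper bounds gives $\wf \sweireducible \sg_G \sweireducible e\sg_G \sweireducible \wf$ and $\wf \sweireducible \isg_G \sweireducible e\isg_G \sweireducible \wf$, whence $\wf \sweiequiv \sg_G \sweiequiv e\sg_G \sweiequiv \isg_G \sweiequiv e\isg_G$, which is the assertion $\wf \sweiequiv (e)\sg_G \sweiequiv (e)\isg_G$.

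I expect the only genuinely delicate point to be checking that the \emph{same} map $\constructionone{T,G'}$ handles both relations at once: specifically, that in the well-founded case the failure of an ordinary subgraph copy already rules out an induced copy, so that one reduction serves simultaneously for $\sg_G$ and $\isg_G$. The remaining work — fixing the computable $G'$ via \thref{fromcetocomputable}, confirming $\Phi$ is computable on $\repmap{\graph}$-names, and verifying $\Psi$ does not consult the input — is routine, and the crux reduces entirely to the bi-implication established in \thref{upperboundsinducedrayomega}.
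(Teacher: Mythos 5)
Your proposal is correct and follows essentially the same route as the paper: the paper's proof simply cites \thref{easycorollary} for the upper bounds and \thref{upperboundsinducedrayomega} for the lower bounds, where the effective Wadge reduction $T \mapsto \constructionone{T,G'}$ serves as the forward functional and the (input-independent) bit manipulation as the backward functional of a strong Weihrauch reduction. Your explicit verification that the single map handles both the subgraph and induced-subgraph relations at once is exactly the content of \thref{upperboundsinducedrayomega}, so nothing is missing.
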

\begin{proof}
   It follows immediately from \thref{upperboundsinducedrayomega,easycorollary}.
\end{proof}

\begin{lemma}
   \thlabel{lemma2question}
   Let $G$ be a c.e.\ infinite graph such that  $\complete{\omega} \not\inducedsubgraph G$. Then $\wf \sweiequiv (e)\isg_G$.
\end{lemma}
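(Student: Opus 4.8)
The plan is to derive this as a near-immediate consequence of the $\Sigma_1^1$-completeness established in \thref{upperboundsinducedkomega}, in exactly the same spirit as \thref{lemma1question}. By \thref{easycorollary} we already have $\isg_G \sweireducible e\isg_G \sweireducible \wf$, so by transitivity of $\sweireducible$ it suffices to produce a single strong reduction $\wf \sweireducible \isg_G$; this closes the chain $\wf \sweireducible \isg_G \sweireducible e\isg_G \sweireducible \wf$ and yields $\wf \sweiequiv \isg_G \sweiequiv e\isg_G$, i.e.\ the claimed equivalence $\wf \sweiequiv (e)\isg_G$.

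For the reduction $\wf \sweireducible \isg_G$ I would recycle the effective Wadge reduction built in the proof of \thref{upperboundsinducedkomega}. Since $G$ is c.e., \thref{fromcetocomputable} lets me fix once and for all a computable $G' \in \mathbf{F}(G)$, so that $\constructiontwo{T,G'}$ is uniformly computable from a $\repmap{\graph}$-name of $T$. The forward functional $\Phi$ then maps a name for $T \in \tree$ to a name for $\constructiontwo{T,G'} \in \repspacegraphs$. The content of \thref{upperboundsinducedkomega} (resting on \thref{Constructionongraphs} and \thref{inducedsubgraphcacprinciple}, and crucially on the hypotheses that $G$ is infinite and $\complete{\omega}\not\inducedsubgraph G$) is precisely that $T \in \illfounded \iff G \inducedsubgraph \constructiontwo{T,G'}$, equivalently $\isg_G(\constructiontwo{T,G'}) = 1 \iff T \notin \wellfounded$.

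The backward functional $\Psi$ is then the bit flip $b \mapsto 1 - b$: reading off $\wellfounded = \tree \setminus \illfounded$, the value $1 - \isg_G(\constructiontwo{T,G'})$ equals $\wf(T)$. Since $\Psi$ delivers its answer without consulting the original input name for $T$, this is a genuine \emph{strong} Weihrauch reduction, as required, and combining it with \thref{easycorollary} gives the full equivalence.

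There is no real obstacle remaining once \thref{upperboundsinducedkomega} is in hand; the only points demanding care are bookkeeping rather than mathematics. First, I must keep the parity of the bit flip straight, using that $\wf$ and $\isg_G$ answer complementary questions ($\wf$ tests well-foundedness while the induced-subgraph relation on $\constructiontwo{T,G'}$ encodes ill-foundedness). Second, I must verify that $\Psi$ is input-free, so that the reduction is strong and not merely Weihrauch; this is immediate, since flipping a single bit needs nothing about $T$. Finally, I would note that the lemma's hypotheses enter only through the invocation of \thref{upperboundsinducedkomega} and play no further role in assembling the reduction.
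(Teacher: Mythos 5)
Your proposal is correct and follows exactly the paper's route: the paper's proof is a one-line citation of \thref{upperboundsinducedkomega} and \thref{easycorollary}, with the forward functional being the effective Wadge reduction $T \mapsto \constructiontwo{T,G'}$ and the backward functional a trivial (input-free) relabelling of the output bit, so the reduction is strong and the chain $\wf \sweireducible \isg_G \sweireducible e\isg_G \sweireducible \wf$ closes. Your additional bookkeeping about the bit flip and the input-independence of $\Psi$ is exactly the (implicit) content of the paper's argument.
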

\begin{proof}
   It follows immediately from \thref{upperboundsinducedkomega,easycorollary}.
\end{proof}
 We finally obtain a negative answer to question (\ref{questionngraph}) showing that the claim does not hold for even a larger class of graphs and problems.
	\begin{theorem}
		There is no c.e.\ graph $G$ such that $\lpo \strictlyweireducible (e)\isg_G \strictlyweireducible \wf$.
	\end{theorem}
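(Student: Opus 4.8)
The plan is to establish a dichotomy: for every c.e.\ graph $G$, the induced-subgraph decision problem is pinned to one of the two extremes, so it can never sit strictly inside the interval. I would split on whether $G$ is finite or infinite and argue that in the finite case the problem collapses to the $\lpo$ end, while in the infinite case it collapses to the $\wf$ end. In each case one of the two strict inequalities in the chain $\lpo \strictlyweireducible (e)\isg_G \strictlyweireducible \wf$ is forced to fail, so no single c.e.\ $G$ can realize the whole chain.

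First I would dispose of the finite case using \thref{finiteinduced}. Every finite graph $G$ satisfies $\isg_G \sweiequiv \lpo$, so in particular $\lpo \sweiequiv \isg_G$ and hence the left-hand strict inequality $\lpo \strictlyweireducible \isg_G$ cannot hold. Thus the $\isg_G$-component already obstructs the chain, and no finite $G$ can witness the claim. (One should note here that for finite $G \not\cong \complete{n}$ the variant $e\isg_G$ does reach the intermediate degree $\lpo'$; but since the statement concerns $(e)\isg_G$, the breakdown of the $\isg_G$-component suffices.)

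The heart of the argument is the infinite case, where I would show $(e)\isg_G \sweiequiv \wf$, making the right-hand strict inequality $(e)\isg_G \strictlyweireducible \wf$ impossible. The two available tools are \thref{lemma1question}, covering graphs with $\R\subgraph G$, and \thref{lemma2question}, covering infinite graphs with $\complete{\omega}\not\inducedsubgraph G$. The crucial point — and the step I expect to be the real obstacle — is verifying that these two hypotheses are jointly exhaustive over all infinite c.e.\ graphs. This rests on the observation that $\R\subgraph\complete{\omega}$ (take the vertices $0,1,2,\dots$ together with the consecutive edges $(i,i+1)$), so whenever $\complete{\omega}\inducedsubgraph G$ one automatically has an induced copy of $\complete{\omega}$ inside $G$, and therefore $\R\subgraph G$. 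Consequently, if $\complete{\omega}\inducedsubgraph G$ then \thref{lemma1question} applies, and otherwise \thref{lemma2question} applies; in either subcase $(e)\isg_G\sweiequiv\wf$. Combining the finite and infinite cases, no c.e.\ graph $G$ can satisfy both strict inequalities simultaneously, which is exactly the assertion.
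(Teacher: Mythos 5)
Your proposal is correct and follows essentially the same route as the paper: dispose of finite $G$ via \thref{finiteinduced} (so $\isg_G\sweiequiv\lpo$), and for infinite $G$ split on whether $\complete{\omega}\inducedsubgraph G$, using that $\complete{\omega}\inducedsubgraph G$ implies $\R\subgraph G$ so that \thref{lemma1question} and \thref{lemma2question} jointly cover all cases and force $(e)\isg_G\sweiequiv\wf$. Your explicit remark that $e\isg_G\sweiequiv\lpo'$ for finite $G\not\cong\complete{n}$, so that only the $\isg_G$-component obstructs the chain in the finite case, is a point the paper's proof leaves implicit.
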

	\begin{proof}
	   If $G$ is finite then $\isg_G\sweiequiv \lpo$ (\thref{finiteinduced}).   If $G$ is an infinite graph then either $\complete{\omega} \not\inducedsubgraph G$ or $\complete{\omega} \inducedsubgraph G$ (and hence $\R \subgraph G$): by \thref{easycorollary}, in both cases we obtain $\isg_G\sweiequiv \wf$.
	\end{proof}

	In the last years researchers have focused on the program connecting reverse mathematics and computable analysis via the framework of Weihrauch reducibility. The natural problem representing $\PiCA$ is $\parallelization{\wf}$: this problem has been first considered by Hirst in \cite{leafmanaegement} and by the second author, Kihara and Marcone in \cite{kihara_marcone_pauly_2020}. A systematic study of other representatives of $\PiCA$ has been carried out by the first author, Marcone and Valenti in \cite{CB} and the following corollary gives a  \quot{graph-theoretic}\ one .
	\begin{corollary}
	   Let $G$ and $H$ be infinite c.e.\ graphs such that $\R\subgraph G$ and $\complete{\omega}\not\inducedsubgraph H$. Then
	   $\parallelization{\wf} \sweiequiv \parallelization{(e)\sg_G}   \sweiequiv \parallelization{(e)\isg_G}\sweiequiv \parallelization{(e)\isg_H}$.
	\end{corollary}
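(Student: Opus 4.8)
The plan is to obtain the equivalences first at the level of the decision problems themselves, and then to transport them across the infinite parallelization operator. First I would apply \thref{lemma1question}: since $\R \subgraph G$ by hypothesis, it gives $\wf \sweiequiv (e)\sg_G \sweiequiv (e)\isg_G$. Next I would apply \thref{lemma2question}: since $H$ is an infinite c.e.\ graph with $\complete{\omega} \not\inducedsubgraph H$, it gives $\wf \sweiequiv (e)\isg_H$. Using $\wf$ as a common anchor and the transitivity of $\sweiequiv$, these assemble into the single chain $(e)\sg_G \sweiequiv (e)\isg_G \sweiequiv \wf \sweiequiv (e)\isg_H$.

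The second step is to push this chain through $\parallelization{(\cdot)}$. For this I rely on the standard fact that infinite parallelization is monotone with respect to strong Weihrauch reducibility, i.e.\ $f \sweireducible g$ implies $\parallelization{f} \sweireducible \parallelization{g}$, so that it preserves $\sweiequiv$. Concretely, if $\Phi,\Psi$ witness $f \sweireducible g$, then one parallelizes them componentwise: apply $\Phi$ to each coordinate of an input sequence to produce a sequence of $g$-instances, and apply $\Psi$ coordinatewise to any sequence of $g$-solutions. Because a strong reduction forbids $\Psi$ from consulting the original input, this coordinatewise recombination remains a legitimate strong reduction and witnesses $\parallelization{f} \sweireducible \parallelization{g}$. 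Applying this to each link of the chain yields $\parallelization{(e)\sg_G} \sweiequiv \parallelization{(e)\isg_G} \sweiequiv \parallelization{\wf} \sweiequiv \parallelization{(e)\isg_H}$, which is precisely the statement.

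I do not anticipate a genuine obstacle here: once the two preceding lemmas are available, the corollary is a formal consequence of the monotonicity of parallelization. The only point demanding a little care is that this monotonicity is invoked for \emph{strong} Weihrauch reducibility rather than for ordinary Weihrauch reducibility; the componentwise construction above respects the strong-reduction constraint (the backward functional never reads the original instance), so the distinction causes no difficulty.
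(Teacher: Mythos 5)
Your proposal is correct and matches the paper's (implicit) argument exactly: the corollary is stated as an immediate consequence of \thref{lemma1question} and \thref{lemma2question} together with the standard fact that $\parallelization{(\cdot)}$ is monotone with respect to $\sweireducible$, which is precisely the two-step route you take. The componentwise justification of monotonicity for strong reductions is also the right one, so there is nothing to add.
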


	The following theorem shows that the situation for $\sg_G$ is quite different. 
	\begin{theorem}
 \thlabel{sgandlpon}
		For every $k \in \nats$, $(e)\sg_{\mathsf{T}_{2k+1}} \sweiequiv \jump{\lpo}{2k}$ and $(e)\sg_{\mathsf{F}_{2k+2}} \sweiequiv \jump{\lpo}{2k+1}$
	\end{theorem}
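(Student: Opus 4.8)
The plan is to obtain the theorem as a direct transfer of the effective Wadge completeness established in \thref{subgraphcomplexity} into the Weihrauch lattice, in exactly the same spirit as \thref{finiteinduced}. The key point is that $(e)\sg_{G}$ is literally the characteristic function of the set $\setofgraphs{G}{\repspaceallgraphs}{\subgraph}$, so its strong Weihrauch degree is pinned down by the lightface complexity of that set together with the principle recorded in \S \ref{subsec:Weihrauch} that $\jump{\lpo}{n}$ answers precisely the $\Sigma_{n+1}^0$ and $\Pi_{n+1}^0$ questions relative to the input.

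First I would match the levels. By \thref{subgraphcomplexity}, the set $\setofgraphs{\mathsf{T}_{2k+1}}{\repspaceallgraphs}{\subgraph}$ is $\Sigma_{2k+1}^0$-complete and $\setofgraphs{\mathsf{F}_{2k+2}}{\repspaceallgraphs}{\subgraph}$ is $\Pi_{2k+2}^0$-complete; since these statements are phrased for $\repspaceallgraphs$, the same complexity holds for the characteristic-function and the enumeration representation simultaneously, so the four problems $\sg_{\mathsf{T}_{2k+1}}, e\sg_{\mathsf{T}_{2k+1}}, \sg_{\mathsf{F}_{2k+2}}, e\sg_{\mathsf{F}_{2k+2}}$ are handled uniformly. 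Taking $n = 2k$ gives $n+1 = 2k+1$, so $\jump{\lpo}{2k}$ is (up to $\sweiequiv$) the problem answering the $\Sigma_{2k+1}^0$ question ``$\mathsf{T}_{2k+1} \subgraph H$''; taking $n = 2k+1$ gives $n+1 = 2k+2$, so $\jump{\lpo}{2k+1}$ answers the $\Pi_{2k+2}^0$ question ``$\mathsf{F}_{2k+2} \subgraph H$''.

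For the reductions, write $G$ for either $\mathsf{T}_{2k+1}$ or $\mathsf{F}_{2k+2}$ and $n$ for the matching $2k$ or $2k+1$. To see $(e)\sg_{G} \sweireducible \jump{\lpo}{n}$, the forward functional computes from a name of $H$ the instance of $\jump{\lpo}{n}$ encoding the $\Sigma_{n+1}^0$/$\Pi_{n+1}^0$ predicate ``$G \subgraph H$'' supplied explicitly by \thref{subgraphcondition}, and the backward functional simply outputs the returned bit; since the backward functional reads only the oracle's answer and never the original input, the reduction is strong. For the converse $\jump{\lpo}{n} \sweireducible (e)\sg_{G}$, I would invoke completeness: the $\Sigma_{n+1}^0$ (resp.\ $\Pi_{n+1}^0$) question decided by $\jump{\lpo}{n}$ effectively Wadge reduces to $\setofgraphs{G}{\repspaceallgraphs}{\subgraph}$, and this effective Wadge reduction is precisely the forward functional mapping a $\jump{\lpo}{n}$-instance to a graph $H$, with the backward functional again only copying the answer bit. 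Both functionals ignore the original instance in the required sense, so both reductions are strong and we conclude $\sweiequiv$.

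Since the whole argument is a routine translation of the completeness results of \S \ref{wadgecomplexityofgraphs}, no step is a genuine obstacle; the only point deserving attention is that passing to the enumeration representation does not raise the lightface complexity. This is already absorbed into \thref{subgraphcondition}, which is stated for $\repspaceallgraphs$: replacing the decidable predicate ``$(v_j, v_{j+1}) \in E(H)$'' of the characteristic representation by its $\Sigma_1^0$ enumeration counterpart is swallowed by the innermost $\exists^{\infty}$ quantifier and leaves each level unchanged, so \thref{enumerationabovecharacteristic} is effectively built into the completeness statement we are citing.
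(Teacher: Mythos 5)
Your proposal is correct and follows exactly the paper's route: the paper's proof is a one-line appeal to \thref{subgraphcomplexity} together with the fact that $\jump{\lpo}{n}$ answers $\Sigma_{n+1}^0$-complete and $\Pi_{n+1}^0$-complete questions, with the forward functionals given by the effective Wadge reductions and the backward functionals by the identity on the answer bit (which is what makes the reductions strong). Your write-up simply makes these functionals explicit, so there is nothing to add.
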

	\begin{proof}
		It follows from \thref{subgraphcomplexity} and the fact that $\jump{\lpo}{n}$ answers a binary $\Sigma_{n+1}^0$-complete or $\Pi_{n+1}^0$-complete question (see \S \ref{subsec:Weihrauch}).
	\end{proof}

   We do not know whether there exists a graph $G$ such that $\jump{\lpo}{n}\strictlyweireducible \sg_{G} \strictlyweireducible\jump{\lpo}{n+1}$: the answer to this question is clearly related to the one we left open at the end of \S \ref{wadgecomplexityofgraphs}.
   \section{Searching the (induced) subgraph}
   \label{findsubgraphsandinduced}
	In this section, we focus entirely on Weihrauch reducibility: notice that some results are still consequences of \S \ref{wadgecomplexityofgraphs}, but most of them require new proof techniques.
	\begin{definition}
		\thlabel{definitionisg}
	   Given a graph $G$, we define four multi-valued functions:
	   \[\partialmultifunction{\findisCC{G}}{\repspacegraphs}{\repspacegraphs}, \ \partialmultifunction{\findisCE{G}}{\repspacegraphs}{\repspaceegraphs},\]
	   \[\partialmultifunction{\findisEE{G}}{\repspaceegraphs}{\repspaceegraphs}, \ \partialmultifunction{\findisEC{G}}{\repspaceegraphs}{\repspacegraphs},\]
	   with domains $\{H:G \inducedsubgraph H\}$ and ranges $\{G': G' \cong G \land G' \text{ is an induced subgraph of } H\}$.
	   The corresponding functions for the subgraph case are denoted replacing $\findis{}{}$ with $\finds{}{}$ and have domains $\{H:G \subgraph H\}$ and ranges $\{G': G' \cong G \land G' \text{ is a subgraph of } H\}$.
	\end{definition}
   
   Informally, in the definition above, the $\chi$ (respectively $e$) at the first (second) position indicates that the domain (range) of the corresponding function is a subset of $\repspacegraphs$ ($\repspaceegraphs$). We adopt the following convention: whenever we write $\findis{}{G}$ we are referring to all four functions in \thref{definitionisg}, similarly for $\finds{}{G}$.

	The following relations hold, also replacing $\findis{}{}$ with $\finds{}{}$.
	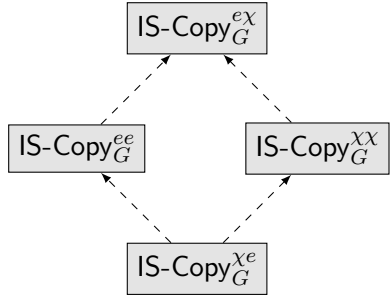
\begin{figure}[H]
	\centering
	 \tikzstyle{every picture}=[tikzfig]
	 \begin{tikzpicture}[scale=0.8]

		\begin{pgfonlayer}{nodelayer}
		\node [style=box] (cte) at (0,0) {$\findisCE{G}$};
		\node [style=box] (ete) at (-4,4) {$\findisEE{G}$ };
		\node [style=box] (ctc) at (4,4) {$\findisCC{G}$ };
		\node [style=box] (etc) at (0,8) {$\findisEC{G}$ };
	
		\end{pgfonlayer}
		\begin{pgfonlayer}{edgelayer}
			\draw [style=reducible] (cte) to (ete);
			\draw [style=reducible] (cte) to (ctc);
			\draw [style=reducible] (ctc) to (etc);
			\draw [style=reducible] (ete) to (etc);

		\end{pgfonlayer}
	\end{tikzpicture}
			\caption{Dashed arrows represent Weihrauch reducibility in the direction of the arrow, leaving open whether the reduction is strict.}
		\label{findisdefinitionsFigure}
   
	\end{figure}
   
   The next proposition shows that the main functions we consider in this section are cylinders, which implies that most reductions we obtain in this section are actually strong ones.
	\begin{proposition}
	   \thlabel{cylindersgraphone}
	   For any infinite graph $G$, $\finds{}{G}$ and  $\findis{}{G}$ are cylinders
	\end{proposition}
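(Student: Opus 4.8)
The plan is to verify the defining property of a cylinder directly, i.e.\ to show $f \sweiequiv f \times \id$ where $f$ is any of the four variants of $\finds{}{G}$ or $\findis{}{G}$ and $\id$ is the identity on $\Baire$. Since $f \sweireducible f \times \id$ always holds (send an instance $p$ to $(p,0^\nats)$ and, on the solution side, project away the second component), the whole content is to establish $f \times \id \sweireducible f$ as a \emph{strong} reduction. I would carry this out uniformly for all four representation combinations at once, since the construction below never inspects whether the domain or range lives in $\repspacegraphs$ or $\repspaceegraphs$, and it treats the subgraph and induced subgraph cases identically (everything is transported along a graph isomorphism, which preserves both relations).

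The forward map is a relabeling that writes longer and longer prefixes of the auxiliary input into the vertex names. Given an instance $(H,q)$ of $f \times \id$, so that $G \subgraph H$ (resp.\ $G \inducedsubgraph H$) and $q \in \Baire$, I define the injection $\iota_q(v) \defas \str{v, q[v+1]}$ and let $H' \defas \iota_q(H)$ be the isomorphic copy of $H$ in which each vertex $v$ is renamed $\iota_q(v)$ and each edge transported accordingly. Because $\iota_q$ is injective and, on the relevant inputs, both $\iota_q$ and membership in $\ran(\iota_q)$ are computable from $q$, a name for $H'$ can be produced computably from $(H,q)$ in either representation: from a $\repmap{\graph}$-name the membership and edge queries for $H'$ are decidable, and from a $\repmap{\egraph}$-name the vertices and edges of $H'$ are enumerable. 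As $H' \cong H$, we retain $G \subgraph H'$ (resp.\ $G \inducedsubgraph H'$), so $H'$ is a legal instance of $f$.

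The backward map acts on a solution $G'$ of $f(H')$ \emph{alone}, as required for a strong reduction. Each vertex of $G'$ is a label $\str{v, q[v+1]}$, where the first coordinates $v$ are exactly the $H$-labels of a copy of $G$ sitting inside $H$; unpairing and reading off first coordinates therefore computes a copy $G'' \cong G$ with $G'' \subgraph H$ (resp.\ $G'' \inducedsubgraph H$), i.e.\ a genuine solution of $f(H)$. At the same time $q$ is recovered from the second coordinates: since $G$ is infinite, $G'$ has infinitely many vertices, so the first coordinates $v$ form an unbounded set of naturals, and the attached codes $q[v+1]$ supply arbitrarily long prefixes of $q$. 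Outputting $(G'', q)$ then witnesses $f \times \id \sweireducible f$, and combined with the automatic reduction this gives $f \sweiequiv f \times \id$.

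The one genuinely essential point, and the place where the hypothesis that $G$ is infinite is used, is precisely this recovery step: the argument needs the vertex labels of an arbitrary solution to be unbounded so that every prefix of $q$ eventually appears. For finite $G$ the labels of a solution are bounded and all but finitely much of $q$ would be lost, so the proof genuinely relies on infinitude rather than being a formal artifact. The remaining verifications—that the relabeled graph is computable in each of the $\repspacegraphs$/$\repspaceegraphs$ input formats and that extracting the vertices of $G'$ and their first coordinates is computable in each output format—are routine representation bookkeeping and present no obstacle.
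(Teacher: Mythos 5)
Your proof is correct and is essentially the paper's own argument: both tag each vertex $v$ of $H$ with a prefix of the auxiliary sequence (the paper uses $\str{v,p[v]}$, you use $\str{v,q[v+1]}$), project the first coordinates of a solution back to a copy of $G$ in $H$, and recover the sequence from the second coordinates using the fact that an infinite $G$ forces the set of tagged vertices in any solution to be unbounded. Your additional remarks on computability of the relabeling in each of the $\repspacegraphs$/$\repspaceegraphs$ formats are correct and only make explicit what the paper leaves implicit.
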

	\begin{proof}
	   We only prove the statement for $\findis{}{G}$ as the same proof shows that $\finds{}{G}$ are cylinders as well.
   Let $p \in \Baire$ and let $H \in \dom(\finds{}{G})$: we compute the graph $H'$ such that $V(H')\defas \{\str{v,p[v]}:v \in V(H)\}$ and $E(H')\defas \{(\str{v,p[v]},\str{w,p[w]}): (v,w) \in E(H)\}$.  Clearly $H'\cong H$ and the isomorphism between the two graph is computable. From $G' \in \findis{}{G}(H')$ compute the graph $S$ where $V(S)=\{v: \str{v,p[v]} \in V(G')\}$  and $E(S)=\{(v,w): (\str{v,p[v]},\str{w,p[w]}) \in E(G')\}$. Clearly  $S \in \findis{}{G}(H)$, and from $\{p[v]: \str{v,p[v]} \in V(S)\}$ (that by hypothesis is infinite) we can recover longer and longer prefixes of $p$. This concludes the proof
	\end{proof}

	The following proposition, together with Figure \ref{findisdefinitionsFigure}, shows that $\CBaire$ is an upper bound for all of them (notice that $\CBaire$ is a cylinder as well).
		\begin{proposition}
			\thlabel{findinducedsubgraphupperbound}
			For any infinite hyperarithmetical graph $G$, $\findis{}{G},\finds{}{G} \weireducible \CBaire$.
		\end{proposition}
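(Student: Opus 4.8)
The plan is to use closed choice on Baire space in its path-finding form: $\CBaire$ is (up to $\weiequiv$) the problem of computing a path through an ill-founded tree $T \in \illfounded$, and equivalently $\CBaire \weiequiv \choice{\boldfaceSigma_1^1}{\Baire}$. The underlying reason the reduction should work is exactly \thref{upperboundandfinite}: for hyperarithmetical $G$ the relevant solution sets are $\Sigma_1^1$, uniformly in the input. First I would cut down the work: by Figure~\ref{findisdefinitionsFigure} and transitivity it suffices to bound the $\weireducible$-maximal problems $\findisEC{G}$ and $\findsEC{G}$ (enumeration input, characteristic-function output), since every other variant reduces to one of these. I treat $\findisEC{G}$ in detail; the subgraph case $\findsEC{G}$ is identical except that a biconditional is weakened to an implication.

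Fix a hyperarithmetical — hence (lightface) $\Delta_1^1$ — copy of $G$ with $V(G)=\{v_i:i\in\nats\}$ (cf.\ \thref{remarkccehyp}), and let $q$ be a $\repmap{\egraph}$-name of an input $H$ with $G \inducedsubgraph H$. The forward functional computes from $q$ a tree $T_q \subseteq \baire$ whose paths code triples $(p,f,y)$, where $p \in \Cantor$ is a candidate $\repmap{\graph}$-name of an induced subgraph $G'$ of $H$, $f\colon\nats\to V(H)$ is a candidate embedding $v_i\mapsto f(i)$, and $y$ bundles all positive witnesses needed below. I would let $T_q$ enforce, as a conjunction of $\Pi_1^0$ (hence computable-from-$q$) clauses: that $f$ is injective; that $p$ is the characteristic function of the subgraph of $H$ induced on $\ran f$, where each positive fact ``$a\in\ran f$'', ``$a\in V(H)$'', ``$(a,b)\in E(H)$'' is certified by a witness read off $y$ while the matching negative facts are genuinely $\Pi_1^0$ in $q$; and that $f$ is an induced embedding, i.e.\ $(v_i,v_j)\in E(G)\iff (f(i),f(j))\in E(H)$ for all $i,j$.

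The hard part is the last clause, because the requirement $G'\cong G$, written naively with a quantifier over isomorphisms and the relation $E(G)$, has complexity $\Sigma_2^1$. I would defuse this in two moves. First, the isomorphism is not quantified away but made part of the searched object: it is exactly (the inverse of) $f$, so fixing $f$ removes the outer existential. Second, to keep the edge-matching clause $\Pi_1^0$ I would exploit that $E(G)$ is $\Delta_1^1$, so both $E(G)$ and its complement admit $\Sigma_1^1$ normal forms with computable matrices; thus for each pair $(i,j)$ the component $y$ carries \emph{either} a witness certifying $(v_i,v_j)\in E(G)$ together with an $E(H)$-edge witness for $(f(i),f(j))$, \emph{or} a witness certifying $(v_i,v_j)\notin E(G)$, in which case $(f(i),f(j))\notin E(H)$ is checked as a $\Pi_1^0$ condition on $q$. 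Since these normal forms are self-certifying — a spurious witness never passes the matrix check — exactly one branch can succeed, so the clause faithfully expresses the biconditional while remaining $\Pi_1^0$.

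With $T_q$ so defined, $\body{T_q}$ is closed and $T_q$ is computable from $q$; moreover $\body{T_q}\neq\emptyset$ \emph{precisely because} $G \inducedsubgraph H$, since a genuine induced copy yields an embedding $f$, the corresponding characteristic function $p$, and correct witnesses $y$. Applying $\CBaire$ to $T_q$ returns some $(p,f,y)\in\body{T_q}$, and the backward functional simply outputs $p$, which by construction is a $\repmap{\graph}$-name of an induced subgraph of $H$ isomorphic to $G$. This gives $\findisEC{G}\weireducible\CBaire$, and $\findsEC{G}\weireducible\CBaire$ follows by the same construction with the biconditional replaced by the implication $(v_i,v_j)\in E(G)\implies (f(i),f(j))\in E(H)$; all remaining variants then follow from Figure~\ref{findisdefinitionsFigure}. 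I expect the only genuinely delicate point in the write-up to be verifying that the witness-bundling keeps $T_q$ a \emph{computable} tree whose paths are in bijection with correct solutions, i.e.\ that the $\Sigma_1^1$ content of ``$\cong G$'' is absorbed into the extra path-coordinates rather than into the membership test for $T_q$.
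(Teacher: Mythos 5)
Your proposal is correct and follows essentially the same route as the paper: reduce to $\findisEC{G}$ and $\findsEC{G}$ via Figure~\ref{findisdefinitionsFigure}, observe that the set of (names of) solutions is a nonempty $\Sigma_1^1$ (relative to the input) subset of $\Baire$ by the formulas of \thref{upperboundandfinite}, and feed it to $\choice{\boldfaceSigma_1^1}{\Baire}\weiequiv\CBaire$. Your explicit witness-carrying tree $T_q$ is just an inline re-derivation of that last equivalence, which the paper simply cites.
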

		\begin{proof}
			By Figure \ref{findisdefinitionsFigure}, it suffices to show that $\findisEC{G},\findsEC{G}\weireducible \CBaire$.
		We only show that $\findisEC{G}\weireducible \CBaire$ as the same proof works also for showing that $\findsCE{G} \weireducible \CBaire$. Since $\choice{\boldfaceSigma_1^1}{\Baire}\weiequiv \CBaire$ (see \S \ref{subsec:Weihrauch}) we show that  $\findisEC{G}\weireducible \choice{\boldfaceSigma_1^1}{\Baire}$. Given in input $H$, by \thref{upperboundandfinite}, notice that $\{G' \in \repspaceegraphs : G' \cong G \land G' \text{ is an induced subgraph of } H\}$ is a nonempty $\Sigma_1^{1}$ (relative to $H$) subset of $\Baire$ and hence a suitable input for $\choice{\boldfaceSigma_1^1}{\Baire}$: clearly any  $G \in \choice{\boldfaceSigma_1^1}{\Baire}(H)$ is such that $G \in \findisEC{G}(H)$.
		\end{proof}

		We first consider the case of $G$ being finite, and notice that the case distinction, in some sense, is close to the one in \thref{upperboundandfinite}.
	\begin{theorem}
		\thlabel{findisthesameforfinite}
		For any finite graph $G$ the following holds:
		\begin{enumerate}[$(i)$]
			\item the problems $\finds{}{G}$ are computable;
			\item $\findisCE{G}$ and $\findisCC{G}$ are computable;
			\item $\findisEE{\complete{n}}$ and  $\findisEC{\complete{n}}$ are computable;
			\item if $G \not\cong \complete{n}$, $\findisEE{G}\sweiequiv \findisEC{G}\sweiequiv \cnats$.
		\end{enumerate}
	\end{theorem}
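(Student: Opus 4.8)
\textit{Overview.} The plan is to treat (i)--(iii) uniformly as cases where only \emph{positive} information about the edge relation must be inspected, so that a dovetailed search is computable, and to isolate (iv) as the genuinely infinitary case, proved by a matching pair of strong reductions to and from $\cnats$. Throughout write $n\defas\length{V(G)}$.

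\textit{The computable cases.} For (i), given $H$ in the domain of some $\finds{}{G}$, a subgraph copy is witnessed by a finite list of edges that must be \emph{present}: a map $v_i \mapsto w_i$ of the vertices of $G$ yields a subgraph copy exactly when each edge of $G$ is carried to an edge of $H$. This is a purely $\Sigma_1^0$ condition in either representation, so I would dovetail over all injective tuples of (enumerated) vertices of $H$, halting on the first tuple all of whose required edges have appeared. Such a tuple exists since $G\subgraph H$, and the located finite graph $G'$ can be printed in either the $\chi$- or the $e$-representation. For (ii) the domain is $\repspacegraphs$, so the edge relation is \emph{decidable}; hence for a fixed tuple the induced-copy condition $(v_i,v_j)\in E(G)\iff(w_i,w_j)\in E(H)$ is decidable, and the same search (now an honest decision at each tuple) terminates by the domain assumption. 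For (iii) the point is that an induced copy of $\complete{n}$ is just an $n$-clique: since $\complete{n}$ has no non-edges, the induced-copy condition collapses to the positive requirement that all pairs be edges, exactly as in (i), so the dovetailed search is again computable even over $\repspaceegraphs$. In all three cases $G'$ is finite and computable from the located tuple, independently of the target representation.

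\textit{Upper bound in (iv).} I would reduce to $\cnats$ by presenting the valid copies as a nonempty $\Pi_1^0$ subset of $\nats$. Encode a candidate as a pair $(t,s)$, where $t$ is an injective $n$-tuple of vertices and $s$ a stage; call $(t,s)$ \emph{valid} if by stage $s$ all vertices of $t$ and all edges required by $E(G)$ have been enumerated (a decidable check) and, moreover, \emph{no} forbidden non-edge of $t$ is ever enumerated (a $\Pi_1^0$ condition, since $G$ is not complete and hence has at least one non-edge to protect). The codes of valid pairs form a $\Pi_1^0$ set, and it is nonempty because a genuine induced copy exists by the domain assumption and its required edges appear by some finite stage. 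Feeding this set to $\cnats$ and decoding the returned code back to $t$ yields an induced copy; since the decoding uses only the $\cnats$-answer, this is a \emph{strong} reduction, and $G'$ is printed in whichever target representation is required. This gives $\findisEE{G},\findisEC{G}\sweireducible\cnats$.

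\textit{Lower bound in (iv), and the main obstacle.} For $\cnats\sweireducible\findisEE{G}$ (and $\findisEC{G}$) I would exploit a fixed non-edge $\{a,b\}$ of $G$, which exists precisely because $G$ is not complete. From a $\cnats$-instance, presented as an enumeration of the complement of a nonempty $A\subseteq\nats$, I build $H\in\repspaceegraphs$ by fixing one induced copy of $G$ with the vertex $b$ removed (a \emph{frame}), together with one candidate vertex $d_k$ for each $k$, attached to the frame exactly as $b$ is attached in $G$; I then add the single edge joining $d_k$ to the image of $a$ precisely when $k$ is enumerated into the complement of $A$. Thus the frame together with $d_k$ is an induced copy of $G$ iff $k\in A$, at least one such copy exists since $A\neq\emptyset$, and from any located copy one reads off the index $k$ of the candidate it uses, using only the copy's name. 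The \textbf{main obstacle} is ruling out \emph{spurious} induced copies that fail to decode to an element of $A$: copies using several candidates, and---when $G$ is disconnected---copies assembled from vertices of distinct components. Indeed the naive alternative of taking disjoint gadgets each isomorphic to $G$ and completing a gadget to $\complete{n}$ once its index leaves $A$ works cleanly for connected $G$ (an induced copy then lies in a single component, and a complete graph has only complete induced subgraphs), but it fails for disconnected $G$, where a copy of $G$ can be assembled from pieces of different components regardless of $A$. I therefore expect the real work to lie in choosing the adjacency among the candidate vertices $d_k$ (and, if needed, the particular non-edge $\{a,b\}$) so that every induced copy of $G$ in $H$ canonically exhibits one candidate whose index lies in $A$; combined with the upper bound this yields $\findisEE{G}\sweiequiv\findisEC{G}\sweiequiv\cnats$.
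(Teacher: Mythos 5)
Parts (i)--(iii) and the upper bound $\findisEE{G},\findisEC{G}\sweireducible\cnats$ in (iv) are correct and essentially identical to the paper's argument: a dovetailed search over finite tuples for the computable cases (with (iii) reducing to the purely positive condition of finding an $n$-clique), and, for the upper bound, coding candidate copies as a tuple together with a stage, so that the positive requirements are decidable and the non-edges are protected by a $\Pi^0_1$ condition.

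The genuine gap is the lower bound $\cnats\sweireducible\findisEE{G}$. You set up a \quot{frame} construction but then explicitly defer its correctness --- ruling out induced copies that use several candidate vertices $d_k$, or that combine frame and candidate vertices in unintended ways --- to unspecified future work on \quot{choosing the adjacency among the candidate vertices}. As written, no reduction is exhibited: without fixing that adjacency and verifying that every induced copy of $G$ decodes to an element of $A$, the equivalence in (iv) is not proved. Moreover, the construction you reject as \quot{failing for disconnected $G$} is exactly the one the paper uses, and it does work. One maintains a single current gadget indexed by $n_s=\min\{n:n\notin A^c[s]\}$ and, whenever this index increases, merges everything built so far into one clique before starting a fresh gadget; in the limit $H\cong\complete{m}\disconnectedunion{}G'$ with $V(G')=\{\str{v,\min A}:v\in V(G)\}$, and every clique vertex carries a second coordinate strictly below $\min A$, hence not in $A$. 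Spurious copies straddling the clique and $G'$ do exist when $G$ is disconnected, but they are harmless: since $G\not\cong\complete{n}$ and induced subgraphs of complete graphs are complete, every induced copy of $G$ must contain at least one vertex of $G'$, and that vertex is identifiable from the copy's name alone as the one with the largest second coordinate among the $\length{V(G)}$ vertices eventually enumerated. So the repair lives in the decoding, not in the construction; your proposal would be complete if you either carried out this decoding argument for the gadget construction or actually pinned down and verified the adjacency pattern in your frame construction.
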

	\begin{proof}
	To prove $(i)$, by Figure \ref{findisdefinitionsFigure} it suffices to show that $\findsEC{G}$ is computable.  Given a $\repmap{\egraph}$-name $h$ for an input of $\findsEC{G}$ we compute a $\repmap{\graph}$-name $p$ for a solution of $\findsEC{G}$ as follows. At any stage $s$, we check  whether there exists a subgraph isomorphic to $G$ in the finite graph determined by $h[s]$. Formally, for every $s>0$, we check whether there exists an injective $\function{f}{V(G)}{V(\repmap{\egraph}(h[s]h(0)h(0)\dots))}$ such that
	\begin{itemize}
		\item $i \in V(G) \implies (\exists k_i<s)(h(k_i)=\str{f(i),f(i)})$ and
		\item  $(i,j) \in E(G)\implies (\exists \ell_i<s)(h(\ell_i)=\str{f(i),f(j)})$.
	\end{itemize}
	This can be done computably as both $\repmap{\egraph}(h[s]h(0)^\nats)$ and $G$ are finite and hence, at any stage, there are only finitely many $f$'s to check. If $G \not\subgraph \repmap{\egraph}(h[s]h(0)^\nats)$ do nothing. Otherwise, let $\function{\bar{f}}{V(G)}{V(\repmap{\egraph}(h[s]h(0)^\nats))}$ be a function witnessing $G\subgraph \repmap{\egraph}(h[s]h(0)^\nats)$ and let $p$ be such that 
		\[p(\str{i,j})\defas\begin{cases}
			1 & \text{if } (i=j \land \bar{f}^{-1}(i)\in V(G)) \lor ((\bar{f}^{-1}(i),\bar{f}^{-1}(j)) \in E(G)),\\
			0 & \text{otherwise.}
		\end{cases}\]
	It is straightforward to check that $p$ is a $\repmap{\graph}$-name for a solution of $\findsEC{G}$.
   
		For $(ii)$, by Figure \ref{findisdefinitionsFigure}, it suffices to prove that $\findisCC{G}$ is computable. The proof is the same of $(i)$, except for the fact that the injective $f$ is such that $i \in V(G)$ implies $h(\str{f(i),f(i)})=1$ and $(i,j) \in E(G)$ if and only if $h(\str{f(i),f(j)})=1$.
   
		The claim in $(iii)$ follows from $(i)$ as, for any graph $H$, $\complete{n}\subgraph H$ if and only if $\complete{n} \inducedsubgraph H$.
   
		To prove $(iv)$, by Figure \ref{findisdefinitionsFigure}, it suffices to show that $\findisEC{G} \sweireducible \cnats$ and $\cnats \sweireducible \findisEE{G}$. For the first reduction, given a name $p$ for an input $H$ of $\findisEC{G}$, let $A$ be the set of $(\sigma,\tau) \in (\nats\times\nats)^{V(G)}$ such that $(\forall i, j \in V(G))(\forall z \in \nats)$
		\begin{itemize}
			\item $p(\str{\tau(i),\tau(i)})=\str{\sigma(i),\sigma(i)}$;
			\item $(i,j) \in E(G) \implies p(\str{\tau(i),\tau(j)})= \str{\sigma(i),\sigma(j)}$ and
			\item $(i,j)\notin E(G) \implies p(z) \neq \str{\sigma(i),\sigma(j)}$.
		\end{itemize}
	Clearly, $A$ is nonempty, and since any $(\sigma,\tau)$ can be coded as a natural number, $A$ is a suitable input for $\cnats$. Let $n \in \cnats(A)$ be the code of some $(\sigma,\tau)$. We compute a name $q$ for a copy of $G' \in \findisEC{G}(H)$ letting $q(\str{\sigma(i),\sigma(j)})=1$ for all $i,j <\length{\sigma}$.
   
		To prove that $\cnats \sweireducible \findisEE{G}$, let $A \in \mathcal{A}(\nats)$ be an input of $\cnats$. We denote by $A^c[s]$ the enumeration of the complement of $A$ up to stage $s$ and notice that, since by hypothesis, $G\not\cong \complete{n}$, we have that $V(G)^2\setminus E(G) \neq \emptyset$. We compute an input $H$ for $\findisEE{G}$ as follows. At stage $0$, for every $v \in V(G)$ and for every $(v,w) \in E(G)$, enumerate $\str{v,0}$ in $V(H)$ and $(\str{v,0},\str{w,0})$ in $E(H)$ and let $n_0\defas 0$. At stage $s+1$, let $n_{s+1}\defas \min\{n: n \notin A^c[s+1]\}$. If $n_{s+1}=n_{s}$, do nothing, otherwise, for every $v,w \in V(H)$ at stage $s+1$, enumerate $(v,w)$ in $E(H)$, i.e.\ modify $H$ so that is isomorphic to a complete graph. Then for every $v \in V(G)$ and for every $(v,w) \in E(G)$, enumerate $\str{v,n_{s+1}}$ in $V(H)$ and $(\str{v,n_{s+1}},\str{w,n_{s+1}})$ in $E(H)$. In the limit, we obtain that either $H$ is such that $V(H)=\{\str{v,0}:v \in V(G)\}$ and $E(H)=\{(\str{v,0}, \str{w,0}) : (v,w) \in E(G)\}$ (in case $0 \in A$), or $H \cong \complete{m} \disconnectedunion{} G'$ for some $m \in \nats$ and  $V(G')=\{\str{v,n_s}:v \in V(G)\}$ and $E(G')=\{(\str{v,n_s}, \str{w,n_s}) : (v,w) \in E(G)\}$ where $n_s = \min \{n : n \in A\}$. Since $G \not\cong \complete{n}$, we know that any $H' \in \findisEE{G}(H)$ is not contained in the copy of $\complete{m}$ in $H$	and hence, given $\str{v,n_s} \in V(H')$, it is easy to check that $n_s \in A$ and this concludes the proof.
	\end{proof}
   
	\subsection{The induced subgraph problem for infinite graphs}
	We now move our attention to the case when $G$ is infinite. The results are summarized in the next two theorems: the first concerns the problems $\findis{}{G}$, while the second the problems $\finds{}{G}$. The first theorem analyzes $\findis{}{G}$ for all infinite $G$'s, while the second one does not include certain graphs some of which are discussed in \S \ref{subgraphsubsection}. We highlight that some results are stated for c.e.\ graphs,  others just for computable graphs.

   \begin{theorem}
	   \thlabel{maintheoreminducedsubgraph}
	   For any infinite graph $G$, $\findis{}{G} \weiequiv \CBaire$ relative to some oracle. In particular, we have the following cases:
	   \begin{enumerate}[$(i)$]
		   \item if $|\{v \in V(G):\D{G}{v}<\aleph_0\}|<\aleph_0$ we have two cases:
		   \begin{itemize}
			   \item[(a)] if $G$ is computable then $\CBaire \weiequiv \findis{}{G}$; 
			   \item[(b)] if $G$ is c.e., then $\CBaire \weiequiv \findisEE{G} \weiequiv \findisEC{G}$.
		   \end{itemize}
		   \item  If $G$ is c.e.\ and $|\{v \in V(G):\D{G}{v}=\aleph_0\}|<\aleph_0$, then 
		   $\CBaire \weiequiv \findis{}{G}$.
		   \item Let $\function{\lambda}{\nats}{\nats}$ be such that $\lambda(n) \defas \min\{k:|v \in V(G):\D{G}{v} \leq k\}|\geq k$. If $\{v \in V(G): \D{G}{v}<\aleph_0\}=\aleph_0$ and $\{v \in V(G): \D{G}{v}=\aleph_0\}=\aleph_0$ we have two cases:
		   \begin{itemize}
			   \item[(a)] if $G$ is computable then $\CBaire \weiequiv \findis{}{G}$ relative to $\lambda$; 
			   \item[(b)] if $G$ is c.e., then $\CBaire \weiequiv \findisEE{G} \weiequiv \findisEC{G}$ relative to $\lambda$.
		   \end{itemize}
	   \end{enumerate}
   \end{theorem}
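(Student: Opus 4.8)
The plan is to prove that in every case $\findis{}{G} \weiequiv \CBaire$ (possibly relative to an oracle), using \thref{findinducedsubgraphupperbound} for the upper bound and constructing an explicit reduction $\CBaire \weireducible \findis{}{G}$ for the lower bound. The upper bound $\findis{}{G} \weireducible \CBaire$ already holds for every infinite hyperarithmetical $G$ by \thref{findinducedsubgraphupperbound}, and this reduction is unaffected by adding an oracle, so the entire content of the theorem lies in establishing the matching lower bound $\CBaire \weireducible \findis{}{G}$ in each case. By Figure \ref{findisdefinitionsFigure} the hardest variant to reduce $\CBaire$ to is $\findisCE{G}$ (it sits at the bottom, so $\findisCE{G} \weireducible \findis{}{G}$ for the other three), hence it suffices to show $\CBaire \weireducible \findisCC{G}$ (or the appropriate $\chi$-domain variant), from which the other three inequalities follow; for the c.e.\ cases we instead target the $e$-domain problems $\findisEE{G},\findisEC{G}$ directly since those are the ones stated.

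The core idea for the lower bound is to use the construction $\constructiontwo{T,G}$ from \thref{Constructionongraphs}: recall $\constructiontwo{T,G}$ has vertex set $T$ and puts an edge between incomparable nodes and between comparable nodes whose level-indices are $G$-adjacent, so that any path $p \in \body{T}$ yields an induced copy $\constructiontwo{T,G}_{\restriction_{\{p[n]\}}} \cong G$. The plan is, given an instance of $\CBaire$ presented as an ill-founded tree $T$, to compute (relative to the relevant oracle) a $\repmap{\graph}$- or $\repmap{\egraph}$-name for $\constructiontwo{T,G}$ as an instance of $\findis{}{G}$, and then to recover a path through $T$ from any induced copy of $G$ returned as a solution. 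The backward functional must read the vertex set of the returned copy $G'$ and extract from it a path through $T$; here the degree hypotheses enter crucially, since they control how an induced copy of $G$ can sit inside $\constructiontwo{T,G}$. In case $(i)$, where only finitely many vertices of $G$ have finite degree, an induced copy of $G$ must contain an infinite chain of comparable nodes (an antichain would, by the argument in \thref{upperboundsinducedkomega}, force a copy of $\complete{\omega}$ in $G$, but infinite-degree vertices dominate), and this chain is exactly a path through $T$; in case $(ii)$, where only finitely many vertices have infinite degree, the copy must eventually lie along a single branch for the opposite reason. In the mixed case $(iii)$ the oracle $\lambda$ is needed precisely to sort vertices by their degree so that the backward reduction can distinguish the chain-direction (comparable, finite-degree-contributing) from the antichain-direction.

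The main obstacle I expect is the bookkeeping in the backward functional that converts a returned induced subgraph into a genuine path through $T$, together with the c.e.\ complications. For the computable cases (a) one can feed $\constructiontwo{T,G}$ directly as a $\repmap{\graph}$-name since $G$ is computable and $\constructiontwo{T,G}$ is computable relative to $T$ and $G$; for the c.e.\ cases (b) one must first apply $\mathbf{F}$ from \thref{fromcetocomputable} to obtain a computable graph $G'$ containing an induced copy of $G$ together with extra finite-degree vertices, build $\constructiontwo{T,G'}$, and then argue that the extra finite-degree junk does not corrupt the extraction of a path — this is where the degree-counting hypotheses and \thref{inducedsubgraphcacprinciple} (the chain/antichain principle) must be combined carefully. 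Concretely, the hard part will be verifying that when $T$ is ill-founded the solution set is nonempty and that every solution, regardless of how it exploits the finite-degree vertices introduced by $\mathbf{F}$ or the structure of $G$, yields recoverable longer-and-longer prefixes of an honest branch; I would handle this by a level-by-level analysis of which of the two edge-types in $\constructiontwo{T,G'}$ a given pair of solution-vertices realizes, using the function $\lambda$ in case $(iii)$ to identify the \quot{spine} vertices. The forward reduction and the appeal to \thref{findinducedsubgraphupperbound} for the upper bound are routine; essentially all the work is in this path-extraction verification.
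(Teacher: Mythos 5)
Your overall architecture matches the paper's: upper bound from \thref{findinducedsubgraphupperbound}, lower bound by encoding an ill-founded tree $T$ into a graph built on $T$ and extracting a path from any returned induced copy, with \thref{fromcetocomputable} handling the c.e.\ cases and the oracle $\lambda$ entering only in the mixed case. Your treatment of cases $(ii)$ and $(iii)$ is essentially the paper's: there one uses $\constructiontwo{T,G'}$, argues via the chain/antichain principle that all but finitely many vertices of the returned copy lie on a single branch $f$ (an infinite antichain would produce infinitely many infinite-degree vertices, contradicting the hypothesis), and then certifies spine vertices by counting extensions, with $\lambda$ supplying the count in case $(iii)$.

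However, there is a genuine gap in case $(i)$. You commit to $\constructiontwo{T,G}$ uniformly, but in case $(i)$ cofinitely many vertices of $G$ have \emph{infinite} degree, and $G$ may contain (or even equal) $\complete{\omega}$; the prototypical instance of case $(i)$ is $G=\complete{\omega}$ itself. In $\constructiontwo{T,\complete{\omega}}$ every infinite antichain of $T$ is an induced copy of $\complete{\omega}$, so a solution to $\findisCE{\complete{\omega}}$ may be a pure antichain carrying no information whatsoever about a branch of $T$, and your backward functional has nothing to extract. Your stated justification --- that an antichain ``would force a copy of $\complete{\omega}$ in $G$'' --- is not a contradiction here, since in case $(i)$ the hypothesis $\complete{\omega}\not\inducedsubgraph G$ of \thref{upperboundsinducedkomega} is typically false; that argument only rules out antichains in case $(ii)$. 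The paper instead uses the \emph{other} construction $\constructionone{T,G}$ in case $(i)$: there edges exist only between comparable nodes, so every induced (or sub-) copy of $G$ is forced to contain an infinite chain, and the backward functional greedily follows vertices of degree exceeding $N\defas\max\{\D{G}{v}:\D{G}{v}<\aleph_0\}$, each of which must have a comparable high-degree successor. Relatedly, targeting $\findisCE{G}$ in the c.e.\ subcase $(i)(b)$ would prove more than the theorem claims; the statement is restricted to $\findisEE{G}$ and $\findisEC{G}$ there precisely because $\constructionone{T,G}$ for c.e.\ $G$ is only computable as an element of $\repspaceegraphs$, not of $\repspacegraphs$, relative to $T$ --- a point your plan does not account for.
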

   
   \begin{theorem}
	   \thlabel{maintheoermsubgraphcase}
	   Let $G$ be an infinite graph such that $|\{v \in V(G):\D{G}{v}<\aleph_0\}|<\aleph_0$. If $G$ is computable then $\CBaire \weiequiv \finds{}{G}$ while if $G$ is c.e., then $\CBaire \weiequiv \findsEE{G}\weiequiv \findsEC{G}$. 
   \end{theorem}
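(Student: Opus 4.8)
The upper bound $\finds{}{G}\weireducible\CBaire$ is already available: by Figure~\ref{findisdefinitionsFigure} it suffices to bound the hardest of the four problems, $\findsEC{G}$, and this is exactly \thref{findinducedsubgraphupperbound}. So the whole content is the lower bound. Since $\CBaire$ is a cylinder and, by \thref{cylindersgraphone}, so are the $\finds{}{G}$, it is enough to produce strong reductions, and by Figure~\ref{findisdefinitionsFigure} it is enough to reduce to the weakest relevant problem: $\CBaire\sweireducible\findsCE{G}$ in the computable case (the weakest of the four) and $\CBaire\sweireducible\findsEE{G}$ in the c.e.\ case (the weaker of the two problems whose domain is $\repspaceegraphs$). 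The stated equivalences then follow at once.

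For the lower bound I would use the tree presentation of $\CBaire$, i.e.\ reduce from the problem ``given $T\in\illfounded$, output a path in $\body{T}$''. The forward functional sends $T$ to a name for $H\defas\constructionone{T,G}$, which is computable from $T$ and the fixed computable $G$. Validity of the produced instance is free: on the domain of $\CBaire$ the tree $T$ is ill-founded, so \thref{Constructionongraphs} gives $G\inducedsubgraph\constructionone{T,G}$, hence $G\subgraph H$. The backward functional must turn a subgraph copy $G'\cong G$ of $H$ into a path. Here I exploit the degree hypothesis: in $\constructionone{T,G}$ every edge joins $\sqsubseteq$-comparable nodes, so if $\sigma\in V(G')$ has infinite degree in $G'$ then, having only finitely many ancestors, it has infinitely many $G'$-neighbours strictly extending it. Because all but finitely many vertices of $G\cong G'$ have infinite degree, a greedy descent that repeatedly passes to an infinite-degree descendant-neighbour produces an infinite chain $\sigma_0\sqsubset\sigma_1\sqsubset\cdots$ inside $V(G')$; as the $\sigma_i$ are literally nodes of $T$ whose lengths tend to infinity, their union is an element of $\body{T}$, i.e.\ the desired $\CBaire$-solution.

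For the c.e.\ case I would compose this with \thref{fromcetocomputable}: from a $\repmap{\egraph}$-name of $G$ one computes some $G'\in\mathbf{F}(G)$, a computable graph containing a copy of $G$ together with extra vertices of finite degree, and runs the construction on $\constructionone{T,G'}$. The finite-degree junk is harmless precisely because our $G$ has only finitely many finite-degree vertices, so in any copy of $G$ the cofinitely many infinite-degree vertices cannot land in the junk, and the greedy descent still finds its chain among genuine infinite-degree nodes; validity comes from \thref{Constructionongraphs} applied to $G'$ together with $G\inducedsubgraph G'$, exactly as in \thref{upperboundsinducedrayomega}. Since $H$ is then handed over as an enumeration, this yields $\CBaire\sweireducible\findsEE{G}$, and combined with $\findsEE{G}\weireducible\findsEC{G}\weireducible\CBaire$ it gives $\CBaire\weiequiv\findsEE{G}\weiequiv\findsEC{G}$.

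The main obstacle is that the greedy descent only shows a path \emph{exists} inside $V(G')$; the real work is extracting one \emph{computably}. Indeed ``$\sigma$ has infinite degree in $G'$'' is a $\Pi^0_2$ property, and blindly searching $V(G')$ for an infinite $\sqsubseteq$-chain is itself a $\CBaire$-instance, so a naive backward functional would illegitimately re-invoke the very oracle we are reducing from. I expect the fix to lie in the forward construction: one must arrange $H$ so that the backbone chain of an \emph{arbitrary} copy becomes \emph{locally} identifiable — so that following, say, ``the least descendant already carrying enough of the prescribed neighbours'' can be certified from finite initial data rather than from a global degree computation — while preserving the equivalence $T\in\illfounded\iff G\subgraph H$. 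Reconciling computable readability with the adversary's freedom to spread a copy of a dense $G$ across incomparable branches is the crux of the proof.
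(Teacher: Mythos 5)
Your outline matches the paper's proof almost exactly --- upper bound via \thref{findinducedsubgraphupperbound}, lower bound by feeding $\constructionone{T,G}$ to the search problem and reading a branch of $T$ off the returned copy --- but you stop exactly at the step that carries the whole weight of the argument, and your guess about where the fix lives is wrong. The backward functional does \emph{not} require any re-engineering of the forward construction, nor any ``local identifiability'' arranged in $H$. The hypothesis $|\{v \in V(G):\D{G}{v}<\aleph_0\}|<\aleph_0$ is used precisely here: set $N\defas\max\{\D{G}{v}: \D{G}{v}<\aleph_0\}$. In any copy $G'\cong G$, a vertex has infinite degree if and only if its degree exceeds $N$, and ``$\D{G'}{\sigma}>N$'' is verifiable from a finite portion of (a name for) $G'$. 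So the greedy ascent you describe is already computable as written: wait for the first enumerated vertex $\sigma_0$ whose observed degree exceeds $N$, then for the first $\sigma_{s+1}\sqsupset\sigma_s$ with observed degree exceeding $N$, and so on; your own comparability argument guarantees each stage terminates and $\bigcup_s\sigma_s\in\body{T}$. Since you explicitly leave this ``crux'' unresolved and conjecture the solution lies in modifying $H$, the proposal as it stands has a genuine gap, even though the missing ingredient is a one-line observation.

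Two smaller points. First, in the c.e.\ case the detour through $\mathbf{F}$ of \thref{fromcetocomputable} is unnecessary: $\constructionone{T,G}$ is c.e.\ relative to $T$ when $G$ is c.e., which is exactly the right kind of name for an input to $\findsEE{G}$, and the paper runs the identical construction with no preprocessing. Your variant would still work once the threshold argument is in place (the returned copy is isomorphic to $G$, so the degree bound $N$ still applies to it), but it adds nothing. Second, your claim that an infinite-degree $\sigma$ has an infinite-degree neighbour strictly extending it is correct and is the same claim the paper proves; just make sure you state it as the paper does, using that only finitely many neighbours of $\sigma$ can be prefixes of $\sigma$ while cofinitely many vertices of $G'$ have infinite degree.
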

   
   The next part of this subsection is devoted to prove \thref{maintheoreminducedsubgraph,maintheoermsubgraphcase}: \thref{findinducedsubgraphupperbound} implies all the reductions from $\findis{}{G}$  to $\CBaire$: Figure \ref{findisdefinitionsFigure} and the remaining lemmas of this subsection prove the converse directions.

   The following lemma proves \thref{maintheoreminducedsubgraph}$(i)$ and \thref{maintheoermsubgraphcase}.
		\begin{lemma}
			\thlabel{findinducedsubgraphfinManyVertFinManyNeigh}
			Let $G$ be an infinite graph such that $|\{v \in V(G):\D{G}{v}<\aleph_0\}|<\aleph_0$. If $G$ is computable then $\CBaire \weireducible \finds{}{G}$, $\findis{}{G}$. If $G$ is c.e.\ then $ \CBaire\weireducible \findisEE{G}, \findsEE{G}$.
		\end{lemma}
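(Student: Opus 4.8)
The reductions $\findis{}{G},\finds{}{G}\weireducible\CBaire$ are already supplied by \thref{findinducedsubgraphupperbound}, so the content is the converse. By Figure \ref{findisdefinitionsFigure} the problems $\findisCE{G}$ and $\findsCE{G}$ are the weakest among the four induced (resp.\ subgraph) variants, so in the computable case I plan to prove only $\CBaire\weireducible\findisCE{G}$ and $\CBaire\weireducible\findsCE{G}$ and then invoke transitivity; in the c.e.\ case I can produce only enumerations, so I would instead aim at $\CBaire\weireducible\findisEE{G}$ and $\CBaire\weireducible\findsEE{G}$ (which by the same figure also yields the $\findisEC{G},\findsEC{G}$ reductions). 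Throughout I use the tree representation, so an instance of $\CBaire$ is a tree $T\in\illfounded$ and a solution is a path in $\body{T}$.

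For the forward map I would take $T\mapsto\constructionone{T,G}$. Since $G$ is infinite, \thref{Constructionongraphs} gives $G\inducedsubgraph\constructionone{T,G}$, hence also $G\subgraph\constructionone{T,G}$, so $\constructionone{T,G}$ is a legal instance of all four problems. When $G$ is computable and $T$ is given by its characteristic function, both $\sigma\in T$ and the edge condition $(v_{\length{\sigma}},v_{\length{\tau}})\in E(G)$ are decidable, so I can output a $\repmap{\graph}$-name of $\constructionone{T,G}$ and feed it to $\findisCE{G}$ / $\findsCE{G}$. When $G$ is only c.e.\ the vertex set of $\constructionone{T,G}$ is $T$ and its edge set is c.e.\ in $T$ and a name of $G$, so I can instead produce a $\repmap{\egraph}$-name and feed it to $\findisEE{G}$ / $\findsEE{G}$; note no appeal to \thref{fromcetocomputable} is needed here, since the degree hypothesis would be destroyed by adding the finite-degree vertices of $\mathbf{F}(G)$.

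For the backward map, let $G'$ be the copy returned by the oracle. Since $G'\cong G$, it has at most finitely many finite-degree vertices, all of degree at most $d\defas\max\{\D{G}{v}:\D{G}{v}<\aleph_0\}$, a fixed number computable from $G$ (take $d\defas 0$ if there is none). The structural fact I would exploit is that every edge of $\constructionone{T,G}$ joins two $\sqsubseteq$-comparable nodes, so, whether $G'$ is a subgraph or an induced subgraph, every edge of $G'$ joins comparable nodes of $T$. I then extract a path by a greedy walk: search $V(G')$ until some $\sigma_0$ is seen to have more than $d$ neighbours (one exists, as cofinitely many vertices of $G'$ have infinite degree); given $\sigma_n$, search its neighbours in $G'$ for some $\tau\sqsupset\sigma_n$ seen to have more than $d$ neighbours and set $\sigma_{n+1}\defas\tau$; finally output $\bigcup_n\sigma_n\in\body{T}$, reading off coordinate $k$ as soon as some $\sigma_n$ with $\length{\sigma_n}>k$ has appeared.

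The step I expect to be the main obstacle is arguing that this walk never stalls and is genuinely computable. A vertex with more than $d$ neighbours cannot be one of the finite-degree vertices, so each $\sigma_n$ has infinite degree in $G'$; since all of its neighbours are comparable to it and at most $\length{\sigma_n}$ are initial segments, it has infinitely many proper-extension neighbours, cofinitely many of infinite degree, which therefore eventually reveal more than $d$ neighbours. Hence the search for $\sigma_{n+1}$ always terminates, each test is semidecidable (counting neighbours in an enumeration, checking the prefix relation), and the $\sigma_n$ form a strictly $\sqsubset$-increasing chain in $T$ whose union is a path. The delicate point this resolves is that merely knowing $V(G')$ contains an infinite chain would only produce a fresh instance of $\CBaire$; it is the \emph{uniform} bound $d$ on the finite degrees, which turns ``$\sigma$ has infinite degree'' into a $\Sigma^0_1$ certificate, that lets me commit to a single branch without ever searching $T$. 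The c.e.\ case is identical, working with enumerations of $G'$ throughout.
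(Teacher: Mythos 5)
Your proposal is correct and follows essentially the same route as the paper: the forward map is $T\mapsto\constructionone{T,G}$, and the backward map extracts a path by greedily searching the returned copy for a $\sqsubset$-increasing chain of vertices certified to have infinite degree via the uniform bound on the finitely many finite degrees (the paper's $N$ is your $d$). The only cosmetic difference is that the paper's walk picks any high-degree proper extension rather than insisting on a neighbour, and your write-up is somewhat more explicit than the paper's about why such an extension of infinite degree always exists.
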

		\begin{proof}
			For the first part, by Figure \ref{findisdefinitionsFigure}, it suffices to show that $\CBaire \weireducible \findsCE{G}$, $\findisCE{G}$. We only prove the claim for $\findsCE{G}$ as the proof for $\findisCE{G}$ is the same. Let $T \in \tree$ be an input for $\CBaire$, let $V(G)=\{v_i : i \in \nats\}$ and compute $\constructionone{T,G}$ (see \S \ref{wadgecomplexityofgraphs} for its definition). Notice that, since $G$ is computable, $\constructionone{T,G}$ is computable with respect to $T$ and by \thref{Constructionongraphs}, $G \subgraph \constructionone{T,G}$: hence $\constructionone{T,G}$ is a suitable input for $\findsCE{G}$. Let $G' \in \findsCE{G}(\constructionone{T,G})$. We claim that
			$$(\forall \sigma \in V(G'))(\D{G'}{\sigma}=\aleph_0 \implies (\exists \tau \in V(G'))(\D{G'}{\tau}=\aleph_0 \land \sigma \sqsubset \tau)).$$
			To prove the claim notice that, by hypothesis, $(\forall^\infty \sigma \in V(G'))(\D{G'}{\sigma}=\aleph_0)$. Hence, given $\sigma$ such that $\D{G'}{\sigma}=\aleph_0$,  there exists another vertex $\tau \in V(G')$ such that $(\sigma,\tau) \in E(G')$ and $\D{G'}{\tau}=\aleph_0$: the definition of $E(\constructionone{T,G})$ implies that $\sigma\sqsubset \tau$. 
	   
			Let $N\defas\max \{\D{G}{v}:v \in A\}$. We compute a sequence $\{\sigma_s : s \in \nats\} \subseteq V(G')$ such that $\bigcup_s \sigma_s \in \body{T}$. At stage $0$, let $\sigma_0$ be the first vertex enumerated by the name of $G'$ satisfying $\D{G'}{\sigma_0}=\aleph_0$ (which exists by the previous claim): this is a computable process as it suffices to verify $\D{G'}{\sigma_0}> N$. Suppose we have computed the sequence up to $\sigma_{s}$. At stage $s+1$, let $\sigma_{s+1}$ be the first vertex enumerated by the name of $G'$, satisfying $\sigma_s\sqsubset \sigma_{s+1} \land \D{G'}{\sigma_{s+1}}>N$ (the existence of $\sigma_{s+1}$ is guaranteed by the previous claim). This proves when $G$ is computable.
   
			To show that if $G$ is c.e. then $\CBaire\weireducible \findisEE{G}, \findsEE{G}$ notice that the proof is exactly the same. In this case, $\constructionone{T, G}$ is c.e.\ with respect to $T$, but this is fine as the input for $\findisEE{G}$ and $\findisEC{G}$ is in $\repspaceegraphs$.
		\end{proof}

		The following lemma proves \thref{maintheoreminducedsubgraph}$(ii)$.
		\begin{lemma}
		\thlabel{finmanywithinfmany}
	Let $G$ be an infinite graph such that $\{v \in V(G):\D{G}{v}=\aleph_0\}|<\aleph_0$. If $G$ is c.e., then $\CBaire \weireducible \findis{}{G}$.	
	\end{lemma}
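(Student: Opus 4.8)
The plan is to establish the single reduction $\CBaire \weireducible \findisCE{G}$; since $\findisCE{G}$ sits below all four problems in Figure \ref{findisdefinitionsFigure}, this yields $\CBaire \weireducible \findis{}{G}$ for all of them. First I would record the structural consequence of the hypothesis: if only finitely many vertices of $G$ have infinite degree, then $\complete{\omega} \not\inducedsubgraph G$ (an induced $\complete{\omega}$ would supply infinitely many infinite-degree vertices), so $G$ falls under the hypothesis of \thref{upperboundsinducedkomega}. To produce a characteristic-function input from the merely c.e. graph $G$, I would feed $G$ to the computable map $\mathbf{F}$ of \thref{fromcetocomputable}, obtaining a computable $G' \in \mathbf{F}(G)$ with $G \inducedsubgraph G'$ and such that every vertex outside the embedded copy of $G$ has finite degree; in particular $G'$ again has only finitely many vertices of infinite degree. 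The forward functional sends a tree $T$ (viewed as a $\CBaire$-instance) to the characteristic function of $\constructiontwo{T,G'}$, which is computable from $T$ since $G'$ is computable.

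For the domain condition, I would invoke \thref{Constructionongraphs}: when $T$ is ill-founded, $G' \inducedsubgraph \constructiontwo{T,G'}$, and composing with $G \inducedsubgraph G'$ gives $G \inducedsubgraph \constructiontwo{T,G'}$, so the image is a legitimate instance of $\findisCE{G}$. The backward functional must turn a solution $H' \cong G$, enumerated as an induced subgraph of $\constructiontwo{T,G'}$ with vertex set $S \subseteq T$, into a path through $T$. The key structural observation is that in $\constructiontwo{T,G'}$ any two incomparable nodes are adjacent; hence an infinite antichain inside $S$ would be an induced $\complete{\omega}$ in $H' \cong G$, contradicting $\complete{\omega}\not\inducedsubgraph G$. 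Therefore $S$ has no infinite antichain, and since $(S,\sqsubseteq)$ is an infinite partial order, the chain–antichain principle (the same tool behind \thref{inducedsubgraphcacprinciple}) forces $S$ to contain an infinite $\sqsubseteq$-chain $\sigma_0 \sqsubset \sigma_1 \sqsubset \cdots$, whose union $\bigcup_n \sigma_n$ lies in $\body{T}$ and is a valid $\CBaire$-answer.

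The delicate point — and the step I expect to be the main obstacle — is making this chain extraction \emph{computable} from the enumeration of $S$ together with $T$. This is exactly where case (ii) differs from the proof of \thref{findinducedsubgraphfinManyVertFinManyNeigh}: there the copy had cofinitely many infinite-degree vertices, which could be detected by a fixed degree threshold $N$ and then threaded into a $\sqsubset$-chain. Here the finite-degree vertices (which are cofinitely many in $S$) can have arbitrarily large finite degree, so no such threshold exists, and one cannot simply read off infinite-degree vertices. Instead I would exploit that non-adjacent vertices of the copy are necessarily comparable in $T$, and that every finite-degree vertex of $H'$ is comparable to all but finitely many elements of $S$; combined with the absence of an infinite antichain, this implies that, for each node $\nu$ of $T$, the elements of $S$ extending $\nu$ concentrate on only finitely many immediate child directions (otherwise one node per direction would yield an infinite antichain). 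I would then run a dovetailing search that, starting from the root, repeatedly follows the child direction currently accumulating the most $S$-descendants and backtracks whenever a chosen direction stalls; the finiteness of antichains guarantees that a genuinely productive direction keeps producing $S$-vertices forever while spurious ones eventually stop, so the search converges to an infinite branch of $T$. Verifying that this priority-style construction indeed produces a total path is the real content of the lemma and should be isolated as its own claim. Finally, since $G'$ being c.e.\ rather than computable only affects whether $\constructiontwo{T,G'}$ is enumerated versus decided, the same argument specializes to show $\CBaire \weireducible \findisEE{G},\findisEC{G}$, and with the upper bound from \thref{findinducedsubgraphupperbound} this completes case $(ii)$ of \thref{maintheoreminducedsubgraph}.
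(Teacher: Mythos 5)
Your forward functional and the structural analysis of a solution are essentially the paper's: compute $G'\in\mathbf{F}(G)$, send $T$ to $\constructiontwo{T,G'}$, and observe that an infinite antichain among the vertices of a solution $H'\cong G$ would yield an induced $\complete{\omega}$ and hence infinitely many infinite-degree vertices, contradicting the hypothesis. The gap is in the backward functional. A Weihrauch reduction requires $\Psi$ to be a computable map into $\Baire$, so it must emit the digits of a path through $T$ irrevocably; your proposed search, which \emph{backtracks whenever a chosen direction stalls}, is a limit construction. At no finite stage do you have a certificate that the child direction currently accumulating the most $S$-descendants is the productive one rather than one that will stall after an arbitrarily long finite run, so you can never commit to an output digit. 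As described, the argument would at best give something like $\CBaire\weireducible \mflim*\findisCE{G}$, not the lemma.

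The missing idea is a semi-decidable certificate for membership in the path, and your remark that \emph{no such threshold exists} is exactly where you go astray: you look for a threshold on degrees, whereas the right threshold is on the number of extensions. Let $N$ be the (finite) number of infinite-degree vertices of $G$. One first upgrades your antichain observation to: there is a single $f\in\body{T}$ such that cofinitely many vertices of $H'$ lie on $f$, and moreover \emph{every} finite-degree vertex of $H'$ is a prefix of $f$ (a finite-degree vertex off $f$ would be incomparable with, hence adjacent to, cofinitely many vertices of $H'$). Consequently any $\sigma\in V(H')$ having at least $N+1$ extensions in $V(H')$ must satisfy $\sigma\sqsubset f$: among those extensions at most $N$ have infinite degree, so one has finite degree and lies on $f$. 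Searching the enumeration of $H'$ for such $\sigma$'s of ever-increasing length is a monotone, semi-decidable process that never certifies a wrong node, which is what makes $\Psi$ computable. Without this (or an equivalent) finite-stage certificate, your construction does not close.
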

	\begin{proof}
		By Figure \ref{findisdefinitionsFigure}, it suffices to show that $\CBaire \weireducible \findisCE{G}$. Let  $T\in \tree$ be an input for $\CBaire$, and compute $G' \in \mathbf{F}(G)$ where $\mathbf{F}$ is the function of \thref{fromcetocomputable}. Let $\constructiontwo{T,G'}$ (see \S \ref{wadgecomplexityofgraphs} for its definition): since $G' \in \repspacegraphs$, $T \in \illfounded$ and $G \inducedsubgraph G'$ (\thref{fromcetocomputable}), by \thref{Constructionongraphs}, $G\inducedsubgraph\constructiontwo{T,G'}$ (i.e.\ $\constructiontwo{T,G'}$ is a suitable input for $\findisCE{G}$). Let $H \in \findisCE{G}(\constructiontwo{T,G'})$.
		  
	Notice that $(\forall^\infty \sigma, \tau \in V(H))(\sigma \sqsubset \tau \lor \tau \sqsubset \sigma)$. If not then there exists $\{\sigma_i : i \in \nats\}\subseteq V(H)$ such that $(\forall i \neq j)(\sigma_i \incomparable \sigma_j)$. By definition of $\constructiontwo{T,G'}$ and the fact that $H$ is an induced subgraph of $\constructiontwo{T,G'}$, we obtain that $(\forall i\neq j)((\sigma_i,\sigma_j) \in E(H))$, and hence $(\forall i)(\D{H}{\sigma_i}=\aleph_0)$.  Since $H\cong G$, this contradicts the hypothesis that $|\{v \in V(G):\D{G}{v}=\aleph_0\}|<\aleph_0$. In other words, we have just showed that $(\exists f \in \body{T})(\forall^\infty \sigma \in V(H))(\sigma\sqsubset f)$.
	   We now show that:
	   \begin{equation}
	   \label{condition}
		   (\forall \sigma \in V(H))(\D{H}{\sigma}<\aleph_0 \implies \sigma \sqsubset f).
	   \end{equation}
	  Otherwise, $(\exists \tau \in V(H))(\D{H}{\tau}<\aleph_0 \land \tau\not\sqsubset f)$ but we have just shown that $(\forall^\infty \sigma \in V(H))(\sigma \sqsubset f)$ and hence $(\forall^\infty \sigma \in V(H))(\sigma \incomparable \tau)$. By definition of $\constructionone{T, G'}$ and the fact that $H$ is an induced subgraph of $\constructionone{T, G'}$ we obtain that $\D{H}{\tau}=\aleph_0$, getting the desired contradiction.
	  
	   Now we compute a sequence of vertices $\{\sigma_i: i \in \nats\}\subseteq V(H)$ such that  $\bigcup_i\sigma_i= f$. Let $N\defas |\{\sigma \in V(G):\D{G}{\sigma}=\aleph_0\}|$. For every $s$, let $\sigma_s \in V(H)$ be such that $(\exists \tau_0,\dots,\tau_{N} \in V(H))(\forall i\leq N)(\sigma_s \sqsubset \tau_i)$ and $(\exists! \tau'_0,\dots,\tau'_{s-1} \in V(H))(\forall i< s)(\tau'_i\sqsubset \sigma_s)$ (the second condition ensures that $\length{\sigma_s}\geq s$). For any $s$ the existence of $\sigma_s$ is guaranteed by (\ref{condition}): just let $\sigma_s \in \{\sigma \in V(H) : \D{H}{\sigma}<\aleph_0 \land \length{\sigma}\geq s \}$. It remains to show that for every $s$, $\sigma_s\sqsubset f$. To do so, notice that any $\sigma_s$ has (at least) $N+1$ many extensions $\tau_0,\dots,\tau_N$ in $T$. By hypothesis there are only $N$ many vertices of infinite degree in $V(H)$, hence there exists an $i<N$ such that $\D{H}{\tau_i}<\aleph_0$, i.e.\ $\sigma_s \sqsubset \tau_i\sqsubset f$, and this concludes the proof.
	   \end{proof}
	   
	Notice that \thref{findinducedsubgraphfinManyVertFinManyNeigh} and \thref{finmanywithinfmany} do not exhaust all the possible cases: it may be the case that $G$ is such that $\{v \in V(G): \D{G}{v}<\aleph_0\}=\aleph_0$ (as in \thref{finmanywithinfmany}) but $\{v \in V(G): \D{G}{v}=\aleph_0\}=\aleph_0$ too. The following lemma shows \thref{maintheoreminducedsubgraph}($iii$) and concludes the proof of \thref{maintheoreminducedsubgraph}. The proof of the next lemma is similar to the one of \thref{finmanywithinfmany}: the main difference is that here $ |\{\sigma \in V(G):\D{G}{\sigma}=\aleph_0\}| \notin \nats$ and hence the reduction is given relative to an oracle.
   
		\begin{lemma}
		\thlabel{oracle}
		Let $\function{\lambda}{\nats}{\nats}$ be such that $\lambda(n) \defas \min\{k:|v \in V(G):\D{G}{v} \leq k|\}\geq k$ and let $G$ be an infinite graph such that
				\[\{v \in V(G): \D{G}{v}<\aleph_0\}=\aleph_0 \text{ and } \{v \in V(G): \D{G}{v}=\aleph_0\}=\aleph_0.\]
				If $G$ is computable then $\CBaire \weireducible \finds{}{G}$, $\findis{}{G}$ relative to $\lambda$. If $G$ is c.e.\ then $ \CBaire\weireducible \findisEE{G}, \findsEE{G}$ relative to $\lambda$.
		\end{lemma}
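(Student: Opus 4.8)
The plan is to follow the templates of \thref{findinducedsubgraphfinManyVertFinManyNeigh} (for $\finds{}{G}$) and \thref{finmanywithinfmany} (for $\findis{}{G}$), replacing the constant $N=|\{v\in V(G):\D{G}{v}=\aleph_0\}|$ — which is now infinite — by information supplied by $\lambda$. By Figure~\ref{findisdefinitionsFigure} it suffices to exhibit, relative to $\lambda$, reductions $\CBaire\weireducible\findisCE{G},\findsCE{G}$ when $G$ is computable and $\CBaire\weireducible\findisEE{G},\findsEE{G}$ when $G$ is c.e. On an instance $T\in\tree$ of $\CBaire$ (so $T\in\illfounded$), the forward map outputs $\constructiontwo{T,G}$ for the induced problems and $\constructionone{T,G}$ for the subgraph ones; these are computable in $T$ when $G$ is computable (yielding $\repmap{\graph}$-names, suited to $\findisCE{G},\findsCE{G}$) and c.e.\ in $T$ when $G$ is c.e.\ (yielding $\repmap{\egraph}$-names, suited to $\findisEE{G},\findsEE{G}$). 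By \thref{Constructionongraphs} we have $G\inducedsubgraph\constructiontwo{T,G}$ and $G\inducedsubgraph\constructionone{T,G}$, so both outputs are legitimate instances. Note that, unlike in \thref{finmanywithinfmany}, I would feed $G$ itself rather than some $G'\in\mathbf{F}(G)$: this keeps the degree sequence of the ambient graph equal to the one recorded by $\lambda$.

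Given a returned copy $H\cong G$ with $V(H)\subseteq T$, the backward map must compute a path of $T$. For $\findis{}{G}$ I would work with $F\defas\{\sigma\in V(H):\D{H}{\sigma}<\aleph_0\}$, which is infinite by hypothesis. In $\constructiontwo{T,G}$ any two $\incomparable$ nodes are adjacent, and $H$ is an induced subgraph, so an infinite antichain contained in $F$ would be an infinite set of pairwise adjacent vertices, contradicting that each of them has finite degree; hence $F$ has no infinite antichain. An elementary chain--antichain argument then shows that $F$ meets only finitely many infinite branches $f_1,\dots,f_m$, each lying in $\body{T}$ (infinitely many pairwise incomparable infinite branches would expose an infinite antichain), and that all but finitely many elements of $F$ lie on their union. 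Each $f_i$ is already a solution to $\CBaire$; the task is to single one out computably.

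This last step is where I expect the real difficulty, and where $\lambda$ enters. Two problems must be overcome: \emph{(a)} finite degree in $H$ is only a co-c.e.\ property of a $\repmap{\egraph}$-name, so $F$ cannot be read off directly; and \emph{(b)} we must commit, computably and consistently, to one of the finitely many branches $f_i$. For a fixed bound the counting trick of \thref{finmanywithinfmany} would suffice, but here there is no finite bound $N$ on the infinite-degree vertices. Instead, for each $n$, $\lambda(n)$ is a degree bound met by at least $n$ vertices of $H$; so, for growing $n$, we are guaranteed ever larger \emph{numbers} of finite-degree vertices even though we cannot certify individual ones. I would then build a nested sequence $\sigma_0\sqsubset\sigma_1\sqsubset\cdots$ in $V(H)$ by a $\lambda$-bounded search: at stage $s$ I wait (using $\lambda(n)$ for a suitably large $n=n_s$) until enough low-degree vertices have appeared that a strict majority of them extend a single child of $\sigma_s$, and I move to that child. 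Because the finite-degree vertices ultimately concentrate on $f_1,\dots,f_m$ and $\lambda$ dictates how long to wait so that this majority outweighs the bounded contribution, at any finite stage, of the other branches and of the not-yet-exposed high-degree vertices, the chosen directions stabilise onto some $f_i$, and $\bigcup_s\sigma_s\in\body{T}$ solves $\CBaire$.

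For $\finds{}{G}$ I would run the dual analysis on $\constructionone{T,G}$, in which every edge joins comparable nodes so that the returned copy respects the tree order; here the roles of the two degree classes are exchanged, as in \thref{findinducedsubgraphfinManyVertFinManyNeigh}, and I expect the branch-counting to be more delicate because infinite-degree vertices are no longer forced to chain upward. The computable and c.e.\ cases differ only in whether the produced instance is a $\repmap{\graph}$- or a $\repmap{\egraph}$-name; the extraction is identical. The point a full proof must pin down is precisely the stabilisation estimate of the previous paragraph: quantifying, in terms of $\lambda$, how many low-degree witnesses are required at level $s$ so that the branch chosen there is never later abandoned, which is exactly what guarantees that the $\sigma_s$ are genuinely nested and that their union is infinite.
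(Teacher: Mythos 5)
Your forward functional matches the paper's (feeding $\constructiontwo{T,G}$, resp.\ $\constructionone{T,G}$, built from $G$ itself rather than from $\mathbf{F}(G)$), and you have correctly located where $\lambda$ must enter. But the backward functional --- the only genuinely new content of this lemma relative to \thref{finmanywithinfmany} --- is left as an acknowledged gap, and the mechanism you sketch for it is doubtful. Two concrete problems. First, your structural claim is weaker than what is true and needed: since a finite-degree vertex of the returned copy is incomparable with only finitely many vertices of the copy, any two finite-degree vertices have a common extension among the copy's vertices and are therefore comparable; so the set $F$ of finite-degree vertices is a \emph{chain} determining a single branch $f\in\body{T}$, not a set spread over finitely many branches $f_1,\dots,f_m$ among which one must ``vote''. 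Second, the majority-vote with $\lambda$-bounded waiting is not justified: low degree cannot be certified from a name (as you note), and the competing population of not-yet-exposed high-degree vertices branching off $f$ below a given level is finite but admits no computable bound, so no waiting time prescribed by $\lambda$ alone guarantees that a majority of apparently-low-degree vertices concentrates on the correct child, nor that a choice once made is never abandoned. You explicitly defer exactly this stabilisation estimate, which is the heart of the proof.

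The paper's extraction avoids voting entirely. It selects $\sigma_s\in V(G')$ by the positively verifiable requirements that $\sigma_s$ has at least $\lambda(s+1)+1$ extensions in $V(G')$ and exactly $s$ predecessors in $V(G')$; such vertices exist because finite-degree vertices of every length lie on $f$. Correctness is then a counting argument: any $\tau\in V(G')$ incomparable with $\sigma_s$ is incomparable with, hence adjacent to, all $\lambda(s+1)+1$ of those extensions, so $\D{G'}{\tau}>\lambda(s+1)$; since $G$ has at least $s+1$ vertices of degree at most $\lambda(s+1)$ and at most $s$ of their images can be prefixes of $\sigma_s$, some finite-degree vertex of $G'$ extends $\sigma_s$, forcing $\sigma_s\sqsubset f$. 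This is precisely the quantitative step your proposal is missing. (A smaller point: the subgraph case via $\constructionone{T,G}$, which you flag as ``more delicate'', is given no argument at all in your proposal; the paper treats it as in \thref{findinducedsubgraphfinManyVertFinManyNeigh}.)
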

		\begin{proof}
			By Figure \ref{findisdefinitionsFigure}, it suffices to show that $\CBaire \weireducible \findisCE{G}$. Let $T \in \tree$ be an input for $\CBaire$, assume $V(G)=\{ \sigma_i : i \in \nats\}$ and compute $\constructiontwo{T,G}$. Let $G'\in \findisCE{G}(\constructiontwo{T,G})$ and notice that the same proof of \thref{finmanywithinfmany} gives us that cofinitely vertices in $G'$ belong to the same path $f \in \body{T}$ and
	   \begin{equation}
	   \label{condition2}
		   (\forall \sigma \in V(G'))(\D{G'}{\sigma}<\aleph_0 \implies \sigma \sqsubset f).
	   \end{equation}
	  
		Now we compute a sequence of vertices $\{\sigma_s: s \in \nats\}\subseteq V(G')$ such that  $\bigcup_s\sigma_s= f$. For any $s$, let $\sigma_s \in V(G')$ be such that:
	\begin{enumerate}[$(i)$]
		\item $(\exists \tau_0,\dots,\tau_{\lambda(s+1)} \in V(G'))(\forall i\leq \lambda(s+1))(\sigma_s \sqsubset \tau_i)$;
		\item if $s>0$, $(\exists! \tau'_0,\dots,\tau'_{s-1} \in V(G'))(\forall i< s-1)(\tau'_i\sqsubset \tau'_{i+1} \sqsubset \sigma_s)$.
	\end{enumerate}
	Condition $(ii)$ ensures that $\length{\sigma_s}\geq s$ and for any $s$ the existence of $\sigma_s$ is guaranteed by (\ref{condition2}): just let $\sigma_s \in \{\sigma \in V(G') : \D{G'}{\sigma}<\aleph_0 \land \length{\sigma}\geq s\}$. It remains to show that for every $s$, $\sigma_s\sqsubset f$. To do so, notice that by hypothesis $(\exists v_0,\dots, v_s \in V(G))(\D{G}{v_s}\leq \lambda(s+1))$. Any isomorphism from $G$ to $G'$, for any $i\leq s$, must map $v_i$ to some $\tau \in V(G')$ such that $\tau \sqsubseteq \sigma_s \lor \sigma_s \sqsubseteq \tau$. Suppose it is not the case: if $\tau\incomparable \sigma_s$ then, since $(\forall i< \lambda(s+1))(\sigma_s \sqsubset \tau_i)$, we have that $(\forall i< \lambda(s+1))(\tau \incomparable \tau_i)$ and so $\D{G'}{\tau}\geq \lambda(s+1)+1$. Hence, any isomorphism from $G$ to $G'$ maps one between $v_0,\dots, v_s$ in some $\tau \in V(G')$ such that $\tau \sqsubseteq \sigma_s$ (this is guaranteed by $(ii)$): indeed, there are only $s$ many vertices in $G'$ that are prefixes of $\sigma_s$, hence $\sigma_s\sqsubset f$ and this concludes the proof.
   
	To show that if $G$ is c.e. then $\CBaire\weireducible \findisEE{G}, \findsEE{G}$ notice that the proof is exactly the same. In this case, $\constructiontwo{T, G}$ is c.e.\ with respect to $T$, but this is fine as the input for $\findisEE{G}$ and $\findisEC{G}$ is in $\repspaceegraphs$.
	   \end{proof}
   
	Notice that for some graphs, the $\lambda$ defined in the previous theorem is computable. For example, this is the case for \emph{highly recursive} graphs, i.e.\ graphs in which for every $v \in V(G)$, we can compute $\D{G}{v}$. These particular graphs have been considered, for different problems, for example in \cite{manaster1972effective}. We also mention that \thref{oracle} holds even if we replace $\lambda$ with any $\gamma$ bounding $\lambda$. 
   
	We leave open whether it is possible to \quot{get rid of}\ the oracle in \thref{oracle}, obtaining a Weihrauch reduction like in \thref{findinducedsubgraphfinManyVertFinManyNeigh,finmanywithinfmany} that would give us the result that for any computable/c.e.\ graph $G$, $\findis{}{G}\weiequiv \CBaire$.
   
   \subsection{The subgraph problem: when $\R\subgraph G$}
   \label{subgraphsubsection}
	\thref{maintheoermsubgraphcase} shows that $\finds{}{G}\weiequiv \CBaire$ where $G$ is an infinite c.e.\ graph such that $|\{v \in V(G): \D{G}{v}<\aleph_0\}|<\aleph_0$. To study the Weihrauch degree of $\finds{}{G}$ for c.e.\ graphs not satisfying such a condition, and in particular to understand which of them satisfy $\finds{}{G}\weiequiv \CBaire$, we start from those graphs that, intuitively, are  \quot{ill-founded}, i.e.\ those graphs $G$ such that $\R \subgraph G$. A piece of evidence supporting the claim that $\finds{}{G}\weiequiv \CBaire$ for any $G$ such that $\R\subgraph G$ is \thref{lemma1question}: in that case, the fact that $\R\subgraph G$ implied that $(e)\sg_G \weiequiv \wf$. In this section we show this intuition is not entirely true: before doing so, we define the following multi-valued functions.
	
	\begin{definition}
		Let $G$ be a graph such that $\R\subgraph G$. The multi-valued functions $\partialmultifunction{\embeddingray{G}}{\repspacegraphs}{\Baire}$ and $\partialmultifunction{\eembeddingray{G}}{\repspaceegraphs}{\Baire}$ with domains respectively $\{H \in \repspacegraphs:  G \cong H\}$ and $\{H \in \repspaceegraphs:  G \cong H\}$ are defined as
	$$(e)\embeddingray{G}(H) \defas \{p: (\forall i)((p(i),p(i+1)) \in E(H))\}.$$
	  \end{definition}
	  
	  The next proposition implies that most reductions we obtain in this section are actually strong ones: the proof is omitted as it is similar to the one proving \thref{cylindersgraphone}.
	  \begin{proposition}
		 \thlabel{cylindersgraphtwo}
		  For any graph $G$, $\embeddingray{G}$ and $\eembeddingray{G}$ are cylinders.
	  \end{proposition}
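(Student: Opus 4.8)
The plan is to mirror the proof of \thref{cylindersgraphone}: to show a problem is a cylinder it suffices to establish the nontrivial strong reduction $\embeddingray{G} \times \id \sweireducible \embeddingray{G}$ (and likewise $\eembeddingray{G}\times\id\sweireducible\eembeddingray{G}$), since $f\sweireducible f\times\id$ holds for every $f$. The idea is again to hide the $\id$-input $p\in\Baire$ inside the vertex labels of a relabelled copy of the input graph, and then to read $p$ back off any output ray.

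Concretely, on an instance $(H,p)$ of $\embeddingray{G}\times\id$ --- so $H\cong G$, hence $\R\subgraph H$ and $H\in\dom(\embeddingray{G})$ --- the forward functional computes the graph $H'$ with $V(H')\defas\{\str{v,p[v]}:v\in V(H)\}$ and $E(H')\defas\{(\str{v,p[v]},\str{w,p[w]}):(v,w)\in E(H)\}$. The map $v\mapsto\str{v,p[v]}$ is an isomorphism from $H$ onto $H'$ that is computable relative to $H$ and $p$, so $H'\cong H\cong G$ and $H'$ is a legitimate instance of $\embeddingray{G}$. For a $\repmap{\graph}$-name we output the characteristic function of $H'$ directly, and for the enumeration variant $\eembeddingray{G}$ we simply relabel each enumerated vertex and edge of a $\repmap{\egraph}$-name, so the construction works uniformly for both representations.

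From a solution $q\in\embeddingray{G}(H')$ the backward functional proceeds without consulting the original input. Each value is of the form $q(i)=\str{v_i,p[v_i]}$, and $(q(i),q(i+1))\in E(H')$ forces $(v_i,v_{i+1})\in E(H)$; hence the first-coordinate projection $r(i)\defas v_i$ is itself a valid output of $\embeddingray{G}(H)$. Reading the second coordinates $p[v_i]$ off $q$ recovers initial segments of $p$, and the functional returns the pair $(r,p)$, which is the required solution of $\embeddingray{G}\times\id$.

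The step that needs care --- the analogue of the appeal in \thref{cylindersgraphone} to the infinitude of $V(S)$ --- is verifying that the prefixes $p[v_i]$ actually exhaust $p$, i.e.\ that $\{v_i:i\in\nats\}$ is unbounded. This is where we use that a valid output of $\embeddingray{G}$ enumerates (the vertices of) a copy of $\R$ inside $H'$, so the vertices $q(i)$ are pairwise distinct; since the pairing $\str{\cdot,\cdot}$ is injective, distinct values $\str{v_i,p[v_i]}$ force the $v_i$ to be pairwise distinct as well, and an infinite set of distinct naturals is unbounded. Thus $\sup_i v_i=\infty$, every prefix of $p$ appears among the $p[v_i]$, and $p$ is recovered in full; the identical argument for $\eembeddingray{G}$ completes the proof.
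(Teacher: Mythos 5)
Your proof is correct and is exactly the adaptation the paper has in mind: it omits the proof of this proposition with the remark that it is similar to that of \thref{cylindersgraphone}, and your argument is precisely that relabelling construction ($v\mapsto\str{v,p[v]}$, project back on the first coordinate, read $p$ off the second). You also rightly flag the one point needing care, namely that the solution ray visits infinitely many distinct vertices of $H$ so that the prefixes $p[v_i]$ exhaust $p$ --- this uses the intended reading of $\embeddingray{G}$'s solutions as injective enumerations of a copy of $\R$, which is the reading the paper relies on elsewhere (e.g.\ in \thref{lim2}).
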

   
	 \begin{lemma}
	\thlabel{embeddabilityCBaire}
	Given a hyperarithmetical graph $G$ such that $\R \subgraph G$, $\embeddingray{G} \weireducible \eembeddingray{G} \weireducible \CBaire$.
	\end{lemma}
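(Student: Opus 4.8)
The plan is to prove the two reductions $\embeddingray{G}\weireducible\eembeddingray{G}$ and $\eembeddingray{G}\weireducible\CBaire$ separately and then chain them by transitivity of $\weireducible$. The first reduction is essentially free. Given a $\repmap{\graph}$-name $p_H$ for an instance $H$ of $\embeddingray{G}$ (so $H\cong G$), the forward functional computes a $\repmap{\egraph}$-name for the same $H$; this conversion is computable, as noted in the remark preceding \thref{fromcetocomputable}. Since $H\cong G$, this is a legitimate instance of $\eembeddingray{G}$, and the two solution sets coincide because the set $\{p:(\forall i)((p(i),p(i+1))\in E(H))\}$ depends only on $E(H)$ and not on the chosen representation of $H$. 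Thus the backward functional is the identity, and this is even a strong reduction.

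It remains to prove $\eembeddingray{G}\weireducible\CBaire$, and here I would follow the pattern of \thref{findinducedsubgraphupperbound}. Using the equivalence $\CBaire\weiequiv\choice{\boldfaceSigma_1^1}{\Baire}$ recalled in \S\ref{subsec:Weihrauch}, it suffices to exhibit the solution set as a nonempty $\Sigma_1^1$ subset of $\Baire$ whose $\boldfaceSigma_1^1$-name is computable from the input. Fix a $\repmap{\egraph}$-name $q$ for an instance $H$ of $\eembeddingray{G}$ and set $S_H:=\{p:(\forall i)((p(i),p(i+1))\in E(H))\}$. For nonemptiness, note that $H\cong G$ and $\R\subgraph G$ by hypothesis, so $\R\subgraph H$; hence there exist distinct vertices $v_0,v_1,\dots$ of $H$ with $(v_i,v_{i+1})\in E(H)$ for every $i$, and $p=(v_i)_{i\in\nats}$ witnesses $p\in S_H$. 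For the complexity, rewriting membership against $q$ as $(\forall i)(\exists k)(q(k)=\str{p(i),p(i+1)})$ shows that $S_H$ is $\Pi_2^0$, hence $\Sigma_1^1$, relative to $q$, so a $\boldfaceSigma_1^1$-name for $S_H$ is computable from $q$. Passing this name to $\choice{\boldfaceSigma_1^1}{\Baire}$ and returning its answer unchanged (the backward functional is again the identity) yields the reduction, in fact $\eembeddingray{G}\sweireducible\CBaire$.

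Chaining the two reductions gives $\embeddingray{G}\weireducible\eembeddingray{G}\weireducible\CBaire$ as claimed; moreover, since $\CBaire$ and $\eembeddingray{G}$ are cylinders (the latter by \thref{cylindersgraphtwo}), these reductions may be taken to be strong. I do not expect a genuine obstacle: the only points that need care are the verification of nonemptiness of $S_H$ — which is precisely where the standing hypothesis $\R\subgraph G$ enters — and the computation of a $\boldfaceSigma_1^1$-name for $S_H$ from $q$, which proceeds exactly as in the proof of \thref{findinducedsubgraphupperbound}.
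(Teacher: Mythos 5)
Your proposal is correct and follows essentially the same route as the paper: the first reduction is the computable conversion from characteristic functions to enumerations, and the second presents the solution set $\{p:(\forall i)((p(i),p(i+1))\in E(H))\}$ as an input to $\CBaire$ (via $\choice{\boldfaceSigma_1^1}{\Baire}$) and returns its answer unchanged. You are in fact a bit more careful than the paper's one-line argument, which simply asserts that this set lies in $\dom(\CBaire)$, whereas you explicitly check nonemptiness from $\R\subgraph G$ and observe that the set is only $\Pi_2^0$ relative to an enumeration name, hence needs the $\boldfaceSigma_1^1$-choice formulation.
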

	\begin{proof}
	The first reduction directly follows from the fact that from the characteristic function of a graph we can compute an enumeration of it, while for the second one it suffices to notice that if $H \in \dom(e\embeddingray{G})$ then $A\defas\{p: (\forall i)((p(i),p(i+1)) \in E(H))\}\in \dom(\CBaire)$ and so any $p \in \CBaire(A)$ is a solution for $e\embeddingray{G}(H)$.
	\end{proof}
   
		The following proposition tells us that $\finds{}{G}$ composed with $(e)\embeddingray{G}$ computes $\CBaire$.

   \begin{proposition}
		\thlabel{cbairecomputedbyisol}
		Given a c.e.\ graph $G$ such that $\R \subgraph G$,
		\[ \eembeddingray{G}*\findsEE{\R}\weiequiv  \embeddingray{G}*\findsEC{G} \weiequiv \CBaire.\]
		Given a computable graph $G$ such that $\R \subgraph G$,
		\[\eembeddingray{G}*\findsCE{\R} \weiequiv \embeddingray{G}*\findsCC{\R} \weiequiv \CBaire.\]
   
	\end{proposition}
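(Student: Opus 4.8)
The plan is to sandwich all four compositional products between $\CBaire$ from below and from above. The upper bounds are uniform and immediate: by \thref{embeddabilityCBaire} each outer factor $(e)\embeddingray{G}$ reduces to $\CBaire$, and by \thref{findinducedsubgraphupperbound} each inner factor reduces to $\CBaire$; since $\CBaire$ is closed under the compositional product (recorded in \S\ref{subsec:Weihrauch}), monotonicity of $*$ gives that each of the four products reduces to $\CBaire*\CBaire\weiequiv\CBaire$.

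For the converse reductions $\CBaire\weireducible(\text{product})$ I would exploit the two oracle calls the compositional product affords. Given an ill-founded $T\in\tree$, I first feed a $\constructionone{T,\cdot}$-graph to the inner problem: $\constructionone{T,\R}$ for the three products whose inner parameter is $\R$, and $\constructionone{T,G'}$ with $G'\in\mathbf{F}(G)$ (via \thref{fromcetocomputable}) for the product $\embeddingray{G}*\findsEC{G}$ in the c.e.\ case. By \thref{Constructionongraphs} these are admissible instances exactly when $T$ is ill-founded, so the inner call returns an injective copy of $\R$ (respectively of $G$) sitting on nodes of $T$. The combinatorial heart is that, by definition of $E(\constructionone{T,\cdot})$, consecutive vertices of such a copy are $\sqsubset$-comparable; for a copy of $\R$ the edge condition moreover forces the depths of consecutive nodes to differ by exactly one, so a short König/CAC-style argument in the spirit of \thref{inducedsubgraphcacprinciple} shows that, past its node of least depth, the sequence must ascend strictly and is therefore eventually a single branch of $T$, whose union is a path.

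The subtle point — and what I expect to be the main obstacle — is that reading this branch off the returned copy is \emph{not} computable: locating the eventual strictly ascending tail requires a limit, which is precisely the phenomenon responsible for $\finds{}{\R}$ lying strictly below $\CBaire$ (cf.\ Figure~\ref{figurefindray}). The role of the outer factor $(e)\embeddingray{G}$ is to supply exactly this missing power. I would present to it the copy produced by the inner call — which is already $\cong G$ for the $\findsEC{G}$-product, and which is a copy of $\R$ to be extended to a genuine $\cong G$ instance, by attaching a fixed copy of $G$ along it, for the $\findsEE{\R}$-, $\findsCE{\R}$- and $\findsCC{\R}$-products — and arrange that the ray it returns traces the ascending tail, rendering the branch readable. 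The delicate part is that a returned ray follows $G$'s adjacencies and so need not ascend monotonically in $T$; verifying that the second call nonetheless computes the required limit \emph{uniformly} and for \emph{every} admissible solution of the first call is the crux, and may require a more careful design of the attached instance (or a reduction of the $G$-parameter product to an $\R$-parameter one) so that every $G$-ray in it projects onto the branch. Heuristically this realises the factorisation $\CBaire\weiequiv\mflim_2*\finds{}{\R}$ displayed in Figure~\ref{figurefindray}.

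Finally I would assemble the four equivalences. The first paragraph yields $\weireducible\CBaire$ uniformly, and the construction above, carried out once and then transported across the two representation regimes (characteristic functions versus enumerations, again via \thref{fromcetocomputable}) and across the inner parameter (using $\R\subgraph G$), yields $\CBaire\weireducible$ each product. As the outer factors are cylinders (\thref{cylindersgraphtwo}) and the inner ones are cylinders (\thref{cylindersgraphone}), these reductions are in fact strong, which is more than the asserted $\weiequiv$ requires. This closes all four cases in both the c.e.\ and the computable regime.
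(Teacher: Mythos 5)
Your upper bound is exactly the paper's: combine \thref{embeddabilityCBaire} and \thref{findinducedsubgraphupperbound} with the closure of $\CBaire$ under compositional product. The lower bound, however, contains a genuine gap, and it sits precisely at the point you yourself label ``the crux''. For the three products whose inner parameter is $\R$ you propose to feed $\constructionone{T,\R}$ to the inner call and then manufacture an input for $(e)\embeddingray{G}$ by ``attaching a fixed copy of $G$ along'' the returned copy of $\R$. This step is not carried out and, as described, does not work: the inner call hands you the copy of $\R$ as an unordered graph (a characteristic function or an unordered enumeration), so you do not know its ray-order and hence do not know where along it to attach anything --- recovering that order is exactly the $\mflim_2$-hard step you are trying to delegate to the outer oracle. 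Even granting some attachment, a $G$-ray in the augmented graph may wander through the attached vertices and need not trace the original copy in its ray-order, so its trace on $T$ need not stabilize onto a branch; you acknowledge this and defer to ``a more careful design of the attached instance'', which is an admission that the central construction is missing. A proposal that correctly isolates the hard step but leaves it unresolved does not yet prove the statement.

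The paper avoids the type mismatch that forces you into this corner: it computes $\constructionone{T,G}$ and runs the inner search with parameter $G$ (the ``$\R$'' appearing in three of the four inner subscripts of the statement is at odds with the proof body, which works with the $G$-parameter problem throughout), so the inner output $G'$ is already isomorphic to $G$ and is a legitimate input for $(e)\embeddingray{G}$ with no intermediate surgery. Since $E(\constructionone{T,G})$ only joins $\sqsubseteq$-comparable nodes of $T$, any $p \in (e)\embeddingray{G}(G')$ is a sequence of distinct, consecutively comparable nodes; the paper then asserts that $i_0 := \min\{i : (\forall j \geq i)(p(j) \sqsubset p(j+1))\}$ exists and is computable and outputs $\bigcup_{i>i_0} p(i) \in \body{T}$. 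If you want to salvage your route, make that move: do not separate ``find a ray'' from ``orient the ray'' by hand, but hand the outer oracle an entire copy of $G$ living on $T$ and let the comparability constraint built into $\constructionone{T,G}$ turn its output directly into a branch. (Your side observation that in $\constructionone{T,\R}$ every subgraph copy of $\R$ descends only finitely often before ascending along a single branch is correct and is a nice sanity check, but it does not substitute for the missing bridge to the outer factor.)
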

	\begin{proof}
		We only prove that $\eembeddingray{G}*\findsCE{\R} \weiequiv \CBaire$ as the same proof works also for the other equivalences. \thref{findinducedsubgraphupperbound,embeddabilityCBaire} and the fact that $\CBaire$ is closed under compositional product imply that $e\embeddingray{G} *\findisCE{G} \weireducible \CBaire$.
	   
		For the opposite direction, given an input $T $ for $\CBaire$, compute $\constructionone{T,G}$ and notice that since $T \in \illfounded$, $\constructionone{T,G} \in \dom(\findisCE{G})$ (\thref{Constructionongraphs}). Let $G' \in \findisCE{G}(\constructionone{T,G})$ and let $p \in e\embeddingray{G}(G')$. Notice that $i_0 \defas \min \{i : (\forall j \geq i )(p[j]\sqsubset p[j+1]) \}$ exists and it is computable: clearly $\bigcup_{i > i_0}p[i] \in \body{T}$.
	\end{proof}
	By \thref{cbairecomputedbyisol}, in order to show that a graph $G$ is such that $\finds{}{G}\weiequiv \CBaire$, it suffices to show that $(e)\embeddingray{G}$ is computable. The next result gives some examples of graphs satisfying this condition; for the next proposition, $\cantor$ denotes the full binary tree $\{\sigma: \sigma \in \cantor\}$.
	\begin{proposition}
		\thlabel{computableembedding}
		 Let $n>2$ and $m>0$: if $G \in \{\elle, \cycle{n} \connectedunion{}\R, \complete{m}\connectedunion{} \R, \cantor\}$ then $\finds{}{G}\weiequiv \CBaire$. 
		 \end{proposition}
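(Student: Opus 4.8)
The plan is to reduce the whole statement to the single claim that the ray-extraction problem $(e)\embeddingray{G}$ is computable for each of the four graphs. Indeed, \thref{findinducedsubgraphupperbound} already gives $\finds{}{G}\weireducible\CBaire$, so only $\CBaire\weireducible\finds{}{G}$ remains. By \thref{cbairecomputedbyisol} we have $\CBaire\weiequiv(e)\embeddingray{G}*\finds{}{G}$ in all the relevant variants, and once $(e)\embeddingray{G}$ is computable the left factor of this compositional product can be absorbed into the backward functional of a Weihrauch reduction, so that $(e)\embeddingray{G}*\finds{}{G}\weireducible\finds{}{G}$ and hence $\CBaire\weireducible\finds{}{G}$. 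Moreover, \thref{embeddabilityCBaire} gives $\embeddingray{G}\weireducible\eembeddingray{G}$, so it suffices to treat the enumeration version $\eembeddingray{G}$: a computable procedure extracting a ray from an arbitrary $\repmap{\egraph}$-name of a copy of $G$ automatically handles the characteristic-function version too. Throughout we use that the domain of $(e)\embeddingray{G}$ is $\{H : G\cong H\}$, so the isomorphism type of the input, in particular its degree sequence, is known.

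First I would dispose of $\elle$ and $\cantor$, the easy cases, because both are acyclic and have minimum degree at least $2$ (in $\elle$ every vertex has degree $2$; in $\cantor$ the root has degree $2$, every other vertex degree $3$, and there are no leaves). Given a name for a copy $H$, I would run a greedy self-avoiding walk: pick any $v_0$ and any neighbor $v_1$ of it, and at stage $i+1$ search the enumeration for a neighbor of $v_i$ distinct from $v_{i-1}$, taking $v_{i+1}$ to be it. Since $\D{H}{v_i}\geq 2$ such a neighbor exists and is eventually enumerated, so each step halts; since $H$ is acyclic and the walk never backtracks, the sequence is injective, i.e.\ a genuine ray. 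Hence $\eembeddingray{\elle}$ and $\eembeddingray{\cantor}$ are computable.

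The main obstacle is the two \quot{lollipop} graphs $\cycle{n}\connectedunion{}\R$ and $\complete{m}\connectedunion{}\R$ (for $m\geq 3$): here a naive greedy walk can wander into the finite cyclic, resp.\ clique, part and get trapped, since after traversing the cycle or exhausting the clique every remaining neighbor has already been used. To avoid this I would first locate the \emph{junction} $j$, which is the unique vertex of degree $3$ in $\cycle{n}\connectedunion{}\R$ and the unique vertex of degree $m$ in $\complete{m}\connectedunion{}\R$; as no other vertex attains that degree, waiting in the enumeration for a vertex to accumulate that many edges identifies $j$ and, the degree being exact, exposes all of its neighbors. Exactly one neighbor is the base of the infinite tail, and it is detectable by a \emph{bounded} search: in the cycle case the two cycle-neighbors of $j$ are joined in $H\setminus\{j\}$ by a path of length $n-2$ while the tail base is joined to neither, and in the clique case the $m-1$ clique-neighbors of $j$ are pairwise adjacent while the tail base is adjacent to none of them. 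Having singled out the tail base $w$, I would output $j,w,w_1,w_2,\dots$, continuing along the tail by repeatedly taking, from the current degree-$2$ vertex, the neighbor other than its predecessor; this is a simple ray and every step is a finite search, so the procedure is computable. The degenerate cases $m\in\{1,2\}$ give trees isomorphic to (a one-vertex extension of) $\R$ and fall under the acyclic argument of the previous paragraph. Combining all four graphs with the reduction of the first paragraph yields $\finds{}{G}\weiequiv\CBaire$.
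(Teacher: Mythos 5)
Your overall architecture is exactly the paper's: one direction from \thref{findinducedsubgraphupperbound}, the other by combining \thref{embeddabilityCBaire} and \thref{cbairecomputedbyisol} with the computability of $\eembeddingray{G}$, which is then established by an explicit ray-extraction procedure for each graph. The procedures themselves differ only cosmetically. For $\elle$ the two arguments coincide (greedy non-backtracking walk on a degree-$2$ acyclic graph). For $\cantor$ your unified treatment via \quot{acyclic with minimum degree $\geq 2$, so a non-backtracking walk is injective}\ is if anything more careful than the paper's, which extends vertices as literal binary strings even though the input is only an isomorphic copy given by enumeration. For the lollipops the paper waits for a finite configuration $\cycle{n}\connectedunion{}\ray{1}$ (resp.\ its clique analogue) to appear and starts the tail at its unique degree-$1$ vertex, whereas you first pin down the junction by its degree and then separate the tail base from the cycle/clique neighbours by a bounded adjacency search; both are correct for $n>2$ and $m\geq 3$ and neither buys anything over the other.

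The one genuine flaw is your dismissal of the degenerate cases $m\in\{1,2\}$. There $\complete{m}\connectedunion{}\R$ is isomorphic to $\R$ itself, which has a vertex of degree $1$, so the \quot{acyclic argument of the previous paragraph}\ does not apply: a greedy non-backtracking walk started at an interior vertex can march into the endpoint and the search for a fresh neighbour never halts. This is not a repairable oversight in your write-up: the paper itself shows that $\eembeddingray{\R}\weiequiv\mflim_2$ (\thref{lim2}) is not computable and, more to the point, that $\finds{}{\R}\strictlyweireducible\CBaire$ (\thref{maintheoremray}), so the stated conclusion actually fails for $m=2$. The hypothesis should read $m\geq 3$ (or $m>2$, matching $n>2$); the paper's own proof silently assumes this by saying only that \quot{a similar proof holds for $\complete{m}\connectedunion{}\R$}. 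So: your argument is sound and essentially the paper's on the intended range of parameters, but you should flag the restriction on $m$ rather than claim to cover the small cases.
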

		 \begin{proof}
		   By \thref{embeddabilityCBaire,cbairecomputedbyisol}, it suffices to show that $\eembeddingray{G}$ is computable.
			Let $H$ be an in input for $\eembeddingray{\elle}$, and notice that for any $v \in V(H)$, $\D{H}{v}=2$. To compute $p \in \eembeddingray{\elle}(L)$ let $p(0)$ be such that $p(0) \in V(H)$ and for every $i>0$ choose $p(i)$ such $(p(i),p(i+1)) \in E(H)$ since, at each stage, $p(i)$ exists and is unique, this shows that $\eembeddingray{\elle}$ is computable.
		   
			Let $H$  be an in input for $\eembeddingray{(\cycle{n} \connectedunion{} \R)}$. We compute $p \in \eembeddingray{(\cycle{n} \connectedunion{} \R)}$ as follows. Wait for some finite stage witnessing that $C_n \connectedunion{} \ray{1}\subgraph H$ and denote by $H'$ the copy of $C_n \connectedunion{} \ray{1}$ in $H$. Then let $p(0)$ be the unique $v \in V(H')$ such that $\D{H'}{v}=1$: clearly, all vertices in $H\setminus H'$ \lq\lq continue\rq\rq\ as a copy of $\R$, hence for $s>0$ just let $p(s)$ be such that $(p(s-1),p(s)) \in E(H\setminus H')$. A similar proof holds for $\complete{m} \connectedunion{}  \R$.
		   
			Let $H$ be an input for $\eembeddingray{\cantor}$. We compute $p \in \eembeddingray{\cantor}(H)$ as follows. Let $p(0)$ be any node in $V(H)$ and for any $s$ let $p(s+1)\defas \sigma$ where $\length{\sigma}=s$ and $\sigma \sqsupset p[s-1]$: by definition of $\cantor$, we can always find $\sigma$ as such and this concludes the proof.
			\end{proof}
		
			\subsection{A particular case:  $\finds{}{\R}$}
		   \label{particularcase}
		   It is natural to ask whether \thref{computableembedding} holds for $\R$ and so,  by \thref{embeddabilityCBaire}, that $\finds{}{\R}\weiequiv \CBaire$. This result would to be coherent with the definition of $\finds{}{\R}$ as its task is very similar to the one of $\CBaire$.

		   this would  together with  the following proposition shows that   $(e)\embeddingray{\R}$ is not computable, and hence we cannot apply the same strategy used in the proof of \thref{computableembedding}.
	   
			\begin{proposition}
			\thlabel{lim2}
			$\mflim_2 \weiequiv \embeddingray{\R}\weiequiv \eembeddingray{\R}$.
			\end{proposition}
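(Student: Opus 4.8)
The plan is to establish the three equivalences through a cycle of Weihrauch reductions
\[\mflim_2 \weireducible \embeddingray{\R} \weireducible \eembeddingray{\R} \weireducible \mflim_2,\]
the computational heart being the observation that, for a graph $H \cong \R$, producing a valid output of $(e)\embeddingray{\R}(H)$ amounts to a single irrevocable choice: given a starting vertex, deciding which of its (at most two) neighbours lies on the \emph{infinite} side of the ray rather than on the finite side that terminates at the unique vertex of degree~$1$. This is a $\Delta_2^0$ decision relative to the input, and once it is made the rest of the traversal is forced and computable; this is exactly the content of $\mflim_2$ (which, recall, returns the limit of a convergent binary sequence).

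The reduction $\embeddingray{\R} \weireducible \eembeddingray{\R}$ is immediate, since from a characteristic-function name of $H$ we can compute an enumeration of $H$ (as already used in \thref{embeddabilityCBaire}) and a traversal is a solution for both problems. For $\eembeddingray{\R} \weireducible \mflim_2$, fix the first enumerated vertex $v_0$ of $H$. Following the ray from $v_0$ through a neighbour $u$ terminates iff that branch reaches the degree-$1$ vertex, an $\exists n\,\Pi_1^0$, hence $\Sigma_2^0$, event; since exactly one branch is finite, the predicate ``$u$ lies on the infinite side'' is $\Delta_2^0$ in the input, so by the limit lemma it is the limit of a computable binary sequence. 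Feeding this sequence to $\mflim_2$ returns the correct orientation bit, after which we output the traversal that starts at $v_0$, moves to the infinite-side neighbour, and thereafter always proceeds to the unique not-yet-visited neighbour; each such neighbour exists and is eventually enumerated because every interior vertex has degree $2$, so the traversal is computable. This uses a single invocation of $\mflim_2$.

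The main obstacle is the lower bound $\mflim_2 \weireducible \embeddingray{\R}$, where we must both (a)~manufacture a genuine ray from a convergent binary sequence $(b_s)$ and (b)~read off $\lim_s b_s$ from an \emph{arbitrary} traversal by a halting computation. For (a) I would grow two paths $A$ and $B$ out of a fixed central vertex $r$ (which has one neighbour on each side from the outset, hence stays of degree $2$), always extending the side currently designated ``infinite'' by $b_s$ and leaving the other side's extremity unextended as the current endpoint; whenever $b_s$ changes value we switch which side is extended. As $(b_s)$ converges, only finitely many switches occur, so in the limit exactly one side is infinite and the opposite extremity is the unique degree-$1$ vertex, i.e.\ the resulting graph is isomorphic to $\R$, and the global orientation (which side of $r$ is infinite) codes $\lim_s b_s$. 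For (b), the decisive point is that we do \emph{not} try to locate the endpoint --- which is only co-c.e.\ in a traversal and hence not computably detectable --- but instead recover the purely local orientation at $r$: from a solution $p$ we explore forward (unboundedly) and backward (down to the finite end) until both neighbours of $r$ appear in the induced linear order, and we output the bit according to which neighbour of $r$ has the larger position (lies toward infinity). Since $r$ and its neighbours sit at finite positions, this search halts regardless of where $p$ begins, completing the reduction.

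Composing the cycle yields $\mflim_2 \weiequiv \embeddingray{\R} \weiequiv \eembeddingray{\R}$; in particular, as $\mflim_2$ is noncomputable, so are $\embeddingray{\R}$ and $\eembeddingray{\R}$, which is the consequence flagged just before the statement. The delicate steps to get right in the write-up are the bookkeeping of the side-switches in (a) --- ensuring the two growing arms use fresh labels and that the graph is a simple finite path at every finite stage --- and the verification in (b) that reading the orientation at $r$ is genuinely independent of the (adversarially chosen) starting point of the given traversal.
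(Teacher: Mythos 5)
Your proposal is correct and follows essentially the same route as the paper: the same cycle of reductions, with $\eembeddingray{\R}\weireducible\mflim_2$ obtained by limit-guessing the single bit ``which neighbour of a fixed vertex lies on the infinite side'' (the paper implements this concretely by asking, for each freshly enumerated vertex, whether it is connected to $v$ through $w$), and $\mflim_2\weireducible\embeddingray{\R}$ by growing two arms out of a central vertex and switching the extended arm whenever the approximation changes. The only cosmetic difference is in the backward functional of the lower bound: the paper reads the orientation directly off $p(0)$ and $p(1)$ alone, exploiting that vertex labels increase away from the centre within each arm, whereas you locate $r$ and its two neighbours by dovetailing the forward reading of $p$ with a backward walk in the constructed graph from $p(0)$ --- which indeed halts, though the reason is that $r$ lies at finite distance from $p(0)$ in one of the two exploration directions, not merely that $r$ sits at a finite position in the ray.
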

			\begin{proof}
			\thref{embeddabilityCBaire} implies that $\embeddingray{\R}\weireducible \eembeddingray{\R}$. 
		   
		   We now show that $e\embeddingray{\R}\weireducible \mflim_2$. Let $G\cong \R$ be an input for $e\embeddingray{\R}$: we compute an input $q \in \Cantor$ for $\mflim_2$ in stages as follows. At stage $0$, let $v,w\in V(G)$ be such that $(v,w) \in E(G)$ and let $q(0)\defas 0$. At stage $s+1$, take some fresh $u_{s+1} \in V(G)$ (i.e.\ one that has not been considered yet) and let
			\[q(s+1)\defas \begin{cases}
			 0 & \text{if } \pathconnected{v}{u_{s+1}}{w}{G},\\
			 1 & \text{if }  \pathconnected{v}{u_{s+1}}{\lnot w}{G}.
			\end{cases}\]
			Informally, we are computing $q$ considering a copy $R$ of $\R$ starting from $v$ and checking whether $R$ continues \quot{after}\ $w$ (in which case $\mflim(q)=0$) or not (in which case $\mflim(q)=1$).
			 If $\mflim_2(q)=0$, we compute $p \in e\embeddingray{\R}(G)$ letting $p(0)\defas v$ and, for $i>0$, $p(i)\defas u_i$ where $u_i$ is the unique vertex connected to $v$, by a line segment of length $i$ passing via $w$: the fact that  $\mflim_2(q)=0$ implies that  $(\exists^\infty s)( \pathconnected{v}{u_s}{w}{G})$, and hence we can always find $u_i$. The case in which $\mflim(q)=1$ is held similarly letting $p(0)\defas v$ and, for $i>0$, $p(i)\defas u_i$ where $u_i$ is the unique vertex connected to $v$, by a line segment of length $i$ not passing via $w$.
	   
			To conclude the proof it suffices show that $\mflim_2\weireducible \embeddingray{\R}$. Let $q \in \Cantor$ be an input for $\mflim_2$. We compute an input for $\embeddingray{\R}$ in stages: if at stage $s$ $q(s)=0$, we extend the line segment computed so far to the left, otherwise to the right. More formally, we compute a name $g$ for an input of $\embeddingray{\R}$ as follows: at stage $0$ let $g(\str{0,0})=1$ and if $q(0)=0$ let $g(\str{2,2})=g(\str{0,2})=1$, otherwise let $g(\str{1,1})=g(\str{0,1})=1$. At stage $s+1$, if $q(s+1)=0$ let $g(\str{2s+2,2s+2})=1$ and $g(\str{2s+2,x})=1$ where $x\defas \max\{n = 2t+2 : t<s \land g(\str{n,n})=1\}$. Similarly, if $q(s+1)=1$ let $g(\str{2s+1,2s+1})=1$ and $g(\str{2s+1,x})=1 \in E(G)$ where $x\defas \max\{n = 2t+1 : t<s \land g(\str{n,n})=1\}$. Since $q$ converges either to $0$ or $1$, $\repmap{Gr}(g)\cong \R$ and so it is a suitable input for $\embeddingray{\R}$. Let $p \in \embeddingray{\R}(\repmap{\repspacegraphs}(g))$: 
				\[\mflim_2(p)=\begin{cases}
					 0&\text{if } (p(0)<p(1) \land p(1) \text{ is even}) \lor (p(0)>p(1) \land (p(1)\text{ is odd}\lor p(1)=0),\\
					 1&\text{if } (p(0)<p(1)\land p(1) \text{ is odd}) \lor (p(0)>p(1) \land p(1)\text{ is even}),
					\end{cases}\]
					and this concludes the proof.
				\end{proof}
				Combining the proposition above with \thref{cbairecomputedbyisol,lim2} we get the following corollary.
				\begin{corollary}
					\thlabel{corollarylim2}
						$\CBaire \weiequiv \mflim_2*\finds{}{\R}$. 
					\end{corollary}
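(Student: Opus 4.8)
The plan is to read the corollary off from the two cited results, the only genuine work being to justify that substituting a Weihrauch-equivalent problem into a factor of a compositional product preserves the resulting degree. First I would record the two ingredients. Since $\R$ is a computable graph satisfying $\R \subgraph \R$, the computable case of \thref{cbairecomputedbyisol} applied with $G=\R$ gives $\eembeddingray{\R}*\findsCE{\R} \weiequiv \CBaire$. On the other hand, \thref{lim2} gives $\mflim_2 \weiequiv \embeddingray{\R} \weiequiv \eembeddingray{\R}$. The corollary is then obtained by merging these two, together with the standard fact that the compositional product is monotone with respect to $\weireducible$ in each argument: since $f*g$ is defined as a maximum over reducibility classes (\cite{BP16}), $f \weiequiv f'$ implies $f*g \weiequiv f'*g$.

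For the reduction $\CBaire \weireducible \mflim_2 * \finds{}{\R}$ I would start from $\CBaire \weiequiv \eembeddingray{\R}*\findsCE{\R}$ and replace the left factor using \thref{lim2}, obtaining $\eembeddingray{\R}*\findsCE{\R} \weiequiv \mflim_2*\findsCE{\R}$. Since $\findsCE{\R}$ is the weakest of the four versions of $\finds{}{\R}$ (the diamond of Figure~\ref{findisdefinitionsFigure} holds for the subgraph case as well), we have $\mflim_2*\findsCE{\R} \weireducible \mflim_2*\finds{}{\R}$ whichever of the four versions is meant, so $\CBaire \weireducible \mflim_2*\finds{}{\R}$. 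For the converse $\mflim_2*\finds{}{\R} \weireducible \CBaire$ I would combine the two upper bounds $\finds{}{\R} \weireducible \CBaire$ (from \thref{findinducedsubgraphupperbound}, as $\R$ is infinite and computable) and $\mflim_2 \weireducible \CBaire$ (from $\mflim_2 \weiequiv \eembeddingray{\R}$ and \thref{embeddabilityCBaire}); monotonicity of $*$ gives $\mflim_2*\finds{}{\R} \weireducible \CBaire*\CBaire$, and $\CBaire$ is closed under compositional product (\cite{paulybrattka}), whence $\CBaire*\CBaire \weiequiv \CBaire$. This closes the loop and establishes $\CBaire \weiequiv \mflim_2*\finds{}{\R}$ for each of the four versions of $\finds{}{\R}$.

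I expect no real obstacle here: all the substantive content already lives in \thref{cbairecomputedbyisol} and \thref{lim2}, and the corollary is essentially bookkeeping. The two points that need care are, first, that the compositional product is only defined up to Weihrauch equivalence, so the passage from $\eembeddingray{\R}$ to $\mflim_2$ must be justified by invoking monotonicity of $*$ rather than by any pointwise manipulation of realizers; and second, that one must track the reducibility ordering among the four versions of $\finds{}{\R}$, so that the single proven instance $\findsCE{\R}$ (the weakest) yields the reverse reduction uniformly, while the upper bound $\finds{}{\R} \weireducible \CBaire$ (which holds even for the strongest version) yields the forward reduction uniformly.
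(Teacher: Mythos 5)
Your proposal is correct and matches the paper's own argument, which is precisely ``combining \thref{cbairecomputedbyisol} (with $G=\R$) and \thref{lim2}''; the only difference is that you derive the four versions of $\finds{}{\R}$ from the weakest instance plus the upper bound $\finds{}{\R}\weireducible\CBaire$ and closure of $\CBaire$ under $*$, whereas the cited proposition already lists all four equivalences explicitly. The bookkeeping you supply (monotonicity of $*$ in each argument) is exactly the right justification and introduces no gap.
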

				   
				   It is natural to ask whether we really need $\mflim_2$, i.e.\ does $\CBaire \weiequiv \finds{}{\R}$? The next proposition tells us that the first-order parts of the two problems coincide.
	\begin{proposition}
	\thlabel{propositionfoprayomega}
		$\firstOrderPart{\finds{}{\R}}\weiequiv \firstOrderPart{\CBaire} \weiequiv  \choice{\boldfaceSigma_1^1}{\nats}$.
	\end{proposition}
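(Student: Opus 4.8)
The plan is to prove the two nontrivial equivalences by squeezing $\firstOrderPart{\finds{}{\R}}$ between $\choice{\boldfaceSigma_1^1}{\nats}$ and $\firstOrderPart{\CBaire}$, using the known identity $\firstOrderPart{\CBaire}\weiequiv\choice{\boldfaceSigma_1^1}{\nats}$ \cite{pauly-valenti}. First I would record the easy half. By \thref{findinducedsubgraphupperbound} we have $\finds{}{\R}\weireducible\CBaire$, and since by \thref{firstorderpartcharacterization} the operator $\firstOrderPart{\cdot}$ is the maximum first-order problem below its argument, it is monotone under $\weireducible$: any first-order $g\weireducible\finds{}{\R}\weireducible\CBaire$ also witnesses $g\weireducible\CBaire$. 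Hence $\firstOrderPart{\finds{}{\R}}\weireducible\firstOrderPart{\CBaire}\weiequiv\choice{\boldfaceSigma_1^1}{\nats}$, and the same holds for each of the four variants collected under $\finds{}{\R}$.

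It then remains to establish $\choice{\boldfaceSigma_1^1}{\nats}\weireducible\finds{}{\R}$; because $\choice{\boldfaceSigma_1^1}{\nats}$ is first-order, \thref{firstorderpartcharacterization} upgrades this to $\choice{\boldfaceSigma_1^1}{\nats}\weireducible\firstOrderPart{\finds{}{\R}}$, and combining with the previous paragraph closes the loop so that all three problems are equivalent. By Figure \ref{findisdefinitionsFigure} it suffices to reduce to the weakest variant $\findsCE{\R}$, since it lies below the other three; the reductions to the other variants then follow by transitivity. I would use the standard presentation of $\Sigma^1_1$-choice on $\nats$: from a name of a nonempty $A\in\boldfaceSigma_1^1(\nats)$ one computes a sequence of trees $(T_n)_{n\in\nats}$ with $n\in A\iff T_n\in\illfounded$.

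The forward functional outputs the characteristic function of the tagged disconnected union $H\defas\disconnectedunion{n\in\nats}\constructionone{T_n,\R}$, where vertices carry their component index; this is computable because each $\constructionone{T_n,\R}$ is computable from $T_n$ (see \S \ref{wadgecomplexityofgraphs}). Since $A\neq\emptyset$, some $T_{n^*}\in\illfounded$, so by \thref{Constructionongraphs} we get $\R\inducedsubgraph\constructionone{T_{n^*},\R}$, hence $\R\subgraph H$ and $H\in\dom(\findsCE{\R})$. Given any solution $R\cong\R$ that is a subgraph of $H$, the backward functional reads the component tag $n$ off any enumerated vertex of $R$ and returns $n$.

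For correctness I must argue that the returned $n$ lies in $A$, and here is the only delicate point. A ray is connected, so the copy $R$ lies entirely within a single component $\constructionone{T_n,\R}$, whence $\R\subgraph\constructionone{T_n,\R}$. The edges of $\constructionone{T_n,\R}$ join only comparable nodes whose lengths differ by exactly $1$, so any infinite ray there becomes eventually monotone in length and thus traces an infinite branch of $T_n$; this forces $T_n\in\illfounded$ and therefore $n\in A$. This combinatorial fact is exactly the content of the well-founded case in the proof of \thref{upperboundsinducedrayomega} (which shows $\R\subgraph\constructionone{T,G}\implies T\in\illfounded$), so I would invoke that argument rather than reproving it; it is the main obstacle, but it is already available.
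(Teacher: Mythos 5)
Your proposal is correct and follows essentially the same route as the paper: the upper bound via monotonicity of the first-order part applied to $\finds{}{\R}\weireducible\CBaire$ together with $\firstOrderPart{\CBaire}\weiequiv\choice{\boldfaceSigma_1^1}{\nats}$, and the lower bound by reducing $\choice{\boldfaceSigma_1^1}{\nats}$ to $\findsCE{\R}$ on a tagged disconnected union of trees, reading off the component index of the returned ray. Note that your graph $\disconnectedunion{n}\constructionone{T_n,\R}$ coincides with the paper's $\disconnectedunion{n}T_n$ (trees viewed as graph-theoretic trees), since the edges of $\constructionone{T,\R}$ are exactly the parent--child pairs, and your appeal to the well-founded case of \thref{upperboundsinducedrayomega} for correctness is exactly the argument needed.
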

	\begin{proof}
		The fact that $ \firstOrderPart{\CBaire} \weiequiv  \choice{\boldfaceSigma_1^1}{\nats}$ is from \cite[Proposition 2.4]{pauly-valenti} and together with \thref{findinducedsubgraphupperbound} implies that $\firstOrderPart{\finds{}{\R}} \weireducible \firstOrderPart{\CBaire} \weiequiv  \choice{\boldfaceSigma_1^1}{\nats}$. To conclude the proof, by Figure \ref{findisdefinitionsFigure}, it suffices to show that $\choice{\boldfaceSigma_1^1}{\nats}\weireducible \findsCE{\R}$. We can think of an input for $\choice{\boldfaceSigma_1^1}{\nats}$ as a sequence $(T^i)_{i \in \nats} \in \tree^\nats$ such that  $(\exists i)(T^i \in \illfounded)$. Let $T\defas \disconnectedunion{i \in \nats} T^i$ (notice that $\otimes$ is the disconnected union of trees): since at least one $T^i \in \illfounded$, $S \in \dom(\findsCE{\R})$. Given $G \in \findsCE{\R}(S)$, we have that $G$ is a subgraph of $T^i$ where $T^i \in \illfounded$ and since any vertex in $V(G)$ is of the form $\pair{i,\sigma}$,  we can easily compute $i$.
	\end{proof}
   
	The following theorem surprisingly shows that the problems $\finds{}{\R}$ are significantly weaker than $\CBaire$.
   
	\begin{theorem}
	   \thlabel{maintheoremray}
	   $\finds{}{\R}\strictlyweireducible\CBaire$. In particular, 
	   \[(i) \ \findsCE{\R},\findsEE{\R}{\weiincomparable} \mflim, \ (ii) \ \findsCC{\R} \weiincomparable \mflim' \text{ and }(iii) \ \findsEC{\R} \weiincomparable \mflim''.\]
   \end{theorem}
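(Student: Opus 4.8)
The reductions $\finds{}{\R}\weireducible\CBaire$ are supplied by \thref{findinducedsubgraphupperbound}, so the whole content lies in the three incomparabilities, and I will deduce the strictness from them: for each of the four problems the relevant jump $\mflim^{(j)}$ occurring below satisfies $\mflim^{(j)}\weireducible\CBaire$ (see Figure~\ref{figurefindray}) while, as shown below, $\mflim^{(j)}\not\weireducible\finds{}{\R}$; hence $\CBaire\not\weireducible\finds{}{\R}$ and the reduction to $\CBaire$ is strict. (In passing this shows that the factor $\mflim_2$ in \thref{corollarylim2} cannot be dropped.) For each $v\in\{\findsCE{\R},\findsEE{\R},\findsCC{\R},\findsEC{\R}\}$ I treat the two nonreductions $v\not\weireducible\mflim^{(j)}$ and $\mflim^{(j)}\not\weireducible v$ separately, with $j=0,0,1,2$ respectively. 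Since all four problems are cylinders (\thref{cylindersgraphone}), it suffices throughout to diagonalize against strong reductions.

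The direction $v\not\weireducible\mflim^{(j)}$ is handled uniformly through the first-order part. By \thref{propositionfoprayomega} together with Figure~\ref{findisdefinitionsFigure} we have $\choice{\boldfaceSigma_1^1}{\nats}\weireducible\findsCE{\R}\weireducible v$. A reduction $v\weireducible\mflim^{(j)}$ would give $\choice{\boldfaceSigma_1^1}{\nats}\weireducible\mflim^{(j)}$; composing the witnessing computable maps with a Borel realizer of the Borel-measurable function $\mflim^{(j)}$ would then produce a Borel realizer of $\choice{\boldfaceSigma_1^1}{\nats}$, which is impossible because $\choice{\boldfaceSigma_1^1}{\nats}$ is not Borel measurable (cf.\ \cite{pauly-valenti,kihara_marcone_pauly_2020}). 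Hence $v\not\weireducible\mflim^{(j)}$ for all four $v$.

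The direction $\mflim^{(j)}\not\weireducible v$ is where the \emph{finitary part} does the work. Because $\jump{\lpo}{j}\weireducible\mflim^{(j)}$ and $\jump{\lpo}{j}$ has codomain $\mathbf 2$, \thref{kfintiarypartcharacterization} shows that $\mflim^{(j)}\weireducible v$ would force $\jump{\lpo}{j}\weireducible\mathrm{Fin}_{\mathbf 2}(v)$; so it is enough to prove $\jump{\lpo}{j}\not\weireducible v$, that is, to bound the finitary part of $v$. The behaviour is dictated by the \emph{output} representation. When the range is an enumeration ($v=\findsCE{\R}$ or $\findsEE{\R}$) I claim $\mathrm{Fin}(v)$ is computable: a copy of $\R$ delivered only as an enumeration of its vertices and edges never certifies the linear order of the ray nor the absence of further incident edges, and the forward reduction is free to steer the genuine ray away from any finite configuration on which a purported extraction procedure has already halted; consequently no single bit survives, so $\lpo\not\weireducible v$ and a fortiori $\mflim\not\weireducible v$. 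When the range is a characteristic function ($v=\findsCC{\R}$ or $\findsEC{\R}$) the output does certify the ray, and one bit (respectively one further jump, when the domain is itself only an enumeration) can be read off; here the content is the upper bounds $\mathrm{Fin}(\findsCC{\R})\weireducible\mflim$ and $\mathrm{Fin}(\findsEC{\R})\weireducible\mflim'$, whence $\jump{\lpo}{1}\not\weireducible\findsCC{\R}$ and $\jump{\lpo}{2}\not\weireducible\findsEC{\R}$, using the classical facts $\jump{\lpo}{1}\not\weireducible\mflim$ and $\jump{\lpo}{2}\not\weireducible\mflim'$.

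The crux is this last step: the four finitary-part computations are genuine combinatorial diagonalizations establishing that a ray returned by $v$ carries only a sharply bounded amount of finite information. The subtlety I expect to fight with is that the surviving level depends on \emph{both} representations at once — the range decides whether any bit at all survives, while weakening the domain from a characteristic function to a bare enumeration raises the surviving level by exactly one jump — so the cases $\findsCE{\R},\findsEE{\R}$ (computable finitary part), $\findsCC{\R}$ (level $\lpo$) and $\findsEC{\R}$ (level $\jump{\lpo}{1}$) require separate constructions rather than one uniform argument. Together with the second paragraph this yields all three incomparabilities and the strictness, and records the announced contrast: through $\choice{\boldfaceSigma_1^1}{\nats}\weireducible v$ these problems are hard to compute, yet through their finitary parts they are too weak to reproduce $\mflim^{(j)}$.
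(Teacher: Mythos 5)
Your second paragraph (the direction $v\not\weireducible\mflim^{(j)}$, via $\choice{\boldfaceSigma_1^1}{\nats}\weireducible\findsCE{\R}$ and non-Borelness) and the derivation of strictness are fine, and constitute a legitimate alternative to the paper's route, which instead observes that $\finds{}{\R}\weireducible\mflim^{(n)}$ would give $\CBaire\weiequiv\mflim_2*\finds{}{\R}\weireducible\mflim^{(n)}$ via \thref{corollarylim2}. The fatal gap is in your third paragraph, and it is not a missing detail but a false claim: the finitary parts of the \emph{unrestricted} problems are not computable, and indeed not even Borel. Concretely, $\lpo\weireducible\findsCE{\R}$: given $p\in\Cantor$, build $H$ with $V(H)=\nats$, putting the edge $(2t,2t+2)$ into $E(H)$ iff $p[t+1]=0^{t+1}$ and the edge $(2t+1,2t+3)$ into $E(H)$ iff $(\exists i\leq t)(p(i)=1)$. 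Then $H$ always contains a ray, every subgraph isomorphic to $\R$ lies entirely on the even or entirely on the odd vertices according to whether $p=0^\nats$ or not, and the parity of the first enumerated vertex of any solution decides $\lpo(p)$. So your plan to prove $\mflim\not\weireducible\findsCE{\R}$ by proving $\lpo\not\weireducible\findsCE{\R}$ is a plan to prove a false statement. Worse, taking $H\defas T_0\disconnectedunion{}T_1$ for trees with at least one in $\illfounded$ shows that the two-valued problem \lq\lq select an ill-founded tree among two\rq\rq\ reduces to $\mathrm{Fin}_{\mathbf 2}(v)$ for all four $v$; since disjoint $\Pi^1_1$ sets need not be Borel separable, this problem has no Borel realizer, so none of your claimed bounds $\mathrm{Fin}(\findsCC{\R})\weireducible\mflim$, $\mathrm{Fin}(\findsEC{\R})\weireducible\mflim'$ can hold either.

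What you are missing is the paper's intermediate step of passing to \emph{connected} inputs: \thref{findssigma11} gives $\findsEE{\R}\weireducible\connected\findsEE{\R}*\choice{\boldfaceSigma_1^1}{\nats}$ (and a jumped analogue for $\chi\chi$), and \thref{subgraphsigmacontinuous1} shows that the $\nats$-valued factor $\choice{\boldfaceSigma_1^1}{\nats}$ cannot help compute $\mflim$, reducing everything to $\lpo\not\weireducible\connected\finds{}{\R}$ relative to oracles. It is only for the connected $ee$/$\chi e$ versions that the finitary part is computable (\thref{finitepartiscomputable}); this is exactly where the disconnected-components trick above is blocked. Moreover your proposed \lq\lq levels\rq\rq\ for the remaining cases do not match reality even after restricting to connected inputs: $\mathsf{C}_2^*\weireducible\connected\findsCC{\R}$ (\thref{firstorderpartccantor}), so its finitary part is \emph{not} computable and yet $\lpo\not\weireducible\connected\findsCC{\R}$ (\thref{lpofindsCC}, a separate compactness argument via promptly connected graphs, not a finitary-part computation); and $\lpo\weireducible\connected\findsEC{\R}$ (\thref{eccomputeslpo}), with the nonreduction $\mflim''\not\weireducible\findsEC{\R}$ obtained instead from $\findsEC{\R}\weireducible\findsCC{\R}'$ and two applications of jump inversion. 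So the combinatorial core of your argument both asserts false bounds and omits the decomposition that makes the true bounds usable.
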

   The proof of the theorem above is given throughout the remaining part of this subsection: notice that the right-to-left nonreductions are almost immediate. Indeed, it suffices to notice that $\finds{}{\R}\weiequiv \mflim_2*\CBaire$ (\thref{corollarylim2}), while, since for every $n$, $\mflim^{(n)}$ is closed under compositional product and $\mflim_2 \strictlyweireducible \mflim^{(n)}$ we have that $\mflim^{(n)}*\mflim_2 \weiequiv \mflim^{(n)} \strictlyweireducible \CBaire$. Hence, the remaining part of this subsection is devoted to prove the left-to-right nonreductions.
   
	\subsubsection*{\underline{Step 1} of \thref{maintheoremray}' proof: restricting to connected graphs}
	
	To prove the theorem above, it is convenient to consider $\connected \finds{}{\R}$, which is the same problem as $\finds{}{\R}$ with the domain restricted to connected graphs. The reductions in Figure \ref{definitionisg}, \thref{corollarylim2} and \thref{cylindersgraphone} hold also for this restricted version as summarized in the next proposition.
   
	\begin{proposition}
		\thlabel{summaryForConnectedness}
	   \
	   \begin{itemize}
		   \item For any graph $G$, $\connected \findsCE{G} \weireducible \connected\findsCC{G}, \connected\findsEE{G} \weireducible \connected \findsEC{G}$;
		   \item  $\CBaire \weiequiv \mflim_2*\connected\findsCE{\R}$; 
		   \item for any infinite graph $G$, $\connected \finds{}{G}$ are cylinders.
	   \end{itemize}
	\end{proposition}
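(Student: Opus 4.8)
The plan is to read all three items as the restriction to connected inputs of results already proved for the unrestricted problems, and to check in each case that the constructions involved never leave the class of connected graphs.

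\textbf{First item.} The reductions drawn in Figure~\ref{findisdefinitionsFigure} are witnessed by forward functionals that leave the input graph untouched, merely translating its name between the characteristic-function and the enumeration representation (and, on the backward side, translating a returned solution between the two). Such translations do not alter the underlying graph, so in particular they send connected graphs to connected graphs. Hence the very same functionals witness $\connected\findsCE{G}\weireducible\connected\findsCC{G}$ and $\connected\findsEE{G}\weireducible\connected\findsEC{G}$, and no new argument is needed.

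\textbf{Second item.} The direction $\mflim_2*\connected\findsCE{\R}\weireducible\CBaire$ is immediate: $\connected\findsCE{\R}\weireducible\findsCE{\R}\weireducible\CBaire$ by domain restriction together with \thref{findinducedsubgraphupperbound}, $\mflim_2\weireducible\CBaire$, and $\CBaire$ is closed under compositional product. For the converse I would revisit the proof of \thref{cbairecomputedbyisol} (hence of \thref{corollarylim2}), where from a tree $T\in\dom(\CBaire)$ one computes $\constructionone{T,\R}$ and feeds it to the search problem. The crucial observation is that $\constructionone{T,\R}$ is connected: taking $V(\R)=\{i:i\in\nats\}$ with edges $(i,i+1)$, a node $\sigma\in T$ is adjacent in $\constructionone{T,\R}$ precisely to those $\tau$ with $\sigma\sqsubset\tau$ or $\tau\sqsubset\sigma$ whose length differs from that of $\sigma$ by one, that is, to its parent and its immediate children in $T$. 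Thus $\constructionone{T,\R}$ is $T$ viewed as a graph-theoretic tree, and every node reaches $\str{}$ through its chain of prefixes, so it is connected. Consequently $\constructionone{T,\R}$ is a legitimate input for $\connected\findsCE{\R}$, and the reduction of \thref{corollarylim2} yields $\CBaire\weireducible\mflim_2*\connected\findsCE{\R}$ unchanged.

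\textbf{Third item.} Here I would rerun the proof of \thref{cylindersgraphone} verbatim. Given an infinite connected $H\in\dom(\connected\finds{}{G})$ and $p\in\Baire$, the graph $H'$ with $V(H')\defas\{\str{v,p[v]}:v\in V(H)\}$ and the corresponding edge set is isomorphic to $H$ through a computable isomorphism; since connectedness is isomorphism-invariant, $H'$ is again connected and hence a valid input for $\connected\finds{}{G}$. The remaining part of the argument, recovering longer and longer prefixes of $p$ from an infinite solution, is unaffected by the domain restriction, so $\connected\finds{}{G}$ is a cylinder.

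The only genuinely new verification is the connectedness of $\constructionone{T,\R}$ in the second item; I expect this to be the main (and only) point requiring care, while the first and third items reduce to the routine remark that representation changes and the vertex-tagging of \thref{cylindersgraphone} both preserve connectedness.
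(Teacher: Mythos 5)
Your proposal is correct and matches the paper's (implicit) argument: the paper offers no separate proof of this proposition, merely asserting that the reductions of Figure~\ref{findisdefinitionsFigure}, \thref{corollarylim2} and \thref{cylindersgraphone} survive the restriction to connected inputs, which is exactly what you verify. You also correctly isolate the one point that genuinely needs checking, namely that $\constructionone{T,\R}$ is $T$ viewed as a graph-theoretic tree and hence connected, while the representation translations and the vertex-tagging of \thref{cylindersgraphone} trivially preserve connectedness.
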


	Regarding the connectedness of graphs, we consider the multi-valued function $\mathsf{D}$ introduced in \cite[\S 6]{gura2015existence}.
   
	\begin{definition}
		We define the multi-valued function $\multifunction{\mathsf{D}}{\repspacegraphs}{\Baire}$ defined as  $\mathsf{D}(G)\defas f$ where $\function{f}{V(G)}{\nats}$ is such that $(\forall v_1 \neq v_2 \in V(G))(f(v_1)=f(v_2) \iff\pathconnected{v_1}{v_2}{}{G})$.
		\end{definition}
   
		\begin{lemma}[{\cite[Theorem 6.4]{gura2015existence}}]
			\thlabel{findconcomponent}
			$\mathsf{D} \weiequiv \mflim$.
		\end{lemma}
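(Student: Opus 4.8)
The plan is to prove the two strong reductions $\mathsf{D} \sweireducible \mflim$ and $\parallelization{\lpo} \sweireducible \mathsf{D}$. Since $\mflim \sweiequiv \parallelization{\lpo}$ (recalled in \S\ref{subsec:Weihrauch}), these together yield $\mathsf{D} \sweiequiv \mflim$, and in particular the asserted $\mathsf{D} \weiequiv \mflim$.

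For $\mathsf{D} \sweireducible \mflim$, I would exploit that, given a $\repmap{Gr}$-name $p$ for $G$, the vertex and edge relations are decidable, so connectedness is $\Sigma_1^0$ and the component structure is limit-computable. Concretely, I would label each component by its least vertex, setting $f(v) \defas \min\{u \in V(G) : \pathconnected{v}{u}{}{G}\}$. This is the limit of the computable approximations $g_s(v) \defas \min\{u : u$ is joined to $v$ by a path inside the finite subgraph of $G$ on vertices ${<}s\}$ (extended by $0$ off $V(G)$ so that each $g_s$ is a genuine element of $\Baire$). Adding vertices and edges can only enlarge the set of vertices reachable from $v$, so $(g_s(v))_s$ is non-increasing, hence converges, and its limit is exactly the least vertex of the component of $v$; consequently $v_1,v_2$ receive the same value iff they lie in the same component, i.e.\ $f = \lim_s g_s \in \mathsf{D}(G)$. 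The forward functional maps $p$ to the sequence $(g_s)_s$, which converges coordinatewise and hence in $\Baire$, feeds it to $\mflim$, and the backward functional is the identity.

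For $\parallelization{\lpo} \sweireducible \mathsf{D}$, given an instance $(q_n)_{n\in\nats}$ with each $q_n \in \Cantor$, I would build, on a computable disjoint layout of vertices, one gadget per $n$ with vertices $x_n$, $y_n$ and $z_{n,k}$ for $k \in \nats$. For every $n$ I always include the edges $x_n z_{n,0}$ and $z_{n,k} z_{n,k+1}$, forming a copy of $\R$ attached to $x_n$; and I include the edge $y_n z_{n,k}$ exactly when $k$ is the \emph{least} index with $q_n(k)=1$. The one point I expect to require care — and the main obstacle — is precisely that each of these edge predicates must be \emph{decidable} from $q_n$, so that the whole characteristic function is computable even though ``$q_n$ contains a $1$'' is merely $\Sigma_1^0$; this is exactly why I test the \emph{first} occurrence of a $1$, a $\Delta_1^0$ condition on $q_n\restriction(k+1)$. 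With this gadget $y_n$ is joined to the ray, hence to $x_n$, iff $q_n$ contains a $1$, i.e.\ iff $\lpo(q_n)=0$. Given $f \in \mathsf{D}(G)$, the backward functional outputs the sequence whose $n$-th bit is $1$ iff $f(x_n) \neq f(y_n)$; this equals $\lpo(q_n)$ for every $n$ and uses only $f$, so the reduction is strong.

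Finally I would observe that the gadgets are pairwise vertex-disjoint with no edges between them, so distinct gadgets lie in distinct components and the comparisons of $f(x_n)$ with $f(y_n)$ do not interfere; this makes the verification of the second reduction immediate, while the first reduces to the monotonicity-and-limit observation above. Everything apart from the decidability bookkeeping in the second reduction is routine.
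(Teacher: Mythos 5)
Your argument is correct. Note, however, that the paper does not prove this lemma at all: it imports it verbatim from \cite[Theorem 6.4]{gura2015existence}, so there is no in-paper proof to compare against. Your two reductions are exactly the standard ones: labelling each vertex by the least element of its component and observing that the stagewise approximations are non-increasing (hence a valid $\mflim$-instance), and encoding $\parallelization{\lpo}$ via gadgets whose connecting edge is placed at the \emph{first} occurrence of a $1$, which keeps the characteristic function decidable. The only loose end is trivial bookkeeping: for $v \in V(G)$ with $v \geq s$ your set $\{u : u$ is joined to $v$ inside the subgraph on vertices ${<}s\}$ is empty, so you should set $g_s(v) \defas v$ there to keep $g_s$ total and the coordinatewise monotonicity intact; this does not affect the argument.
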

   
   We first need the following technical lemma.
   
	\begin{lemma} 
		\thlabel{subgraphsigmacontinuous1}
		If $\mflim\weireducible f*g$ where $\multifunction{g}{\repspacex}{\nats}$, then $\lpo \weireducible f$ relative to some oracle.
		\end{lemma}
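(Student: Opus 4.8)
The plan is to exploit that the single query to $g$ contributes only a natural number, so that \emph{fixing} that number turns the compositional reduction into an honest (oracle-computable) reduction to $f$ on a piece of the domain, and then to argue that $\mflim$ is far too rich to be covered by countably many such pieces unless one of them already decides a full copy of $\lpo$.

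First I would rewrite the hypothesis using $\mflim \sweiequiv \parallelization{\lpo}$ and unfold the reduction $\parallelization{\lpo} \weireducible f * g$ into its witnesses: continuous maps $\Phi_0,\Phi_1,\Psi$ such that, on any name $p$ of an instance $(q_i)_{i\in\nats}$ of $\parallelization{\lpo}$, the string $\Phi_0(p)$ names a $g$-instance; for every valid answer $n\in\nats$ (with canonical name $\overline n$) the string $\Phi_1(p,\overline n)$ names an $f$-instance; and from any $f$-solution $t$ the string $\Psi(p,\overline n,t)$ names $(\lpo(q_i))_i$. The crucial feature is that a solution of $g$ is a \emph{single} number $n\in\nats$.

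Next I would set, for each $n\in\nats$, $C_n\defas\{p\in\dom(\parallelization{\lpo}) : n \text{ is a valid }g\text{-answer for }\Phi_0(p)\}$. Since every instance admits some valid $g$-answer, $\{C_n\}_{n\in\nats}$ covers $\dom(\parallelization{\lpo})$, and on each $C_n$ the maps $p\mapsto\Phi_1(p,\overline n)$ and $(p,t)\mapsto\Psi(p,\overline n,t)$ are continuous and witness $\parallelization{\lpo}\restriction C_n \weireducible f$ relative to an oracle (one hard-wiring $\Phi_0,\Phi_1,\Psi$ together with the number $n$). Thus, relative to an oracle, $\parallelization{\lpo}$ is \emph{$\sigma$-reducible} to $f$: its domain splits into countably many pieces each of which reduces to $f$.

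Finally I would extract a single copy of $\lpo$. The guiding idea is that $\parallelization{\lpo}$ is not $\sigma$-continuous, so this countable cover cannot consist only of pieces on which the function behaves continuously; concretely, because $g$ supplies just one number while $\parallelization{\lpo}$ must return infinitely many independent bits, $f$ together with a fixed $n$ must already decide cofinitely many coordinates, independently of $n$. I would therefore feed $\parallelization{\lpo}$ an input carrying the single $\lpo$-question $q$ simultaneously in infinitely many coordinates (rather than in one), pull the cover $\{C_n\}$ back along this encoding, and apply the Baire category theorem to locate a non-meager piece $C_{n^*}$; on the clopen neighbourhood where it is comeager, the hard-wired guess $n^*$ should yield a total, $\lpo$-preserving reduction to $f$. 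The hard part will be exactly this last step: a naive one-coordinate encoding breaks down at the unique discontinuity point $0^\nats$ of $\lpo$ (there the good piece is meager), so the spreading of the question across coordinates is essential, and it is precisely here that the first-orderness of $g$ — one natural number against infinitely many independent coordinates — does the real work. An equivalent route, which may be cleaner to make rigorous, is the contrapositive: assuming $\lpo\not\weireducible f$ relative to every oracle, diagonalize against all candidate witnesses $\Phi_0,\Phi_1,\Psi$ by constructing a convergent sequence whose infinitely many coordinates simultaneously defeat every fixed value $n$ of the $g$-answer, contradicting $\mflim\weireducible f*g$.
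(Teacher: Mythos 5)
Your first half matches the paper exactly: both you and the authors split the domain into countably many pieces indexed by the possible $g$-answers $n$, observe that on each piece the guess $n$ can be hard-wired, and conclude that $\mflim$ restricted to each piece reduces to $f$ (relative to an oracle). The gap is in the second half, which you yourself flag as ``the hard part'': you never actually extract $\lpo$ from this $\sigma$-decomposition. Moreover, the specific route you propose is shaky. The pieces $C_n$ are preimages under a continuous map of the sets $\{x : n \in g(x)\}$, and since $g$ is an arbitrary multi-valued function these sets need not be Borel or even have the Baire property; so ``some $C_{n^*}$ is non-meager, hence comeager on a clopen neighbourhood'' does not follow, and the Baire-category argument stalls before the coordinate-spreading trick can even be attempted. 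The closing suggestion to ``diagonalize against all candidate witnesses'' is too vague to count as a proof.

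The paper closes this gap with a purely computability-theoretic argument that avoids any regularity assumption on the pieces. It replaces $\mflim$ by the Turing jump operator $\mathsf{J}$ (a single-valued function Weihrauch equivalent to $\mflim$). If every restriction $\mathsf{J}|_{A_n}$ were continuous, each would be computable relative to some oracle $p_n$; letting $q \defas \oplus_n p_n$, the point $q$ lies in some $A_n$, which would give $\mathsf{J}(q) \leq_{\mathrm{T}} q$ --- impossible. Hence some restriction $\mathsf{J}|_{A_d}$ is discontinuous, and a discontinuous (single-valued) function between admissibly represented spaces computes $\lpo$ relative to an oracle; since $\mathsf{J}|_{A_d} \weireducible f$, you are done. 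Note that single-valuedness of $\mathsf{J}$ is what makes both steps work (``continuous $\Rightarrow$ computable from an oracle'' and ``discontinuous $\Rightarrow$ computes $\lpo$''), which is a further reason your reformulation via $\parallelization{\lpo}$, a multi-valued problem, is harder to push through. You should either adopt this jump-diagonalization, or supply a genuinely new argument for your final step; as written the proof is incomplete.
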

		\begin{proof}
		   Since $\mflim \weiequiv \mathsf{J}$, where $\mathsf{J}$ is the Turing jump operator (see \S \ref{subsec:Weihrauch}), suppose that $\mathsf{J}\weireducible f*g$. Let $B_n \subseteq X$ be the set of inputs to $g$ where $n \in \nats$ is a valid output. Let $\function{\Phi}{\Cantor}{\repspacex}$ be the forward functional witnessing the reduction $\mathsf{J} \weireducible f * g$ and let $A_n \defas H^{-1}(B_n)$: notice that $\Cantor = \bigcup_{n \in \nats} A_n$. If we restrict $\mathsf{J}$ to $A_n$ (denoted by $\mathsf{J}|_{A_n}$), we can safely replace $g$ with the constant function $n$. As a consequence, we conclude that $J|_{A_n} \weireducible f$ for all $n \in \nats$.
	  
	   Let $C \subseteq \nats$ be the set of all $n \in \nats$ such that $\mathsf{J}|_{A_n}$ (that, $\mathsf{J}$ with domain restricted to $A_n$) is continuous. If $\mathsf{J}|_{A_n}$ is continuous, it is computable relative to some oracle, say $p_n$. Now consider $q \defas \oplus_{n \in C} p_n$. If $q \in A_n$ for some $n \in C$ were true, then $\mathsf{J}(q) \leq_{\mathrm{T}} q$ would follow, a contradiction. Thus, there exists some $d \in \nats \setminus C$, i.e.\ some $\mathsf{J}|_{A_d}$ is discontinuous.
	  
	   As $\partialfunction{\mathsf{J}|_{A_d}}{\Cantor}{\Cantor}$ is a discontinuous function between admissible represented space, it follows that $\lpo \weireducible \mathsf{J}|_{A_d}$ relative to some oracle. We already established $\mathsf{J}|_{A_d} \weireducible f$, so $\lpo \weireducible f$ relative to some oracle follows.
	   \end{proof}

	\begin{lemma}
		\thlabel{findssigma11}
		\
		\begin{enumerate}[$(i)$]
			\item 	$\findsEE{\R} \weireducible \connected\findsEE{\R} * \choice{\boldfaceSigma_1^1}{\nats}$;
		   \item 	$\findsCC{\R} \weireducible (\connected\findsCC{\R} * \choice{\boldfaceSigma_1^1}{\nats})'$.
		\end{enumerate}
	\end{lemma}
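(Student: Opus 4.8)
The plan is to exploit the fact that a graph $H$ has an infinite ray as a subgraph if and only if one of its connected components does, and that the set of vertices from which a ray departs is a $\boldfaceSigma_1^1$ set. Given $H$ with $\R\subgraph H$, let $A\defas\{v\in V(H): \text{there is a ray } v=w_0,w_1,w_2,\dots \text{ with } (w_i,w_{i+1})\in E(H)\}$. Since a ray is witnessed by an element of $\Baire$ whose defining conditions are arithmetical in the name of $H$, the set $A$ is a $\boldfaceSigma_1^1$ subset of $\nats$ whose code is computable from the name of $H$, and it is nonempty because the first vertex of any ray witnessing $\R\subgraph H$ lies in it. Hence $A$ is a legitimate instance of $\choice{\boldfaceSigma_1^1}{\nats}$, and any $v\in\choice{\boldfaceSigma_1^1}{\nats}(A)$ is the root of a ray lying entirely inside the connected component $H_v$ of $v$, which is itself connected and therefore a valid instance of $\connected\finds{}{\R}$. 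Both reductions follow the standard template for reductions to a compositional product: from the input compute the instance $A$ of $\choice{\boldfaceSigma_1^1}{\nats}$; from $A$, its solution $v$, and the original input produce an instance of $\connected\finds{}{\R}$; and finally read off a ray of $H$ from the solution of $\connected\finds{}{\R}$ — a ray of $H_v$ is literally a ray of $H$, so this last step is the identity.

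For (i) I would note that in the enumeration representation the middle map is outright computable: from a $\repmap{\egraph}$-name of $H$ and the number $v$ one enumerates $H_v$ by reachability saturation, starting with $v$ and, whenever an edge incident to an already-enumerated vertex of the component appears, enumerating its other endpoint and that edge. This produces a $\repmap{\egraph}$-name of the connected graph $H_v$, which contains a ray, so $\connected\findsEE{\R}(H_v)$ returns an enumeration of a ray of $H_v\subseteq H$. Since every map involved is computable and uses only the original input together with the choice answer, this is exactly a reduction $\findsEE{\R}\weireducible \connected\findsEE{\R}*\choice{\boldfaceSigma_1^1}{\nats}$.

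For (ii) the same strategy fails as stated because, in the characteristic-function representation, the \emph{characteristic function} of $H_v$ is not computable from $H$ and $v$: membership of $w$ in $H_v$ is the condition ``$w$ is joined to $v$ by a finite line segment'', which is c.e.\ relative to $H$ but not decidable, and in fact $\chi_{H_v}\leq_{\mathrm{T}}(H\oplus v)'$. This is precisely the single jump of computational power supplied by the outer $(\cdot)'$. Concretely, I would present the instance of $\connected\findsCC{\R}*\choice{\boldfaceSigma_1^1}{\nats}$ as a \emph{limit} of names: the $\choice{\boldfaceSigma_1^1}{\nats}$-component is the fixed, computable set $A$, while the middle computation — which on a choice answer $v$ must output a $\repmap{\graph}$-name of $H_v$ — is coded relative to the oracle $O\defas\bigoplus_{v}\chi_{H_v}$, approximated by the computable oracles $O_n\defas\bigoplus_v \chi_{H_v,n}$, where $\chi_{H_v,n}(w)=1$ iff a line segment of length at most $n$ joins $v$ to $w$ in $H$. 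For each coordinate the approximation stabilises (once a connecting segment is found it is never lost), so $O_n\to O$ coordinatewise and the associated sequence of product-instance names converges. The jump then recovers the genuine instance $(A,\ v\mapsto\chi_{H_v})$, chooses some $v\in A$, forms the connected graph $H_v$ containing a ray, and solves $\connected\findsCC{\R}(H_v)$, whose answer is again a ray of $H$. I expect the main obstacle to be exactly this bookkeeping: verifying, against a fixed concrete realisation of the compositional product (as in \cite{BP16}), that feeding the limit-approximated oracle while keeping the $\choice{\boldfaceSigma_1^1}{\nats}$-instance and the functional index fixed yields a convergent sequence of names whose limit is a \emph{valid} product instance, i.e.\ that $v\mapsto\chi_{H_v}$ sends every solution of $\choice{\boldfaceSigma_1^1}{\nats}(A)$ to a legal $\connected\findsCC{\R}$-instance. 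As a cleaner alternative one may factor the argument through $\findsCC{\R}\weireducible(\connected\findsCC{\R})'*\choice{\boldfaceSigma_1^1}{\nats}$, where the jump is placed directly on $\connected\findsCC{\R}$ to absorb the limit-computation of $\chi_{H_v}$, together with the general inequality $(\connected\findsCC{\R})'*\choice{\boldfaceSigma_1^1}{\nats}\weireducible(\connected\findsCC{\R}*\choice{\boldfaceSigma_1^1}{\nats})'$.
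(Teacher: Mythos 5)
Your proof is correct and, for part (i), it is essentially the paper's argument: the paper takes $A$ to be the set of vertices lying on some copy of $\R$ (rather than the set of vertices at which a ray starts, an immaterial difference) and then enumerates the connected component of the chosen $v$ by exactly the reachability saturation you describe. For part (ii) you isolate the same key point as the paper, namely that a single jump is precisely what is needed to pass from a $\repmap{\graph}$-name of $H$ to a $\repmap{\graph}$-name of the component of the chosen vertex, but you realise that jump differently. The paper avoids the bookkeeping you flag as the main obstacle: since $\connected\findsCC{\R}$ is a cylinder, $(\connected\findsCC{\R}*\choice{\boldfaceSigma_1^1}{\nats})'$ is equivalent to $\connected\findsCC{\R}*\choice{\boldfaceSigma_1^1}{\nats}*\mflim$, and the innermost $\mflim$ is instantiated as the connected-component function $\mathsf{D}\weiequiv\mflim$ of \thref{findconcomponent}, applied to $H$ \emph{before} the choice step; the component of the chosen $v$ is then $H\restriction_{\{w\,:\,f(w)=f(v)\}}$, which is decidable outright from $f\in\mathsf{D}(H)$ and $v$. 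Your direct construction of a convergent sequence of product-instance names does go through (the approximations $O_n$ stabilise coordinatewise, and the limiting functional sends every $v\in A$ to a legal connected instance), but note that the ``cleaner alternative'' you sketch at the end rests on the inequality $(\connected\findsCC{\R})'*\choice{\boldfaceSigma_1^1}{\nats}\weireducible(\connected\findsCC{\R}*\choice{\boldfaceSigma_1^1}{\nats})'$, which is not a general fact about these operators (it amounts to commuting an $\mflim$ past $\choice{\boldfaceSigma_1^1}{\nats}$ inside a compositional product) and would itself need justification; the $\mathsf{D}$-based phrasing, which makes the limit computation depend on $H$ alone and hence places it harmlessly at the inner end of the composition, is the cleanest way to discharge this step.
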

	\begin{proof}
		Let $q$ be a name for an input $H \in \repspaceegraphs$ of $\findsEE{\R}$ and let $A \defas \{v : (\exists G_0 \cong G)(G_0 \text{ is a subgraph of } H \land v \in V(G_0))\}$. This is a valid input for $\choice{\boldfaceSigma_1^1}{\nats}$ and, given $v \in \choice{\boldfaceSigma_1^1}{\nats}(A)$, we can compute a name for the graph $H_0 \defas H\restriction_{\{w \in V(H): \pathconnected{v}{w}{}{H}\}} \in \repspaceegraphs$ (notice that $H_0$ is connected and it is easy to check that $\pathconnected{v}{w}{}{H}$ is a $\Sigma_1^0$ property).
	   To do so, we enumerate a vertex $w$ in $V(H_0)$ only when $(\exists s)(\pathconnected{v}{w}{}{\repmap{EGr}(q[s]q(0)^\nats)})$. Then we enumerate all the vertices/edges in the path from $v$ to $w$. By definition of $v$, $\R\subgraph H_0$ and any $R \in \connected\findsEE{\R}(H_0) $ is a valid solution for $\findsEE{\R}$.
   
		To prove the second item, by the fact that for any multi-valued function $f$, $f * \lim \weiequiv f'$ (see \S \ref{subsec:Weihrauch}) and \thref{findconcomponent}, the right-hand-side of the reduction is equivalent to $\connected\findsCC{\R} * \choice{\boldfaceSigma_1^1}{\nats}* \mathsf{D}$ and hence it suffices to show that $\findsCC{\R} \weireducible \connected\findsCC{\R} * \choice{\boldfaceSigma_1^1}{\nats}* \mathsf{D}$. Let $q$ be a name for  an input $H \in \repspacegraphs$ of $\findsCC{G}$, and 	let $f\in  \mathsf{D}(H)$. Then, given $A$ as above, let $v \in \choice{\boldfaceSigma_1^1}{\nats}(A)$. Now consider the graph $H\restriction_{\{w: f(w)=f(v)\}}$: this is a suitable input for $\connected\findsCC{\R}$ any $R \in \connected \findsCC{\R}(H\restriction_{\{w: f(w)=f(v)\}})$ is a valid solution for $\findsCC{\R}(H)$.
	\end{proof}
	\subsubsection*{\underline{Step 2} of \thref{maintheoremray}' proof: proving the theorem for $\findsCE{\R}$ and $\findsEE{\R}$}
	The finitary part of a problem was defined in \S \ref{finitarypart}.
   \begin{theorem}
		\thlabel{finitepartiscomputable}
	   For every $k \in \nats$, $\mathrm{Fin}_\mathbf{k}(\connected\findsEE{\R})$ is computable, and hence we obtain that $\mathrm{Fin}(\connected\findsEE{\R})$ is computable as well.
		\end{theorem}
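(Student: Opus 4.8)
The plan is to exhibit a computable realizer for $\mathrm{Fin}_\mathbf{k}(\connected\findsEE{\R})$; the \quot{hence}\ for $\mathrm{Fin}(\connected\findsEE{\R})$ is then immediate, since $\mathrm{Fin}(f)=\bigsqcup_{k\geq 1}\mathrm{Fin}_\mathbf{k}(f)$ and a uniformly computable family of problems has computable coproduct. By \thref{finitaryk} an instance is a triple $(e,w,H)$ with $H$ a connected graph satisfying $\R\subgraph H$ and such that $\Phi_e(w\oplus p_y)(0)\downarrow<k$ for every ray-subgraph $y$ of $H$ and every $\repmap{\egraph}$-name $p_y$ of $y$; a solution is any $n<k$ equal to $\Phi_e(w\oplus p_y)(0)$ for some such $p_y$. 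So it suffices, on input $(e,w,H)$, to compute one value lying in this (nonempty) \emph{achievable set}. The key observation is that since $\Phi_e$ reads only a finite prefix of a converging computation, a value is achievable as soon as we can name a finite string $\rho$ which (i) is an initial segment of the natural-order name of an \emph{actual} ray of $H$ and (ii) already forces $\Phi_e(w\oplus\rho\,\cdots)(0)\downarrow$; we may then output that value, as $\rho$ completes to a genuine ray name.

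The first ingredient I would isolate is a connectivity lemma: in a connected graph with $\R\subgraph H$, every vertex — indeed every edge — lies on a ray (join the given vertex or edge to a fixed ray by a finite path and splice at the first point of intersection, in the spirit of the argument behind \thref{Constructionongraphs}). This guarantees that the shortest prefixes we might present are automatically \quot{on a ray}, and more generally that any genuine finite initial segment of a ray is a legal candidate for $\rho$. The whole construction then reduces to producing, computably and \emph{without} a degree oracle, a finite path in $H$ that is genuinely the beginning of a ray and is long enough for $\Phi_e$ to converge on it.

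Concretely I would grow a path $\pi$ from a base vertex $a$, feeding its natural-order $\repmap{\egraph}$-name to $\Phi_e$ one edge at a time and stalling, so that $\Phi_e$ only reads what we have committed. Whenever $\Phi_e$ asks to read past the current end of $\pi$ we extend $\pi$ through the enumeration of $H$; whenever the end admits no fresh continuation we use connectivity to reroute, changing only the not-yet-read tail of $\pi$ and leaving the already-read prefix untouched. As soon as $\Phi_e(w\oplus\widehat{\pi})(0)\downarrow$ with use inside the committed prefix, we output its value. The point of never disturbing the read prefix is that the output is then pinned to a fixed finite structure, and it is the finiteness of the codomain $\mathbf{k}$ together with the abundance of rays in a connected graph that should let us reach a committed prefix realizable inside some actual ray. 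This is exactly the feature that fails for $\eembeddingray{\R}\weiequiv\mflim_2$ (\thref{lim2}), whose finitary part is \emph{not} computable: there a single ray, and hence the one relevant orientation bit, is handed to us by the input, so the achievable set is genuinely hard.

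The main obstacle is precisely step (i): certifying that the committed prefix lies on a genuine ray, in the presence of dead ends and of vertices of infinite degree, with no access to the degree function. There is no finite or c.e.\ certificate for \quot{extends to a ray}\ (the predicate is $\Sigma_1^1$), and a coherent computable chain of on-a-ray prefixes would amount to computing a ray outright, which is impossible; so the argument cannot proceed by certifying extendability of a fixed path. The weight of the proof will therefore lie in showing that connectivity permits rerouting the uncommitted tail often enough that every initial segment we are forced to fix can be absorbed into \emph{some} ray — exploiting that we need only one achievable value, so that we are free to keep changing which ray we aim at as long as the finite prefix already read by $\Phi_e$ survives each rerouting. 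Verifying that this rerouting can always be performed while keeping the read prefix on a ray, uniformly over all connected $H$ with $\R\subgraph H$, is the step I expect to require the most care.
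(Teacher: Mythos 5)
Your proposal correctly locates the difficulty --- certifying that the finite prefix already read by the evaluation functional sits inside a genuine ray of the input graph --- but the mechanism you propose cannot overcome it, and this is a real gap rather than a detail to be checked. Growing a single path $\pi$ and rerouting only the not-yet-read tail fails because the committed prefix itself may fail to be a subgraph of \emph{any} ray: take $H$ to be a copy of $\R$ together with two extra leaves $a,b$ attached to its endpoint $0$. The path with vertices $a,0,b$ and edges $(a,0),(0,b)$ is a perfectly legal thing for your greedy extension to commit to (you only discover that $b$ is a dead end after $\Phi_e$ has read the edge $(0,b)$), yet no copy of $\R$ in $H$ contains both of these edges, since that would force both $a$ and $b$ to be endpoints of the ray. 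Once such a prefix is read, no rerouting of the tail helps, and the value output on it need not be achievable. You sense this obstruction yourself and you correctly guess that the finiteness of the codomain must come to the rescue, but you never say how, and a single-candidate construction has no way to use it.

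The paper's proof supplies exactly the missing combinatorial device. One searches, computably, for $k+1$ finite line segments $\ray{\ell_0},\dots,\ray{\ell_k}$ laid end to end so that their connected union is a single finite path in the input graph and the backward functional converges on the (natural-order) name of each segment separately; such a configuration is guaranteed to exist because one may take consecutive chunks of an actual ray, applying the functional to each successive tail, which is again a valid solution. By pigeonhole two of the segments $\ray{\ell_i},\ray{\ell_j}$ yield the same value $m$, and the key lemma is that for \emph{any} two of these segments at least one is a subgraph of a genuine ray: given an actual ray $S'$, one routes from one of the two segments, along the finite connecting path guaranteed by connectedness, into $S'$, with a short case analysis on how $S'$ meets the segments. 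Since solutions of $\connected\findsEE{\R}$ are given as elements of $\repspaceegraphs$, \quot{subgraph of a ray}\ rather than \quot{initial segment of a ray}\ suffices for the committed prefix to extend to a name of a true solution, so $m$ is achievable. This \quot{two candidates, at least one good, both giving the same answer}\ step is precisely what your single-path rerouting scheme lacks, and without it the argument does not go through.
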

		\begin{proof}
	Let $\partialmultifunction{f}{\Baire}{\mathbf{k}}$ for some $k \in \nats$ and suppose  that $f\weireducible \connected\findsEE{\R}$ as witnessed by the computable maps $\Phi$ and $\Psi$.

	Let $r$ be a name for $x \in \dom(f)$ and notice that $\R\subgraph\Phi(r)$. The proof strategy is the following: 
	\begin{enumerate}[$(i)$]
		\item proving that $\Phi(r)$ contains $k+1$-many distinct line segment of finite length $\ray{\ell_0},\dots,\ray{\ell_{k}}$ with names $p_{\ell_i}$ such that the following hold
		\begin{itemize}
		\item $\connectedunion{i \leq k} \ray{\ell_i}\cong  \ray{N}$ where $N\defas \sum_{i\leq k} \ray{\ell_i}$,
		\item $\connectedunion{i \leq  k} \ray{\ell_i}$ is a subgraph of $\Phi(r)$  and
		\item  for every $i \leq k$, $\Psi( r[\length{p_{\ell_i}}], p_{\ell_i})\downarrow= j$ for $j<k$.
		\end{itemize}
		\item Once we have proven $(i)$, the fact that there are only $k$-many solutions for $f(x)$ implies that
		\[(\exists m)(\exists i \neq j)(\Psi( p[\length{q_{\ell_i}}], q_{\ell_i})\downarrow=\Psi( r[\length{q_{\ell_j}}], q_{\ell_j})\downarrow=m).\]
		To conclude the proof, we first show that finding $m$ is a computable process, and then that it must be the case that $m \in f(x)$.
		\end{enumerate}

 	To prove $(i)$, let $p_0$ be a name for $S_0 \in \connected\findsEE{\R}(r,\Phi(r))$ and, for readability, assume $V(S_0)=\{v_i: i \in \nats\}$ and $E(S_0)=\{(v_i,v_{i+1}): i \in \nats\}$. Let $s_0 \defas \min \{ t>0 : \Psi(r[t],p_0[t])\downarrow  \}$ and let $\ell_0 \defas \max\{i: (\exists j\leq s_0)(v_i=p_0(\str{j,j})) \}$. Without loss of generality, we can assume that $\repmap{\egraph}(p_0[s_0]p_0(0)^\nats) \cong \ray{\ell_0}$. Indeed, if $\repmap{\egraph}(p_0[s_0]p_0(0)^\nats) \not\cong \ray{\ell_0}$ we can extend the graph determined by $p_0[s_0]$ to a line segment of finite length: to do so, since $\Psi(r[s_0],p_0[s_0])\downarrow$ and $S_0\cong\R$ we can extend $p_0[s_0]$ to $p_0[s_0]\concat \sigma$ where $\sigma$ is the (finite string) having digits $\str{v_i,v_i}$ for $i < \ell_0$ and $(v_j,v_j+1)$ for $j < \ell_0-1$. Then, we obtain that $\repmap{\egraph}(p_0[s_0]\concat \sigma\sigma(0)^\nats) \cong \ray{\ell_0}$ and $\Psi(r[s_0+\length{\sigma}],p_0[s_0]\concat \sigma)\downarrow$. For $0 < m \leq k$, let $p_m$ be a name for $S_m$ where $V(S_m) = V(S_0) \setminus \{v_i : i < \ell_{m-1}\}$ and $E(S_m)=E(S_0)\setminus \{(v_i,v_{i+1}) : i <\ell_{m-1}-1\}$.  Notice that $S_m \cong \R$, hence  $S_m \in \connected\findsCE{\R}(p,\Phi(r))$. Then, given $s_m \defas \min \{ t>0 : \Psi(p[t],p_m[t]) \downarrow  \}$, let $\ell_m \defas \max\{i : (\exists j \leq s_k)(v_i=p_m(\str{j,j}))\}$. Again, without loss of generality, we can assume that $\repmap{\egraph}(p_m[s_m]p_m(0)^\nats) \cong \ray{\ell_m}$. Since for every $i<k$, $\max\{v: v \in V(\repmap{\egraph}(p_{\ell_i}[s_{\ell_i}]p_{\ell_i}(0)^\nats))\}=\min\{v:v \in V(\repmap{\egraph}(p_{\ell_{i-1}}[s_{\ell_{i-1}}]p_{\ell_{i-1}}(0)^\nats))\}$, we obtain that $\connectedunion{i \leq k} \ray{\ell_i}\cong  \connectedunion{i \leq k} \repmap{\egraph}(p_{\ell_i}[s_{\ell_i}]p_{\ell_i}(0)^\nats)$  and the proof of $(i)$.
	
 	To prove $(ii)$, it suffices to show that 
	\begin{equation}
		\label{equationconnectedproof}
		(\exists S \in \connected\findsEE{\R}(\Phi(r)))(\ray{\ell_i}\text{ is a subgraph of }S \lor \ray{\ell_j} \text{ is a subgraph of } S).
	\end{equation}
Indeed, suppose $\ray{\ell_i}$ is a subgraph of $S$ (the case for $ray{\ell_j}$ is analogous): then there is a name for $S$ that begins with $q_{\ell_i}$, i.e.\ an enumeration of $\ray{\ell_i}$. Since $\Psi(r[\length{q_{\ell_i}}],q_{\ell_i})\downarrow=m$ by hypothesis, we are done. So let $S' \in \connected\findsEE{\R} (\Phi(r)))$. We have the following cases:
 	\begin{itemize}
 	\item if (\ref{equationconnectedproof}) holds there is nothing to prove;
 	\item if $V(S') \cap (V(\ray{\ell_i}) \cup V(\ray{\ell_j}))=\emptyset$, notice that by hypothesis $\Phi(r)$ is connected, hence $(\exists v \in V(\ray{\ell_i}))(\exists w \in V(S))(\pathconnected{v}{w}{}{\Phi(r)})$. If $\D{\ray{\ell_i}}{v}=1$, let $S$ be the infinite ray starting with $\ray{\ell_i}$, passing through $w$ and continuing as $S'$. Otherwise, let $S$ be the infinite ray starting with $\ray{\ell_j}$, passing through $v$ and $w$ and continuing as $S'$.
 	\item if $V(S') \cap V(\ray{\ell_i}) \supseteq \{v\}$ let $S$ be any infinite ray containing $\ray{\ell_j}$, passing through $v$ and continuing as $S'$; the case $V(S) \cap V(\ray{\ell_j}) \sqsupseteq \{v\}$ is analogous.
 	\end{itemize}
 	Since the procedure to search $\connectedunion{i \leq k} \ray{\ell_i}$ is computable we have shown that $f$ is computable and this proves the theorem.
		\end{proof}
   
		The following  corollary is immediate combining the previous theorem and \thref{summaryForConnectedness}.
		\begin{corollary}
		   \thlabel{corollarylpo1}
			$\lpo \not\weireducible  \connected\findsCE{\R}, \ \connected\findsEE{\R}$ (the same holds relative to any oracle).
		\end{corollary}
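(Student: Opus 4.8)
The plan is to deduce both nonreductions from the computability of the finitary part established in \thref{finitepartiscomputable}. First I would collapse the two claims into one. By \thref{summaryForConnectedness} the reductions of Figure~\ref{findisdefinitionsFigure} carry over to the connected restrictions, so in particular $\connected\findsCE{\R} \weireducible \connected\findsEE{\R}$; since $\weireducible$ is transitive, a failure of $\lpo \weireducible \connected\findsEE{\R}$ automatically yields a failure of $\lpo \weireducible \connected\findsCE{\R}$. Hence it suffices to show $\lpo \not\weireducible \connected\findsEE{\R}$.

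The decisive point is that $\lpo$ has finite codomain, namely $\partialmultifunction{\lpo}{\Baire}{\mathbf{2}}$. So I would apply the characterization of the $\mathbf{k}$-finitary part (\thref{kfintiarypartcharacterization}): were $\lpo \weireducible \connected\findsEE{\R}$ to hold, then, as $\lpo$ is a problem with codomain $\mathbf{2}$, it would be dominated by the maximum $\mathrm{Fin}_{\mathbf{2}}(\connected\findsEE{\R})$, i.e.\ $\lpo \weireducible \mathrm{Fin}_{\mathbf{2}}(\connected\findsEE{\R})$. But \thref{finitepartiscomputable} tells us that $\mathrm{Fin}_{\mathbf{2}}(\connected\findsEE{\R})$ is computable, so $\lpo$ would be computable too. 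This contradicts the discontinuity of $\lpo$: its fibre over $1$ is the single point $0^\nats$, which is closed but not open, whereas any computable function is continuous.

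For the parenthetical \emph{relative to any oracle}, I would simply note that every step relativizes uniformly. Fixing an oracle $q$, the argument of \thref{finitepartiscomputable} shows that any problem $\partialmultifunction{f}{\Baire}{\mathbf{k}}$ reducible to $\connected\findsEE{\R}$ via maps computable in $q$ is itself computable in $q$; instantiating $f = \lpo$ would make $\lpo$ computable relative to $q$, which is impossible since relative computability still entails continuity. I do not anticipate a real obstacle, as the corollary is essentially a packaging of \thref{kfintiarypartcharacterization,finitepartiscomputable}; the only point deserving explicit mention is that the finitary-part machinery relativizes, which is immediate from the uniformity of those constructions.
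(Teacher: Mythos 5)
Your argument is correct and is exactly the paper's intended proof: the paper derives the corollary by combining \thref{finitepartiscomputable} with \thref{summaryForConnectedness}, just as you do (reduce the $\chi e$ case to the $ee$ case, then note that $\lpo$, having codomain $\mathbf{2}$, would be dominated by the computable $\mathrm{Fin}_{\mathbf{2}}(\connected\findsEE{\R})$, contradicting its discontinuity). Your remarks on relativization are also the right justification for the parenthetical claim.
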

   
   The next proposition proves \thref{maintheoremray}$(i)$ for $\findsEC{\R}$ and $\findsEE{\R}$.
   \begin{proposition}
	   \thlabel{firstpartofproofgraph}
	   $\mflim \not\weireducible \findsCE{\R}, \findsEE{\R}$. 
   \end{proposition}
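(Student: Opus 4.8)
The plan is to derive the two nonreductions as a short chain of the three technical facts just proved, doing the real work for $\findsEE{\R}$ and then obtaining $\findsCE{\R}$ for free. First I would suppose, toward a contradiction, that $\mflim \weireducible \findsEE{\R}$. By \thref{findssigma11}$(i)$ we have $\findsEE{\R} \weireducible \connected\findsEE{\R} * \choice{\boldfaceSigma_1^1}{\nats}$, so transitivity of $\weireducible$ gives $\mflim \weireducible \connected\findsEE{\R} * \choice{\boldfaceSigma_1^1}{\nats}$. The decisive observation is that $\choice{\boldfaceSigma_1^1}{\nats}$ has codomain $\nats$, so it plays exactly the role of the factor $g$ in \thref{subgraphsigmacontinuous1}.

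Applying \thref{subgraphsigmacontinuous1} with $f = \connected\findsEE{\R}$ and $g = \choice{\boldfaceSigma_1^1}{\nats}$ then yields $\lpo \weireducible \connected\findsEE{\R}$ relative to some oracle. This directly contradicts \thref{corollarylpo1}, which states that $\lpo \not\weireducible \connected\findsEE{\R}$ relative to any oracle. Hence $\mflim \not\weireducible \findsEE{\R}$.

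For $\findsCE{\R}$, I would invoke the reduction $\findsCE{\R} \weireducible \findsEE{\R}$ recorded in Figure \ref{findisdefinitionsFigure}: if we had $\mflim \weireducible \findsCE{\R}$, composing with this reduction would give $\mflim \weireducible \findsEE{\R}$, which we have just excluded. Therefore $\mflim \not\weireducible \findsCE{\R}$ as well, completing the proposition.

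I do not anticipate a real obstacle, since the genuine difficulty has been front-loaded into \thref{subgraphsigmacontinuous1} (the lemma saying that a factorization of $\mflim$ through a first-order problem forces $\lpo$ into the remaining factor) and into \thref{corollarylpo1} (computability of the finitary part of $\connected\findsEE{\R}$); the present statement is merely their juxtaposition. The only points demanding a moment's care are confirming that the codomain of $\choice{\boldfaceSigma_1^1}{\nats}$ is indeed $\nats$, so that \thref{subgraphsigmacontinuous1} is applicable, and checking that the ``relative to some oracle'' in the conclusion of \thref{subgraphsigmacontinuous1} is matched by the ``relative to any oracle'' strength of the nonreduction in \thref{corollarylpo1}, so that the contradiction is genuine.
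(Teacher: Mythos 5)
Your proof is correct and follows essentially the same route as the paper: assume $\mflim \weireducible \findsEE{\R}$, pass through \thref{findssigma11}$(i)$, apply \thref{subgraphsigmacontinuous1} with $g = \choice{\boldfaceSigma_1^1}{\nats}$, and contradict \thref{corollarylpo1}. The reduction $\findsCE{\R} \weireducible \findsEE{\R}$ handling the second case is left implicit in the paper but is exactly the intended argument.
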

   \begin{proof}
	   For the sake of contradiction, suppose that $\mflim \weireducible \findsEE{\R}$: by \thref{findssigma11}$(i)$ we obtain that $\mflim \weireducible \connected\findsEE{\R} * \choice{\boldfaceSigma_1^1}{\nats}$. \thref{subgraphsigmacontinuous1} implies that  $\lpo \weireducible \connected\findsEE{\R}$ relative to some oracle, contradicting \thref{corollarylpo1}.
   \end{proof}
   
	Before moving our attention to $\connected\findsCC{\R}$ and $\connected\findsEC{\R}$, we prove that the fact that $\mathrm{Fin}(\connected\findsEE{\R})$ is computable (\thref{finitepartiscomputable}) cannot be extended to first-order functions. Let $\mathrm{ACC}_\nats$ be the restriction of $\cnats$ to sets of the form $\nats$ or $\nats \setminus \{n\}$ for some $n \in \nats$: notice that this problem is not computable. 
		\begin{proposition}
			\thlabel{subgraphACCFindLine}
			$\mathrm{ACC}_\nats\strictlyweireducible \firstOrderPart{\connected\findsEE{\R}}$. 	
	   \end{proposition}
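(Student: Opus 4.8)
The plan is to prove the two inequalities separately. For the reduction $\mathrm{ACC}_\nats \weireducible \firstOrderPart{\connected\findsEE{\R}}$, note first that $\mathrm{ACC}_\nats$ is itself first-order (its codomain is $\nats$), so by \thref{firstorderpartcharacterization} it suffices to exhibit a Weihrauch reduction $\mathrm{ACC}_\nats \weireducible \connected\findsEE{\R}$. Given an instance of $\mathrm{ACC}_\nats$, i.e.\ an enumeration of the complement of a set $A$ that is either $\nats$ or $\nats\setminus\{n\}$, the forward map would compute a \emph{connected} graph $H \in \repspaceegraphs$ with $\R\subgraph H$ assembled from one "candidate gadget" per $m\in\nats$. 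I would arrange that: (i) $H$ is connected; (ii) the gadget for candidate $m$ supports an infinite ray precisely when $m$ is never enumerated into the complement (so the gadget of the unique excluded candidate, if any, stays finite); and (iii) every infinite ray of $H$ lies cofinitely inside a single candidate gadget. The names of the gadget vertices are chosen (via the pairing) to carry the tag $m$, so that the candidate of a vertex is computable from that vertex — exactly the device that makes the disconnected construction behind \thref{propositionfoprayomega} work.

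The backward map $\Psi$ reads the returned ray $S$ together with the complement enumeration. On the branch where the exclusion $n$ is eventually enumerated, $\Psi$ can commit to a correct \emph{unbounded} answer: it keeps reading $S$ until it exposes a tag $m\neq n$ and outputs $m$; since the infinite part of $S$ sits in the gadget of some $m\in A$ with $m\neq n$, such a tag does appear, and $m\neq n$ already certifies $m\in A$. This is the straightforward half, and it is where the unboundedness of the codomain of $\mathrm{ACC}_\nats$ (as opposed to the $\mathbf{k}$-bounded situation of \thref{finitepartiscomputable}) is essential.

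The hard part will be committing to a correct output along the branch where \emph{nothing} is ever enumerated, i.e.\ when $A=\nats$: here no signal arrives from the complement, so the output must be extracted from $S$ alone, and — because connectivity forbids the clean component separation available to $\findsEE{\R}$ — the ray $S$ may first traverse a bounded initial segment of the gadget of some candidate that, for all the reader knows at a finite stage, could still be the excluded one. Deciding "is this candidate good?" from what has been read is a $\boldfacePi^0_1$ question that $\connected\findsEE{\R}$ cannot answer, since $\lpo\not\weireducible\connected\findsEE{\R}$ by \thref{corollarylpo1}; hence $\Psi$ must never rely on such a decision. The resolution has to exploit simultaneously that there is \emph{at most one} excluded candidate and that $S$, being infinite and connected, eventually leaves every bounded gadget: one schedules the reading of $S$ and of the complement so that $\Psi$ only ever commits to a tag witnessed at a depth growing with the number of complement symbols already inspected, guaranteeing that a genuinely excluded candidate is caught by the complement before it is ever output, while in the no-exclusion branch the first such deep tag is automatically safe. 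Making this schedule precise, and in particular verifying finite commitment together with correctness on the $A=\nats$ branch, is the delicate heart of the argument and the step I expect to require the most care.

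For strictness, i.e.\ $\firstOrderPart{\connected\findsEE{\R}}\not\weireducible \mathrm{ACC}_\nats$, the plan is to produce a first-order problem that reduces to $\connected\findsEE{\R}$ but not to $\mathrm{ACC}_\nats$. The natural candidate is a choice principle tolerating more than one excluded value (a "co-$2$" variant of $\cnats$): the same gadget construction, now allowing two candidate gadgets to be truncated, realizes it as a problem below $\connected\findsEE{\R}$, whereas a single invocation of $\mathrm{ACC}_\nats$ — which by design copes with only one excluded point — provably cannot compute it. Establishing this non-reduction via a direct diagonalization against the candidate pairs of forward/backward maps, together with the already-proved reduction of the first half, then yields the strict inequality.
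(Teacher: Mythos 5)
Your reduction of the statement to showing $\mathrm{ACC}_\nats \weireducible \connected\findsEE{\R}$ via \thref{firstorderpartcharacterization} is fine, and you have correctly located the crux: the backward functional must commit to an output from a finite prefix of the returned ray even on the branch where nothing is ever enumerated into the complement. But the proposal does not resolve that crux --- you explicitly defer ``making the schedule precise'' --- and the scheduling idea as described does not address the actual obstacle. The difficulty is not that the complement enumeration arrives too late to ``catch'' the excluded candidate; it is that whatever tag $m$ your $\Psi$ outputs after reading a finite prefix of (a name for) the solution ray $S$ while $A^c$ is still empty, the adversary may then enumerate that very $m$ into $A^c$, and the reduction is refuted unless the forward functional guarantees that the prefix already read is \emph{not} extendible to an infinite ray of the modified graph. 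With one dead-end gadget per candidate and the rule ``output the tag of the gadget containing the tail of $S$,'' this guarantee fails: a ray may traverse a long but finite initial piece of gadget $m$ and then leave through the hub into another gadget, so a finite prefix witnessing deep penetration of gadget $m$ never certifies $m\in A$ (from an enumeration one cannot identify the degree-one endpoint of the ray, hence cannot tell inward from outward traversal). As it stands, the forward construction is not specified in a verifiable way, and the one concrete mechanism you offer does not close the gap.

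The paper's proof avoids candidate gadgets entirely. It grows a single path $1-2-3-\cdots$ while $A^c$ stays empty; when $n$ enters $A^c$ it attaches a new vertex $0$ to $n$ and continues with an infinite ray from $0$. In the resulting graph every infinite ray containing the vertex $n$ must use the edge $(n,0)$, so a vertex that occurs in the returned ray together with \emph{both} of its consecutive-integer neighbours is certified to lie in $A$; on the branch $A=\nats$ any output is correct, so the backward functional simply outputs the first vertex for which it sees such a ``straight'' configuration. This local, finitely verifiable certificate is exactly what your architecture lacks. Your strictness plan (a first-order ``co-$2$'' choice principle below $\connected\findsEE{\R}$ but not below $\mathrm{ACC}_\nats$) is a reasonable alternative to the paper's terse appeal to \thref{summaryForConnectedness}, but it is likewise only sketched and rests on the unproved forward construction.
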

			\begin{proof}
			 Let $A$ be an input for $\mathrm{ACC}_\nats$, and let $A^c[s]$ denote the enumeration of the complement of $A$ up to stage $s$. We compute a graph $G$ as follows. At stage $0$, let $1 \in V(G)$ (we deliberately leave $0$ outside $V(G)$ for the moment). At stage $s+1$,
				\begin{itemize}
					\item if $A^c[s+1]=\emptyset$, let ${s+2} \in V(G)$ and  $({s+1},{s+2}) \in E(G)$.
					\item if $A^c[s+1]=\{n\}$
					\begin{itemize}
						\item if $n\leq s+1$, let $0 \in V(G)$, $(n,{0}) \in E(G)$ and add a copy of $\R$ starting from $0$ and end the construction.
						\item if $n>s+1$, add a line segment from $s+1$ to $n$ having vertices $\{i : s+1 \leq i \leq n\}$ and let $(n,{0}) \in E(G)$. Then add a copy of $\R$ starting from $0$  and end the construction.
					\end{itemize}			
				\end{itemize}	
				Let $R'	\in \connected\findsEE{\R}(G)$. Exactly one of the following holds:
				\begin{itemize}
					\item $({n},{n+1}), ({n+1},{n+2}) \in E(R')$, for $n>0$. In this case, $n \in \mathrm{ACC}_\nats(A)$;
					\item $({n},{0}) \in E(R')$ for $n>0$. Then any $m \neq n$ is in $\mathrm{ACC}_\nats(A)$.
				\end{itemize}
				Strictness follows from \thref{summaryForConnectedness}.
			\end{proof}
		   Combining \thref{finitepartiscomputable,subgraphACCFindLine,finpartbelowfirstorder} and Figure \ref{definitionisg} we obtain the following corollary.
		   \begin{corollary}
			   $\mathrm{Fin}(\findsEE{\R}) \strictlyweireducible \firstOrderPart{\findsEE{\R}}$.
		   \end{corollary}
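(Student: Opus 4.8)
The reduction $\mathrm{Fin}(\findsEE{\R}) \weireducible \firstOrderPart{\findsEE{\R}}$ is immediate from \thref{finpartbelowfirstorder}, so the entire content lies in the strictness. The plan is to pin down the first-order part explicitly and then separate it from the finitary part by a cardinality argument of exactly the shape used in \thref{finitepartcnats}. First I would record that $\firstOrderPart{\findsEE{\R}} \weiequiv \choice{\boldfaceSigma_1^1}{\nats}$: by \thref{propositionfoprayomega} the first-order part of the $\finds{}{\R}$-family is $\choice{\boldfaceSigma_1^1}{\nats}$, and since Figure \ref{findisdefinitionsFigure} gives $\findsCE{\R} \weireducible \findsEE{\R} \weireducible \findsEC{\R}$, monotonicity of the first-order part sandwiches $\firstOrderPart{\findsEE{\R}}$ between two copies of $\choice{\boldfaceSigma_1^1}{\nats}$.

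For the strictness itself I would argue by contradiction. Suppose $\firstOrderPart{\findsEE{\R}} \weireducible \mathrm{Fin}(\findsEE{\R})$, i.e.\ $\choice{\boldfaceSigma_1^1}{\nats} \weireducible \bigsqcup_{k \geq 1} \mathrm{Fin}_{\mathbf{k}}(\findsEE{\R})$, the coproduct being the definition of the finitary part (\thref{def:finitepart}). Since $\choice{\boldfaceSigma_1^1}{\nats}$ is join-irreducible (for the same reason $\cnats$ is, cf.\ \cite[Corollary 5.6]{paulybrattka}), the reduction must factor through a single summand, so there is a $k$ with $\choice{\boldfaceSigma_1^1}{\nats} \weireducible \mathrm{Fin}_{\mathbf{k}}(\findsEE{\R})$. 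But $\#\choice{\boldfaceSigma_1^1}{\nats} = \aleph_0$ (witnessed by the singleton instances, whose solution sets are pairwise disjoint), whereas $\mathrm{Fin}_{\mathbf{k}}(\findsEE{\R})$ has codomain $\mathbf{k}$ and hence $\#\mathrm{Fin}_{\mathbf{k}}(\findsEE{\R}) \leq k < \aleph_0$; this contradicts \cite[Proposition 3.6]{brattka2015probabilistic}, exactly as in \thref{finitepartcnats}. Hence the reduction is strict.

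The hard part is twofold, and both difficulties are inherited from \thref{finitepartcnats}: one must justify that $\choice{\boldfaceSigma_1^1}{\nats}$ is join-irreducible, and one must apply the cardinality bound of \cite{brattka2015probabilistic} (literally stated for strong reductions) along an ordinary Weihrauch reduction. I would also stress \emph{why} the seemingly cheaper route is unavailable here: although \thref{finitepartiscomputable} makes $\mathrm{Fin}(\connected\findsEE{\R})$ computable and \thref{subgraphACCFindLine} places the noncomputable $\mathrm{ACC}_\nats$ below $\firstOrderPart{\connected\findsEE{\R}}$ — which already separates the two parts for the \emph{connected} restriction — the unrestricted problem is genuinely different, since $\mathrm{Fin}(\findsEE{\R})$ is itself \emph{not} computable. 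Indeed, coding an $\lpo$-instance $p$ as a two-component graph exactly one of whose components is an infinite ray (the $0$-component a ray iff $p$ is constantly $0$, the $1$-component a ray iff $p$ takes the value $1$) yields $\lpo \weireducible \findsEE{\R}$ with codomain $\mathbf{2}$, whence $\lpo \weireducible \mathrm{Fin}_{\mathbf{2}}(\findsEE{\R})$ by \thref{kfintiarypartcharacterization}. So the separation cannot be obtained from computability of the finitary part and must instead rest on the mismatch between the $\aleph_0$-valued $\choice{\boldfaceSigma_1^1}{\nats}$ and the finite-codomain pieces $\mathrm{Fin}_{\mathbf{k}}$.
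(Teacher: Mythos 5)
Your proposal is correct in outline but follows a genuinely different route from the paper's, and the comparison is instructive. The paper disposes of the corollary in one line by combining \thref{finitepartiscomputable} (that $\mathrm{Fin}(\connected\findsEE{\R})$ is computable), \thref{subgraphACCFindLine} (that the noncomputable $\mathrm{ACC}_\nats$ lies below $\firstOrderPart{\connected\findsEE{\R}}$) and \thref{finpartbelowfirstorder}; as you observe, these ingredients directly separate the two parts only for the \emph{connected} restriction, and indeed the Conclusions restate the result with the $\connected$ prefix. Your two-component coding showing $\lpo \weireducible \mathrm{Fin}_{\mathbf{2}}(\findsEE{\R})$ is correct (a copy of $\R$, being connected, lies in a single component of the input, so any name for it reveals which component is infinite), and it demonstrates that the ``computable finitary part'' route genuinely cannot be transplanted to the unrestricted problem as stated. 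Your replacement --- identify $\firstOrderPart{\findsEE{\R}}$ with $\choice{\boldfaceSigma_1^1}{\nats}$ via \thref{propositionfoprayomega} and Figure \ref{findisdefinitionsFigure}, then run the join-irreducibility-plus-cardinality argument of \thref{finitepartcnats} --- is the right repair and buys the unrestricted statement, at the cost of the two debts you yourself flag: join-irreducibility of $\choice{\boldfaceSigma_1^1}{\nats}$ is not literally \cite[Corollary 5.6]{paulybrattka} (which concerns $\cnats$) and needs the fractal argument spelled out for the $\boldfaceSigma_1^1$ representation; and the bound $\#f \leq \#g$ is stated for strong reductions, so invoking it along an ordinary reduction to a single summand $\mathrm{Fin}_{\mathbf{k}}$ requires an extra step (for instance a mind-change argument on the at most $k$ candidate outputs of the backward functional). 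Since the paper's own proof of \thref{finitepartcnats} leans on exactly that same step, you are no worse off than the source, but a fully self-contained write-up should close both debts explicitly.
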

	   \subsubsection*{\underline{Step 3}: proving \thref{maintheoremray} for $\findsCC{\R}$ and $\findsEC{\R}$}
		The next proposition, together with the next corollary, shows that $\mathrm{Fin}(\connected \findsCE{\R})$  is not computable, and hence the same proof technique used to prove \thref{maintheoremray}$(i)$ does not work here.
	   
		\begin{proposition}
			\thlabel{ccantorfindsubgraph}
			 $\CCantor\strictlyweireducible \connected\findsCC{\R}$.
			\end{proposition}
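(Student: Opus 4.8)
The plan is to prove the two halves of the statement separately: the reduction $\CCantor \weireducible \connected\findsCC{\R}$, which is the substantive part (and the one that, via monotonicity of $\mathrm{Fin}$, yields non-computability of $\mathrm{Fin}(\connected\findsCC{\R})$), and the non-reduction $\connected\findsCC{\R} \not\weireducible \CCantor$ giving strictness.

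For the reduction I would feed an instance of $\CCantor$ --- presented via the tree representation as a binary tree $T$ with $\body{T}\neq\emptyset$, i.e.\ $T\in\illfounded$ --- into the graph $\constructionone{T,\R}$ from \S\ref{wadgecomplexityofgraphs}. Unpacking the definition with $G=\R$, the only edges of $\constructionone{T,\R}$ join a node $\sigma\in T$ to its immediate successors in $T$, so $\constructionone{T,\R}$ is exactly the graph-theoretic tree of $T$: it is connected, rooted at $\str{}$, and computable from a characteristic name of $T$. Since $T\in\illfounded$, \thref{Constructionongraphs} gives $\R\inducedsubgraph\constructionone{T,\R}$, hence $\R\subgraph\constructionone{T,\R}$, so $\constructionone{T,\R}$ is a legitimate connected instance of $\connected\findsCC{\R}$. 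The backward functional must turn an arbitrary solution $R'\cong\R$ (a subgraph given by its characteristic function) into a point of $\body{T}$. The key observation is that in a tree-graph every ray is eventually a descending branch: reading $R'$ from its unique degree-$1$ endpoint as $u_0,u_1,\dots$, consecutive vertices are parent/child, and once a step goes to a child the path is forced to keep descending (the parent is already used), while ascending steps can occur only finitely often. Thus there is a unique minimal-length vertex $u_m$ with $u_m\sqsubset u_{m+1}\sqsubset\cdots$, and $\bigcup_{k\geq m}u_k\in\body{T}$. This branch is computable from $R'$: after locating $u_0$, every vertex of $R'$ of length exceeding $\length{u_0}$ already lies on the descending tail, so for $N>\length{u_0}$ any vertex $v\in V(R')$ with $\length{v}\geq N$ satisfies $v[N]=x[N]$ and all such agree. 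This establishes $\CCantor\weireducible\connected\findsCC{\R}$.

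For strictness I would exhibit a problem below $\connected\findsCC{\R}$ but not below $\CCantor$. The natural candidate is $\mflim_2$: by \thref{lim2} we have $\mflim_2\weiequiv\embeddingray{\R}$, and $\embeddingray{\R}\weireducible\connected\findsCC{\R}$ essentially by definition, since a graph $H\cong\R$ is connected with $\R\subgraph H$, and from any solution $R'\cong\R$ of $\connected\findsCC{\R}(H)$ one reads off, starting at the degree-$1$ endpoint, a sequence $p$ with $(p(i),p(i+1))\in E(H)$, which is a valid output of $\embeddingray{\R}$. Hence $\lpo\weireducible\mflim_2\weireducible\connected\findsCC{\R}$, where the first reduction is immediate (from $q\in\Cantor$ build the convergent bit sequence that switches to $1$ as soon as some $q(k)=1$, whose limit is $1-\lpo(q)$). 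Since it is well known that $\lpo\not\weireducible\CCantor$ --- equivalently, $\lpo$ and $\CCantor\weiequiv\mathsf{WKL}$ are Weihrauch-incomparable --- we obtain $\connected\findsCC{\R}\not\weireducible\CCantor$, and therefore $\CCantor\strictlyweireducible\connected\findsCC{\R}$.

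I expect the main obstacle to be the bookkeeping in the backward functional of the reduction, namely verifying rigorously that an \emph{arbitrary} subgraph $R'\cong\R$ of $\constructionone{T,\R}$ (which a priori could wander up and down near the root) is eventually a genuine descending branch, and that this branch is extracted uniformly and computably; this is exactly where the ``once you descend you cannot turn back'' property of trees does the work. The separation step is comparatively soft, resting only on the standard incomparability of $\lpo$ (equivalently $\mflim_2$) with $\CCantor$; should one prefer to avoid citing it, the same conclusion follows from $\CBaire\weiequiv\mflim_2*\connected\findsCE{\R}\weireducible\mflim_2*\connected\findsCC{\R}$ (\thref{summaryForConnectedness}) together with the fact that $\CBaire$ is not reducible to $\mflim_2*\CCantor$.
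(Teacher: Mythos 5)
Your forward reduction $\CCantor\weireducible\connected\findsCC{\R}$ is correct and is essentially the paper's: feeding in the graph-theoretic tree of $T$ (your $\constructionone{T,\R}$ is literally that graph) and extracting the eventually-descending tail of the returned ray. Your extraction via the degree-one endpoint works because in a \emph{binary} tree every vertex has at most three potential neighbours, all known in advance, so degrees in a characteristic-function-presented subgraph are decidable; the paper instead counts the ($\leq 2^n$) vertices of the solution at each level and takes the union of the levels carrying a unique vertex, but the two bookkeeping schemes are interchangeable.

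The strictness argument you lead with, however, is broken. You claim $\embeddingray{\R}\weireducible\connected\findsCC{\R}$ ``essentially by definition,'' but this would give $\lpo\weireducible\mflim_2\weiequiv\embeddingray{\R}\weireducible\connected\findsCC{\R}$, directly contradicting \thref{lpofindsCC}, which states that $\lpo\not\weireducible\connected\findsCC{\R}$ even relative to any oracle. The step fails because $\connected\findsCC{\R}$ returns an \emph{unordered} copy of $\R$ as a characteristic function: to convert it into an output of $\embeddingray{\R}$ you must enumerate it as an injective walk, which forces you to locate its degree-one endpoint, and in an arbitrary $H\cong\R$ on $\nats$ (unlike in a binary tree) ``$v$ has exactly one neighbour'' is only a $\Pi^0_1$ condition; a greedy walk can get stuck at the endpoint without ever certifying this. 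This gap is precisely the $\mflim_2$ that \thref{lim2} and \thref{cbairecomputedbyisol} measure, and it is the whole point of the paper's distinction between $\finds{}{\R}$ and $\embeddingray{\R}$. Your fallback in the last sentence --- $\CBaire\weiequiv\mflim_2*\connected\findsCE{\R}\weireducible\mflim_2*\connected\findsCC{\R}$ together with $\CBaire\not\weireducible\mflim_2*\CCantor$ --- is the argument the paper actually uses, and it is the one you should promote to the main proof.
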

			\begin{proof}
			Let $T\in \tree_2$ be an input for $\CCantor$ and let $G \in \connected\findsCC{\R}(T)$. Notice that, the fact that $T \in \tree_2$ implies that for every $n$ $\length{\{\sigma \in V(T): \length{\sigma}=n\}}\leq 2^n$, and this combined with the fact that $G \in \repspacegraphs$ implies that we can compute $\length{\{\sigma \in V(G): \length{\sigma}=n\}}$. Finally, it is easy to verify that $\bigcup_{n \in \nats}\{\sigma:\length{\{\sigma \in V(G): \length{\sigma}=n\}}=1\} \in \body{T}$ and this proves the reduction.
		   
			Strictness follows from \thref{summaryForConnectedness} and the fact that $\CBaire\not\weireducible \mflim_2* \CCantor$.
			\end{proof}
			Notice that $\mathsf{C}_2^*\weiequiv \firstOrderPart{\CCantor}$ (\cite[Corollary 7.6]{valentisolda}).
			\begin{corollary}
				\thlabel{firstorderpartccantor}
				$\mathsf{C}_2^*\weiequiv \firstOrderPart{\CCantor}\strictlyweireducible \connected\findsCC{\R}$.
			\end{corollary}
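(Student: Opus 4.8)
The equivalence $\mathsf{C}_2^* \weiequiv \firstOrderPart{\CCantor}$ is exactly \cite[Corollary 7.6]{valentisolda}, so the plan is to obtain the strict reduction $\firstOrderPart{\CCantor} \strictlyweireducible \connected\findsCC{\R}$ as a direct consequence of \thref{ccantorfindsubgraph}, using nothing beyond the general fact that the first-order part of a problem lies below the problem itself. In particular, no new construction is needed: every ingredient is already in hand.

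First I would record that $\firstOrderPart{\CCantor} \weireducible \CCantor$. This is immediate from \thref{firstorderpartcharacterization}, since $\firstOrderPart{f}$ is by definition the $\weireducible$-maximum of the first-order problems that are $\weireducible f$, and hence in particular $\firstOrderPart{f} \weireducible f$. Chaining this with the reduction $\CCantor \weireducible \connected\findsCC{\R}$ supplied by \thref{ccantorfindsubgraph} yields, by transitivity of $\weireducible$, the forward reduction $\firstOrderPart{\CCantor} \weireducible \connected\findsCC{\R}$.

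For strictness I would argue by contradiction. Suppose $\connected\findsCC{\R} \weireducible \firstOrderPart{\CCantor}$; composing with the reduction $\firstOrderPart{\CCantor} \weireducible \CCantor$ from the previous step gives $\connected\findsCC{\R} \weireducible \CCantor$. This contradicts the strictness half of \thref{ccantorfindsubgraph}, where it was shown (via \thref{summaryForConnectedness} together with $\CBaire \not\weireducible \mflim_2 * \CCantor$) that $\CCantor \strictlyweireducible \connected\findsCC{\R}$, and in particular that $\connected\findsCC{\R} \not\weireducible \CCantor$. Hence $\connected\findsCC{\R} \not\weireducible \firstOrderPart{\CCantor}$, which combined with the forward reduction gives the desired $\firstOrderPart{\CCantor} \strictlyweireducible \connected\findsCC{\R}$.

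Since all the work has already been carried out in \thref{ccantorfindsubgraph} and the cited equivalence, there is no substantial obstacle here. The only point to keep in mind is that strictness requires no fresh separation: it reduces to re-using the previously established nonreduction $\connected\findsCC{\R} \not\weireducible \CCantor$ against the trivial $\firstOrderPart{\CCantor} \weireducible \CCantor$. I would avoid trying to derive strictness merely from $\firstOrderPart{\CCantor}$ being first-order, since $\connected\findsCC{\R} \weireducible \firstOrderPart{\CCantor}$ would not force $\connected\findsCC{\R}$ itself to be first-order; the transitivity argument through $\CCantor$ is the clean route.
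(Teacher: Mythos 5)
Your proposal is correct and matches the paper's (implicit) argument: the corollary is stated as an immediate consequence of \thref{ccantorfindsubgraph} together with $\mathsf{C}_2^*\weiequiv\firstOrderPart{\CCantor}$ from the cited reference, with the forward reduction obtained by chaining $\firstOrderPart{\CCantor}\weireducible\CCantor\weireducible\connected\findsCC{\R}$ and strictness by the same transitivity-through-$\CCantor$ contradiction you give. Your closing remark correctly identifies the one pitfall to avoid, and the route you take around it is exactly the intended one.
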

   The corollary above implies that $\mathrm{Fin}(\findsCC{\R})$ is not computable: despite this, we can prove that $\lpo\not\weireducible \connected\findsCC{\R}$. To do so, we introduce the notion of \emph{promptly connected} graph. Recall that, for a  graph $G$ and $V \subseteq V(G)$ the graph induced by $V$ on $G$, denoted by $G_{\restriction V}$ is such that $V(G_{\restriction V})\defas V$ and $E(G_{\restriction V})\defas E(G)\cap (V \times V)$.

	\begin{definition}
	A graph $G$ is  \emph{promptly connected} if for every $n$, the graph $G_{\restriction V \cap \{0,\dots,n\}}$ is connected.
	\end{definition}
   
	\begin{proposition}
	\thlabel{propositionincreasinglyconnected}
	Let $G$ be a promptly connected graph and let $v_0\defas \min\{v:v \in V(G)\}$. Then, for every $v \in V(G)\setminus \{v_0\}$, $v_0$ and $v$ are \quot{increasingly connected}, i.e.,
		\[(\exists \sigma)(\forall i< |\sigma|-1)(\sigma(0)=v_0 \land \sigma(\length{\sigma}-1)=v \land (\sigma(i),\sigma(i+1))\in E(G) \land \sigma(i)<\sigma(i+1)).\]
	\end{proposition}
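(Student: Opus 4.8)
The plan is to prove by strong induction on the value of the vertex $v \in V(G) \subseteq \nats$ the (slightly reformulated) claim that for every $v \in V(G)$ there is an increasing path from $v_0$ to $v$, where the degenerate one-vertex path $\str{v_0}$ handles the value $v=v_0$. The single observation that drives the whole argument is that prompt connectedness, applied at the specific index $n=v$, forces the largest vertex of the initial segment $\{0,\dots,v\}$ to have a strictly smaller neighbour, which lets us extend increasing paths one vertex at a time.

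First I would dispatch the base case $v=v_0=\min\{v : v \in V(G)\}$: the length-one sequence $\sigma \defas \str{v_0}$ trivially witnesses the claim, since $\length{\sigma}-1=0$ makes the universally quantified condition vacuous and $\sigma(0)=\sigma(\length{\sigma}-1)=v_0$. This case is excluded from the proposition's statement, but it is the natural anchor for the induction.

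For the inductive step, fix $v \in V(G)$ with $v > v_0$ and assume the claim for every $w \in V(G)$ with $w<v$. I would invoke the definition of promptly connected at $n=v$: the induced subgraph $G_{\restriction V(G)\cap\{0,\dots,v\}}$ is connected. Since $v>v_0$, this subgraph contains both $v_0$ and $v$, hence has at least two vertices, so $v$ is not isolated and therefore has a neighbour $w$ inside it. As every vertex of this subgraph is $\le v$ and $w\neq v$ (there are no self-loops), we get $w<v$ and $(w,v)\in E(G)$. By the induction hypothesis there is an increasing path $\tau$ from $v_0$ to $w$, and I set $\sigma \defas \tau\concat\str{v}$. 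Because $\tau$ is strictly increasing and ends at $w<v$ while $(w,v)\in E(G)$, the sequence $\sigma$ is a strictly increasing path from $v_0$ to $v$, which completes the induction and hence the proposition.

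There is no genuine obstacle in this argument: the entire content reduces to the remark that prompt connectedness, read at the index equal to the target vertex, always provides a downward edge from the maximal vertex of the initial segment. The only point requiring a moment's care is verifying that the relevant induced subgraph has at least two vertices before appealing to the fact that a connected graph has no isolated vertices, and this holds precisely because $v>v_0$ guarantees that both $v_0$ and $v$ lie in $V(G)\cap\{0,\dots,v\}$.
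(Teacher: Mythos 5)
Your proof is correct and follows essentially the same route as the paper's: both argue by (strong) induction, using prompt connectedness at the level of the target vertex to find a smaller neighbour and then extending the increasing path supplied by the inductive hypothesis. The only cosmetic difference is that you induct on the vertex value with an explicit degenerate base case at $v_0$, while the paper inducts on the index of the increasing enumeration of $V(G)$.
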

	\begin{proof}
   Let $\{v_i: i \in \nats \land (\forall i)(v_i<v_{i+1}) \}$ be an enumeration of $V(G)$. The proof goes by an easy induction on the $v_i$'s. The claim for $v_1$ holds trivially: indeed, $v_0<v_1$ and, by definition of promptly connected graph, $(v_0,v_1) \in E(G)$. Suppose that $v_0$ and $v_s$ for $s>1$ are increasingly connected: we show that $v_0,\dots,v_{s+1}$ are increasingly connected as well. By definition of promptly connected graph, $G_{\restriction V \cap \{0,\dots,v_{s+1}\}}$ is a connected graph, and hence $(\exists i< s+1)((v_i,v_{s+1}) \in E(G))$. By inductive hypothesis, $v_0$ and $v_i$ are increasingly connected by some path $\sigma$ and hence, since $v_i<v_{s+1}$, the path $\sigma\concat v_{s+1}$ witnesses that $v_0$ and $v_{s+1}$ are increasingly connected.
	\end{proof}
   
	\begin{lemma}
	\thlabel{prefixproperty}
	There exists a computable function $\partialfunction{\mathbf{PC}}{\repspacegraphs}{\repspacegraphs}$ that, given in input a connected graph $G \in \repspacegraphs$, is such that $\mathbf{PC}(G) \cong G$, $\mathbf{PC}(G)$ is promptly connected and $0 \in V(\mathbf{PC}(G))$.
	\end{lemma}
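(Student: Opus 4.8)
The plan is to relabel the vertices of $G$ according to a greedy ``connected-order'' enumeration, sending the starting vertex to $0$, so that the vertices of the output graph, listed in increasing numerical order, always induce a connected subgraph. The one subtlety to keep in mind throughout is computability: from a $\repmap{\graph}$-name we can decide membership in $V(G)$ and $E(G)$, but we cannot decide whether $V(G)$ is finite, so I will \emph{not} take the vertex set of $\mathbf{PC}(G)$ to be an initial segment of $\nats$. Instead, writing $v_0 \defas \min V(G)$ (computable, as $G$ is nonempty), I set $V(\mathbf{PC}(G)) \defas \{0\} \cup (V(G) \setminus \{v_0\})$, a set of decidable membership which has the same cardinality as $V(G)$ and contains $0$.

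First I would define the connected-order enumeration $c_0, c_1, c_2, \dots$ of $V(G)$ by $c_0 \defas v_0$ and, given $\{c_0,\dots,c_k\}$, letting $c_{k+1}$ be the least $w \in V(G)\setminus\{c_0,\dots,c_k\}$ adjacent to some $c_i$ with $i \leq k$ (undefined if no such $w$ exists). Letting $w_0 < w_1 < \dots$ enumerate $V(\mathbf{PC}(G))$ in increasing order (so $w_0 = 0$), I define the bijection $\iota(c_k) \defas w_k$ and declare $(\iota(u),\iota(v)) \in E(\mathbf{PC}(G)) \iff (u,v) \in E(G)$. By construction $\iota$ is an isomorphism, $0 = w_0 \in V(\mathbf{PC}(G))$, and for every $k$ the induced subgraph on $\{w_0,\dots,w_k\} = \iota(\{c_0,\dots,c_k\})$ is connected, because $\{c_0,\dots,c_k\}$ is connected in $G$. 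Since $V(\mathbf{PC}(G)) \cap \{0,\dots,n\}$ is always of the form $\{w_0,\dots,w_k\}$, this witnesses that $\mathbf{PC}(G)$ is promptly connected.

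Two things then remain to check. The first is that, when $G$ is connected, the enumeration $c_0,c_1,\dots$ exhausts $V(G)$, so that $\iota$ is genuinely a bijection. If the enumeration halts at stage $k$, then no vertex outside $\{c_0,\dots,c_k\}$ is adjacent to it, so by connectedness $\{c_0,\dots,c_k\} = V(G)$; if it is infinite, a minimality argument shows every vertex is eventually picked, since a vertex $w$ adjacent to the discovered set can be overtaken only by the finitely many vertices below it and hence is added after finitely many further stages. The second is computability of the realizer: deciding $m \in V(\mathbf{PC}(G))$ reduces to the decidable test $m = 0 \lor (m \in V(G) \land m \neq v_0)$, and to decide an edge between $m,m' \in V(\mathbf{PC}(G))$ I compute their numerical ranks $k,k'$ in $V(\mathbf{PC}(G))$ by a finite search, recover $c_k = \iota^{-1}(m)$ and $c_{k'} = \iota^{-1}(m')$ by running the enumeration, and test $(c_k,c_{k'}) \in E(G)$.

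The main obstacle is exactly the halting of the step computing $c_k$: a step of the enumeration diverges once the discovered set already equals $V(G)$. This is why the membership test is performed first. If $m \in V(\mathbf{PC}(G))$, then its rank satisfies $k < |V(\mathbf{PC}(G))| = |V(G)|$, so the discovered set is still a proper subset of $V(G)$ when we look for $c_k$, and connectedness guarantees an eligible vertex at each earlier step, so every relevant search halts. Thus every output bit is computed in finite time, and $\mathbf{PC}$ is the desired computable map.
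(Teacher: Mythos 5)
Your proof is correct, and it rests on the same core idea as the paper's: computably re-index the vertices of $G$ so that every numerical initial segment of the new vertex set induces a connected subgraph. The implementations differ in two ways worth noting. First, the paper processes the vertices in the order given by the name and, whenever the next vertex is not yet adjacent to the already-labelled part, waits for a finite path from the starting vertex to appear and labels that whole path at once; you instead use a greedy \lq\lq least eligible neighbour\rq\rq\ enumeration $c_0,c_1,\dots$ and must therefore argue separately (as you do) that this enumeration exhausts $V(G)$ and that each search terminates. Second, and more substantively, the paper's relabelling sends $V(G)$ onto an initial segment of $\nats$, whereas you send it onto $\{0\}\cup(V(G)\setminus\{v_0\})$. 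Your choice makes membership in $V(\mathbf{PC}(G))$ decidable outright, which is exactly what is needed to output a total $\repmap{\graph}$-name when $G$ may be finite: with an initial segment of $\nats$ one cannot in general decide whether $m<\length{V(G)}$, a point the paper's \lq\lq straightforward\rq\rq\ verification leaves implicit (and which is harmless in the paper's application, since there the graphs contain $\R$ and are infinite). So your argument is a slightly more careful rendering of the same construction, and it covers the finite case that the paper's version technically does not.
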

	\begin{proof}
	Given a name $p$ for $G \in \repspacegraphs$, the function $\mathbf{PC}$ computes a name for a graph in $\repspacegraphs$ as follows. We define an auxiliary map $\function{\iota}{V(G)}{\nats}$ that arranges the vertices of $G$ so that $\mathbf{PC}(G)$ satisfies the properties of the lemma. Let $(v_i)_{i \in \nats}$ be an enumeration of $V(G)$. At stage $0$, let $\iota_0(v_0)=0$. At stage $s+1$, let $n_{s+1} \defas \max\{n : n \in \range(\iota) \text{ at stage } s\}$ and, if $(\exists i< s+1)((v_i,v_{s+1}) \in E(G))$, let $\iota(v_{s+1})=n_{s+1}+1$. Otherwise, wait for a stage $t_{s+1}\defas \min \{t: \pathconnected{v_0}{v_{s+1}}{}{G} \text{ at stage }t\}$ and let $\sigma_{s+1}$ be the path connecting $v_0$ and $v_{s+1}$ in $G$ at stage $t_{s+1}$. For every $i<\length{\sigma}$, if $\sigma(i) \notin \dom(\iota)$ at stage $s$ let $\iota(\sigma(i)) \defas n_{s+1}+i$. This ends the construction.
   
   Let $\mathbf{PC}(G)$ be the graph having $\{\iota(v_i): v_i \in V(G)\}$ and $\{(\iota(v_i),\iota(v_j)) : (v_i,v_j) \in E(G)\}$  as vertex set and edge set respectively. It is straightforward to verify that $\mathbf{PC}(G)$ satisfies the properties of the lemma.
				\end{proof}
			   
	The following lemma gives us a useful property of promptly connected graphs. First, given a connected graph $G$ we define the \emph{distance} between $v,w \in V(G)$ as 
	$$ d^G(v,w)\defas \min \{n : (\exists \sigma \in \nats^n) (\forall i < \length{\sigma}-1)(\sigma(0)=v\land \sigma(\length{\sigma}-1)=w \land (\sigma(i),\sigma(i+1)) \in E(G))\}.$$
   
	\begin{lemma}
	\thlabel{decreasingunionofcompactsets}
	Suppose $G$ is a promptly connected graph such that $(\forall v \in V(G))(\D{G}{v}<\aleph_0)$ and $\R\subgraph G$. then, 
	\[(\exists R\cong \R)(R\subseteq G \land (\forall v,w \in V(R))(d^G(0,v)<d^G(0,w)\implies v<w)),\]
		\end{lemma}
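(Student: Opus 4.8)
The plan is to produce an \emph{increasing ray}: a ray $R=(r_0,r_1,r_2,\dots)\subseteq G$ with $r_0<r_1<\cdots$ and $d^G(0,r_0)<d^G(0,r_1)<\cdots$. On such an $R$ the label order and the distance order coincide, so the implication $d^G(0,v)<d^G(0,w)\Rightarrow v<w$ holds trivially for all $v,w\in V(R)$, and the lemma follows. First I would record the standing facts. As $G$ is promptly connected it is connected, by hypothesis it is locally finite ($\D{G}{v}<\aleph_0$ for every $v$), and since $\R\subgraph G$ it is infinite; hence each sphere $S_k:=\{v\in V(G):d^G(0,v)=k\}$ is finite and, $G$ being connected and infinite, nonempty for every $k$. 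Two consequences are used repeatedly: along any ray $d^G(0,\cdot)\to\infty$ (a ray has infinitely many distinct vertices while each ball $\{v:d^G(0,v)\le D\}$ is finite), and prompt connectivity forces $d^G(0,v)\le\length{\{w\in V(G):w\le v\}}\le v$ (take a shortest path inside $\{w\le v\}$), so that vertices far from $0$ necessarily carry large labels. Note that the increasingly-connected structure of \thref{propositionincreasinglyconnected} gives increasing-\emph{label} paths from $0$ to every vertex, but these need not have monotone distance, so they do not suffice on their own.

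Because consecutive vertices of a ray are adjacent, their distances to $0$ differ by at most $1$; thus an increasing ray is exactly a ray that moves strictly outward at each step while increasing its label. I would therefore apply K\"onig's lemma to the tree $T$ whose nodes are the finite \emph{outward increasing paths} $(r_0,\dots,r_n)$ with $(r_i,r_{i+1})\in E(G)$, $r_i<r_{i+1}$ and $d^G(0,r_{i+1})=d^G(0,r_i)+1$. Once the first vertex $r_0$ is fixed, $T$ is finitely branching, since each $r_i$ has finite degree and so admits only finitely many candidate successors $r_{i+1}$. Hence it suffices to exhibit a single vertex from which arbitrarily long outward increasing paths emanate, and a König's lemma argument on the corresponding subtree then yields the desired ray $R$.

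The crux, and where I expect the real work to lie, is producing such a starting vertex — equivalently, showing that the relation generated by the outward edges $x\to y$ (meaning $x<y$ and $d^G(0,y)=d^G(0,x)+1$) has a vertex with infinitely many descendants. Prompt connectivity is essential here: it excludes degenerate locally finite graphs with no usable outward structure (a star is promptly connected and locally finite but has no ray), and through $d^G(0,v)\le v$ it forces outward edges into large-label vertices. Concretely, setting $\beta_k:=\max\{v:d^G(0,v)\le k\}$, the balls are finite and exhaust $V(G)$, so $\beta_k\to\infty$; thus for infinitely many $k$ there is a vertex $u\in S_k$ exceeding every vertex nearer to $0$. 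Each geodesic parent of such a $u$ lies in the ball $\{v:d^G(0,v)\le k-1\}$ and is therefore smaller than $u$, so $u$ is the head of an outward edge and sits at the top of an outward increasing path; since these record vertices occur at distances tending to infinity, the outward increasing paths leading to them have unbounded length. The delicate step is to convert ``unboundedly long paths'' into a \emph{single infinite} path despite the first vertex ranging over a possibly infinite set. The route I would take is: every vertex lies above a $\to$-minimal source, local finiteness makes the forest of sources finitely branching, and prompt connectivity (again via $d^G(0,v)\le v$) should prevent the record vertices from being distributed among sources with uniformly finite descendant sets — pinning infinitely many records, at unbounded distance, below one source. That source then has descendants of unbounded distance, hence infinitely many, and a final König's lemma on its (finitely branching) descendant tree produces an infinite outward increasing path, i.e.\ the ray $R$.
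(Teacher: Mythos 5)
Your reduction of the lemma to the existence of an infinite \emph{outward increasing} path (labels increase and $d^G(0,\cdot)$ increases by one at each step) is sound, and such a path would indeed satisfy the stated conclusion. The problem is that the existence of that path is exactly the part you do not prove, and the two steps you offer both have genuine gaps. First, from a record vertex $u=\beta_k$ (with $\beta_k>\beta_{k-1}$) you only get that every geodesic parent $u'$ of $u$ satisfies $u'<u$; that exhibits an outward increasing path of length two ending at $u$, not one of length comparable to $k$. To conclude that ``the outward increasing paths leading to them have unbounded length'' you would need some geodesic from $0$ to $u$ to be label-increasing along its entire length, and nothing forces that: the intermediate geodesic vertices are all smaller than $u$, but need not be increasing among themselves, and the backward extension of the edge $u'\to u$ can die immediately if all geodesic parents of $u'$ exceed $u'$. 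Second, even granting unboundedly long finite paths, the step that pins infinitely many of them below a single source is stated only as something prompt connectivity ``should prevent''; no argument is given, and this is precisely the compactness bookkeeping that K\"onig's lemma cannot do for you while the roots range over an infinite set. So as written the proposal does not establish the lemma.

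For comparison, the paper's proof is far more economical and does not attempt to control $d^G(0,\cdot)$ at all: it applies K\"onig's lemma to the tree $T$ of \emph{label}-increasing paths starting at $\min V(G)$, which is finitely branching by local finiteness and infinite because, by \thref{propositionincreasinglyconnected}, every vertex of $G$ is the endpoint of such a path; any infinite branch is then declared to be the desired ray. Your observation that a label-increasing ray need not have monotone distances to $0$ is a fair reading of the statement, and it is what motivates your stronger tree --- but the strengthening is exactly where the argument breaks. If you want to pursue your route, the missing ingredient is a propagation claim of the form: if no vertex of the sphere $S_{k+1}$ has an accessible smaller parent in $S_k$, then prompt connectivity forces each such vertex to take its smaller neighbour from the same or a farther sphere, producing a strictly decreasing chain of labels moving outward, which cannot continue forever in $\nats$. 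Proving that (or retreating to the label-only statement the paper actually uses downstream in \thref{lpofindsCC}) is what would close the gap.
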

   
		\begin{proof}
		   Consider the tree:
		   \[T\defas \{\sigma: \sigma(0)=\min\{v: v \in V(G)\} \land (\forall i<\length{\sigma}-1)((\sigma(i),\sigma(i+1)) \in E(G) \land \sigma(i)<\sigma(i+1))\}.\]
		  To prove the lemma, it suffices to show that $\body{T}\neq \emptyset$. Indeed, suppose $p \in \body{T}$: then the graph having $ \{p(i):i \in \nats\}$ and $\{(p(i),p(i+1)):i \in \nats\}$ as vertex set and edge set respectively is the copy $R\cong \R$ contained in $G$ we were looking for.
		  
		   To show that $\body{T}\neq \emptyset$, first notice that, by hypothesis, $(\forall v \in V(G))(\D{G}{v}<\aleph_0)$ and in particular $(\forall \sigma \in T)(\forall i <\length{\sigma})(\length{\{v : (\sigma(i),v) \in E(G)\}}< \aleph_0)$, implying that $T$ is finitely branching. Hence, by K{\"o}nig lemma, it suffices to show that $T$ is infinite. By \thref{propositionincreasinglyconnected}, for every $v \in V(G)\setminus \min\{v: v \in V(G)\}$,  $\min\{v: v \in V(G)\}$ and $v$ are {increasingly connected}, and hence, for any $v \in V(G)$, there exists  $\sigma \in T$ such that $\sigma(\length{\sigma}-1)=v$ and since $G$ is infinite, $T$ is infinite as well, and this concludes the proof.
			   \end{proof}

		   We are now ready to show that  $\lpo \not\weireducible \connected\findsCC{\R}$.
				\begin{proposition}
				\thlabel{lpofindsCC}
				$\lpo \not\weireducible \connected\findsCC{\R}$ relative to any oracle $\mu$.
				\end{proposition}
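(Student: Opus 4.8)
The plan is to assume, towards a contradiction, that $\lpo \weireducible \connected\findsCC{\R}$ relative to some oracle $\mu$, witnessed by $\mu$-computable forward and backward maps $\Phi,\Psi$, and to defeat this by a fooling argument between the input $0^\nats$ (on which $\lpo=1$) and its perturbations $0^m 1 0^\nats$ (on which $\lpo=0$), exploiting that $0^m 1 0^\nats \to 0^\nats$. Since the whole argument will only use the finite use (continuity) of $\Phi$ and $\Psi$, it relativizes to $\mu$ automatically. First I would pre-compose $\Phi$ with $\mathbf{PC}$ of \thref{prefixproperty} and post-compose $\Psi$ with the corresponding computable inverse relabelling, so that without loss of generality each $\Phi(p)$ is a promptly connected graph $H_p$ with least vertex $0$ and $\R\subgraph H_p$; by \thref{propositionincreasinglyconnected} every vertex is then reachable from $0$ by an increasing path.

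Write $H:=\Phi(0^\nats)$ and $H_m:=\Phi(0^m 1 0^\nats)$. By continuity of $\Phi$ the names of $H$ and $H_m$ share a common prefix whose length tends to infinity with $m$, so $H$ and $H_m$ agree on the subgraph induced by $\{0,\dots,B_m\}$ for some $B_m\to\infty$. In the main (finite-degree) case, \thref{decreasingunionofcompactsets} guarantees a monotone ray in $H$, and I would work with the finitely branching tree $T$ of increasing paths from $0$. For a monotone ray $\hat R$ of $H$ the part of its characteristic name that $\Psi$ actually reads records exactly an initial increasing path; fixing the use, there are $s$ and $V$ so that $\Psi(0^\nats\oplus q_{\hat R})(0)=1$ is already committed from the input prefix $0^s$ and from the membership information about vertices $\le V$, i.e.\ from a single $\le V$-maximal increasing path $\pi$ lying in the \emph{finite} set $Q$ of such paths.

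Now I would push this through the perturbations. Each $H_m$ is promptly connected, finite-degree, with $\R\subgraph H_m$, hence has a monotone ray whose $\le V$-footprint lies in $Q$ (the same $Q$ for $H$ and all $H_m$, as they agree below $B_m$). By pigeonhole some $\pi^\ast\in Q$ is the footprint of a monotone ray of $H_m$ for infinitely many $m$; for each such $m$ that ray extends $\pi^\ast$ by an increasing path staying inside the shared region $\{0,\dots,B_m\}$, so $\pi^\ast$ has increasing extensions in $H$ of length tending to infinity. By K\"onig's lemma (finite branching) $\pi^\ast$ extends to a monotone ray $\hat R^\ast$ of $H$, which is a genuine solution of $H$ with footprint $\pi^\ast$, whence $\Psi(0^\nats\oplus q_{\hat R^\ast})(0)=1$. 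Taking $m$ large enough that $0^m 1 0^\nats$ agrees with $0^\nats$ past the input use $s$ and that $B_m>V$, and feeding the monotone ray $R^{(m)}$ of $H_m$ with the same $\le V$-footprint $\pi^\ast$ (read identically by $\Psi$), forces $\Psi(0^m 1 0^\nats\oplus q_{R^{(m)}})(0)=1$ even though $R^{(m)}$ is a valid solution of $H_m$ and $\lpo(0^m 1 0^\nats)=0$ — the desired contradiction.

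The main obstacle is precisely the robustness step: a priori an adversarial $\Phi$ could \emph{cap} the ray of $H$ just above the shared region and re-route the perturbed graph through a different initial path, so that $\Psi$ detects the flip from the solution's early behaviour. The rigidity of monotone rays from \thref{decreasingunionofcompactsets} (their low vertices form a genuine initial segment) combined with the K\"onig–pigeonhole argument \emph{across} the infinitely many flip positions $m\to\infty$ is exactly what rules this out, and it is where finite degree is indispensable. I therefore expect the only remaining point to be the reduction to the finite-degree case, which I anticipate to be the easiest part: a vertex of infinite degree is shared with all $H_m$ for large $m$ and offers infinitely many increasing continuations that cannot all be capped, so high-degree vertices only make the footprint-matching easier and the finite-degree graphs are the genuinely hard instances.
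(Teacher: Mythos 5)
Your overall plan (normalise with $\mathbf{PC}$, work with monotone rays via \thref{propositionincreasinglyconnected} and \thref{decreasingunionofcompactsets}, and fool $\Psi$ across the inputs $0^\nats$ and $0^m10^\nats$) is a sensible starting point, but the step you yourself flag as ``the main obstacle'' is exactly where the argument breaks, and the K\"onig--pigeonhole device does not repair it. Two concrete problems. First, a monotone ray of $H_m$ with $\le V$-footprint $\pi^\ast$ may jump from the last vertex of $\pi^\ast$ directly to a vertex $>B_m$: the graphs only agree as \emph{induced} subgraphs on $\{0,\dots,B_m\}$, and $H_m$ may attach arbitrary fresh high-numbered vertices to any low vertex. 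So the portion of that ray inside the shared region can be just $\pi^\ast$ itself, you do not obtain increasing extensions of $\pi^\ast$ in $H$ of unbounded length, and K\"onig's lemma has nothing to act on; $\hat R^\ast$ need not exist. Second, even when $\hat R^\ast$ does exist, the use of $\Psi$ on $(0^\nats,q_{\hat R^\ast})$ is a fresh bound $V^\ast$, in general larger than the $V$ you fixed for $\hat R$, and $R^{(m)}$ agrees with $\hat R^\ast$ only on vertices $\le V$; so $\Psi$ need not ``read it identically'' and the forced output $1$ does not follow. A concrete adversary: let $\Phi(0^\nats)$ be $\R$ with the identity labelling and let $\Phi(0^m10^\nats)$ be the path on $\{0,\dots,B_m\}$ with a fresh infinite tail attached at vertex $5$. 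Then $\pi^\ast$ is the path on $\{0,\dots,5\}$, it does extend to the ray $0,1,2,\dots$ of $H$, and yet a backward functional that queries vertex $6$ legitimately answers $0$ on the monotone ray $0,\dots,5,B_m{+}1,B_m{+}2,\dots$ of $H_m$; your argument extracts no contradiction from this pair. (The reason no correct $\Psi$ exists for this adversary involves \emph{non-monotone} solutions, e.g.\ $B_m,B_m{-}1,\dots,6,5,B_m{+}1,\dots$ of $H_m$ against the solution $5,6,7,\dots$ of $H$, which your monotone-ray framework never examines.)

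What is missing is some control on how far the ``next vertex'' of a solution ray can jump, and the paper obtains exactly this by a splicing argument that has no counterpart in your proposal: if a solution of $G_k$ left the region $\{0,\dots,\ell\}$ too abruptly, one could graft the initial segment of $G_\infty$'s solution onto it and obtain a solution of $G_k$ whose name agrees with that of $G_\infty$'s solution up to the use of $\Psi$, forcing the wrong answer. This yields a (non-computable, hence oracle) bound $\lambda(v)$ on the next vertex, uniformly over the whole family of graphs; after thinning the edge relation by $\lambda$ the tree of candidate rays from \thref{decreasingunionofcompactsets} becomes finitely branching, so the search is a $\CCantor$-instance, and the contradiction is that $\lpo\weireducible\CCantor$ relative to an oracle, which is false. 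Note also that \thref{decreasingunionofcompactsets} genuinely requires all degrees finite, so your deferred ``reduction to the finite-degree case'' is not a side issue: the paper's $\lambda$-thinning is precisely what makes that hypothesis available.
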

				\begin{proof}
				   Let $\Phi,\Psi$ be the computable (relative to $\mu$) forward and backward functionals witnessing that $\lpo \weireducible \connected\findsCC{\R}$. Without loss of generality, we can restrict the domain of $\lpo$ to $A\defas \{0^\nats\} \cup \{0^i10^\nats: i \in \nats\}$: doing so, we ensure that $\Phi$ produces only countably many graphs. Let $\mathbf{PC}$ be the function defined in \thref{prefixproperty}, and compute the graphs $G_\infty\defas \mathbf{PC}(\Phi(0^\nats))$ and, for every $i \in \nats$, $G_i\defas \mathbf{PC}(\Phi(0^i10^\nats))$. Notice that,  by \thref{propositionincreasinglyconnected,decreasingunionofcompactsets}, for every $x \in \nats \cup \{\infty\}$, $0 \in V(G_x)$ and, for any $v \in V(G_x)$, $0$ and $v$ are increasingly connected.
				   We claim that there is a $\function{\lambda}{\nats}{\nats}$ such that for every $x \in \nats \cup \{\infty\}$ there exists $R \cong \R$ in $G_x$ (with  $V(R)=\{v_i :i \in \nats\}$, $E(R)\defas \{(v_i,v_{i+1}) : i \in \nats\}$ and  $v_0=0$) with the property that, for every $n \in \nats$, $v_n \leq \lambda(v_{n+1})$. The existence of such a $\lambda$ would witness that $\lpo\weireducible \CCantor$ relative to $\mu \oplus \lambda$, giving rise to a contradiction. Indeed,  given an input $p \in A$ for (the restricted version of) $\lpo$, via the $\mathbf{PC}$ defined in \thref{prefixproperty} we can compute $\mathbf{PC}(\Phi(p))$ being promptly connected. Then, consider the graph $G$ such that $V(G)\defas V(\mathbf{PC}(\Phi(p)))$ and $E(G) \defas \{(v,w): v,w \in V(\mathbf{PC}(\Phi(p)))\land \max\{v,w\}< \lambda(\min\{v,w\})\}$. Clearly, $G$ satisfies the conditions of \thref{decreasingunionofcompactsets}. Hence, compute the tree $T$ of the proof of \thref{decreasingunionofcompactsets} (a valid input for $\CCantor$) and, given $f \in \CCantor(\body{T})$ (as we have done in the same proof)  we compute a $\repmap{\graph}$-name $q$ for some $R\cong \R$ in $G$. Then, $\Psi(p,R)$ is a correct answer for $\lpo$, showing that $\lpo\weireducible \CCantor$ relative to the oracle $\lambda \oplus \mu$, obtaining the desired contradiction. 
		 
		We define $\lambda$ as follows. Let $p_\infty$ be the name for $G_\infty \in \repspacegraphs$ and, for every $i \in \nats$ let $p_i$ be the name for $G_i \in \repspacegraphs$. Let $R \in \connected\findsCC{\R}(G_\infty)$ with name $r$, and let $v$ be such that $\D{R}{v}=1$. Since $R$ is a solution for $\connected\findsCC{\R}(G_\infty)$, there exists a stage $\ell$ such that $v<\ell$ and $\Psi(0^\ell,\repmap{\graph}(r)[\ell])\downarrow=1$. Without loss of generality, since $\Phi$ is computable and in particular continuous, we can assume that for every $k \geq \ell$, $\repmap{\graph}(p_k[\ell]p_k(0)^\nats)\cong \repmap{\graph}(p_\infty[\ell]p_\infty(0)^\nats)$. Notice that, for the reduction to work correctly, it must be the case that, 
	   \begin{equation}
	   \label{equationfindline}
	   (\forall k \geq \ell)(\forall S\cong\R)(S \text{ is a subgraph of } G_k \implies \Psi(0^\nats, R)\downarrow\neq \Psi(0^k10^\nats,S)\downarrow).
	   \end{equation}
	   We claim that (\ref{equationfindline}) implies
	   \[(\forall k \geq \ell)(\forall S\cong\R)(\exists u \neq v <\ell)((S \text{ is a subgraph of } G_k \land \D{S}{v}=1) \implies u \in V(S)).\]
	   Suppose not: then, for $k\geq \ell$, let $s'$ be a name for $S' \in \connected\findsCC{\R}(G_k)$ be such that $\D{S}{v}=1$ and suppose that for every $u\neq v <\ell$, $u \notin V(R)$, i.e.\ vertices after $v$ in  $V(S')$ are greater than $\ell$. Notice that $R[\ell] \connectedunion{} S' \in \connected\findsCC{\R}(G_k)$ (where the connection involves the vertex $v$) and let $s''$ be a $\repmap{Gr}$-name for such a solution. Clearly, $\repmap{\graph}(s''[\ell]s''(0)^\nats) \cong \repmap{\graph}(r[\ell]r(0)^\nats)$ and so, by hypothesis, $\Psi(0^\ell,s''[l])\downarrow=1$, a contradiction. 
	  
	   Furthermore, since $G_k$ is promptly connected, by \thref{propositionincreasinglyconnected} $0$ and $v$ are increasingly connected. Then, given
	   \[a_k \defas  \min\{w: (\forall i<k)(\exists R \cong \R)(R \subseteq G_i \land 0,v,w \in V(R) \land d^R(0,v) < d^R(0,w))\}.\]
	   let  $\lambda(v)\defas \max \{\ell, a_k\}$: the definition of $\lambda(v)$ ensures that in every $G_x$, for $x \in \nats \cup \{\infty\}$, we can find a copy $R$ of $\R$ such that $\D{R}{0}=1$,  passes through $v$ and \lq\lq continues\rq\rq\ with some $w \leq \lambda(v)$. 	This concludes the proof. 
	   \end{proof}
	  
			   So far we have shown that $\lpo\not\weireducible \connected\finds{a}{\R}$ for $a \in \{\chi e, ee, \chi\chi\}$. The next proposition shows that instead, $\lpo\strictlyweireducible \connected\findsEC{\R}$. 
   
				\begin{proposition}
					\thlabel{eccomputeslpo}
					$\lpo\strictlyweireducible \connected\findsEC{\R}$. 
				\end{proposition}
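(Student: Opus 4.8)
The plan is to split the statement into the reduction $\lpo\weireducible\connected\findsEC{\R}$ and the non-reduction $\connected\findsEC{\R}\not\weireducible\lpo$, the latter yielding strictness. I would dispatch the non-reduction first, since it is soft. By \thref{ccantorfindsubgraph} and \thref{summaryForConnectedness} we have $\CCantor\weireducible\connected\findsCC{\R}\weireducible\connected\findsEC{\R}$, so it suffices to see $\CCantor\not\weireducible\lpo$. Here I would use that every problem Weihrauch below $\lpo$ has a computable solution on each computable instance: if $f\weireducible\lpo$ via computable $\Phi,\Psi$ and $x$ is a computable instance, then $\Phi(x)$ is a computable $\lpo$-instance, its $\lpo$-value is a single bit (a computable object), and $\Psi$ applied to it computes a solution of $f(x)$. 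But $\connected\findsEC{\R}$ has computable instances with no computable solution: taking a computable binary tree $T$ with no computable path and pushing it through the reduction $\CCantor\weireducible\connected\findsEC{\R}$ produces a computable connected graph whose every ray would compute a path of $T$. Hence $\connected\findsEC{\R}\not\weireducible\lpo$.

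For the reduction itself, the forward map $\Phi$ reads the instance $p\in\Cantor$ and enumerates a connected graph $H_p\in\repspaceegraphs$ with $\R\subgraph H_p$, while the backward map $\Psi$ reads $p$ together with the characteristic function $q$ of the returned ray $R'$. The guiding reformulation is the running-OR $\lpo(p)=1\iff\lim_n\max_{i\le n}p(i)=0$: while the scanned prefix of $p$ is all zero I keep the construction in a ``neutral'' regime, and the first stage $t$ at which a $1$ appears triggers a switch. The two ingredients I mean to exploit are exactly the ones that separate $\findsEC{\R}$ from the three weaker variants in Figure~\ref{findisdefinitionsFigure}. First, because the instance is presented by \emph{enumeration}, I can keep the ``continuation to infinity'' hidden, so that $H_p$ is connected yet—unlike the characteristic-function case ruled out in \thref{lpofindsCC}—cannot be computably normalised to a promptly connected graph; this is where the switch at stage $t$ can genuinely reroute the unique escape to infinity. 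Second, because the solution is a \emph{characteristic} function, $\Psi$ can decide with random access, in finitely many steps, whether any named vertex or edge lies in $R'$, or has $\D{R'}{\cdot}\ge 2$. The backward map then outputs $0$ as soon as it sees a $1$ in $p$, and must otherwise certify $\lpo(p)=1$ from a finite query to $q$, timed against its reading of $p$.

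The hard part, which I expect to be the main obstacle, is designing $H_p$ so that this finite certificate is both present in the all-zero regime and structurally impossible after the switch. Two constraints make this delicate and must be overcome simultaneously. As in the argument that no vertex can be forced into every solution (any ray has a tail avoiding a prescribed vertex), the solver may hand back an arbitrary sub-ray, so the certificate cannot be membership of a fixed vertex or edge. Moreover, $H_p$ for $p=0^\nats$ and $H_p$ for ``$1$ at stage $t$'' necessarily share arbitrarily long enumerated initial segments, so the certificate cannot live in that shared finite part either. The construction must therefore arrange that, once a $1$ has appeared, the rerouted continuation bars \emph{every} ray of the switched graph from realising a local configuration that \emph{every} ray of the all-zero graph realises arbitrarily late—so that a single query to $q$, with a threshold advanced as $\Psi$ reads more zeros of $p$, fires in the all-zero case but cannot fire before stage $t$ in the other. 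I would verify that the switch \emph{destroys} rather than merely relocates this configuration (this is precisely what the failure of prompt connectedness for enumerated inputs should buy us), and that $H_p$ stays connected and in $\dom(\connected\findsEC{\R})$ throughout. Finally, since $\connected\findsEC{\R}$ is a cylinder (\thref{cylindersgraphtwo}, via \thref{summaryForConnectedness}), the resulting reduction automatically upgrades to a strong one, completing $\lpo\strictlyweireducible\connected\findsEC{\R}$.
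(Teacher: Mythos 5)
Your strictness half is fine: $\CCantor\weireducible\connected\findsCC{\R}\weireducible\connected\findsEC{\R}$ together with the observation that anything Weihrauch below $\lpo$ sends computable instances to instances with computable solutions does rule out $\connected\findsEC{\R}\weireducible\lpo$, and this is no less detailed than the paper, which dismisses the non-reduction as straightforward. (Your closing remark about the cylinder property upgrading the reduction to a \emph{strong} one is beside the point, though: $\strictlyweireducible$ here is strictness of $\weireducible$, which you already get from the non-reduction; it is not strong reducibility.)

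The positive reduction, however, is where the actual content lies, and you have not supplied it: you correctly diagnose the two constraints (the solver may return an arbitrary sub-ray, and the graphs for $0^\nats$ and for $0^t1\ldots$ share arbitrarily long enumerated prefixes), but you stop at ``the construction must therefore arrange that\ldots'' without exhibiting a graph or a certificate, and you explicitly flag this as the unresolved main obstacle. The paper's construction is short and resolves exactly the tension you describe. From $p$ one enumerates a binary tree $T$, viewed as a connected graph: while $p(s)=0$ one adds the node $0^s$, and upon the first $1$ one freezes the $0$-branch and adds $1^t$ for every $t$. Thus $\body{T}=\{0^\nats\}$ if $\lpo(p)=1$ and $\body{T}=\{1^\nats\}$ otherwise, and $T$ is a connected graph containing a copy of $\R$. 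Given a solution $R$ by its \emph{characteristic function}, any ray in this two-branch graph eventually commits to the unique infinite branch, so there is an $s$ with exactly one of $0^s,1^s$ in $V(R)$, and for the least such $s\geq 1$ the branch it lies on decides $\lpo(p)$; each test is a finite random-access query to the name of $R$, and crucially the configuration ``$0^s\in V(R)\wedge 1^s\notin V(R)$'' is \emph{structurally impossible} whenever $p$ contains a $1$ (any ray that meets the frozen $0$-branch must pass through the root and then contains every $1^j$). This is precisely the kind of certificate you were looking for --- not membership of a fixed vertex, and not located in the shared prefix, but an asymmetry between two designated branches --- and without it your argument is a plan rather than a proof.
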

				\begin{proof}		
					For the reduction, let $p \in \Cantor$ be an input for $\lpo$: we compute a tree $T \in \tree_2$ as follows. At stage $s$, if $p(s)=0$, let $0^s \in T$. If $p(s)=1$, we stop inspecting $p$ at later stages and for every $t$, let $1^t \in T$. It is clear that if $p=0^\nats$ then $\body{T}=\{0^\nats\}$, while if $(\exists i)(p(i)=1)$, $\body{T}=\{1^\nats\}$. The fact that $T\in \illfounded_2$ implies that $T \in \dom(\findsEC{\R})$. Let $R \in  \findsEC{\R}(T)$, and notice that by definition of $T$ there exists an $s$ such that for all $t>s$, either $0^t \in V(R)$ and $1^t \notin V(R)$ or vice versa. Hence, it suffices to search for such an $s$, and in the first case $\lpo(p)=1$ while in the second one  $\lpo(p)=0$.
   
						The fact that $\connected\findsEC{\R} \not\weireducible \lpo$ is straightforward and this concludes the proof.
   \end{proof}		
				Combining \thref{finitepartiscomputable,lpofindsCC,eccomputeslpo} we obtain the following corollary.
				\begin{corollary}
					$\connected\findsCC{\R}, \ \connected\findsEE{\R} \strictlyweireducible \connected\findsEC{\R}$.
				\end{corollary}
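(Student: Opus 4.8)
The plan is to separate the argument into the two plain reductions and the two nonreductions, since the corollary is really a synthesis of the three preceding results. First I would record that both $\connected\findsCC{\R} \weireducible \connected\findsEC{\R}$ and $\connected\findsEE{\R} \weireducible \connected\findsEC{\R}$ hold. These are instances of the connected (and subgraph) analog of Figure \ref{findisdefinitionsFigure}, as summarized in \thref{summaryForConnectedness}: in the diamond of the four search problems, $\findsEC{}$ occupies the top position, so both $\findsCC{}$ and $\findsEE{}$ lie strictly below it in the reduction order. Thus only the strictness, i.e.\ the two nonreductions, remains to be verified.

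For strictness I would use $\lpo$ as a separating problem, exploiting that it sits below $\connected\findsEC{\R}$ but not below the other two. Concretely, \thref{eccomputeslpo} gives $\lpo \weireducible \connected\findsEC{\R}$. On the other side, \thref{lpofindsCC} yields $\lpo \not\weireducible \connected\findsCC{\R}$ relative to any oracle, and \thref{corollarylpo1}—which is extracted from the computability of $\mathrm{Fin}(\connected\findsEE{\R})$ established in \thref{finitepartiscomputable}—yields $\lpo \not\weireducible \connected\findsEE{\R}$, again relative to any oracle.

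The two nonreductions then follow by transitivity of $\weireducible$. If one had $\connected\findsEC{\R} \weireducible \connected\findsCC{\R}$, then composing with $\lpo \weireducible \connected\findsEC{\R}$ would give $\lpo \weireducible \connected\findsCC{\R}$, contradicting \thref{lpofindsCC}. Symmetrically, $\connected\findsEC{\R} \weireducible \connected\findsEE{\R}$ together with $\lpo \weireducible \connected\findsEC{\R}$ would give $\lpo \weireducible \connected\findsEE{\R}$, contradicting \thref{corollarylpo1}. Combined with the reductions from the first paragraph, this establishes $\connected\findsCC{\R} \strictlyweireducible \connected\findsEC{\R}$ and $\connected\findsEE{\R} \strictlyweireducible \connected\findsEC{\R}$.

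I expect no genuine obstacle here, since all the real work has been discharged in the three cited results; the corollary is essentially bookkeeping with transitivity. The one point requiring a little care is the correct deployment of the separating witness $\lpo$: the nonreductions must rule out $\lpo \weireducible \connected\findsCC{\R}$ and $\lpo \weireducible \connected\findsEE{\R}$ outright, and indeed \thref{lpofindsCC,corollarylpo1} are stated to hold relative to any oracle, which is more than sufficient for the plain nonreductions used above.
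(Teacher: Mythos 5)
Your proposal is correct and matches the paper's intended argument exactly: the paper obtains this corollary by combining \thref{finitepartiscomputable} (via \thref{corollarylpo1}), \thref{lpofindsCC}, and \thref{eccomputeslpo}, using $\lpo$ as the separating problem on top of the reductions from \thref{summaryForConnectedness}, just as you do.
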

		   Despite \thref{eccomputeslpo} we are still able to prove that $\CBaire\not\weireducible \findsEC{\R}$: before concluding the proof of \thref{maintheoremray}, we recall the \emph{jump inversion theorem}, stating that  $f' \weireducible g' \implies f \weireducible g$ relative to the halting problem (see \cite[Theorem 11]{montecarlo}).

	\begin{proposition}
	   \thlabel{secondpartofproofgraph}
	$\mflim' \not\weireducible  \findsCC{\R}$ and $\mflim'' \not\weireducible  \findsEC{\R}$.  
	\end{proposition}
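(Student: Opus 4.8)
The plan is to deduce both nonreductions from the already-established weakness of $\connected\findsCC{\R}$ (namely \thref{lpofindsCC}) by invoking the jump inversion theorem. Since each left-hand side is itself a jump, $\mflim'=(\mflim)'$ and $\mflim''=(\mflim')'$, it suffices, for each right-hand side, to bound it below a suitable jump whose un-jumped form is already known to be weak, and then to ``cancel'' one jump via jump inversion. The two halves are handled asymmetrically: the first bootstraps off \thref{findssigma11}$(ii)$, and the second bootstraps off the first.

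For $\mflim'\not\weireducible\findsCC{\R}$, suppose towards a contradiction that $\mflim'\weireducible\findsCC{\R}$. By \thref{findssigma11}$(ii)$ we have $\findsCC{\R}\weireducible(\connected\findsCC{\R}*\choice{\boldfaceSigma_1^1}{\nats})'$, so $\mflim'\weireducible(\connected\findsCC{\R}*\choice{\boldfaceSigma_1^1}{\nats})'$. Writing $\mflim'=(\mflim)'$ and applying the jump inversion theorem, we obtain $\mflim\weireducible\connected\findsCC{\R}*\choice{\boldfaceSigma_1^1}{\nats}$ relative to the halting problem. Since $\choice{\boldfaceSigma_1^1}{\nats}$ has codomain $\nats$, \thref{subgraphsigmacontinuous1} then yields $\lpo\weireducible\connected\findsCC{\R}$ relative to some oracle, contradicting \thref{lpofindsCC}. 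As the final contradiction holds relative to any oracle and every intermediate step relativizes, this argument in fact establishes $\mflim'\not\weireducible\findsCC{\R}$ relative to any oracle, which is what the second part will need.

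For $\mflim''\not\weireducible\findsEC{\R}$, the crucial ingredient is the bridge $\findsEC{\R}\weireducible(\findsCC{\R})'$. Indeed, $\findsEC{\R}$ takes an enumeration of $H$ and must produce a characteristic-function name of a ray, while $(\findsCC{\R})'$ accepts a sequence converging to a characteristic-function name of $H$ and outputs a characteristic-function name of a ray. The forward map sends an enumeration of $H$ to the sequence of characteristic-function approximations obtained by setting the entry for $(i,j)$ to $1$ once that vertex or edge has been enumerated; each coordinate eventually stabilizes, so this sequence converges to the true characteristic function of $H$ (which satisfies $\R\subgraph H$, hence lies in $\dom(\findsCC{\R})$), making it a legitimate name for the jumped input. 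The backward map is the identity, since both problems output a characteristic-function name of a ray. Granting this, assume $\mflim''\weireducible\findsEC{\R}$; then $\mflim''\weireducible(\findsCC{\R})'$, and writing $\mflim''=(\mflim')'$ the jump inversion theorem gives $\mflim'\weireducible\findsCC{\R}$ relative to the halting problem, contradicting the (oracle-robust) first part.

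The main obstacle is that one cannot simply replay the first part for $\findsEC{\R}$: by \thref{eccomputeslpo} the problem $\connected\findsEC{\R}$ already computes $\lpo$, so no contradiction could ever be reached by driving a $\lpo$-nonreduction through \thref{subgraphsigmacontinuous1}. The right perspective is that converting a ray's output into characteristic-function form from weaker input data costs exactly one jump, which is precisely what makes the clean bridge $\findsEC{\R}\weireducible(\findsCC{\R})'$ available and lets the second nonreduction reduce, via a single jump inversion, to the first. The only point demanding genuine care is checking that the approximating sequence in the forward map is a valid $\repmap{\graph}'$-name, i.e.\ that it converges (coordinatewise eventually constant) to an honest characteristic function of $H$; this is routine from the domain condition on $\repspaceegraphs$-names.
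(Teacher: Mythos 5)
Your proof is correct and follows essentially the same route as the paper's: the first nonreduction via \thref{findssigma11}$(ii)$, jump inversion, \thref{subgraphsigmacontinuous1} and \thref{lpofindsCC}, and the second via the bridge $\findsEC{\R}\weireducible(\findsCC{\R})'$ (which the paper obtains by citing that recovering a characteristic function from an enumeration is equivalent to $\mflim$, rather than verifying the approximating sequence by hand as you do). The only cosmetic difference is that you close the second half by appealing to a relativized version of the first half, whereas the paper unwinds two further jump inversions to land back on \thref{lpofindsCC} directly; both hinge on that proposition holding relative to any oracle.
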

	\begin{proof}

 For the first nonreduction, assume for the sake of contradiction, that $\mflim' \weireducible  \findsCC{\R}$. By \thref{findssigma11}$(ii)$, $\findsCC{G} \weireducible (\connected\findsCC{G} * \choice{\boldfaceSigma_1^1}{\nats})'$. By the jump inversion theorem, if $\mflim' \weireducible (\connected\findsCC{G} * \choice{\boldfaceSigma_1^1}{\nats})'$ then $\mflim \weireducible \connected\findsCC{G} * \choice{\boldfaceSigma_1^1}{\nats}$ relative to $\emptyset'$. If so, \thref{subgraphsigmacontinuous1} implies that $\lpo \weireducible \connected\findsCC{G}$, contradicting \thref{lpofindsCC}. 
  
 For the second nonreduction, assume again for the sake of contradiction, that $\mflim'' \weireducible  \findsEC{\R}$. By \cite[Lemma 6.3]{BG09}, we get that computing the characteristic function of a set from its enumeration is Weihrauch equivalent to $\mflim$. Hence, $\findsEC{\R}\weireducible {\findsCC{\R}}'$ and from this we obtain that $\mflim''\weireducible{\findsCC{\R}}'$. The jump inversion theorem implies that $\mflim'\weireducible \findsCC{\R}$ relative to $\emptyset'$ and combined with the fact that $\findsCC{\R} \weireducible (\connected\findsCC{G} * \choice{\boldfaceSigma_1^1}{\nats})'$ we would obtain  $\mflim' \weireducible (\connected\findsCC{G} * \choice{\boldfaceSigma_1^1}{\nats})'$. Applying again the jump inversion theorem we would finally get that $\mflim \weireducible \connected\findsCC{G} * \choice{\boldfaceSigma_1^1}{\nats}$ relative to $\emptyset''$, contradicting \thref{lpofindsCC}.
	\end{proof}
   
	Notice that the strongest result we have for non first-order problems is $\CCantor\strictlyweireducible\finds{}{\R}$. It is not clear to us what $\finds{}{\R}$ compute: it would be also interesting to characterize its first-order/deterministic/finite part, in terms of some already known problem in the Weihrauch lattice.
   \subsection*{The subgraph problem: when $G$ has only finite components}
   
   Now we deal with the problems $\finds{}{G}$ where $G\defas \disconnectedunion{i \in \nats} F_i$, with $F_i$ a finite graph. For these problems, it seems to be harder to  reach the equivalence with $\CBaire$: indeed, we have already discussed that $\CBaire$ can be stated as the task of finding a path through an ill-founded tree, and graphs of this kind  are far from being intuitively \quot{ill-founded}, i.e.\ from having $\R$ as subgraph. This intuition is actually wrong: even if for many graphs $G$ the problems $\finds{}{G}$ are computable, for others we have that $\finds{}{G}\weiequiv \CBaire$. The important distinction is the following. Given $G\defas \disconnectedunion{i \in \nats} F_i$ we distinguish whether 
	\begin{equation}
	\label{Equationfinmanycomponents}
	(\forall^{\infty} i)(\exists^\infty j)(F_i \subgraph F_j).
	\end{equation}
	\begin{theorem}
	   \thlabel{computablefinitecomponents}
	Let $G$ be an infinite computable graph such that $G=\disconnectedunion{i \in \nats} F_i$, where $F_i$ is a finite graph and $(\forall^{\infty} i)(\exists^\infty j)(F_i \subgraph F_j)$. Then the problems $\finds{}{G}$ are computable.
	\end{theorem}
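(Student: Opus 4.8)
The plan is to prove the statement for the hardest of the four problems, namely $\findsEC{G}$ (enumeration input, characteristic-function output). By Figure~\ref{findisdefinitionsFigure} — which, as noted there, holds equally with $\finds{}{}$ in place of $\findis{}{}$ — every other version Weihrauch-reduces to $\findsEC{G}$, and computability is downward closed under $\weireducible$, so it suffices to exhibit a computable realizer for $\findsEC{G}$. Throughout I fix the enumeration input $h$ of a graph $H$ with $G \subgraph H$, and I use that the guaranteed witness $G' \cong G$, a subgraph of $H$, is a disjoint union of vertex-disjoint copies $F'_0, F'_1, \dots$ of the components $F_0, F_1, \dots$.

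First I would isolate the finite exceptional set $I_0 \defas \{i : \lnot(\exists^\infty j)(F_i \subgraph F_j)\}$, which is finite by hypothesis; since $G$ is a fixed graph, $I_0$ may be treated as known data. Because $\disconnectedunion{i \in I_0} F_i$ is a finite graph and occurs (vertex-disjointly) inside $G' \cong G \subgraph H$, I can search $h$ until a subgraph of $H$ isomorphic to $\disconnectedunion{i \in I_0} F_i$ is found, and commit to it on a finite vertex set $U_0$. This disposes of all the irregular components in finitely many steps.

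The core is a greedy interleaved procedure. I maintain finite sets $U$ (vertices of committed copies, starting from $U_0$) and $R$ (rejected vertices, initially empty). At even stages I take the next generic component $F_k$ with $k \notin I_0$ and search $h$ for a subgraph of $H$ isomorphic to $F_k$ whose vertices avoid $U \cup R$, committing to it and adding its vertices to $U$. At odd stages I take the least vertex $n \in \nats$ lying in neither $U$ nor $R$ and place it in $R$. In this way every vertex of $\nats$ is eventually \emph{decided}, i.e.\ either committed into some copy or permanently rejected. The hard part, and the crux of the whole argument, is showing that the even-stage search always terminates. This is exactly where the hypothesis enters: for $k \notin I_0$ there are infinitely many $j$ with $F_k \subgraph F_j$, and the corresponding components $F'_j$ of the fixed witness $G'$ are pairwise vertex-disjoint, each containing a copy of $F_k$; hence $H$ contains infinitely many pairwise vertex-disjoint copies of $F_k$. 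Since $U \cup R$ is finite at every stage, at most $|U \cup R|$ of these copies can meet it, so one disjoint from $U \cup R$ exists and is eventually revealed by $h$.

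It remains to check correctness and that the output is a genuine characteristic-function name computable from $h$. Each index $i$ is processed exactly once (exceptional ones in the preliminary phase, generic ones at even stages), so the committed copies realize every $F_i$ with the correct multiplicity and are pairwise vertex-disjoint; taking only the intra-copy edges yields $G' = \disconnectedunion{k}(\text{copy of }F_k) \cong G$ with $E(G') \subseteq E(H)$, so $G'$ is a subgraph of $H$ (induced-ness is not required, so stray edges of $H$ between the copies are harmlessly omitted). To emit the bit at position $\str{a,b}$ of the characteristic function I simply wait until the construction has decided both $a$ and $b$: if both are committed the bit is determined by whether they lie in a common copy with an edge between them (or $a=b$ in a copy), and otherwise it is $0$; since a committed vertex is a full connected component of $G'$, its entire adjacency is already fixed by its copy, and a rejected vertex contributes only zeros. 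As every vertex is eventually decided, each bit is produced in finite time, giving a total, correctly converging characteristic function. The entire procedure is computable from $h$, so $\findsEC{G}$, and hence all four $\finds{}{G}$, are computable.
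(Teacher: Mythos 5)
Your proof is correct and follows essentially the same strategy as the paper's: isolate the finitely many exceptional components, find them all at once by a terminating finite search, and then greedily commit fresh copies of the generic components, justified by the observation that the pairwise disjoint components of the witness $G'\subgraph H$ supply infinitely many pairwise vertex-disjoint copies of each generic $F_k$. If anything you are more careful than the paper on two points it glosses over — finding the exceptional part as a single disjoint union \emph{before} any greedy commitments (the paper runs the two phases ``in parallel''), and the rejected-vertex bookkeeping that makes the characteristic-function output converge.
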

	\begin{proof}
	   By Figure \ref{findisdefinitionsFigure}, it suffices to show that $\findsEC{G}$ is computable.
	By definition there exists $k \in \nats$ such that there are $k$-many graphs $F_{n_0},\dots,F_{n_{k-1}}$ that are subgraphs of just finitely many $F_i$'s. Let $A\defas \{F_i: (\exists l<k)(F_{n_l} \subgraph F_i )\}$ and notice that $\length{A}<\aleph_0$. Given $H \in \dom(\findisEC{G})$ in input, we compute $G'$ being a subgraph of $H$ such that $G'\cong G$ with the two following procedures that can be performed in parallel.
			   
	\begin{itemize}
	\item The first procedure waits for a finite stage witnessing that $(\forall F \in A)(F \subgraph H)$ and adds to $G'$ the corresponding copy of $F$ in $H$. Since all $F$'s are finite and $A$ is finite as well, such an $s$ exists, and we can computably find it.
	\item The second procedure takes care of all the $F_s \in \{F_i : i \in \nats\} \setminus A$. For every $s>k$, it adds to $G'$ the first copy of $F_s$ in $H$ that it finds. We claim that this procedure eventually adds to $G'$ a copy for every $F_s$. Recall that, by hypothesis, $(\forall s>k)(\exists^\infty j)(F_i \subgraph F_j)$. Suppose that at stage $s$ there exists an $m>k$ such that $F_m$ has not been added to $G'$ yet. Since $(\exists^\infty j)(F_m \subgraph F_j)$, and we have seen only a finite portion of $H$, we can wait for a finite stage greater than $ s$ such that $F_m'$ is a subgraph of  $H$, $F_m'\cong F_m$ and $F_m'$ has not been added to $G'$ yet: hence we can add $F_m'$ to $G'$.
	\end{itemize}
	This completes the proof.
	\end{proof}
	The graph $\infinitelycycles$ does not satisfy (\ref{Equationfinmanycomponents}): the next theorem shows that  $\findisCE{\infinitelycycles}\weiequiv \CBaire$.
	\begin{theorem}
	   \thlabel{infinitelycyclescbaire}
	$\findis{}{\infinitelycycles} \weiequiv \CBaire$.
	\end{theorem}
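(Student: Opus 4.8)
The plan is to establish the two reductions separately and then assemble the full equivalence for all four variants through Figure~\ref{findisdefinitionsFigure}. The upper bound $\findis{}{\infinitelycycles}\weireducible\CBaire$ (for every variant) is already given by \thref{findinducedsubgraphupperbound}, since $\infinitelycycles$ is an infinite computable graph. For the reverse direction it suffices to prove $\CBaire\weireducible\findisCE{\infinitelycycles}$: indeed $\findisCE{\infinitelycycles}$ sits at the source of the diamond in Figure~\ref{findisdefinitionsFigure}, so $\CBaire\weireducible\findisCE{\infinitelycycles}$ propagates to all four problems, and combined with the upper bound this yields $\findis{}{\infinitelycycles}\weiequiv\CBaire$.

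For $\CBaire\weireducible\findisCE{\infinitelycycles}$ I would reuse the construction $\constructiontwo{T,\infinitelycycles}$ from \S\ref{wadgecomplexityofgraphs}. Given an input $T\in\illfounded$ for $\CBaire$, fix an enumeration $V(\infinitelycycles)=\{v_i:i\in\nats\}$ and compute a $\repmap{\graph}$-name for $\constructiontwo{T,\infinitelycycles}$; this is computable relative to $T$, and by \thref{Constructionongraphs} the ill-foundedness of $T$ guarantees $\infinitelycycles\inducedsubgraph\constructiontwo{T,\infinitelycycles}$, so it is a legitimate instance of $\findisCE{\infinitelycycles}$. The backward functional then receives an enumeration of an induced copy $H\cong\infinitelycycles$ inside $\constructiontwo{T,\infinitelycycles}$ and must return a branch of $T$.

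The heart of the argument, and the step I expect to be the main obstacle, is to show that the \emph{entire} vertex set of such a copy lies on a single branch $f\in\body{T}$. This is precisely where the very restrictive degree profile of $\infinitelycycles$ is used: every vertex of $\infinitelycycles$ has degree $2$, so $H$ contains no vertex of infinite degree. Arguing as in \thref{finmanywithinfmany}, an infinite antichain in $V(H)$ would, by the definition of $E(\constructiontwo{T,\infinitelycycles})$, be pairwise adjacent and hence produce a vertex of infinite degree, which is impossible; therefore cofinitely many vertices of $V(H)$ are pairwise comparable and so lie on a common branch $f$. Moreover any hypothetical $\tau\in V(H)$ with $\tau\not\sqsubset f$ would be incomparable to the cofinitely many on-branch vertices longer than $\tau$, hence adjacent to all of them, again forcing $\D{H}{\tau}=\aleph_0$; thus in fact every vertex of $H$ satisfies $\sigma\sqsubset f$. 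This is exactly the condition in the proof of \thref{finmanywithinfmany}, here specialised to a graph with \emph{no} vertex of infinite degree, which is what makes the copy collapse onto one branch.

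Once confinement to a single branch is secured, extracting the branch is routine: since all vertices of $H$ are prefixes of $f$ and, $H$ being infinite, have unbounded length, the backward functional computes $f$ by waiting, for each $n$, for some enumerated $\sigma\in V(H)$ with $\length{\sigma}>n$ and outputting $\sigma(n)$. The resulting $f\in\body{T}$ is a correct answer for $\CBaire$, completing $\CBaire\weireducible\findisCE{\infinitelycycles}$ and hence the theorem. I would also remark that this equivalence is precisely the instance of \thref{maintheoreminducedsubgraph}$(ii)$ corresponding to $G=\infinitelycycles$, since $\infinitelycycles$ is computable and $|\{v\in V(\infinitelycycles):\D{\infinitelycycles}{v}=\aleph_0\}|=0<\aleph_0$; the construction above simply makes that reduction explicit for this natural example.
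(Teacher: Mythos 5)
Your argument does establish the statement as literally printed, but—as you yourself note at the end—$\findis{}{\infinitelycycles}\weiequiv\CBaire$ is just the instance of \thref{maintheoreminducedsubgraph}$(ii)$ with $G=\infinitelycycles$ (every vertex has degree $2$, so there are no vertices of infinite degree), and your proof is essentially a re-run of \thref{finmanywithinfmany}. The theorem's actual content is not this: the $\findis{}{\infinitelycycles}$ in the displayed statement is a slip for $\finds{}{\infinitelycycles}$. This is clear from its placement in the subsection on the \emph{subgraph} problem for graphs with only finite components, from the sentence introducing it (which contrasts $\infinitelycycles$ with condition (\ref{Equationfinmanycomponents}), under which $\finds{}{G}$ is computable by \thref{computablefinitecomponents}), from the introduction, and from the paper's own proof, which explicitly shows $G\subgraph H$ and reasons about subgraph copies. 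So the missing content is $\CBaire\weireducible\findsCE{\infinitelycycles}$.

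For that, the reduction via $\constructiontwo{T,\infinitelycycles}$ fails. In $\constructiontwo{T,G}$ any two incomparable nodes are adjacent, so every $n$-element antichain of $T$ spans a copy of $\complete{n}$ and hence contains $\cycle{n}$ as a (non-induced) subgraph; by \thref{inducedsubgraphcacprinciple} even an infinite well-founded tree supplies arbitrarily large antichains. A solver for $\findsCE{\infinitelycycles}$ applied to $\constructiontwo{T,\infinitelycycles}$ is therefore free to return a copy of $\infinitelycycles$ whose cycles sit on larger and larger finite antichains of $T$, and such a copy carries no information about any branch, so the backward functional cannot recover a path. The degree argument that confines a copy to a chain is specific to \emph{induced} copies, which must inherit all edges between incomparable vertices; a mere subgraph copy need not. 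The paper's proof is of a genuinely different nature: it partitions the cycles into three families, assembles the input graph from \quot{boxes}\ of cycles glued at shared vertices, and exploits the fact that in any subgraph copy of $\infinitelycycles$ the constituent cycles must be pairwise vertex-disjoint (each vertex has degree exactly $2$), so that selecting one cycle forces or forbids its neighbours and every valid copy is compelled to trace out a branch of $T$. Some such combinatorial gadgetry, rather than $\constructiontwo{\cdot,\cdot}$, is what the subgraph version requires.
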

	\begin{proof}
	The left-to-right direction is \thref{findinducedsubgraphupperbound}. For the opposite direction, by Figure \ref{findisdefinitionsFigure}, it suffices to show that $\CBaire\weireducible\findsEC{G}$. We begin partitioning the $\cycle{i}$'s in  three infinite disjoint sets: i.e., $\{\cycle{i}:i \geq 3\}=\{P_n:n \in \nats\}\cup\{F_n:n \in \nats\}\cup\{G_n:n \in \nats\}$. Let $T \in \tree$ be an input for $\CBaire$: we compute a graph $H \in \dom(\findisCE{\infinitelycycles})$ in stages as follows. First, for every $n \in \nats$, we add to $H$
	\begin{itemize}
	\item infinitely many copies of $P_n$, that we denote by $\{P_n^i: i \in \nats \land  P_n^i \cong P_n\}$,			\item two copies of $G_n$ denoted by $G_n^0$, $G_n^1$, and
	\item a copy of $F_n$.
	\end{itemize}
	For every $i,n \in \nats$, we associate to $P_n^i$ a \emph{box} containing the graphs
	\[ P_n^i\odot  G_{\pair{n,i,0}}^1 \text{ and }  \disconnectedunion{k \in \nats} \big ( G_{\pair{n,i,k}}^0 \odot  G_{\pair{n,i,k+1}}^1 \big ).\]
   
	Notice that for any $k>0$, $G_{\pair{n,i,k}}^0$ has a designed docking vertex (different from the ones involved in $G_{\pair{n,i,k}}^1\connectedunion{}  G_{\pair{n,i,k+1}}^0$) to which, in later stages, we may attach a graph from $\{F_n:n \in \nats\}$, i.e.\ $G_{\pair{n,i,k}}^1\connectedunion{}  G_{\pair{n,i,k+1}}^0$ may become $G_{\pair{n,i,k}}^1\connectedunion{}  G_{\pair{n,i,k+1}}^0 \connectedunion{} F_n$. We say that $G_{\pair{n,i,k}}^0$ is \emph{free} if no $F_n$ is attached to the designed docking vertex.
			   
	Without loss of generality, we assume that $\length{\{\sigma:\sigma \notin T\}}=\aleph_0$ is infinite: indeed, if such a set is finite, instead of considering $T$, we consider the tree $T'\defas \str{} \cup \{1\tau:\tau \in T\}$:  clearly, $\{\sigma:\sigma \notin T'\}$ is infinite and from $T'$ we can easily compute $T$. Let $(\sigma_s)_{s \in \nats}$ be a computable (with respect to $T$) enumeration  of $\{\sigma:\sigma \notin T\}$. At stage $\sigma_s$, let $k\defas\min\{j:G_{\pair{n,i,j}}^0 \text{ is free}\}$: for every $n<\length{\sigma_s}$ attach $F_{s}$ to $G_{\pair{n,i,k}}^0$ if and only if $\sigma_s(n)=i$ (recall that $G_{\pair{n,i,j}}^0$ is only in $P_n^i$'s box). This ends the construction.
   
	We first claim that $G\subgraph H$. Let $q \in \body{T}$,  and consider the following copy of $G$ in $H$. For every $n \in \nats$, consider the graph $G'$ containing,
	\begin{itemize}
	\item  $P_n^{q(n)}$ and,
	\begin{itemize}
	\item if $P_n^i \in G'$ then, for every $k \in \nats$, we add to $G'$ the graphs $G_{\pair{n,i,k}}^0$,
	\item if $P_n^i \notin G'$ then, for every $k \in \nats$, we add to $G'$ the graphs $G_{\pair{n,i,k}}^1$ (this choice allow us to put in $G'$, if needed, a copy of $F_m$ contained in $P_n^i$'box).
	\end{itemize}
	\item the copy of $F_{n}$ belonging to $P_{\length{\sigma_n}-1}^{\sigma_n(\length{\sigma_n}-1)}$ box: since $\sigma_n\notin T$,  $P_{\length{\sigma_n}}^{\sigma_n(\length{\sigma_n}-1)} \notin G'$, hence by the previous point we can choose the copy of $F_n$ in this box.
	\end{itemize}
	Hence, for every $n$, we added in $G'$ a copy of $P_n$, $G_n$, and $F_n$ and this concludes the proof of the claim.
   
	To conclude the proof we need to show that from any $G' \in \findisCE{G}(H)$ we can compute some $q \in \body{T}$. First, notice the following useful fact. Suppose that $P_n^i \in G'$ and recall that, for every $x$, $G_x$ has only two copies in $H$, namely $G_x^0$ and $G_x^1$. Since $P_n^i \in G'$ and $P_n^i$ shares a vertex with $G_{\pair{n,i,0}}^1$ we are forced to add in $G'$ the copy $G_{\pair{n,i,0}}^0$. Similarly, for any $m \in \nats$, $G_{\pair{n,i,m}}^0$ shares a vertex with $G_{\pair{n,i,m+1}}^1$: the fact that $G_{\pair{n,i,0}}^0$ in $G'$ forces us, for every $m \in \nats$, to add $G_{\pair{n,i,m}}^0$ in $G'$. Another important consequence of this observation is that if $P_n^i \in G'$ we cannot put in $G'$ any copy of $F_s$ from $P_n^i$'s box, as $F_s$ shares a vertex with $G_{\pair{n,i,t}}^0$ for some $t \in \nats$, and we have just argued that $G_{\pair{n,i,t}}^0$ is in $G$. So let $(P_n^i)_{i,n \in \nats}$ be the copies of $P_n$ in $G'$: we claim that there exists a $q \in \body{T}$ such that for every $n,i \in \nats$, $q(n)=i$. Suppose not. This means that 
	\[(\exists \tau \in \baire)(\forall m<\length{\tau})(\tau[\length{\tau}-2] \in T \land \tau \notin T \land P_{m}^{\tau(m)} \in G').\]
	In other words, $\tau=\sigma_s$ for some $s$ (where $(\sigma_s)_{s \in \nats}$ is the computable enumeration of $\{\sigma: \sigma \notin T\}$). By construction, the only copies of $F_s$ are in $P_m^{\tau(m)}$'s box for $m<\length{\tau}$: since $P_m^{\tau(m)}$ are all in $G'$, from the observation above, we cannot put any copy of $F_s$ in $G'$, contradicting that $G' \in \findisCE{G}(H)$, and this concludes the proof.
	\end{proof}
	Notice that this construction heavily relies on the fact that the connected union of at most three connected and finite components of $G$ is not isomorphic to any connected component of $G$. This means that the theorem above holds for any graph having this property.
   
	\subsection{Other subgraph problems}
	\label{othersubgraphproblems}
	We have shown examples of graphs $G$ such that $\finds{}{G} \weiequiv \CBaire$, graphs $G$ such that the problems $\finds{}{G}$ are computable and others that are difficult to compute but weak when they have to compute other problems. In this section, we show that there are also graphs $G$ such that $\finds{}{G}$ occupy well known areas of the Weihrauch lattice. Before doing so,  we give a convenient characterization of $\jump{\mflim}{n}$ for every $n \in \nats$. Given a represented space $\repspacex$, we denote by $\mathcal{O}(\repspacex)$ the final topology on $X$ induced by $\repmap{X}$.
   
	\begin{definition}
		\thlabel{definitionenuminf}
	 Let $\mathrm{EnumInf}_{\Pi_n} :\subseteq \boldfacePi^0_n(\nats) \rightrightarrows \mathcal{O}(\nats)$ be defined by $U \in \mathrm{EnumInf}(A)$ if and only if $U \subseteq A \wedge \length{U} = \aleph_0$.
	 \end{definition}
	
	 \begin{lemma}
		\thlabel{enuminf}
		For $n>0$,  $\mathrm{EnumInf}_{\Pi_n} \weiequiv \jump{\mflim}{n-1}$.
	 \end{lemma}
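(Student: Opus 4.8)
The plan is to prove both reductions through the identification $\jump{\mflim}{n-1} \sweiequiv \parallelization{\jump{\lpo}{n-1}}$ recorded in \S\ref{subsec:Weihrauch}, together with the fact that $\jump{\lpo}{n-1}$ decides $\boldfacePi_n^0$ (and $\boldfaceSigma_n^0$) questions in its input. For the easy direction $\mathrm{EnumInf}_{\Pi_n} \weireducible \jump{\mflim}{n-1}$: given a $\boldfacePi_n^0(\nats)$-name for an infinite set $A$, the predicate $i \in A$ is uniformly $\boldfacePi_n^0$ in the name, hence is decided by a single instance of $\jump{\lpo}{n-1}$; feeding all these instances to $\parallelization{\jump{\lpo}{n-1}} \sweiequiv \jump{\mflim}{n-1}$ yields the characteristic function of $A$, from which one computably enumerates $A$ itself, an infinite subset and thus a valid solution.

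The substance is the converse, which I would prove by induction on $n$, the base case $n=1$ being the crux. Here $\jump{\mflim}{0}=\mflim \sweiequiv \parallelization{\lpo}$, so from a sequence $(r_j)_{j\in\nats}$ of $\lpo$-instances I must build one co-c.e.\ (i.e.\ $\boldfacePi_1^0$) infinite set $A$ whose every infinite subset reveals the whole answer vector $\vec{b}$, where $b_j=\lpo(r_j)$. Writing $E_j$ for the $\boldfaceSigma_1^0$ event \quot{$r_j$ is eventually nonzero} (so $b_j=1$ iff $E_j$ never fires), let the ground set consist of pairs $(\rho,m)$ with $\rho\in\cantor$ and $m\in\nats$, and put $(\rho,m)\in A$ iff \textbf{(a)} for every $j<\length{\rho}$ with $\rho_j=0$ the event $E_j$ has fired by stage $m$, and $m$ is least such (a decidable condition on $(\rho,m)$), and \textbf{(b)} no $1$-coordinate of $\rho$ is ever refuted, that is $(\forall j<\length{\rho})(\rho_j=1 \Rightarrow E_j\text{ never fires})$ (a $\boldfacePi_1^0$ condition). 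Condition (a) is decidable and already discards every \emph{underconfident} $\rho$ (one with $\rho_j=0$ but $b_j=1$), since then the required confirmation never appears; condition (b) discards every \emph{overconfident} $\rho$. What survives is exactly $\{(\vec{b}\restriction L,\, t_L) : L\in\nats\}$ for suitable stages $t_L$. Hence $A$ is $\boldfacePi_1^0$, infinite, and has exactly one element of each length, so any infinite subset of $A$ meets unboundedly many lengths; reading off the first coordinates of its members produces arbitrarily long initial segments of $\vec{b}$, hence $\vec{b}$ itself. Both maps ignore the unused name, so this is a strong reduction, and with the easy direction we get $\mathrm{EnumInf}_{\Pi_1}\sweiequiv\mflim$.

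For the inductive step I would lift the base case through the jump. Using that the jump lifts to strong Weihrauch degrees and that $\jump{\mflim}{n}\sweiequiv(\jump{\mflim}{n-1})'$, it suffices to establish $\mathrm{EnumInf}_{\Pi_{n+1}}\sweiequiv(\mathrm{EnumInf}_{\Pi_n})'$: then the inductive hypothesis $\mathrm{EnumInf}_{\Pi_n}\sweiequiv\jump{\mflim}{n-1}$ yields $\mathrm{EnumInf}_{\Pi_{n+1}}\sweiequiv(\jump{\mflim}{n-1})'\sweiequiv\jump{\mflim}{n}$. The equivalence $\mathrm{EnumInf}_{\Pi_{n+1}}\sweiequiv(\mathrm{EnumInf}_{\Pi_n})'$ is representation-theoretic: by the description of the Borel hierarchy as represented spaces and its behaviour under the jump of a represented space (cf.\ \cite{effectiveborelmeasurability}), a $\boldfacePi_{n+1}^0(\nats)$-name is computably interchangeable with a sequence of $\boldfacePi_n^0(\nats)$-names converging in the jump topology, while the solution set (the infinite subsets of the represented set) is unchanged. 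This is the exact analogue of the relation $(\choice{\boldfacePi_n^0}{\nats})'\weiequiv\choice{\boldfacePi_{n+1}^0}{\nats}$ quoted in \S\ref{subsec:Weihrauch}.

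I expect the main obstacle to be the backward direction. In the base case the delicate point is keeping $A$ inside $\boldfacePi_1^0$ while simultaneously eliminating the \quot{all-zero} and other underconfident guesses, which are never $\boldfacePi_1^0$-refutable and would otherwise let an adversarial infinite subset consist entirely of uninformative elements, and forcing each length to contribute exactly one surviving element so that infinitude of the subset compels unbounded lengths. Offloading the elimination of underconfident guesses onto the \emph{decidable} stage-condition (a) is precisely what keeps the complexity at $\boldfacePi_1^0$ rather than slipping to $\boldfaceDelta_2^0$; this is the mechanism that fails naively and is the heart of the argument. The second load-bearing step is the jump lemma $\mathrm{EnumInf}_{\Pi_{n+1}}\sweiequiv(\mathrm{EnumInf}_{\Pi_n})'$, whose verification requires carefully unwinding the jump of the represented space $\boldfacePi_n^0(\nats)$.
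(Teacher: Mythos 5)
Your easy direction and your base case $n=1$ are correct, and the base case is essentially the paper's construction specialized to $n=1$: from a $\boldfacePi^0_n$-name of $A$ with $\nats\setminus A=\bigcup_i C_i$ ($C_i\in\boldfacePi^0_{n-1}$), the paper forms the witness function $\lambda_A(i)$ (equal to $0$ if $i\in A$ and to $1+\min\{j: i\in C_j\}$ otherwise) and the set $B=\{\prod_{i\le k}p_i^{\lambda_A(i)} : k\in\nats\}$; each element of $B$ encodes an initial segment of $\lambda_A$, hence of $\chi_A$, exactly as your pairs $(\vec b\restriction L,\,t_L)$ do, and $B$ stays $\boldfacePi^0_n$ because ``$\lambda_A(i)=j+1$'' is $\boldfaceDelta^0_n$ while ``$\lambda_A(i)=0$'' is $\boldfacePi^0_n$. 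Your observation that the minimal-stage condition is what keeps the complexity down and forces one element per length is exactly the right mechanism.

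The gap is the inductive step. The equivalence $\mathrm{EnumInf}_{\Pi_{n+1}}\sweiequiv(\mathrm{EnumInf}_{\Pi_n})'$ is true, but your justification fails: with the paper's definition of the jump of a represented space, the underlying set of $(\boldfacePi^0_n(\nats))'$ is unchanged --- a jump-name is a sequence converging to an ordinary $\boldfacePi^0_n$-name and it names the set named by that limit, which is therefore still a $\boldfacePi^0_n$ set. Since a $\boldfacePi^0_{n+1}$ set is in general not $\boldfacePi^0_n$, a $\boldfacePi^0_{n+1}$-name is not ``computably interchangeable'' with such a sequence, and the solution sets are not ``unchanged.'' Concretely, the reduction $\mathrm{EnumInf}_{\Pi_{n+1}}\weireducible(\mathrm{EnumInf}_{\Pi_n})'$ requires the forward functional to produce, in the limit, a name of a single infinite $\boldfacePi^0_n$ set whose infinite subsets (with the original input) solve the given $\boldfacePi^0_{n+1}$ instance; building such a set is essentially the full content of the lemma at level $n+1$, so as written the step is circular (it does follow from the lemma via $(\jump{\mflim}{n-1})'\sweiequiv\jump{\mflim}{n}$, but cannot be used to prove it). The analogy with $(\choice{\boldfacePi_n^0}{\nats})'\weiequiv\choice{\boldfacePi_{n+1}^0}{\nats}$ is apt only insofar as that, too, is a theorem about problems requiring a construction, not an isomorphism of represented spaces. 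The repair is to run your base-case construction uniformly in $n$, replacing ``$E_j$ has fired by stage $m$, and $m$ is least such'' by ``$i$ lies in $C_j$ and in no $C_{j'}$ with $j'<j$'': this is precisely the paper's proof and it needs no induction.
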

	 \begin{proof}
		Notice that  $\widehat{\jump{\mflim}{n-1}}\weiequiv (\id : \boldfacePi^0_n(\nats) \to \Cantor)$, i.e.\ the function computing the characteristic function of a $\boldfacePi_n^0$ set. Hence, we show  $\mathrm{EnumInf}_{\Pi_n} \weiequiv (\id : \boldfacePi^0_n(\nats) \to \Cantor)$.
	
		The left-to-right direction is trivial and if $n = 0$, both sides are trivially computable.
	
	 For the opposite direction, we need to show that from a $\boldfacePi^0_n$-name of a set $A \subseteq \nats$ we can compute a $\boldfacePi^0_n$-name of an infinite set $B \subseteq \nats$, such that any enumeration of an infinite subset of $B$ allows us to recover the characteristic function of $A$.
	
	 The given $\boldfacePi^0_n$-name $A$ brings with it a sequence $(C_i)_{i \in \nats}$ of $\boldfacePi^0_{n-1}$-sets with $\nats \setminus A = \bigcup_{i \in \nats} C_i$. For $n \in \nats$, we let $\lambda_A(n) = 0$ if $n \in A$, and $\lambda_A(n) = \min \{i \mid n \in C_i\}+1$ if $n \notin A$. Let $p_0,p_1,\ldots$ be the increasing enumeration of the prime numbers. We now define:
	 $$B = \{\prod_{i \leq k} p_i^{\lambda_A(i)} \mid k \in \nats\}$$
	
	 The set $B$ has the desired properties: we can obtain its $\boldfacePi^0_n$-name from the name of $A$; the set $B$ is infinite, and an enumeration of any infinite subset of $B$ allows us to recover $B$, and then subsequently $A$.
	 \end{proof}
	
	The graphs we promised at the beginning of this section are $\mathsf{T}_{2k+1}$ and $\mathsf{F}_{2k+2}$, defined in \S \ref{wadgecomplexityofgraphs}. Before stating the main theorem of this section, we give a preparatory result.
   
		\begin{lemma}
	The problems $\finds{}{\mathsf{T}_1}$ and $\finds{}{\mathsf{F}_2}$ are computable and $\finds{}{\mathsf{T}_3}\weiequiv \choice{\boldfacePi_2^0}{\nats}$.
		\end{lemma}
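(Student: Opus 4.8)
The plan is to treat the three assertions separately, in each case reducing the four variants of $\finds{}{\cdot}$ to a single representative by means of the reductions recorded in Figure \ref{findisdefinitionsFigure} (read for $\finds{}{\cdot}$), where $\findsEC{\cdot}$ is the hardest of the four problems and $\findsCE{\cdot}$ the easiest. Thus, to prove a problem computable it suffices to treat $\findsEC{\cdot}$; to prove all four $\weireducible \choice{\boldfacePi_2^0}{\nats}$ it suffices to reduce $\findsEC{\mathsf{T}_3}$; and to prove $\choice{\boldfacePi_2^0}{\nats}\weireducible$ all four it suffices to reduce $\choice{\boldfacePi_2^0}{\nats}$ to $\findsCE{\mathsf{T}_3}$.

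For $\finds{}{\mathsf{T}_1}$ (a single vertex) and $\finds{}{\mathsf{F}_2}$ (an infinite edgeless graph) I would show $\findsEC{\mathsf{T}_1}$ and $\findsEC{\mathsf{F}_2}$ computable. Given an $\repspaceegraphs$-name of $H$, the vertex set $V(H)$ is enumerated. For $\mathsf{T}_1$, output the $\repmap{\graph}$-name of $(\{v\},\emptyset)$ for the first enumerated vertex $v$. For $\mathsf{F}_2$, the hypothesis $\mathsf{F}_2\subgraph H$ forces $V(H)$ to be infinite, so the enumeration yields an infinite c.e.\ set; from it I would extract a strictly increasing, hence computable, subsequence $v_0<v_1<\cdots$ and output the $\repmap{\graph}$-name of the edgeless graph on $\{v_k:k\in\nats\}$, a subgraph of $H$ isomorphic to $\mathsf{F}_2$. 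The only point requiring care is that a characteristic-function output cannot be produced directly from the c.e.\ set $V(H)$; passing to an increasing computable subsequence is exactly what makes the output decidable.

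For $\findsEC{\mathsf{T}_3}\weireducible\choice{\boldfacePi_2^0}{\nats}$, recall that $\mathsf{T}_3$ is the infinite star, so a subgraph copy of $\mathsf{T}_3$ in $H$ is determined by a center of infinite degree together with infinitely many of its neighbours. The forward functional sends the enumeration of $H$ to a name of the set $A\defas\{v\in V(H):\D{H}{v}=\aleph_0\}$, which is $\boldfacePi_2^0$ relative to the input and nonempty precisely because $\mathsf{T}_3\subgraph H$. From a center $c\in\choice{\boldfacePi_2^0}{\nats}(A)$ the backward functional enumerates the neighbours of $c$ and, again via the increasing-subsequence device, extracts an infinite computable set $L$ of neighbours; the star with center $c$ and leaf set $L$ is then output as a $\repmap{\graph}$-name. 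By Figure \ref{findisdefinitionsFigure} this yields $\finds{}{\mathsf{T}_3}\weireducible\choice{\boldfacePi_2^0}{\nats}$ for all four variants.

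For the converse $\choice{\boldfacePi_2^0}{\nats}\weireducible\findsCE{\mathsf{T}_3}$, I would write the given nonempty $\boldfacePi_2^0$ set in the normal form $A=\{n:\exists^\infty s\,P(n,s)\}$ with $P$ computable, and build a graph $H$ with vertices $c_n$ and $\ell_{n,s}$ (all present), placing the edge $(c_n,\ell_{n,s})$ exactly when $P(n,s)=1$. Since $P$ is computable the edge relation is decidable, so $H$ has a computable $\repmap{\graph}$-name; moreover $\D{H}{c_n}=\aleph_0$ iff $n\in A$, while every $\ell_{n,s}$ has degree at most one, so the infinite-degree vertices of $H$ are exactly $\{c_n:n\in A\}$ and $\mathsf{T}_3\subgraph H$. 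Any solution $G'\in\findsCE{\mathsf{T}_3}(H)$, presented as an enumeration, has a center of infinite degree in $H$ (found as the first vertex acquiring two neighbours), which must be some $c_n$ with $n\in A$; decoding $n$ answers $\choice{\boldfacePi_2^0}{\nats}$. The main obstacle throughout is the mismatch between enumeration inputs and characteristic-function outputs: the normal form $A=\{n:\exists^\infty s\,P(n,s)\}$, which keeps the edge relation decidable on the coding side, and the passage to increasing computable subsequences on the solving side, are the two devices that overcome it.
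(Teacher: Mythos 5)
Your proposal is correct and follows essentially the same route as the paper: the same reductions to $\findsEC{}$ and $\findsCE{}$ via Figure \ref{findisdefinitionsFigure}, the same set $A=\{v\in V(H):\D{H}{v}=\aleph_0\}$ for the forward direction, and the same centers-and-leaves graph built from the $\exists^\infty$ normal form for the converse. Your explicit use of an increasing computable subsequence to turn a merely enumerated vertex/neighbour set into a $\repmap{\graph}$-name is a point the paper's proof glosses over, and is a welcome bit of extra care rather than a divergence.
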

		\begin{proof}
		The fact that $\finds{}{\mathsf{T}_1}$ and $\finds{}{\mathsf{F}_2}$ are computable is straightforward, regardless of whether graphs are given as input/output as elements of $\repspacegraphs$ or $\repspaceegraphs$: in the first case, given $H \in \dom(\finds{}{\mathsf{T}_1})$, it suffices to wait for a vertex $v$ to appear in $V(H)$: it is clear that $(\{v\},\{\emptyset\}) \in \finds{}{\mathsf{T}_1}(H)$. Similarly, given $H \in \dom(\finds{}{\mathsf{F}_2})$, we obtain that $(V(H),\{\emptyset\}) \in \finds{}{\mathsf{F}_2}(H)$.
   
		To show that $\finds{}{\mathsf{T}_3}\weireducible \choice{\boldfacePi_2^0}{\nats}$, by Figure \ref{findisdefinitionsFigure}, it suffices to show that $\findsEC{\mathsf{T}_3}\weireducible \choice{\boldfacePi_2^0}{\nats}$. Given in input $H \in \dom(\findsEC{\mathsf{T}_3})$, let $A\defas \{v \in V(H) : \D{H}{v}=\aleph_0\}$. It is clear that $A \in \boldfacePi_2^0(\nats)$ and $A$ is nonempty. Then, given $v_0 \in \choice{\boldfacePi_2^0}{\nats}(A)$ we compute the graph $G'$ such that $V(G')=\{v_0\} \cup \{v:(v_0,v) \in E(H)\} $ and $E(G') \defas \{(v_0,v): (v_0,v) \in E(H)\}$.
	   
		For the converse, by Figure \ref{findisdefinitionsFigure}, it suffices to show that $\choice{\boldfacePi_2^0}{\nats}\weireducible \findsCE{\mathsf{T}_3}$. An input for $ \choice{\boldfacePi_2^0}{\nats}$ is a nonempty set $A \in \boldfacePi_2^0(\nats)$. By \thref{firstcompletesets}, we can associate to every $n \in \nats$ an infinite sequence $p_n \in \Cantor$ such that $n \in \nats \iff (\exists^\infty i)(p_n(i)=1)$. Let $H$ be the graph such that $V(H) \defas \nats$ and $E(H)\defas \{(\str{n,0},\str{n,i+1}) : p_n(i)=1\}$. Since $A \neq \emptyset$ by hypothesis, we have that $(\exists m)(\exists^\infty i)(p_m(i)=1)$ and hence $\D{H}{\str{m,0}}=\aleph_0$, i.e.\ $\mathsf{T}_3 \subgraph H$: this show that $H$ is a suitable input for $\findsCE{\mathsf{T}_3}$. It is also clear that given $G' \in \findsCE{H'}$ we can compute $n \in A$ (just look at the projection on the first coordinate of a vertex in $V(G')$).
		\end{proof}
   
		We conclude this section with \thref{maintheorem_findubgraphgn}.
   
		\begin{theorem}
	\thlabel{maintheorem_findubgraphgn}
			 For $k>0$,
		   $$(i)\ \finds{}{\mathsf{T}_{2k+1}} \weireducible \choice{\boldfacePi_{2k}^0}{\nats} \times \parallelization{\jump{\lpo}{2k-3}}  \text{ and }(ii)\ \finds{}{\mathsf{F}_{2k+2}} \weiequiv \parallelization{\jump{\lpo}{2k-1}}.$$
		\end{theorem}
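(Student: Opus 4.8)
The plan is to analyze both problems through the ``support'' relations. For $d\ge 0$ and a vertex $v$ of the input graph $H$, say that $v$ \emph{$d$-supports} if there is a copy of $\mathsf{T}_{2d+1}$ in $H$ whose centre is $v$; unfolding \thref{subgraphcondition} gives the recursion that $v$ $0$-supports iff $v\in V(H)$, and $v$ $d$-supports iff $(\exists^\infty w)((v,w)\in E(H)\wedge w\text{ }(d{-}1)\text{-supports})$, so that $d$-support is a $\boldfacePi^0_{2d}$ predicate of $v$. Two facts drive everything. First, the greedy/counting argument already used in the proof of \thref{subgraphcondition} (the map $\iota$) shows that once the relevant support relations are decidable relative to an oracle, one can computably (in that oracle) assemble an honest vertex-disjoint copy of $\mathsf{T}_{2k+1}$ (resp.\ $\mathsf{F}_{2k+2}$): at each node one looks for a fresh neighbour of the correct support, which exists because the parent's support relation guarantees infinitely many candidates and only finitely many have been used. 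Second, in any tree of height at most $k$ only the root can $k$-support, since a vertex at depth $d\ge 1$ has a single parent and children whose subtrees have height $<k-1$, hence cannot have infinitely many disjoint neighbours each $(k{-}1)$-supporting. Consequently a single such tree contains at most one vertex-disjoint copy of $\mathsf{T}_{2k+1}$.

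For the upper bounds I would argue for the hardest representation $\findsEC{}$ (the top of Figure \ref{findisdefinitionsFigure}), whence all four versions follow. For $(ii)$, since $\parallelization{\jump{\lpo}{2k-1}}\sweiequiv\mflim^{(2k-1)}$, a single invocation computes the $(2k-1)$-st jump of the (enumeration of the) input, relative to which every support relation up to $k$-support (a $\boldfacePi^0_{2k}$ condition) is decidable; the greedy assembly then builds a full copy of $\mathsf{F}_{2k+2}$, and as $\mflim^{(2k-1)}$ is closed under compositional product and lies above $\mflim$ for $k\ge 1$, the cost of presenting the answer as a characteristic function is absorbed. For $(i)$ I would run in parallel $\choice{\boldfacePi_{2k}^0}{\nats}$ on the nonempty $\boldfacePi^0_{2k}$ set of $k$-supporting vertices, obtaining a root $v_0$, and $\parallelization{\jump{\lpo}{2k-3}}\sweiequiv\mflim^{(2k-3)}$, obtaining the $(2k-3)$-rd jump; the point is that to grow the tree below a \emph{given} root one only ever tests $(k{-}1)$-support and lower, i.e.\ $\boldfacePi^0_{2k-2}$ predicates, which this weaker jump decides. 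The conversion costs are again absorbed for $k\ge 2$, and $k=1$ is exactly the preceding lemma (with $\parallelization{\jump{\lpo}{2k-3}}$ read as the identity).

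For the matching lower bound in $(ii)$ I would show $\mathrm{EnumInf}_{\Pi_{2k}}\weireducible\findsCE{\mathsf{F}_{2k+2}}$, which by \thref{enuminf} and Figure \ref{findisdefinitionsFigure} gives $\parallelization{\jump{\lpo}{2k-1}}\weireducible\finds{}{\mathsf{F}_{2k+2}}$. Given an infinite $A\in\boldfacePi^0_{2k}(\nats)$, I reduce $A$ to the $\boldfacePi^0_{2k}$-complete nested form $(\exists^\infty m_1)\cdots(\exists^\infty m_k)\varphi_n(m_1,\dots,m_k)$ of ``$n\in A$'' (\thref{firstcompletesets}) and, for each $n$, build a single tree $D_n\in\tree_{\leq k}$ whose nodes are the tuples $\pair{m_1,\dots,m_j}$ with $j<k$ present unconditionally and the depth-$k$ node $\pair{m_1,\dots,m_k}$ present iff $\varphi_n$ holds; the branching condition is decidable, so $D_n$ has a computable $\repmap{\graph}$-name. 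The support recursion then yields that the root of $D_n$ $k$-supports iff $n\in A$, i.e.\ $\mathsf{T}_{2k+1}\subgraph D_n\iff n\in A$, while by the structural fact above $D_n$ holds at most one disjoint copy of $\mathsf{T}_{2k+1}$. Setting $H\defas\disconnectedunion{n\in\nats}{D_n}$ makes $\mathsf{F}_{2k+2}\subgraph H$ (as $A$ is infinite); any solution is a family of infinitely many pairwise-disjoint copies of $\mathsf{T}_{2k+1}$, each living in a single component $D_n$ with $n\in A$, and since each component carries at most one such copy the first coordinates of the returned vertices enumerate an infinite subset of $A$.

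I expect the main obstacle to be the lower-bound gadget in $(ii)$: one must simultaneously make $D_n$ effectively presentable as a characteristic function, make ``contains $\mathsf{T}_{2k+1}$'' coincide \emph{exactly} with the $\boldfacePi^0_{2k}$ predicate $n\in A$, and -- crucially -- guarantee the at-most-one-disjoint-copy property, for it is this rigidity that converts ``infinitely many disjoint trees in the solution'' into ``infinitely many distinct indices of $A$'' and thereby pins the degree to $\mathrm{EnumInf}_{\Pi_{2k}}$ rather than to something weaker. The remaining work is bookkeeping: verifying that the greedy assembly respects vertex-disjointness, that the representation conversions are genuinely absorbed by the jumps, and noting that no lower bound is asserted in $(i)$, so only the displayed reduction need be produced there.
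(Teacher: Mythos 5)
Your overall architecture coincides with the paper's: for the upper bounds one uses $\parallelization{\jump{\lpo}{2k-1}}$ (resp.\ $\choice{\boldfacePi_{2k}^0}{\nats}\times\parallelization{\jump{\lpo}{2k-3}}$) to decide, for every vertex and every level, the nested $\exists^\infty$ ``support'' predicates of \thref{subgraphcondition}, and then assembles a copy greedily via the map $\iota$ from that lemma; for the lower bound one reduces $\mathrm{EnumInf}_{\Pi_{2k}}$ (via \thref{enuminf}) to $\findsCE{\mathsf{F}_{2k+2}}$ through a disconnected union of gadgets coding membership in $A$, and reads off indices from the returned copy. Where you genuinely diverge is the lower-bound gadget: the paper invokes \thref{treesforests} and takes $\disconnectedunion{n}F_n$ for forests $F_n$ coding $n\in A$, whereas you build a single tree $D_n\in\tree_{\leq k}$ per index and prove that such a tree admits at most one copy of $\mathsf{T}_{2k+1}$ (only the root of $D_n$ can be the root of a copy, by the depth count). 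This rigidity is exactly what forces an infinite pairwise-disjoint family of $\mathsf{T}_{2k+1}$'s to meet infinitely many components and hence to reveal infinitely many elements of $A$; the paper leaves this point implicit, and your explicit one-copy-per-component argument is the more robust form of the reduction, since a component hosting infinitely many disjoint copies could in principle absorb the entire solution and yield only one index.

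One justification in your upper bound is wrong as stated: $\mflim^{(n)}$ is \emph{not} closed under compositional product (already $\mflim*\mflim\weiequiv\mflim^{[2]}$ is strictly above $\mflim$; in general $\mflim^{(n)}*\mflim^{(n)}$ computes the $(2n+2)$-nd Turing jump of its input while $\mflim^{(n)}$ computes only the $(n+1)$-st). Hence you cannot absorb a trailing $\mflim$ --- needed to turn the enumerated copy into a $\repmap{\graph}$-name for $\findsEC{\mathsf{F}_{2k+2}}$ --- into the single oracle call by appealing to closure under composition. The conversion has to be built into the greedy assembly itself: always choose the fresh witness $\iota(\sigma)$ above the current stage and commit value $0$ for every passed-over vertex, so that the vertex set of the constructed copy is decidable, not merely c.e., relative to the input together with the oracle's answers (the closure property actually used is closure under \emph{parallelization}, as in the paper). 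A smaller slip of the same kind: $\parallelization{\jump{\lpo}{2k-1}}$ computes the $2k$-th jump of the input, not the $(2k-1)$-st, and it is the former that decides the $\boldfacePi^0_{2k}$ support predicates you need. With these repairs your proposal is correct.
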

		\begin{proof}
		   We only prove $(ii)$ as the proof of $(i)$ is similar to the left-to-right direction of $(ii)$. 
		   
		   For the left-to-right direction, by Figure \ref{findisdefinitionsFigure}, it suffices to show that $\findsEC{\mathsf{F}_{2k+2}} \weireducible \parallelization{\jump{\lpo}{2k-1}}$. Since, for any $n$, $\parallelization{\jump{\lpo}{n}}$ is clearly closed under parallelization, it suffices to show that 
		   \[\findsEC{\mathsf{F}_{2k+2}}\weireducible   \bigtimes_{0< i \leq 2k-1}  \parallelization{\jump{\lpo}{i}}.\]
		   For every $i$ such that $0<i \leq 2k-1$, and for every $v \in V(H)$ we can uniformly compute a sequence $(p_v)_{v \in V(H)}$ such that $\jump{\lpo}{i}(p_v)=1 $ if and only if
		   \[(\exists^\infty v_0 \in V(H))\dots(\exists^{\infty} v_{j}\in V(H))(\forall  j< i)((v,v_0) \in E(H)\land (v_{j},v_{j+1})\in E(H)).\]
	   Now for every $i$ such that $0<i\leq 2k-1$, let $(v_i^m)_{m \in \nats}$ be an enumeration of $\{v \in V(H): \jump{\lpo}{i}(p_v)=1\}$. Following the same ideas of \thref{subgraphcondition}' proof we can compute a $\repmap{\graph}$-name for a copy of $\mathsf{F}_{2k+2}$ in $H$.
   
	   For the converse direction, notice that, by \thref{enuminf}, $\widehat{\jump{\lpo}{2k-1}} \weiequiv\jump{\mflim}{2k-1} \weiequiv  \mathrm{EnumInf}_{\Pi_{2k}}$: hence, by Figure \ref{findisdefinitionsFigure}, it suffices to show that $\mathrm{EnumInf}_{\Pi_{2k}} \weireducible\findsCE{\mathsf{F}_{2k+2}}$. Recall from \S \ref{wadgecomplexityofgraphs} that given a subspace $\mathcal{G} \subseteq \repspaceallgraphs$, we define $\bigotimes \tree_{\leq k} \defas \{
	   \disconnectedunion{n \in \nats}{G_n} : (\forall n)(G_n \in \tree_{\leq k})	\}$,
   where $\tree_{\leq k}$ is the space of trees with height at most $k$. \thref{treesforests} tells us that ${Forests}_{2k}\defas\{G \in \bigotimes \tree_{\leq k}:  \mathsf{F}_{2k} \subgraph G\}$ is $\Pi_{2k}^0$-complete: this allows us to think of an input $A$ for $\mathrm{EnumInf}_{\Pi_{2k}}$ as a sequence $(F_n)_{n \in \nats}$ such that $n \in A \iff F_n \in Forests_{2k}$. Since by \thref{definitionenuminf} $A$ is infinite, $\disconnectedunion{n \in \nats}{F_n}$ is a suitable input for $\finds{}{\mathsf{F}_{2k+2}}$ and from any $G \in \findsCE{\disconnectedunion{n \in \nats} F_n}$, checking the projection on the first coordinate of the nodes in $G$ we can compute an infinite subset of $A$.
		\end{proof}
   
		We conjecture that the reduction in \thref{maintheorem_findubgraphgn}$(ii)$ is actually an equivalence, but the details of the proof still need to be adjusted. We leave open whether there exists graphs $G$ such that the problem $\finds{}{G}\weiequiv f$ for some non computable $f \notin \{\CBaire, \choice{\boldfacePi_{2k}^0}{\nats} \times \parallelization{\jump{\lpo}{k}}, \parallelization{\jump{\lpo}{k}}\}$ for $k \in \nats$.

	 \section{Conclusions and open problems}
	 In this paper, we investigated the {subgraph problem} and the {induced subgraph problem} using tools from (effective) Wadge reducibility and Weihrauch reducibility. We studied the (effective) Wadge complexity of certain subsets of graphs, and we located decision problems and \quot{search}\ problems in the Weihrauch hierarchy. Regarding Weihrauch reducibility, we solved some questions left open in \cite{bement2021reverse}, but the exact (effective) Wadge complexity of some subsets of graphs we studied and the Weihrauch degree of certain problems remains open.

	 For the induced subgraph relation, in \S \ref{wadgecomplexityofgraphs} we showed that, at least when $G$ is computable, for any graph $G$, $\setofgraphs{G}{\repspaceallgraphs}{\inducedsubgraph} \in \Gamma$ for $\Gamma \in \{\Sigma_{1}^0, \Sigma_1^1\}$. \thref{subgraphcomplexity} witnesses that, for the subgraph case, there exists a graph $G_k$ such that $\setofgraphs{G_k}{\repspaceallgraphs}{\subgraph} \in \Gamma$ for $\Gamma \in \{\Sigma_{2k+1}^0,\Pi_{2k+2}^0, \Sigma_1^1\}$ for $k \in \nats$.

	 \begin{open}
		Is there a computable/c.e.\ graph $G$ and some $k \in \nats$ such that $\setofgraphs{G}{\repspaceallgraphs}{\subgraph} \in \Gamma$ for $\Gamma \notin \{\Sigma_{2k+1}^0,\Pi_{2k+2}^0, \Sigma_1^1\}$?
	 \end{open}

In terms of Weihrauch reducibility essentially the same question can be rephrased as follows.
\begin{open}
	Is there a computable/c.e.\ graph $G$ and some $k>0$ such that $\jump{\lpo}{k}\strictlyweireducible \sg_{G} \strictlyweireducible\jump{\lpo}{k+1}$?
 \end{open}

Regarding \quot{search}\ problems, \thref{maintheoreminducedsubgraph} shows that $\findis{}{G} \weiequiv \CBaire$ but, in case $G$ is such that
\[\{v \in V(G): \D{G}{v}<\aleph_0\}=\aleph_0 \text{ and }\{v \in V(G): \D{G}{v}=\aleph_0\}=\aleph_0,\]
the right-to-left reduction of the equivalence holds relative to an oracle.

\begin{open}
	Is it the case that for any computable/c.e.\ graph $G$,  $\findis{}{G}\weiequiv \CBaire$ (with no oracle involved)?
\end{open}

We have showed that the problems $\finds{}{\R}$ have the unusual property  of being hard to compute, but weak when they have to compute a problem on their own.  \thref{maintheoremray}, in particular, shows what $\finds{}{\R}$ cannot compute but, regarding what $\finds{}{\R}$ compute,  the only satisfactory result we have is that $\firstOrderPart{\finds{}{\R}}\weiequiv \firstOrderPart{\CBaire}$ (\thref{propositionfoprayomega}). For non first-order problems, the best we were able to show is  implied by \thref{ccantorfindsubgraph} and it shows that  $\CCantor\strictlyweireducible\findsCC{\R}$.

\begin{open}
	What can $\finds{}{\R}$ compute? 
\end{open}

\thref{corollarylpo1} shows that 
			$\mathrm{Fin}(\connected\findsEE{\R}) \strictlyweireducible \firstOrderPart{\connected\findsEE{\R}}$: it is natural to ask the following.

			\begin{open}
				Does	$\firstOrderPart{\connected\findsEE{\R}}\weiequiv f$ for some well-known $f$ in the Weihrauch lattice?
			\end{open}

We furthermore point out two open questions arising from \S \ref{othersubgraphproblems}. For the first one (asking whether the reduction in \thref{maintheorem_findubgraphgn}$(ii)$ is an equivalence), we conjecture a positive answer, but the details of the proof still need to be adjusted.

\begin{open}
	Does $\finds{}{\mathsf{T}_{2k+1}} \weiequiv \choice{\boldfacePi_{2k}^0}{\nats} \times \parallelization{\jump{\lpo}{2k-3}}$?
\end{open}

\begin{open}
	Is there a computable/c.e.\ a graph $G$ such that $\finds{}{G}\weiequiv f$ for some non computable $f \notin \{\CBaire, \choice{\boldfacePi_{2k}^0}{\nats} \times \parallelization{\jump{\lpo}{k}}, \parallelization{\jump{\lpo}{k}}\}$ for some $k \in \nats$?
\end{open}

We conclude by mentioning the dual problem: Rather than fixing the putative subgraph and take the putative supergraph as input, we can also consider the \emph{supergraph} problem, where the question is whether the input graph is isomorphic to a(n induced) subgraph of the fixed graph. We are exploring this problem in a companion paper \cite{cipriani2023complexity}.

\end{document}